\def\input@path{{\string"/Users/paranoia/Documents/Research/mypapers/Cubical Convex Ear Decompositions/\string"/}}
\theoremstyle{plain}
\newtheorem{thm}{Theorem}[section]
  \theoremstyle{plain}
  \newtheorem{lem}[thm]{Lemma}
  \theoremstyle{remark}
  \newtheorem{note}[thm]{Note}
  \theoremstyle{plain}
  \newtheorem{cor}[thm]{Corollary}
  \newcounter{casectr}
  \newenvironment{caseenv}
  {\begin{list}{{\itshape\ Case} \arabic{casectr}.}{%
   \setlength{\leftmargin}{\labelwidth}
   \addtolength{\leftmargin}{\parskip}
   \setlength{\itemindent}{\listparindent}
   \setlength{\itemsep}{\medskipamount}
   \setlength{\topsep}{\itemsep}}
   \setcounter{casectr}{0}
   \usecounter{casectr}}
  {\end{list}}
 \theoremstyle{definition}
  \newtheorem{example}[thm]{Example}
  \theoremstyle{plain}
  \newtheorem{prop}[thm]{Proposition}
  \theoremstyle{remark}
  \newtheorem{claim}[thm]{Claim}
  \newlength{\BiblioSpacing}
\newtheorem{question}{Question}
\begin{document}
%
{}

\global\long\def\normalin{\mathrel{\triangleleft}}

\global\long\def\innormal{\mathrel{\triangleright}}

\global\long\def\semidirect{\mathbin{\rtimes}}

\global\long\def\Stab{\operatorname{Stab}}

%
{}

\global\long\def\bdry{\partial}

\global\long\def\susp{\operatorname{susp}}

%
{}

\global\long\def\lrprod{\mathop{\check{\prod}}}

\global\long\def\lrtimes{\mathbin{\check{\times}}}

\global\long\def\urtimes{\mathbin{\hat{\times}}}

\global\long\def\urprod{\mathop{\hat{\prod}}}

\global\long\def\subsetdot{\mathrel{\subset\!\!\!\!{\cdot}\,}}

\global\long\def\dotsupset{\mathrel{\supset\!\!\!\!\!\cdot\,\,}}

\global\long\def\precdot{\mathrel{\prec\!\!\!\cdot\,}}

\global\long\def\dotsucc{\mathrel{\cdot\!\!\!\succ}}

\global\long\def\des{\operatorname{des}}

%
{}

\global\long\def\modreln{\mathrel{M}}

%
{}

\global\long\def\link{\operatorname{link}}

\global\long\def\freejoin{\mathbin{\circledast}}

\global\long\def\stellarsd{\operatorname{stellar}}

\global\long\def\conv{\operatorname{conv}}

\global\long\def\disjointunion{\mathbin{\dot{\cup}}}

%
{}

\global\long\def\cosetposet{\overline{\mathfrak{C}}}

\global\long\def\cosetlat{\mathfrak{C}}

\global\long\def\ltrr{<_{\mathrm{rr-lex}}}

\global\long\def\gtrr{>_{\mathrm{rr-lex}}}

\title{Cubical Convex Ear Decompositions}

\author{Russ Woodroofe\\
{\small Department of Mathematics}\\
{\small Washington University in St.~Louis}\\
{\small St.~Louis, MO 63130, USA}\\
\texttt{\small russw@math.wustl.edu}}

\date{
{\small Mathematics Subject Classification: 05E25}\\
{\small \medskip{}
Dedicated to Anders Björner in honor of his 60th birthday.}}
\maketitle
\begin{abstract}
We consider the problem of constructing a convex ear decomposition
for a poset. The usual technique, introduced by Nyman and Swartz,
starts with a $CL$-labeling and uses this to shell the `ears' of
the decomposition. We axiomatize the necessary conditions for this
technique as a {}``$CL$-ced'' or {}``$EL$-ced''. We find an
$EL$-ced of the $d$-divisible partition lattice, and a closely related
convex ear decomposition of the coset lattice of a relatively complemented
finite group. Along the way, we construct new $EL$-labelings of both
lattices. The convex ear decompositions so constructed are formed
by face lattices of hypercubes.

We then proceed to show that if two posets $P_{1}$ and $P_{2}$ have
convex ear decompositions ($CL$-ceds), then their products $P_{1}\times P_{2}$,
$P_{1}\lrtimes P_{2}$, and $P_{1}\urtimes P_{2}$ also have convex
ear decompositions ($CL$-ceds). An interesting special case is: if
$P_{1}$ and $P_{2}$ have polytopal order complexes, then so do their
products. 
\end{abstract}
\tableofcontents{}

\section{Introduction}

Convex ear decompositions, introduced by Chari in \cite{Chari:1997},
break a simplicial complex into subcomplexes of convex polytopes in
a manner with nice properties for enumeration. A complex with a convex
ear decomposition inherits many properties of convex polytopes. For
example, such a complex has a unimodal $h$-vector \cite{Chari:1997},
with an analogue of the $g$-theorem holding \cite{Swartz:2006},
and is doubly Cohen-Macaulay \cite{Swartz:2006}.

Nyman and Swartz constructed a convex ear decomposition for geometric
lattices in \cite{Nyman/Swartz:2004}. Their proof method used the
$EL$-labeling of such lattices to understand the decomposition's
topology. Similar techniques were pushed further by Schweig \cite{Schweig:2006}.
In Section \ref{sec:Definitions-and-Tools}, we introduce the necessary
background material, and then axiomatize the conditions necessary
for these techniques. We call such a convex ear decomposition a {}``$CL$-ced'',
or {}``$EL$-ced.''

We then show by example in Sections \ref{sec:d-divisible-Partition}
and \ref{sec:coset-lattice} how to use these techniques on some poset
families: $d$-divisible partition lattices, and coset lattices of
a relatively complemented group. These posets have each interval $[a,\hat{1}]$
supersolvable, where $a\neq\hat{0}$. Finding the convex ear decompositions
will involve constructing a (dual) $EL$-labeling that respects the
supersolvable structure up to sign, and showing that a set of (barycentricly
subdivided) hypercubes related to the $EL$-labeling is an $EL$-ced,
or at least a convex ear decomposition. We will prove specifically:
\begin{thm}
\label{thm:d-divisibleced_intro}The $d$-divisible partition lattice
$\Pi_{n}^{d}$ has an $EL$-ced, hence a convex ear decomposition.
\end{thm}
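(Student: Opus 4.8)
The plan is to apply the ``$EL$-ced'' framework set up in Section \ref{sec:Definitions-and-Tools}, which axiomatizes the Nyman--Swartz shelling technique \cite{Nyman/Swartz:2004}: it suffices to produce a (dual) $EL$-labeling of $\Pi_n^d$ together with a family of subcomplexes of $\Delta(\overline{\Pi_n^d})$ --- here barycentric subdivisions of boundary complexes of hypercubes --- satisfying the axioms of an $EL$-ced. The structural feature that makes this feasible is that for every $a\neq\hat 0$ in $\Pi_n^d$ the interval $[a,\hat 1]$ is isomorphic to the ordinary partition lattice $\Pi_k$, where $k$ is the number of blocks of $a$ (merging blocks automatically preserves divisibility by $d$), and $\Pi_k$ is supersolvable. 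Hence $\Pi_n^d$ is graded of rank $n/d$, each open upper interval is $EL$-shellable in the strong supersolvable sense, and only the bottom rank --- the edges $\hat 0\lessdot a$ into the atoms, which are the partitions all of whose blocks have size exactly $d$ --- lies outside the supersolvable pattern.

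First I would build the labeling. Starting from Bj\"orner's classical $EL$-labeling of $\Pi_k$, which sends a merge of blocks $B,B'$ to $\max(\min B,\min B')$, I would lift it to each interval $[a,\hat 1]$ and then choose labels for the atom edges $\hat 0\lessdot a$ so that the resulting global labeling is a (dual) $EL$-labeling whose restriction to each $[a,\hat 1]$ agrees, up to reversing the order of the label alphabet, with the supersolvable labeling. The sign reversal is dictated by which atom one sits over, and is forced by the requirement that the unique increasing maximal chain of each interval also be the lexicographically smallest; verifying the $EL$ property then reduces to an interval-by-interval check, handled by supersolvability of the $[a,\hat 1]$ in the upper ranks and by a direct argument at rank $1$. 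This produces an $EL$-labeling different from the previously known ones, tuned so that its descent structure is adapted to hypercubes.

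Second I would specify the ears. Fix a linear (rr-lex-type) order on a set of ``frames,'' a frame being an atom $a$ together with the auxiliary ordering data needed to pin down a supersolvable spine of $[a,\hat 1]\cong\Pi_{n/d}$. To a frame I associate the subposet $Q_a\subseteq\Pi_n^d$ spanned by the maximal chains whose bottom edge is $\hat 0\lessdot a$ and whose labels above lie in the ``cube's-worth'' neighborhood of the spine determined by the labeling. The key combinatorial claim --- and the geometric heart of the argument --- is that $Q_a$ embeds in the face lattice of an $(n/d-1)$-cube, with each cube coordinate matching one rank of the spine, the two states of that coordinate matching the two available sign choices there, and cube faces matching the corresponding merges; so $\Delta(\overline{Q_a})$ is a shellable subcomplex of the barycentric subdivision of $\partial I^{\,n/d-1}$. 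One then checks, for the chosen order on frames: (i) the first ear is all of $\partial I^{\,n/d-1}$, an $(n/d-2)$-sphere; (ii) each later ear is a ball, shellable as a subcomplex of a cube boundary in an order compatible with the lex order on chain labels; (iii) $\Delta(\overline{Q_a})$ meets the union of the earlier ears in exactly its boundary sphere; and (iv) the ears cover $\Delta(\overline{\Pi_n^d})$, i.e.\ every maximal chain, read upward from its atom, lies in the cube of some frame. These are precisely the $EL$-ced axioms of Section \ref{sec:Definitions-and-Tools}, so Theorem \ref{thm:d-divisibleced_intro} follows, the ``hence a convex ear decomposition'' clause being immediate from the axioms.

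I expect the main obstacle to be items (ii)--(iii): arranging the frames and the subposets $Q_a$ so that they genuinely are (pieces of) cube face lattices \emph{and} so that consecutive ears meet exactly in the boundary, with a shelling of each ear fitting inside one global lex shelling. Getting the sign conventions in the labeling so that all of this holds simultaneously is the delicate creative step; once the bookkeeping is set up, the individual verifications are routine --- which is exactly why the $EL$-ced axioms are isolated in Section \ref{sec:Definitions-and-Tools} rather than re-derived here.
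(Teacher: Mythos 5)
Your overall strategy --- a signed twist of an upper-interval labeling together with a family of cube-shaped ears checked against the $EL$-ced axioms of Section \ref{sec:Definitions-and-Tools} --- is the right one, and it is what the paper does. But the ears you specify cannot work. You define $Q_a$ to be generated by the maximal chains whose bottom edge is $\hat 0\lessdot a$ for a \emph{single} atom $a$. Every facet of $\vert Q_a\vert$ then contains the vertex $a$, so $\vert Q_a\vert$ is a cone with apex $a$: it is contractible, never a sphere, and it is not the order complex of the proper part of a polytope's face lattice. In particular your item (i) --- that the first ear is all of $\bdry I^{\,n/d-1}$ --- is impossible, since the barycentric subdivision of $\bdry I^{\,n/d-1}$ has $2^{n/d-1}$ vertices at the atom level, one for each vertex of the cube. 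The decomposition the paper uses (Wachs's homology basis) indexes each ear by a permutation $\alpha\in A_n^d$ and takes $\Sigma_\alpha$ to consist of \emph{all} partitions obtained by splitting and/or switch-and-splitting $\alpha$ at $d$-divisible positions; the independent choice of split versus switch-and-split at each of the $n/d-1$ positions produces $2^{n/d-1}$ distinct atoms of $\Pi_n^d$, and these are the cube's vertices. So the ``two states per coordinate'' are split versus switch-and-split at a position of $\alpha$, not two sign choices in the labels above one fixed atom. These $\Sigma_\alpha$, ordered by rr-lex, are then shown to satisfy (CLced-desc), (CLced-bdry) and (CLced-union), with (CLced-polytope) already supplied by Wachs.

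Two smaller points on the labeling. The paper does not start from the geometric-lattice labeling $\max(\min B,\min B')$ --- that is essentially Wachs's original labeling, which the paper explicitly sets aside as inconvenient --- but from the supersolvable labeling $\lambda^{ss}(y\dotsucc z)=\max B_1$, the smaller of the two block maxima, coming from the left-modular chain $1\,\vert\,2\,\vert\,\dots\,\vert\, j\,\vert\,(j+1)\dots n$. And the sign is not ``dictated by which atom one sits over'': a cover relation merging $B_1$ and $B_2$ (with $\max B_1<\max B_2$) receives a negative label exactly when $\max B_1<\min B_2$, i.e.\ when the refinement is non-crossing, while every bottom edge $y\dotsucc\hat 0$ is labeled $0$. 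This intrinsic, atom-independent sign rule is what makes the descending chains of intervals line up with the cubes $\Sigma_\alpha$, which is the content of (CLced-desc) and the heart of the verification.
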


\begin{thm}
\label{thm:cosetposetced_intro}The coset lattice $\cosetlat(G)$
has a convex ear decomposition if and only if $G$ is a relatively
complemented finite group.
\end{thm}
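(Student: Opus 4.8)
The plan is to treat the two directions separately: necessity (``only if'') is formal, resting on general Cohen--Macaulay theory, while sufficiency (``if'') is the substantive construction, carried out in close parallel with the treatment of $\Pi_n^d$ in Section~\ref{sec:d-divisible-Partition}.

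For necessity, suppose $\cosetlat(G)$ has a convex ear decomposition. By the results recalled in Section~\ref{sec:Definitions-and-Tools} (Swartz~\cite{Swartz:2006}), $\Delta(\cosetposet(G))$ is then doubly Cohen--Macaulay; in particular it is a finite complex, so $G$ is finite. Intervals of a doubly Cohen--Macaulay poset are again doubly Cohen--Macaulay, and each interval $[A,B]$ of the subgroup lattice $L(G)$ occurs as an interval of $\cosetlat(G)$ (the cosets between two subgroups $A\le B$ are precisely the subgroups between $A$ and $B$); hence every $[A,B]$ is a doubly Cohen--Macaulay lattice. But a finite lattice having an element with no complement has a contractible order complex, by the homotopy complementation formula of Bj\"{o}rner and Walker, and a contractible complex of dimension $\ge 0$ is not doubly Cohen--Macaulay (its top reduced homology vanishes). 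Therefore every interval of $L(G)$ is complemented; that is, $G$ is relatively complemented.

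For sufficiency, let $G$ be a relatively complemented finite group. By the classical structure theory of such groups (they are supersolvable with elementary abelian Sylow subgroups), $\cosetlat(G)$ is a graded lattice in which every interval $[a,\hat 1]$ with $a\neq\hat 0$ is a supersolvable lattice; moreover, as for any finite group, $[gH,\hat 1]\cong[H,G]\subseteq L(G)$ and $[\hat 0,gH]\cong\cosetlat(H)$. Fixing a chief series of $G$ selects an $M$-chain in every interval $[H,G]$ of $L(G)$ simultaneously; from it I would build an edge labeling $\lambda$ of $\cosetlat(G)$, labeling a cover $C\lessdot C'$ by the level of the chief series at which $C$ and $C'$ ``separate,'' decorated by a sign recording whether the cover runs along the selected $M$-chains or transverse to them (with a suitable convention for the covers out of $\hat 0$). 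The first task, paralleling Section~\ref{sec:d-divisible-Partition}, is to verify that $\lambda$ is a (dual) $EL$-labeling respecting the supersolvable structure up to sign. The ears are then read off from $\lambda$: they are indexed by the maximal chains $\mathfrak c$ with a prescribed descent pattern, and each $\Delta_{\mathfrak c}$ is a subcomplex of the barycentric subdivision of the boundary of a hypercube --- the transverse covers along $\mathfrak c$ providing the cube's coordinate directions --- namely the whole boundary sphere for the first chain and a ball thereafter. Ordering the $\mathfrak c$ by decreasing $\lambda$-lexicographic order, it remains to check the axioms of Section~\ref{sec:Definitions-and-Tools} (or, where those do not literally apply, Chari's axioms~\cite{Chari:1997} directly): the first ear is a polytope boundary; each later ear is a ball inside a hypercube boundary; the ears cover $\Delta(\cosetposet(G))$, using the $EL$-property of $\lambda$; and each ear meets the union of the earlier ones in exactly its own boundary.

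I expect the main obstacle to be this last intersection condition, hand in hand with arranging the signs in $\lambda$ so that ``$\mathfrak c$ has a descent in a given transverse coordinate'' becomes equivalent to ``$\Delta_{\mathfrak c}$ has already been covered along the corresponding boundary facet.'' Unlike for $\Pi_n^d$, the lower intervals of $\cosetlat(G)$ are themselves coset lattices of relatively complemented subgroups, rather than products of Boolean or partition lattices, so the bookkeeping that assigns each face of $\Delta(\cosetposet(G))$ to its hypercube is appreciably more delicate; I anticipate that for this reason one verifies a convex ear decomposition directly rather than obtaining a genuine $EL$-ced. Everything else --- gradedness, the supersolvability of the intervals $[a,\hat 1]$, and the identification of the relevant intervals with cube face lattices --- should be routine once the structure theory, the labeling $\lambda$, and the axiomatization of Section~\ref{sec:Definitions-and-Tools} are in hand.
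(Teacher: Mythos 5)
Your necessity argument is sound: passing from $\cosetlat(G)$ to the interval $[A,B]\cong[A,B]_{L(G)}$, using that intervals of doubly Cohen--Macaulay posets are doubly Cohen--Macaulay, and then ruling out non-complemented intervals via the Bj\"orner--Walker complementation result together with the fact that a doubly Cohen--Macaulay complex has nonvanishing top reduced homology (a fact due to Baclawski that you should cite, since the paper never states it). This is a slightly different route from the paper, which instead deduces Corollary \ref{cor:2CMimpliesRC} from Schweig's equivalence for supersolvable lattices; both work, and yours avoids having to invoke gradedness/supersolvability first.

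The gap is in the ``if'' direction, at exactly the point you flag as the main obstacle but then leave unresolved. Your labeling is essentially the paper's (Theorem \ref{thm:anELlabelingofC(G)}): fix a chief series, and give an edge the sign $-i$ precisely when it is cut out by the chosen complement $B_i^0$. But your ears are not actually defined: you index them by descending chains, attach to each the hypercube spanned by the ``transverse'' covers, and assert that the new part is a ball, with the ears ordered by decreasing lexicographic order. The paper's closing Note in Section \ref{sec:coset-lattice} shows this scheme fails as stated: already for $\cosetlat(\mathbb{Z}_2^2)$, the part of a later cube not covered by earlier cubes can be disconnected, hence is not a ball, so ``one ear per descending chain, consisting of everything new in its cube'' cannot satisfy (ced-topology) or (ced-bdry), and saying only ``a subcomplex of the cube'' leaves the ear unspecified. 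The missing idea is the finer carving the paper performs: ears are indexed by pairs $(\mathcal{B},x)$ of a base-set of complements to the chief factors and a coset, each ear lives in the sublattice $\Sigma_{\mathcal{B}x}$ generated by $\mathcal{B}x$ together with the preferred complements $B_i^0$ in the coordinates where $B_ix=B_i^0x$, sitting inside the full cube $\Sigma_{\mathcal{B}x}^{+}$ of Corollary \ref{cor:BasisForC(G)GivesCube}, so that one cube contributes several ears; and the ordering is by the pattern $\rho(\mathcal{B})$, which is what makes the descent-uniqueness Lemma \ref{lem:C(G)CedHasDescentProperty}, shellability, and the boundary argument of Proposition \ref{pro:CedBdryForC(G)} (via Lemma \ref{lem:earchain_extension}) go through; there is no argument that a lexicographic order on descending chains has these properties. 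Finally, coverage of all maximal chains (Corollary \ref{cor:EarsCover_C(G)}) is not a consequence of the $EL$-property of $\lambda$: it requires relative complementation (coatomicity of $L(G)$) to produce, for every edge labeled $\pm i$, a complement of $N_i/N_{i+1}$ realizing it, and indeed it fails for any weaker hypothesis. Without these ingredients your outline does not yet prove the ``if'' direction.
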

I believe these convex ear decompositions to be the first large class
of examples where each ear is a hypercube. 

Although both poset families were known to be $EL$-shellable, the
$EL$-labelings that we construct in these sections also seem to be
new. The ideas used to find them may be applicable in other settings,
as briefly discussed in Section \ref{sec:Further-Questions}.
\begin{lem}
$\Pi_{n}^{d}$ has a dual $EL$-labeling; $\cosetlat(G)$ has a dual
$EL$-labeling if $G$ is a complemented finite group.
\end{lem}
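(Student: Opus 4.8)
The plan is to produce, in each case, an explicit edge labeling and to check the $EL$-axiom directly on the \emph{dual} lattice, using as a guide the one structural fact that makes a dual labeling natural here: every upper interval $[a,\hat1]$ with $a\neq\hat0$ is supersolvable. In $\widehat\Pi_n^d:=\Pi_n^d\cup\{\hat0\}$ the interval $[a,\hat1]$ is the partition lattice on the blocks of $a$, and more generally any interval $[x,y]$ with $x\neq\hat0$ is a product of partition lattices; in $\cosetlat(G)$ the interval $[\{g\},\hat1]$ is (a left translate of) the subgroup lattice $L(G)$, which for $G$ complemented is a complemented modular lattice, hence supersolvable, and the same holds for the intervals $[\{g\},Hg]\cong\cosetlat(H)$. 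Since a supersolvable lattice carries Stanley's $EL$-labeling coming from a maximal chain of modular elements ($M$-chain), the task reduces to gluing these interval labelings into one global edge labeling whose restriction to each upper interval is the supersolvable one (up to a global reordering of labels, i.e.\ ``up to sign'') and which remains an $EL$-labeling after dualizing.

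Concretely, for $\widehat\Pi_n^d$ I would label a covering relation $\pi\gtrdot\sigma$ — where $\sigma$ splits a block $B$ of $\pi$ into $B_1\sqcup B_2$ with $\min B\in B_1$ — by a label built from $\min B_2$, the smallest element peeled off, together with whatever auxiliary data (a second coordinate, or a sign) is needed to accommodate the constraint that both parts must have size divisible by $d$; the covers $a\gtrdot\hat0$ above the unique bottom element then receive labels chosen so that each maximal chain can be completed in exactly one increasing way. Read in the dual direction this is essentially the usual recipe underlying $EL$-shellability of the partition lattice, and restricted to $[a,\hat1]$ (after relabeling the blocks of $a$ by their minima) it is the standard supersolvable labeling associated to the $M$-chain that successively merges the first $i+1$ blocks. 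For $\cosetlat(G)$ I would fix an $M$-chain $\{1\}=K_0\lessdot K_1\lessdot\cdots\lessdot K_t=G$ of modular subgroups realizing supersolvability of $L(G)$, transport it by left translation to every atom $\{g\}$, label a coset cover $Hg\gtrdot H'g$ by the index at which $H/H'$ first ``appears'' in this chain, and again choose special labels on the covers $\{g\}\gtrdot\hat0$.

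I would then verify the $EL$-property case by case in the dual lattice. For an interval avoiding the top $\hat0_{L^*}=\hat1_L$ — equivalently an interval $[x,y]$ of $L$ with $x\neq\hat0$ — the interval is (dual to) a product of smaller partition lattices (resp.\ a product of intervals of $L(G)$), these are supersolvable, the restricted labeling is the supersolvable one, and the unique increasing maximal chain together with its lexicographic minimality can be read off from the explicit description of the labels, the ``sign'' accounting for the fact that the global order on labels need not coincide, interval by interval, with the intrinsic order on that interval's blocks. For an interval meeting the top of $L^*$ — equivalently an interval $[\hat0,y]$ of $L$, which is again a ``hatted'' product $\widehat{\prod_i\Pi_{dm_i}^d}$ (resp.\ is isomorphic to $\cosetlat(H)$ for some complemented $H\leq G$) of the same kind — I would either induct on $n$ (resp.\ on $|G|$) or argue directly that the sign convention on the $\hat0$-covers forces exactly one increasing maximal chain and that it is lexicographically first.

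The step I expect to be the real obstacle is precisely this compatibility at the bottom. Each upper interval $[a,\hat1]$ ``wants'' its own copy of the supersolvable labeling indexed by its own blocks, but these intervals overlap heavily, so every label must be committed to once and for all on $[n]$ (resp.\ on $G$). Showing that a single global labeling simultaneously restricts correctly to every upper interval \emph{and} is completed correctly across the $\hat0$-covers — so that the parities conspire to leave a unique increasing chain in every interval that passes through $\hat0$ — is the delicate point, and is presumably the reason the construction can only respect the supersolvable structure ``up to sign.''
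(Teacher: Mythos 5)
Your high-level strategy matches the paper's: restrict attention to the two kinds of intervals (those of the form $[\hat 0,y]$ and those avoiding $\hat 0$), use the fact that intervals $[x,y]$ with $x\neq\hat 0$ are products of supersolvable pieces carrying the standard supersolvable dual $EL$-labeling, and modify that labeling ``up to sign'' so that the intervals through $\hat 0$ also acquire a unique increasing chain. But the proposal has a genuine gap: the sign rule, which is the entire content of the construction, is never specified. You write that the label should carry ``whatever auxiliary data (a second coordinate, or a sign) is needed,'' and that one should ``choose special labels on the covers'' above $\hat 0$, and you then explicitly defer the verification on intervals containing $\hat 0$ as ``the real obstacle'' and ``the delicate point.'' That deferred point is exactly where the lemma lives. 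The paper's resolution is concrete: for $\Pi_n^d$, a cover $y\dotsucc z$ merging blocks $B_1,B_2$ with $\max B_1<\max B_2$ is labeled $-\max B_1$ precisely when $\max B_1<\min B_2$ (a non-crossing refinement) and $+\max B_1$ otherwise, with label $0$ on edges to $\hat 0$; since an increasing chain into $\hat 0$ must then use only negative labels, it is forced through the unique atom that sorts each block into consecutive-by-maxima sub-blocks, reducing to the supersolvable case. For $\cosetlat(G)$, one fixes a chief series and one complement $B_i^0$ to each factor $N_i/N_{i+1}$ once and for all, and the label $\lambda^{ss}=i$ becomes $-i$ exactly when $K_1x=K_0x\cap B_i^0$; the negative labels then single out a unique chain of intersections with the $B_i^0$, whose existence needs a separate argument (the paper's Lemma on complements versus factors). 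Without committing to some such rule one cannot check anything, and not every plausible rule works: your proposed base statistic for $\Pi_n^d$ ($\min B_2$ where $\min B\in B_1$) differs from the paper's ($\max B_1$ where $\max B_1<\max B_2$), and whether it admits a workable sign convention is not evaluated.

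Two smaller points. First, your justification that $L(G)$ is supersolvable because it is ``a complemented modular lattice'' is incorrect; $L(G)$ need not be modular for $G$ complemented. The relevant fact is that complemented groups are supersolvable, so any chief series is a left modular chain in $L(G)$. Second, for $\cosetlat(G)$ the supersolvable labeling on $[x,G]\cong L(G)$ depends only on the subgroups involved, so no ``transport by translation'' to each atom is needed; what does depend on the coset is the sign, via the fixed subgroups $B_i^0$, and that is the part your sketch omits.
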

In Section \ref{sec:Poset-Products} we change focus slightly to discuss
products of bounded posets. Our first goal is:
\begin{thm}
\label{thm:cedprod_intro}If bounded posets $P_{1}$ and $P_{2}$
have convex ear decompositions, then so do $P_{1}\times P_{2}$, $P_{1}\lrtimes P_{2}$,
and $P_{1}\urtimes P_{2}$.
\end{thm}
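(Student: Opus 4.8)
The plan is to translate the three poset operations into constructions on order complexes and then build the ears of the product from pairwise ``products'' of the ears of the two factors. The relevant dictionary is classical: for bounded posets $P_1,P_2$, the complexes $\Delta(\overline{P_1\lrtimes P_2})$ and $\Delta(\overline{P_1\urtimes P_2})$ are triangulations of the join $\|\Delta(\bar P_1)\|\ast\|\Delta(\bar P_2)\|$, while $\Delta(\overline{P_1\times P_2})$ is a triangulation of its suspension $\susp\bigl(\|\Delta(\bar P_1)\|\ast\|\Delta(\bar P_2)\|\bigr)$; moreover, on the level of polytopes, $\times$, $\lrtimes$, and $\urtimes$ correspond to the free join, the Cartesian product, and the free sum, and the triangulations above are exactly the (regular, staircase/pulling-type) subdivisions carried by the boundary of the corresponding polytope operation. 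So, starting from convex ear decompositions $\Delta(\bar P_1)=\Sigma_1\cup\dots\cup\Sigma_m$ and $\Delta(\bar P_2)=\Theta_1\cup\dots\cup\Theta_n$ --- with $\Sigma_1,\Theta_1$ boundaries of simplicial polytopes and the later ears balls sitting inside such boundaries --- I will produce, for each pair $(i,j)$, an ear $E_{ij}$ by applying the appropriate polytope operation to (polytopes realizing) $\Sigma_i$ and $\Theta_j$.

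The combinatorial bookkeeping is uniform across the three cases. The ear $E_{11}$ is the full boundary sphere of the polytope built from $\Sigma_1$ and $\Theta_1$, and every other $E_{ij}$ is a ball; order the $E_{ij}$ by any linear extension of the product partial order on $\{1,\dots,m\}\times\{1,\dots,n\}$. Then one verifies: (i) the $E_{ij}$ cover the product's order complex, because the $\Sigma_i$ cover $\Delta(\bar P_1)$, the $\Theta_j$ cover $\Delta(\bar P_2)$, and the operation respects these covers; (ii) using the appropriate product rule for boundaries (in the join case, $\bdry(\Sigma_i\ast\Theta_j)=(\bdry\Sigma_i\ast\Theta_j)\cup(\Sigma_i\ast\bdry\Theta_j)$, and its analogues for the other two operations), $\bdry E_{ij}$ is covered by the ears $E_{i'j}$ with $i'<i$ and $E_{ij'}$ with $j'<j$, all of which precede $E_{ij}$; and (iii) $E_{ij}$ meets the union of the earlier ears in exactly $\bdry E_{ij}$: a face in the relative interior of $E_{ij}$ projects to a face in the relative interior of $\Sigma_i$ if $i\ge2$ and of $\Theta_j$ if $j\ge2$, which by the ear-decomposition property of the factors cannot lie in an ear with a strictly smaller first, resp.\ second, index --- and no $E_{i'j'}$ preceding $E_{ij}$ has both $i'\ge i$ and $j'\ge j$. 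The dimension conditions are immediate from additivity of dimension.

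The main obstacle I anticipate is making step (iii) --- and, more basically, the claim that each $E_{ij}$ really is a ball (or sphere) sitting inside the boundary of a \emph{simplicial polytope} --- genuinely work: having a convex ear decomposition is not a topological invariant, so it does not suffice to recognize the product's order complex abstractly as a subdivided join or suspension. The technical heart is the statement that the Cartesian product, free sum, and free join of simplicial polytopes, carrying the regular triangulations that these operations induce on order complexes, are again simplicial polytopes, and that these triangulations restrict correctly on boundaries so as to be compatible with the realizations of the earlier ears used in (ii). Granting this, all three cases follow, and the assertion that products of posets with polytopal order complexes again have polytopal order complexes is the single-ear instance, obtained along the way.
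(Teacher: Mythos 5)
Your proposal is essentially the paper's own proof. The ears $E_{ij}$ are exactly the paper's $\Delta_{s,t}$ (the complexes generated by maximal chains projecting to the $s$-th ear of $P_{1}$ and the $t$-th ear of $P_{2}$), ordered lexicographically in $(s,t)$; the topology is handled by $\Delta_{s,t}\approx\Delta_{s}^{(1)}*\Delta_{t}^{(2)}$ in the reduced cases and its suspension for $P_{1}\times P_{2}$, and (ced-bdry) by the same boundary bookkeeping you describe (the boundary of $\Delta_{s,t}$ consists precisely of the faces projecting into $\bdry\Delta_{s}^{(1)}$ or $\bdry\Delta_{t}^{(2)}$). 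The one step you explicitly defer --- that the order complex of the product is realized inside the boundary complex of a simplicial polytope, compatibly with the polytope operation --- is exactly the paper's Lemma \ref{lem:posetpolytopeprod_intro}, and it is the real content here: it does not follow merely from recognizing the triangulation as regular, and the paper proves it by exhibiting $\vert P_{1}\lrtimes P_{2}\vert$ (resp.\ $\vert P_{1}\times P_{2}\vert$) as an explicit sequence of stellar subdivisions of the boundary of $\overline{\vert P_{1}\vert}\times\overline{\vert P_{2}\vert}$ (resp.\ of $\overline{\vert P_{1}\vert}\freejoin\overline{\vert P_{2}\vert}$), each of which preserves being a polytope boundary. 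Supplying that stellar-subdivision argument closes the only gap; everything else in your outline matches the paper's proof of Theorem \ref{thm:ced-respects-products}.
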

This is the first result of which I am aware that links poset constructions
and convex ear decompositions with such generality. A result of a
similar flavor (but more restrictive) is proved by Schweig \cite{Schweig:2006}:
that rank selected subposets of some specific families of posets have
convex ear decompositions.

A special case of Theorem \ref{thm:cedprod_intro} has a particularly
pleasing form:
\begin{lem}
\label{lem:posetpolytopeprod_intro}If $P_{1}$ and $P_{2}$ are bounded
posets such that $\vert P_{1}\vert$ and $\vert P_{2}\vert$ are isomorphic
to the boundary complexes of simplicial polytopes, then so are $\vert P_{1}\times P_{2}\vert$,
$\vert P_{1}\lrtimes P_{2}\vert$, and $\vert P_{1}\urtimes P_{2}\vert$.
\end{lem}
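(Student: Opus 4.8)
The plan is to derive Lemma~\ref{lem:posetpolytopeprod_intro} as a corollary of Theorem~\ref{thm:cedprod_intro}, via the principle that a simplicial complex is isomorphic to the boundary complex of a simplicial polytope exactly when it admits a convex ear decomposition consisting of a single ear. One implication here is immediate; conversely, if the decomposition is the one-term sequence $\Delta_{1}$, then $\Delta=\Delta_{1}$ is by definition such a boundary complex.

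First I would note that each $|P_{i}|$, being (isomorphic to) the boundary complex of a simplicial polytope, carries the trivial one-ear convex ear decomposition $\Delta_{1}:=|P_{i}|$. Theorem~\ref{thm:cedprod_intro} then yields convex ear decompositions of $|P_{1}\times P_{2}|$, $|P_{1}\lrtimes P_{2}|$, and $|P_{1}\urtimes P_{2}|$, and it remains only to check that each of these consists of a single ear. For this I would invoke two facts. The first is that each of the three order complexes is a simplicial $(d-1)$-sphere, for the appropriate value of $d$: the polytope boundary $|P_{i}|$ is a sphere, and the order complex of $P_{1}\lrtimes P_{2}$, and likewise of $P_{1}\urtimes P_{2}$, is homeomorphic to the join $|P_{1}|\ast|P_{2}|$, while that of $P_{1}\times P_{2}$ is homeomorphic to the suspension $\susp(|P_{1}|\ast|P_{2}|)$ (these identifications of homeomorphism type are classical and appear in Section~\ref{sec:Poset-Products}), and a join, or a suspension, of spheres is again a sphere. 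The second fact is the homological bookkeeping underlying convex ear decompositions: telescoping the long exact sequences of the pairs $(\Delta_{1}\cup\dots\cup\Delta_{j},\,\Delta_{1}\cup\dots\cup\Delta_{j-1})$ shows that a convex ear decomposition with $k$ ears forces $\widetilde{\beta}_{d-1}(\Delta)=k$. Since a $(d-1)$-sphere has $\widetilde{\beta}_{d-1}=1$, each of our three decompositions has $k=1$; hence each product complex coincides with its single ear, which is the boundary complex of a simplicial polytope.

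The step most in need of care is the identity $\widetilde{\beta}_{d-1}(\Delta)=k$ — but this is precisely the machinery that makes convex ear decompositions useful, so it will already be at hand — together with the degenerate cases $d\le 1$, where the order complexes are empty or a pair of points and are polytope boundaries for trivial reasons. I do not anticipate any deep obstruction here: given Theorem~\ref{thm:cedprod_intro} and the classical homeomorphism-type computations, the lemma is a clean consequence. One could alternatively bypass the sphere argument entirely by inspecting the construction in the proof of Theorem~\ref{thm:cedprod_intro} and observing that it sends a pair of single-ear decompositions to a single-ear decomposition.
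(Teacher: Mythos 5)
Your reduction is circular within the logical structure of the paper. You propose to deduce Lemma \ref{lem:posetpolytopeprod_intro} from Theorem \ref{thm:cedprod_intro}, but the paper's proof of that theorem (Theorem \ref{thm:ced-respects-products}) verifies the condition (ced-polytope) for the product ears $\Delta_{s,t}$ by citing Lemma \ref{lem:posetpolytopeprod_intro} itself: the only available simplicial polytope containing $\Delta_{s,t}$ in its boundary is the one whose boundary complex is the order complex of the product of the face lattices of the polytopes containing $\Delta_{s}^{(1)}$ and $\Delta_{t}^{(2)}$, and knowing that this order complex \emph{is} a polytope boundary is exactly the content of the lemma. So the substantive geometric work cannot be outsourced to Theorem \ref{thm:cedprod_intro}; it has to be done first. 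The paper does it directly: Proposition \ref{pro:ProductOfPolytopes} identifies $\Sigma_{1}\times\Sigma_{2}$, $\Sigma_{1}\lrtimes\Sigma_{2}$, $\Sigma_{1}\urtimes\Sigma_{2}$ with the face lattices of the free join, Cartesian product, and free sum of the polytopes, and Lemmas \ref{lem:LrPolytopeProduct} and the analogous lemma for $\times$ realize the order complexes $\vert P_{1}\lrtimes P_{2}\vert$ and $\vert P_{1}\times P_{2}\vert$ as iterated stellar subdivisions of the boundaries of these product polytopes (with $\urtimes$ handled by duality). None of that construction appears in your proposal, so the actual proof content is missing.

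The peripheral ingredients you use are fine, for what it is worth: the equivalence ``boundary complex of a simplicial polytope $\iff$ one-ear convex ear decomposition'' is correct (for the nontrivial direction you need that a $(d-1)$-sphere subcomplex of the boundary of a simplicial $d$-polytope is the whole boundary, not merely ``by definition''), and the Euler-characteristic computation giving $\tilde{\beta}_{d-1}(\Delta)=k$ for a $k$-ear decomposition of a Cohen--Macaulay complex is sound. But these only show that \emph{if} Theorem \ref{thm:cedprod_intro} were available independently, the lemma would follow; they do not constitute a proof. To repair the argument you would need to prove the polytopality of the product order complexes directly, which is what the stellar subdivision argument accomplishes.
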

We then recall the work of Björner and Wachs \cite[Section 10]{Bjorner/Wachs:1997}
on $CL$-labelings of poset products, which we use to prove a result
closely related to Theorem \ref{thm:cedprod_intro}:
\begin{thm}
\label{thm:clcedprod_intro}If bounded posets $P_{1}$ and $P_{2}$
have $CL$-ceds with respect to $CL$-labelings $\lambda_{1}$ and
$\lambda_{2}$, then $P_{1}\times P_{2}$, $P_{1}\lrtimes P_{2}$,
and $P_{1}\urtimes P_{2}$ have $CL$-ceds with respect to the labelings
$\lambda_{1}\times\lambda_{2}$, $\lambda_{1}\lrtimes\lambda_{2}$,
and $\lambda_{1}\urtimes\lambda_{2}$.
\end{thm}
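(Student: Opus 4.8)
The plan is to combine two things that are already available: the convex ear decomposition of a product produced in Theorem~\ref{thm:cedprod_intro}, and the $CL$-labelings $\lambda_{1}\times\lambda_{2}$, $\lambda_{1}\lrtimes\lambda_{2}$, $\lambda_{1}\urtimes\lambda_{2}$ of Bj\"{o}rner and Wachs~\cite[Section~10]{Bjorner/Wachs:1997}, which are already known to be $CL$-labelings and to shell the corresponding products. The only genuinely new content is to check that these two structures fit together in the way demanded by the definition of a $CL$-ced in Section~\ref{sec:Definitions-and-Tools}. I would carry out the verification in full for the ordinary product $P_{1}\times P_{2}$, and then observe that $P_{1}\lrtimes P_{2}$ and $P_{1}\urtimes P_{2}$ are handled by the same computation: the reduced-product labelings and the reduced-product ear decompositions differ from the unreduced ones only in how the bottom (respectively, top) coordinate is treated, and the local analysis below never touches that coordinate.

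First I would recall, from the proof of Theorem~\ref{thm:cedprod_intro}, that each ear of the product decomposition is the order complex of a piece built from an ear $E_{1}$ of the decomposition of $P_{1}$ and an ear $E_{2}$ of that of $P_{2}$, with the ears listed in the product of the two ear orderings; and from Bj\"{o}rner--Wachs that a maximal chain $\mathbf{m}$ of an interval $[(a_{1},a_{2}),(b_{1},b_{2})]$ of $P_{1}\times P_{2}$ is encoded by its two coordinate projections $\mathbf{m}_{1}\in[a_{1},b_{1}]$ and $\mathbf{m}_{2}\in[a_{2},b_{2}]$ together with a shuffle word recording, at each covering step, which coordinate moved, and that $(\lambda_{1}\times\lambda_{2})(\mathbf{m})$ is assembled from $\lambda_{1}(\mathbf{m}_{1})$, $\lambda_{2}(\mathbf{m}_{2})$ and that shuffle. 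The core of the proof is then two checks, both local to a single ear $E$ built from $E_{1}$ and $E_{2}$. (i)~Coordinate projection of maximal chains commutes with passage to a subinterval, so the restriction of $\lambda_{1}\times\lambda_{2}$ to the maximal chains lying inside $E$ is literally the $\times$-product, in the sense of \cite[Section~10]{Bjorner/Wachs:1997}, of the restrictions of $\lambda_{1}$ and $\lambda_{2}$ to $E_{1}$ and $E_{2}$; hence $E$ with the restricted labeling is again a product of $CL$-shellable, polytopal pieces of the kind handled by Lemma~\ref{lem:posetpolytopeprod_intro} and the proof of Theorem~\ref{thm:cedprod_intro}. (ii)~The rr-lex order induced by $\lambda_{1}\times\lambda_{2}$ on those maximal chains is the shuffle of the rr-lex orders induced by $\lambda_{1}$ and $\lambda_{2}$ on the maximal chains of $E_{1}$ and $E_{2}$ --- which is exactly the shelling statement of \cite[Section~10]{Bjorner/Wachs:1997} read off inside the subinterval carrying $E$. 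Feeding (i) and (ii), together with Theorem~\ref{thm:cedprod_intro} (that the product ears in product order already form a convex ear decomposition) and Lemma~\ref{lem:posetpolytopeprod_intro} (for the polytopal structure on the first ear), into the $CL$-ced axioms gives the result.

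The step I expect to be the main obstacle is (ii), the order-compatibility. The shelling order of a $CL$-ced is a \emph{reverse} lexicographic order ($\ltrr$), and reverse lex of a shuffled concatenation of two label sequences is not a product order on the nose, so I must check carefully that the sign conventions built into the definition of a $CL$-ced are preserved under shuffling, and, separately, that the \emph{global} ordering of the ears (as opposed to the ordering of facets within a single ear) remains the lexicographic-type order the axioms require. A secondary but fiddly point is turning the identification in (i) into a genuine equality of indexed shelling orders rather than merely an order isomorphism of abstract facet sets: this matters because the three products $\times$, $\lrtimes$, $\urtimes$ each twist the indexing of facets of the ambient polytopes slightly differently, and the bookkeeping has to be organized so that all three cases are covered by a single argument.
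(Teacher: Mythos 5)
There is a genuine gap, on two counts. First, you have aimed the labeling-side verification at the wrong target. The axiom that actually has to be checked is (CLced-desc): for \emph{every} rooted interval $[x,y]_{\mathbf{r}}$, at most one descending maximal chain of $[x,y]_{\mathbf{r}}$ lies in a given ear $\Delta_{s,t}$. Your step (ii) is instead about reverse-lexicographic orders of facets and ``sign conventions built into the definition of a $CL$-ced'' --- but the general $CL$-ced definition has no sign conventions and no rr-lex order (those are artifacts of the $\Pi_{n}^{d}$ example), and the ordering of the ears is simply lexicographic in $(s,t)$, which needs no compatibility check. The paper's verification of (CLced-desc) is short but is not a shuffle-of-shelling-orders statement: the label sequence of a chain in the product is a shuffle of the label sequences of its two projections, so a descending chain forces both projections to be descending; each projection is then unique in $\Delta_{s}^{(1)}$ resp.\ $\Delta_{t}^{(2)}$ by the hypothesis on the factors; and since $S_{1}$ and $S_{2}$ are disjoint totally ordered sets there is exactly one way to shuffle two fixed descending sequences into a descending sequence. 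Your proposal never makes this uniqueness argument, and it is the one piece of genuinely new content in the theorem. (The remaining axioms do come essentially for free as you say: (CLced-polytope) from Proposition \ref{pro:ProductOfPolytopes}, (CLced-union) trivially, and (CLced-bdry) by running Lemma \ref{lem:SubmanifoldBdry} against the boundary computation already done for Theorem \ref{thm:cedprod_intro}.)

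Second, the claim that $P_{1}\lrtimes P_{2}$ and $P_{1}\urtimes P_{2}$ are ``handled by the same computation'' because ``the local analysis never touches that coordinate'' is false. In $P_{1}\lrtimes P_{2}$ the bottom cover $\hat{0}\lessdot y$ projects to a cover relation in \emph{both} coordinates and is labeled by a two-letter word in $S_{1}^{A}S_{2}^{A}$; every rooted interval of the form $[\hat{0},y]$ contains such an edge, so the descent analysis must treat it. This is exactly where one needs $\lambda_{1}$ and $\lambda_{2}$ to be orderly, with the shuffle $S_{1}^{-}<S_{2}^{-}<S_{1}^{A}S_{2}^{A}<S_{1}^{+}<S_{2}^{+}$, so that a descent at the bottom edge forces all later labels into $S_{1}^{-}\cup S_{2}^{-}$ and hence forces both projections to descend. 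Your proposal never invokes Lemma \ref{lem:OrderlyCLlabelingsExist}, even though the orderly (resp.\ co-orderly) normalization is required merely to \emph{define} $\lambda_{1}\lrtimes\lambda_{2}$ (resp.\ $\lambda_{1}\urtimes\lambda_{2}$); without it the reduced-product cases cannot be completed.
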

We close by considering some additional questions and directions for
further research in Section \ref{sec:Further-Questions}.

\section{\label{sec:Definitions-and-Tools}Definitions and tools}

All simplicial complexes, posets, and groups discussed in this paper
are finite.

A poset $P$ is \emph{bounded} if it has a lower bound $\hat{0}$
and an upper bound $\hat{1}$, so that $\hat{0}\leq x\leq\hat{1}$
for all $x\in P$. 

If $P$ is a bounded poset, then the \emph{order complex} $\vert P\vert$
is the simplicial complex whose faces are the chains of $P\setminus\{\hat{0},\hat{1}\}$.
(This is slightly different from the standard definition, in that
we are taking only the \emph{proper part} of the poset.) Where it
will cause no confusion, we talk about $P$ and $\vert P\vert$ interchangeably:
for example, we say $P$ has a convex ear decomposition if $\vert P\vert$
does. 

We denote by $\mathcal{M}(P)$ the set of maximal chains of $P$,
which is in natural bijective correspondence with the facets of $\vert P\vert$
through adding or removing $\hat{0}$ and $\hat{1}$.

\subsection{Convex ear decompositions}

A \emph{convex ear decomposition} of a pure $(d-1)$-dimensional simplicial
complex $\Delta$ is an ordered collection of subcomplexes $\Delta_{1},\dots,\Delta_{m}\subseteq\Delta$
with the following properties:
\begin{description}
\item [{ced-polytope}] $\Delta_{s}$ is isomorphic to a subcomplex of the
boundary complex of a simplicial $d$-polytope for each $s$. 
\item [{ced-topology}] $\Delta_{1}$ is a $(d-1)$-sphere, and $\Delta_{s}$
is a $(d-1)$-ball for $s>1$.
\item [{ced-bdry}] $(\bigcup_{t=1}^{s-1}\Delta_{t})\cap\Delta_{s}=\bdry\Delta_{s}$
for each $s>1$.
\item [{ced-union}] $\bigcup_{s=1}^{m}\Delta_{s}=\Delta$.
\end{description}
It follows immediately from the definition that any complex with a
convex ear decomposition is pure. As far as I know, no one has tried
generalizing the theory of convex ear decompositions to non-pure complexes.
As many interesting posets are not graded (i.e., have an order complex
that is not pure), finding such a generalization could be useful.

Convex ear decompositions were first introduced by Chari \cite{Chari:1997}.
He used the unimodality of the $h$-vector of a simplicial polytope
to give a strong condition on the $h$-vector for a complex with a
convex ear decomposition. Swartz \cite{Swartz:2006} showed that a
`$g$-theorem' holds for any $(d-1)$-dimensional complex with a convex
ear decomposition, as stated precisely in Theorem \ref{thm:HvectorCEDSwartz}.
We refer the reader to \cite{Stanley:1996} for further background
on $h$-vectors, $M$-vectors, and the (original) $g$-theorem.
\begin{thm}
\emph{\label{thm:HvectorCEDChari}(Chari \cite[Section 3]{Chari:1997})}
The $h$-vector of a pure $(d-1)$-dimensional complex with a convex
ear decomposition satisfies the conditions \[
h_{0}\leq h_{1}\leq\dots\leq h_{\lfloor d/2\rfloor}\]
\[
h_{i}\leq h_{d-i},\quad\mbox{for }0\leq i\leq\lfloor d/2\rfloor.\]

\end{thm}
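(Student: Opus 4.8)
The plan is to prove both displayed inequalities at once, by showing that the $h$-vector of $\Delta$ is assembled additively from contributions of the individual ears, each of which already carries (a piece of) the right shape. First I would record an additivity formula. By \textbf{ced-bdry} and \textbf{ced-union}, every face of $\Delta$ lies in the ``interior'' of a unique ear, where all of $\Delta_1$ counts as interior and, for $s>1$, the interior of the ball $\Delta_s$ is $\Delta_s\setminus\bdry\Delta_s$. Counting faces by dimension gives $f_{i-1}(\Delta)=f_{i-1}(\Delta_1)+\sum_{s=2}^{m}\bigl(f_{i-1}(\Delta_s)-f_{i-1}(\bdry\Delta_s)\bigr)$, and since the $h$-vector is a fixed linear image of the $f$-vector this becomes $h_i(\Delta)=h_i(\Delta_1)+\sum_{s=2}^{m}h^{\mathrm{rel}}_i(\Delta_s)$, where $h^{\mathrm{rel}}(\Delta_s)$ is the relative $h$-vector of the ball-with-boundary $(\Delta_s,\bdry\Delta_s)$ (the same linear combination applied to the interior face numbers $f_{j-1}(\Delta_s)-f_{j-1}(\bdry\Delta_s)$).

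Next I would bring in two standard facts. Since $\Delta_1$ is the boundary complex of a simplicial $d$-polytope, the Dehn--Sommerville relations give $h_i(\Delta_1)=h_{d-i}(\Delta_1)$, and the unimodality half of the $g$-theorem (hard Lefschetz) gives $h_0(\Delta_1)\le h_1(\Delta_1)\le\cdots\le h_{\lfloor d/2\rfloor}(\Delta_1)$. Second, the Dehn--Sommerville relations for a simplicial $(d-1)$-ball $B$ — obtained by doubling $B$ across $\bdry B$ to a homology $(d-1)$-sphere and unwinding — say that $h^{\mathrm{rel}}_i(B)=h_{d-i}(B)$, and more sharply $h_i(B)-h_{d-i}(B)=h_i(\bdry B)-h_{i-1}(\bdry B)$, the $i$-th $g$-number of the $(d-2)$-sphere $\bdry B$. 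Substituting $h^{\mathrm{rel}}_i(\Delta_s)=h_{d-i}(\Delta_s)$ into the additivity formula and using $h_i(\Delta_1)=h_{d-i}(\Delta_1)$, the difference $h_{d-i}(\Delta)-h_i(\Delta)$ collapses to $\sum_{s=2}^{m}\bigl(h_i(\Delta_s)-h_{d-i}(\Delta_s)\bigr)=\sum_{s=2}^{m}\bigl(h_i(\bdry\Delta_s)-h_{i-1}(\bdry\Delta_s)\bigr)$; this yields the second inequality once one knows that each $(d-2)$-sphere $\bdry\Delta_s$ has nonnegative $g$-numbers through degree $\lfloor d/2\rfloor$ (the top one vanishing for free by symmetry when $d$ is even). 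For the first inequality one writes $h_{i+1}(\Delta)-h_i(\Delta)=\bigl(h_{i+1}(\Delta_1)-h_i(\Delta_1)\bigr)+\sum_{s=2}^{m}\bigl(h^{\mathrm{rel}}_{i+1}(\Delta_s)-h^{\mathrm{rel}}_i(\Delta_s)\bigr)$: the $\Delta_1$ term is nonnegative by the $g$-theorem, and one needs the relative $h$-vector of each ball-ear to be nondecreasing up to the middle.

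So the whole argument reduces to a statement about a single ball-ear $\Delta_s$: that its $h$-vector — equivalently, that of its boundary sphere — behaves well below the middle (nonnegative $g$-numbers for $\bdry\Delta_s$, monotone relative $h$-vector for $\Delta_s$). This is the step I expect to be the real work. Cohen--Macaulayness of $\Delta_s$, automatic for a ball, delivers non-negativity of $h(\Delta_s)$ but not the unimodality one needs; the extra input is \textbf{ced-polytope}, namely that $\Delta_s$, and hence $\bdry\Delta_s$, embeds in the boundary complex of a simplicial polytope, which is what lets the hard-Lefschetz inequalities be imported. Making that transfer precise is where Chari's argument is concentrated. In the situations of interest in this paper the point is painless: the ears and their boundaries are (barycentric subdivisions of) boundary complexes of hypercubes — genuine simplicial polytope boundaries — so their $g$-vectors are nonnegative outright and both reductions above go through at once.
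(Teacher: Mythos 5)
First, a caveat: the paper does not prove this theorem --- it is quoted from Chari's paper with a citation, so there is no internal proof to compare against. Judging your proposal on its own terms, the combinatorial skeleton is correct and is indeed how such arguments are organized: the additivity $h_i(\Delta)=h_i(\Delta_1)+\sum_{s\ge 2}h^{\mathrm{rel}}_i(\Delta_s)$ (each face is an interior face of exactly one ear, by (ced-bdry) plus the ordering), the Dehn--Sommerville identities $h^{\mathrm{rel}}_i(B)=h_{d-i}(B)$ and $h_i(B)-h_{d-i}(B)=h_i(\partial B)-h_{i-1}(\partial B)$ for a $(d-1)$-ball $B$, and the use of the $g$-theorem on the polytopal sphere $\Delta_1$ all check out, as does your observation that the top case of the second inequality is free by symmetry when $d$ is even. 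You also correctly isolate the two per-ear statements that remain.

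The gap is in how you propose to close those per-ear statements, and it is not a small one, since that is where the entire content of the theorem lives. You reduce the inequality $h_i\le h_{d-i}$ to the claim that each $(d-2)$-sphere $\partial\Delta_s$ has nonnegative $g$-numbers, to be obtained because $\partial\Delta_s$ sits inside a simplicial polytope boundary. But a sphere that is merely a subcomplex of a polytope boundary is not known to be polytopal, so the $g$-theorem cannot be applied to it; at the time of Chari's paper (and this one), $g$-nonnegativity for general simplicial spheres was open, and hard Lefschetz for the ambient polytope does not descend to an arbitrary codimension-one subsphere. The transfer that actually works goes through the ear itself rather than its boundary: with $\Sigma_s$ the ambient polytope boundary and $A=k[\Sigma_s]/\Theta$ its Artinian reduction, the relative $h$-vector $h^{\mathrm{rel}}(\Delta_s)$ is the Hilbert function of the ideal $I=\ker\bigl(A\to k[\Delta_s^{c}]/\Theta\bigr)$ cut out by the complementary ball, and the Lefschetz maps $\omega\colon A_i\to A_{i+1}$ (for $i<d/2$) and $\omega^{d-2i}\colon A_i\to A_{d-i}$ are injective, hence remain injective on the submodule $I$; this yields $h^{\mathrm{rel}}_i\le h^{\mathrm{rel}}_{i+1}$ and $h^{\mathrm{rel}}_i\le h^{\mathrm{rel}}_{d-i}$, which is exactly what your two reductions require. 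Your closing remark that the situation in this paper is ``painless'' because the ears are hypercube boundaries also misreads the setup: only the ambient $\Sigma_s$ (and the first ear) are full polytope boundaries, while each later $\Delta_s$ is a proper ball whose boundary $\partial\Delta_s$ is again just some subsphere --- and in any case the theorem concerns arbitrary convex ear decompositions, so the general case cannot be avoided.
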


\begin{thm}
\emph{\label{thm:HvectorCEDSwartz}(Swartz \cite[Corollary 3.10]{Swartz:2006})}
If $\{h_{i}\}$ is the $h$-vector of a pure $(d-1)$-dimensional
complex with a convex ear decomposition, then \[
(h_{0},h_{1}-h_{0},\dots,h_{\lfloor d/2\rfloor}-h_{\lfloor d/2\rfloor-1})\]
is an $M$-vector.
\end{thm}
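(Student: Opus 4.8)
The plan is to reduce everything to Macaulay's theorem: a sequence is an $M$-vector precisely when it is the Hilbert function of a standard graded Artinian algebra over a field, and its defining inequalities $m_{i+1}\le m_i^{\langle i\rangle}$ involve only consecutive entries, so that every initial segment of an $M$-vector is again an $M$-vector. Hence it suffices to construct a standard graded Artinian $k$-algebra (with $k$ infinite) whose Hilbert function in degrees $0,1,\dots,\lfloor d/2\rfloor$ is $(h_0,\ h_1-h_0,\ \dots,\ h_{\lfloor d/2\rfloor}-h_{\lfloor d/2\rfloor-1})$. Let $A$ be a generic Artinian reduction of the Stanley-Reisner ring $k[\Delta]$; since $\Delta$ has a convex ear decomposition it is Cohen-Macaulay (indeed doubly so), so the Hilbert function of $A$ is the $h$-vector of $\Delta$. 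The algebra we want is then $A/\omega A$ for one further generic linear form $\omega$, \emph{provided} multiplication $\cdot\,\omega\colon A_{i-1}\to A_i$ is injective for every $i\le\lfloor d/2\rfloor$: granting this, $\dim_k(A/\omega A)_i=h_i-h_{i-1}$ in those degrees (nonnegative by Chari's Theorem~\ref{thm:HvectorCEDChari}), and $A/\omega A$ is standard graded Artinian, so its Hilbert function --- hence the initial segment displayed above --- is an $M$-vector.

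So the task reduces to a ``half Weak Lefschetz property'' of $\Delta$, and this is where the convex ear decomposition enters. Set $\Psi_s=\Delta_1\cup\dots\cup\Delta_s$; condition ced-bdry gives $\Psi_{s-1}\cap\Delta_s=\bdry\Delta_s$, so telescoping inclusion-exclusion on face numbers yields $h_i(\Delta)=h_i(\Delta_1)+\sum_{s=2}^{m}h_i(\Delta_s,\bdry\Delta_s)$. By condition ced-topology, $\Delta_1$ is a $(d-1)$-sphere lying inside the $(d-1)$-sphere $\bdry P_1$, hence equals $\bdry P_1$, the boundary of a simplicial $d$-polytope; and for $s\ge 2$ each $\Delta_s$ is a Cohen-Macaulay $(d-1)$-ball (a subcomplex of a polytope boundary by ced-polytope), so relative Dehn-Sommerville gives $h_i(\Delta_s,\bdry\Delta_s)=h_{d-i}(\Delta_s)$. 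The idea is to run the filtration $\Psi_1\subseteq\Psi_2\subseteq\dots\subseteq\Psi_m=\Delta$ through generic Artinian reductions, using the Hard Lefschetz property of each simplicial polytope $P_s$ --- the deep ingredient of the $g$-theorem --- to see that on the graded piece newly contributed by $\Delta_s$ (controlled by the relative algebra of the pair $(\Delta_s,\bdry\Delta_s)$ inside $\bdry P_s$) multiplication by a generic $\omega$ is injective below the middle degree, and then to splice these ear-wise injectivities, along the exact sequences relating $A(\Psi_s)$, $A(\Psi_{s-1})$, and the relative algebra of $(\Delta_s,\bdry\Delta_s)$, into injectivity of $\cdot\,\omega$ on all of $A$ in degrees $\le\lfloor d/2\rfloor$.

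The main obstacle is precisely this transfer-and-splice step. One must choose $\omega$ generically enough to be a Lefschetz element for $\Delta$ and simultaneously for all the polytopes $P_s$; one must handle the \emph{relative} algebras of the pairs $(\Delta_s,\bdry\Delta_s)$, which are Buchsbaum-type rather than Cohen-Macaulay, and the (possibly degree-shifted) exact sequences of the filtration carefully enough that injectivity survives the gluing; and one must keep the whole bookkeeping inside degrees $\le\lfloor d/2\rfloor$, where Chari's inequalities prevent the dimension counts from obstructing injectivity. Once the Lefschetz statement is established, Macaulay's theorem together with the initial-segment observation finishes the proof.
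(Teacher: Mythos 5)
You should first be aware that the paper does not prove this statement at all: it is quoted as an external result of Swartz, so there is no internal proof to compare you against. Measured instead against what a complete proof requires, your proposal correctly identifies the architecture of the known argument: reduce to Macaulay's theorem via the further quotient $A/\omega A$ of a generic Artinian reduction $A$ of the Stanley--Reisner ring, note that initial segments of $M$-vectors are $M$-vectors, and observe that everything hinges on the injectivity of $\cdot\,\omega\colon A_{i-1}\to A_{i}$ for $i\le\lfloor d/2\rfloor$ (the existence of what Swartz calls a $g$-element). Your bookkeeping around this reduction is sound: Cohen--Macaulayness of $\Delta$ makes the Hilbert function of $A$ equal to the $h$-vector, $\dim_{k}(A/\omega A)_{i}=h_{i}-h_{i-1}$ exactly in the range where $\cdot\,\omega$ is injective, and Theorem \ref{thm:HvectorCEDChari} supplies nonnegativity.

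The genuine gap is that the one step carrying all the content --- injectivity of $\cdot\,\omega$ below the middle degree --- is only announced, never established. You yourself label it ``the main obstacle'' and list the difficulties (choosing $\omega$ simultaneously generic for $\Delta$ and for every polytope $P_{s}$, the non--Cohen--Macaulay relative algebras of the pairs $(\Delta_{s},\bdry\Delta_{s})$, the splicing of ear-wise injectivity along the exact sequences of the filtration $\Psi_{1}\subseteq\dots\subseteq\Psi_{m}$); but these are precisely the points where an argument of this shape can fail, and where Swartz's actual proof does its work, using hard Lefschetz for each simplicial polytope on the relative module of $(\Delta_{s},\bdry\Delta_{s})$ and then checking that the connecting maps preserve injectivity in degrees at most $\lfloor d/2\rfloor$. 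As written, your text is a correct road map with the bridge missing: to turn it into a proof you must either carry out that transfer-and-splice argument in detail or explicitly cite the $g$-element theorem immediately preceding the quoted corollary in Swartz's paper, after which the rest of your write-up does complete the deduction.
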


\subsection{\label{sub:Shellings}Shellings}

An essential tool for us will be the theory of lexicographic shellability,
developed by Björner and Wachs in \cite{Bjorner:1980,Bjorner/Wachs:1983,Bjorner/Wachs:1996,Bjorner/Wachs:1997}.
We recall some of the main facts.

We say that an ordering of the facets $F_{1},F_{2},\dots,F_{t}$ of
a simplicial complex $\Delta$ (with $t$ facets) is a \emph{shelling}
if $F_{i}\cap\left(\bigcup_{j=1}^{i-1}F_{j}\right)$ is pure $(\dim F_{i}-1)$-dimensional
for all $1<i\leq t$. An equivalent condition that is often easier
to use is:\begin{eqnarray}
\mbox{if }1\leq i<j\leq t,\!\!\!\!\!\! &  & \mbox{then }\exists k<j\mbox{ such that}\label{eq:ShellingDef}\\
 &  & F_{i}\cap F_{j}\subseteq F_{k}\cap F_{j}=F_{j}\setminus\{x\}\mbox{ for some }x\in F_{j}.\nonumber \end{eqnarray}
A simplicial complex is \emph{shellable} if it has a shelling.

The existence of a shelling tells us a great deal about the topology
of a pure $d$-dimensional complex: the complex is Cohen-Macaulay,
with homotopy type a bouquet of spheres of dimension $d$. A fact
about shellable complexes that will be especially useful for us is
that a shellable proper pure $d$-dimensional subcomplex of a simplicial
$d$-sphere is a $d$-ball \cite[Proposition 1.2]{Danaraj/Klee:1974}.

A \emph{cover relation} in a poset $P$, denoted $x\lessdot y$, is
a pair $x\lneq y$ of elements in $P$ such that there is no $z\in P$
with $x\lneq z\lneq y$. Equivalently, a cover relation is an edge
in the Hasse diagram of $P$.

An \emph{$EL$-labeling} of $P$ (where $EL$ stands for \emph{edge
lexicographic}) is a map from the cover relations of $P$ to some
fixed partially ordered set, such that in any interval $[x,y]$ there
is a unique increasing maximal chain (i.e., a unique chain with increasing
labels, read from the bottom), and this chain is lexicographically
first among maximal chains in $[x,y]$. It is a well-known theorem
of Björner in the pure case \cite[Theorem 2.3]{Bjorner:1980}, and
more generally of Björner and Wachs \cite[Theorem 5.8]{Bjorner/Wachs:1996},
that any bounded poset $P$ with an $EL$-labeling is shellable. As
a result, the term \emph{$EL$-shelling} is sometimes used as a synonym
of $EL$-labeling.

The families of posets that we study in this paper will have lower
intervals $[\hat{0},x]$ that `look like' the whole poset, but upper
intervals $[x,\hat{1}]$ of a different form. For induction, then,
it will usually be easier for us to label the posets upside down,
and construct \emph{dual $EL$-labelings}, that is, $EL$-labelings
of the dual poset. Dual $EL$-labelings have been used in other settings,
and are natural in many contexts \cite[Corollary 4.4]{Bjorner/Wachs:1983}
\cite[Corollary 4.10]{Shareshian:2001}.

A generalization of an $EL$-labeling which is sometimes easier to
construct (though harder to think about) is that of a $CL$-labeling.
Here, instead of labeling the cover relations (edges), we label {}``rooted
edges.'' More precisely, a \emph{rooted edge}, or \emph{rooted cover
relation,} is a pair $(\mathbf{r},x\lessdot y)$, where the \emph{root}
$\mathbf{r}$ is any maximal chain from $\hat{0}$ to $x$. Also,
if $x_{0}\lessdot x_{1}\lessdot\dots\lessdot x_{n}$ is a maximal
chain on $[x_{0},x_{n}]$, and $\mathbf{r}$ is a root for $x_{0}\lessdot x_{1}$,
then $\mathbf{r}\cup\{x_{1}\}$ is a root for $x_{1}\lessdot x_{2}$,
and so on, so it makes sense to talk of a \emph{rooted chain} $\mathbf{c}_{\mathbf{r}}$
on a \emph{rooted interval} $[x_{0},x_{n}]_{\mathbf{r}}$. A \emph{$CL$-labeling}
is one where every rooted interval $[x,z]_{\mathbf{r}}$ has a unique
increasing maximal chain, and the increasing chain is lexicographically
first among all chains in $[x,z]_{\mathbf{r}}$. An in-depth discussion
of $CL$-labelings can be found in \cite{Bjorner/Wachs:1983,Bjorner/Wachs:1996}:
the main fact is that $EL$-shellable$\implies$$CL$-shellable$\implies$shellable.
We will make real use of the greater generality of $CL$-labelings
only in Section \ref{sec:Poset-Products}, and the unfamiliar reader
is encouraged to read {}``$EL$'' for {}``$CL$'' everywhere else.

The homotopy type of bounded posets with a $CL$-labeling (including
an $EL$-labeling) is especially easy to understand, as discussed
in \cite{Bjorner/Wachs:1996}. Such a poset is homotopy equivalent
to a bouquet of spheres, with the spheres in one-to-one correspondence
with the descending maximal chains. These descending chains moreover
form a cohomology basis for $\vert P\vert$.

\subsection{Supersolvable lattices\label{sub:Supersolvable-lattices}}

The upper intervals $[x,\hat{1}]$ in the posets we look at will be
supersolvable, so we mention some facts about supersolvable lattices.
For additional background, the reader is referred to \cite{Stanley:1972}
or \cite{McNamara/Thomas:2006}.

An element $x$ of a lattice $L$ is \emph{left modular} if for every
$y\leq z$ in $L$ it holds that $(y\vee x)\wedge z=y\vee(x\wedge z)$.
This looks a great deal like the well-known Dedekind identity from
group theory, and in particular any normal subgroup is left modular
in the subgroup lattice.

A graded lattice is \emph{supersolvable} if there is a maximal chain
$\hat{1}=x_{0}\gtrdot x_{1}\gtrdot\dots\gtrdot x_{d}=\hat{0}$, where
each $x_{i}$ is left modular. Thus the subgroup lattice of a supersolvable
group is a supersolvable lattice. In fact, supersolvable lattices
were introduced to generalize the lattice properties of supersolvable
groups.

A supersolvable lattice has a dual $EL$-labeling \[
\lambda^{ss}(y\gtrdot z)=\min\{j\,:\, x_{j}\wedge y\leq z\}=\max\{j-1\,:\, x_{j}\vee z\geq y\},\]
 which we call the \emph{supersolvable labeling} of $L$ (relative
to the given chain of left modular elements). This labeling has the
property:

\begin{equation}
\begin{array}{l}
\mbox{Given an interval }[x,y]\mbox{, every chain on }[x,y]\mbox{ has the same set}\\
\mbox{\quad of labels (in different orders). }\end{array}\label{eq:SSPermutationProperty}\end{equation}
McNamara \cite{McNamara:2003} has shown that having an $EL$-labeling
that satisfies (\ref{eq:SSPermutationProperty}) characterizes the
supersolvable lattices.

\subsection{Cohen-Macaulay complexes}

If $F$ is a face in a simplicial complex $\Delta$, then the link
of $F$ in $\Delta$ is \[
\link_{\Delta}F=\{G\in\Delta\,:\, G\cap F=\emptyset\mbox{ and }G\cup F\in\Delta\}.\]
 A simplicial complex $\Delta$ is \emph{Cohen-Macaulay} if the link
of every face has the homology of a bouquet of top dimensional spheres,
that is, if $\tilde{H}_{i}(\link_{\Delta}F)=0$ for all $i<\dim(\link_{\Delta}F)$. 

The Cohen-Macaulay property has a particularly nice formulation on
the order complex of a poset. A poset is Cohen-Macaulay if every interval
$[x,y]$ has $\tilde{H}_{i}([x,y])=0$ for all $i<\dim(\vert[x,y]\vert)$.
In particular, every interval in a Cohen-Macaulay poset is Cohen-Macaulay.
It is well-known that every shellable complex is Cohen-Macaulay. For
a proof of this fact and additional background on Cohen-Macaulay complexes
and posets, see \cite{Stanley:1996}.

The Cohen-Macaulay property is essentially a connectivity property.
Just as we say a graph $G$ is \emph{doubly connected} (or 2-connected)
if $G$ is connected and $G\setminus\{v\}$ is connected for each
$v\in G$, we say that a simplicial complex $\Delta$ is \emph{doubly
Cohen-Macaulay} (2-CM) if
\begin{enumerate}
\item $\Delta$ is Cohen-Macaulay, and
\item for each vertex $x\in\Delta$, the induced complex $\Delta\setminus\{x\}$
is Cohen-Macaulay of the same dimension as $\Delta$.
\end{enumerate}
Doubly Cohen-Macaulay complexes are closely related to complexes with
convex ear decompositions:
\begin{thm}
\label{thm:CEDthen2CM}\emph{(Swartz \cite{Swartz:2006})} If $\Delta$
has a convex ear decomposition, then $\Delta$ is doubly Cohen-Macaulay.
\end{thm}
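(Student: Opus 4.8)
The plan is to verify directly the two conditions in the definition of ``doubly Cohen--Macaulay,'' inducting on $d$ and using the ears of the decomposition as the inductive handle within each fixed dimension. Throughout write $\Delta^{(s)}=\Delta_{1}\cup\dots\cup\Delta_{s}$, and for a fixed vertex $v$ write $\Gamma_{s}=\Delta_{s}\setminus v$ and $\Gamma^{(s)}=\Delta^{(s)}\setminus v$. First I would record two auxiliary facts. \emph{Gluing.} If two Cohen--Macaulay complexes of dimension $d-1$ meet in a $(d-2)$-sphere, their union is Cohen--Macaulay of dimension $d-1$; this follows by induction on the number of vertices, with Mayer--Vietoris giving the vanishing of the reduced homology of the union below degree $d-1$, and the link of a face of the union being the union of its links in the two pieces glued along its link in the intersecting sphere. \emph{Links inherit ceds.} For every nonempty face $F$, the complex $\link_{\Delta}F$ again carries a convex ear decomposition, of dimension $d-1-|F|$: its ears are the $\link_{\Delta_{t}}F$ over the $t$ with $F\in\Delta_{t}$, each of which lies in $\link_{\bdry P_{t}}F=\bdry(P_{t}/F)$, the boundary complex of a simplicial polytope; and \textbf{ced-bdry} forces $F$ to be interior to the \emph{first} such ear (so that its link there is a sphere) and to lie on the boundary of every later one (so that those links are balls), whence reordering puts the decomposition in standard form, with \textbf{ced-bdry} and \textbf{ced-union} inherited by the links.

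Granting these, $\Delta$ is Cohen--Macaulay: $\Delta_{1}$ is a polytope boundary and each later $\Delta_{s}$ is a combinatorial $(d-1)$-ball, so all are Cohen--Macaulay of dimension $d-1$; $\Delta^{(s-1)}\cap\Delta_{s}=\bdry\Delta_{s}$ is a $(d-2)$-sphere by \textbf{ced-bdry}; and induction on $s$ using Gluing shows every $\Delta^{(s)}$, hence $\Delta=\Delta^{(m)}$, is Cohen--Macaulay of dimension $d-1$. It remains to show $\Delta\setminus v$ is Cohen--Macaulay of dimension $d-1$. For purity, observe that every ridge of $\Delta$ lies in at least two facets --- trace it down to the first ear that contains it, where it lies either in the sphere $\Delta_{1}$ or in the interior of some ball $\Delta_{s}$ --- so every ridge avoiding $v$ lies in a facet avoiding $v$, and $\Delta\setminus v$ is pure of dimension $d-1$. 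For the Cohen--Macaulay homology conditions, use that $\link_{\Delta\setminus v}F=\link_{\Delta}F\setminus v$: when $F\neq\emptyset$, ``Links inherit ceds'' and the inductive hypothesis on $d$ make $\link_{\Delta}F$ doubly Cohen--Macaulay, so $\link_{\Delta}F\setminus v$ is Cohen--Macaulay of the right dimension. Everything therefore comes down to showing $\tilde{H}_{i}(\Delta\setminus v)=0$ for $i<d-1$.

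This last reduction is where I expect the real work to lie, and I would handle it by yet another induction, over the ears. The base $\Gamma^{(1)}=\Delta_{1}\setminus v$ is either the sphere $\Delta_{1}$ itself or the antistar of $v$ in $\Delta_{1}$, a $(d-1)$-ball; in either case its reduced homology vanishes below degree $d-1$. For the inductive step, $\Gamma^{(s)}=\Gamma^{(s-1)}\cup\Gamma_{s}$ with intersection $\bdry\Delta_{s}\setminus v$. When $v$ is not an interior vertex of $\Delta_{s}$, the piece $\Gamma_{s}$ is a $(d-1)$-ball and the intersection is a $(d-2)$-sphere or $(d-2)$-ball, and the Mayer--Vietoris sequence closes the step immediately. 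The delicate case is when $v$ lies in the interior of the ear $\Delta_{s}$: then $\Gamma^{(s-1)}=\Delta^{(s-1)}$, and $\Gamma_{s}=\Delta_{s}\setminus v$ is not a ball but a shell PL-homeomorphic to $S^{d-2}\times[0,1]$, attached to $\Delta^{(s-1)}$ along its entire outer boundary sphere $\bdry\Delta_{s}$. What saves the argument is that $\bdry\Delta_{s}$ is a deformation retract of this shell, so in the Mayer--Vietoris sequence the map $\tilde{H}_{d-2}(\bdry\Delta_{s})\to\tilde{H}_{d-2}(\Gamma_{s})$ is an isomorphism while $\tilde{H}_{d-2}(\Gamma^{(s-1)})=0$ (as $\Delta^{(s-1)}$ is already known Cohen--Macaulay); the cokernel vanishes, and $\tilde{H}_{i}(\Gamma^{(s)})=0$ for all $i<d-1$ follows. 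The small cases $d\le2$ are elementary and provide the base of the induction on $d$. I anticipate that the most error-prone part will be keeping track of the PL ball-versus-sphere status of all the antistars, face links, and shells simultaneously, even though each such claim on its own is routine.
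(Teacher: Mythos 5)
The paper does not prove this statement: it quotes it from Swartz \cite{Swartz:2006}, so there is no in-paper argument to compare yours against. Your strategy --- links inherit convex ear decompositions, induction on dimension to handle links of nonempty faces, and a Mayer--Vietoris induction over the ears to kill $\tilde{H}_{i}(\Delta\setminus v)$ for $i<d-1$ --- is essentially the route taken in Swartz's paper, and the overall architecture is sound: the ``links inherit ceds'' lemma is correct as you state it (the first ear containing $F$ is forced by (ced-bdry) to contain $F$ in its interior, the later ones in their boundaries), the purity argument via ridges is right, and the case analysis in the final induction isolates exactly the one nontrivial case, namely $v$ interior to the ear $\Delta_{s}$.

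Two of your topological assertions are stronger than what is true and should be replaced by the (correct) homological statements that the Mayer--Vietoris argument actually consumes. First, when $v\in\bdry\Delta_{s}$, the antistar $\Delta_{s}\setminus v$ need \emph{not} be a $(d-1)$-ball --- if $\Delta_{s}$ is a single $(d-1)$-simplex containing $v$, its antistar is a $(d-2)$-simplex --- but it is acyclic (compare $\Delta_{s}=(\Delta_{s}\setminus v)\cup\overline{\operatorname{star}}(v)$, whose intersection $\link_{\Delta_{s}}v$ is a $(d-2)$-ball), and acyclicity is all the sequence needs. Second, in the interior case the claim that $\Delta_{s}\setminus v$ is PL-homeomorphic to $S^{d-2}\times[0,1]$ with $\bdry\Delta_{s}$ a deformation retract invokes (a relative of) the PL annulus theorem, which is far more than is needed and not obviously available; the fact you actually use is only that $\tilde{H}_{d-2}(\bdry\Delta_{s})\to\tilde{H}_{d-2}(\Delta_{s}\setminus v)$ is surjective, and this follows elementarily from the exact sequence of the triple $(\Delta_{s},\Delta_{s}\setminus v,\bdry\Delta_{s})$ together with excision, since $H_{d-1}(\Delta_{s},\bdry\Delta_{s})\to H_{d-1}(\Delta_{s},\Delta_{s}\setminus v)\cong H_{d-1}(\overline{\operatorname{star}}(v),\link v)$ carries the fundamental class of the ball to a generator, forcing $H_{d-2}(\Delta_{s}\setminus v,\bdry\Delta_{s})=0$. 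With those two repairs the proof goes through.
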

Thus, convex ear decompositions can be thought of as occupying an
analogous role to shellings in the geometry of simplicial complexes:
a shelling is a combinatorial reason for a complex to be (homotopy)
Cohen-Macaulay, and a convex ear decomposition is a combinatorial
reason for a complex to be doubly Cohen-Macaulay. Of course, convex
ear decompositions also give the strong enumerative constraints of
Theorems \ref{thm:HvectorCEDChari} and \ref{thm:HvectorCEDSwartz}. 

Intervals in a poset with a convex ear decomposition are not known
to have convex ear decompositions. However, intervals do inherit the
2-CM property, as intervals are links in the order complex, and intervals
inherit the Cohen-Macalay property. Thus, Theorem \ref{thm:CEDthen2CM}
is particularly useful in proving that a poset does not have a convex
ear decomposition.

\subsection{\label{sub:ELcedsAndCLceds}$EL$-ceds and $CL$-ceds}

Nyman and Swartz used an $EL$-labeling in \cite{Nyman/Swartz:2004}
to find a convex ear decomposition for any geometric lattice. The
condition on an $EL$-labeling says that ascending chains are unique
in every interval, and that the lexicographic order of maximal chains
is a shelling. Starting with the usual $EL$-labeling of a geometric
lattice, Nyman and Swartz showed that descending chains are unique
in intervals of an ear of their decomposition, and that the reverse
of the lexicographic order is a shelling. Schweig used similar techniques
in \cite{Schweig:2006} to find convex ear decompositions for several
families of posets, including supersolvable lattices with complemented
intervals.

In this subsection, we axiomatize the conditions necessary for these
techniques. Although we state everything in terms of $CL$-labelings,
one could just as easily read `$EL$' for the purposes of this section,
and ignore the word `rooted' whenever it occurs.

\smallskip{}

Suppose that $P$ is a bounded poset of rank $k$. Let $\{\Sigma_{s}\}$
be an ordered collection of rank $k$ subposets of $P$. For each
$s$, let $\Delta_{s}$ be the simplicial subcomplex generated by
all maximal chains that occur in $\Sigma_{s}$, but not in any $\Sigma_{t}$
for $t<s$. (Informally, $\Delta_{s}$ is all {}``new'' maximal
chains in $\Sigma_{s}$.) Recall that $\mathcal{M}(\Sigma_{s})$ refers
to the maximal chains of $\Sigma_{s}$, and let $\mathcal{M}(\Delta_{s})$
be the maximal chains of $\Delta_{s}$. As usual, maximal chains are
in bijective correspondence with facets of the order complex via removing
or adding $\hat{1}$ and $\hat{0}$.

The ordered collection $\{\Sigma_{s}\}$ is a \emph{chain lexicographic
convex ear decomposition} (or \emph{$CL$-ced} for short) of $P$
with respect to the $CL$-labeling $\lambda$, if it obeys the following
properties:
\begin{description}
\item [{CLced-polytope}] For each $s$, $\Sigma_{s}$ is the face lattice
of a convex polytope.
\item [{CLced-desc}] For any $\Delta_{s}$ and rooted interval $[x,y]_{\mathbf{r}}$
in $P$, there is at most one descending maximal chain $\mathbf{c}$
on $[x,y]_{\mathbf{r}}$ which is a face of $\Delta_{s}$.
\item [{CLced-bdry}] If $\mathbf{c}$ is a chain of length $<k$, such
that $\mathbf{c}$ can be extended to a maximal chain in both of $\Delta_{s}$
and $\Delta_{t}$, where $t<s$; then $\mathbf{c}$ can be extended
to a chain in $\mathcal{M}(\Sigma_{s})\setminus\mathcal{M}(\Delta_{s})$.
\item [{CLced-union}] Every chain in $P$ is in some $\Sigma_{s}$.\end{description}
\begin{note}
\label{not:LikeCLlabelingOnSimpComplex}We note the resemblance of
(CLced-desc) with the increasing chain condition for a $CL$-labeling
(under the reverse ordering of labels); but though $\Delta_{s}$ is
a simplicial complex corresponding with chains in $P$, it is not
itself a poset. 
\end{note}

\begin{note}
\label{not:DescChainIsLexLast}By analogy with $CL$-labelings, it
would seem that we should require the descending chain in (CLced-desc)
to be lexicographically last. But this would be redundant: suppose
$\mathbf{c}$ is the lexicographically last maximal chain in $[x,y]_{\mathbf{r}}$
that is also in $\Delta_{s}$, but that $\mathbf{c}$ has an ascent
at $c_{i}$. Then Lemma \ref{lem:CLcedReplaceAscentWithDescent} below
gives that we can replace the ascent with a descent, obtaining a lexicographically
later chain, a contradiction.
\end{note}

\begin{note}
As previously mentioned, we will usually refer to $EL$-ceds in this
paper, i.e., the special case where $\lambda$ is an $EL$-labeling.
Similarly, we may refer to dual $EL$-ceds, that is, $EL$-ceds of
the dual poset.\end{note}
\begin{lem}
\label{lem:CLcedReplaceAscentWithDescent}\emph{(Technical Lemma)}
Let $\{\Sigma_{s}\}$ be a $CL$-ced of a poset, with $\{\Delta_{s}\}$
as above, and let $\mathbf{c}=\{x\lessdot c_{1}\lessdot\dots\lessdot c_{j-1}\lessdot y\}$
be a maximal chain on a rooted interval $[x,y]_{\mathbf{r}}$, with
$\mathbf{c}$ a face in $\Delta_{s}$. Suppose that $\mathbf{c}$
has an ascent at $c_{i}$. Then $\Delta_{s}$ contains a $\mathbf{c}''=(\mathbf{c}\setminus\{c_{i}\})\cup c_{i}''$
which descends at $c_{i}''$, and is lexicographically later than
$\mathbf{c}$.\end{lem}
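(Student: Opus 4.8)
The plan is to localize the problem to the rank-$2$ interval $[c_{i-1},c_{i+1}]$ and to produce $c_i''$ from the polytopality of $\Sigma_s$. Since $\mathbf{c}$ is a face of $\Delta_s$, it lies in some maximal chain of $\Sigma_s$. Because $P$ is graded of rank $k$ and $\Sigma_s$ is a graded rank-$k$ subposet of $P$ with the same $\hat 0$ and $\hat 1$, every maximal chain of $\Sigma_s$ is a maximal chain of $P$; hence the rank function of $\Sigma_s$ is the restriction of that of $P$, and $[c_{i-1},c_{i+1}]$ is a rank-$2$ interval of the face lattice $\Sigma_s$. By (CLced-polytope) a rank-$2$ interval of a polytope face lattice is a \emph{diamond}, so it has exactly two elements strictly between its endpoints: $c_i$ and one other element, which I call $c_i''$. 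As $c_i''$ covers $c_{i-1}$ and is covered by $c_{i+1}$ in $\Sigma_s$, and hence in $P$, the chain $\mathbf{c}'' := (\mathbf{c}\setminus\{c_i\})\cup\{c_i''\}$ is again a maximal chain on $[x,y]_{\mathbf{r}}$.

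Granting $c_i''$, the descent and lexicographic assertions are immediate from the defining property of $\lambda$. Let $\mathbf{r}'$ be the root of $c_{i-1}$ determined by $\mathbf{r}$ together with the bottom portion of $\mathbf{c}$. The chain $c_{i-1}\lessdot c_i\lessdot c_{i+1}$ is increasing, since that is exactly what an ascent at $c_i$ means; so by the $CL$-labeling axiom it is the unique increasing maximal chain on the rooted interval $[c_{i-1},c_{i+1}]_{\mathbf{r}'}$ and is lexicographically first there. The remaining maximal chain $c_{i-1}\lessdot c_i''\lessdot c_{i+1}$ of that interval is therefore not increasing, and a lexicographic comparison with the increasing chain forces $\lambda(c_{i-1}\lessdot c_i'') > \lambda(c_{i-1}\lessdot c_i)$; hence $\mathbf{c}''$ descends at $c_i''$, and — as $\mathbf{c}$ and $\mathbf{c}''$ agree on all covers below $c_{i-1}$ and above $c_{i+1}$ — $\mathbf{c}''$ is lexicographically later than $\mathbf{c}$.

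The substantive point is that $\mathbf{c}''\in\Delta_s$. First I would extend $\mathbf{c}$ to a facet $\mathbf{m}\in\mathcal{M}(\Delta_s)$ and form $\mathbf{m}'' := (\mathbf{m}\setminus\{c_i\})\cup\{c_i''\}\in\mathcal{M}(\Sigma_s)$; if $\mathbf{m}''\in\mathcal{M}(\Delta_s)$ then $\mathbf{c}''\subseteq\mathbf{m}''$ already shows $\mathbf{c}''\in\Delta_s$. Otherwise $\mathbf{m}''$ occurs in some $\Sigma_t$ with $t<s$, and choosing $t$ minimal the codimension-$1$ chain $\mathbf{g}:=\mathbf{m}\setminus\{c_i\} = \mathbf{m}''\setminus\{c_i''\}$ extends to a maximal chain in $\Delta_s$ (via $\mathbf{m}$) and in $\Delta_t$ (via $\mathbf{m}''$), so (CLced-bdry) applies to $\mathbf{g}$ — and with the same input to the smaller face $\mathbf{d}:=\mathbf{c}\setminus\{c_i\}$. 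I would combine this with the fact that, by the diamond property, the only vertices of $P$ that can fill the rank-$2$ gap of $\mathbf{d}$ inside $\Sigma_s$ are $c_i$ and $c_i''$, to conclude that some facet of $\Delta_s$ through $\mathbf{d}$ uses $c_i''$, i.e. $\mathbf{c}''\in\Delta_s$.

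The hard part, and the only place I expect real difficulty, is this last deduction. On its own (CLced-bdry) is compatible with $\mathbf{c}''$ sitting only in earlier ears — the old maximal chain of $\Sigma_s$ it provides may again use $c_i$ — so one must also exploit the rigidity of the polytopal diamond in $\Sigma_s$ together with a non-degeneracy of the complex $\Delta_s$ of new chains: that $\link_{\Delta_s}\mathbf{d}$, which is a full-dimensional subcomplex of the join $\vert(\hat0,x)_{\Sigma_s}\vert * \{c_i,c_i''\} * \vert(y,\hat1)_{\Sigma_s}\vert$, cannot collapse onto the cone with apex $c_i$. The degenerate case $x=\hat0$, $y=\hat1$ (where $\mathbf{d}=\mathbf{g}$ is already codimension $1$ and has only the two completions $\mathbf{m},\mathbf{m}''$ in $\mathcal{M}(\Sigma_s)$) should be dispatched separately by a direct appeal to (CLced-bdry) and (CLced-desc). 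A clean writeup will likely phrase all of this third step in terms of $\link_{\Delta_s}\mathbf{d}$, deriving the contradiction from its being simultaneously a cone on $c_i$ and, by (CLced-bdry), forced not to be.
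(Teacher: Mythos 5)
Your first two steps are sound and agree with the paper's: polytopality forces the rank-$2$ interval $[c_{i-1},c_{i+1}]$ to be a diamond, producing the unique second element $c_i''$, and the $CL$-axiom on the rooted interval $[c_{i-1},c_{i+1}]_{\mathbf{r}'}$ gives both the descent at $c_i''$ and the lexicographic comparison. But the claim you yourself flag as ``the hard part'' --- that $\mathbf{c}''$ is actually a face of $\Delta_s$, i.e.\ extends to a maximal chain that is \emph{new} at stage $s$ --- is the entire content of the lemma, and your proposal does not prove it. The link-of-$\mathbf{d}$ argument is only sketched, and, as you observe, (CLced-bdry) by itself is compatible with every maximal chain of $\Sigma_s$ through $c_i''$ already occurring in some earlier $\Sigma_t$. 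So the proposal is incomplete at exactly the load-bearing step.

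The paper closes this gap with a device you do not use: rather than working inside $\Sigma_s$, it sets $\mathbf{c}^-=\mathbf{c}\setminus\{c_i\}$ and passes to the \emph{first} subposet $\Sigma_t$ of the $CL$-ced containing $\mathbf{c}^-$. The payoff of this choice is that every maximal chain of $\Sigma_t$ containing $\mathbf{c}^-$ is automatically new at stage $t$ (it cannot lie in an earlier $\Sigma_u$, since $\mathbf{c}^-$ does not), hence is a facet of $\Delta_t$ --- no analysis of $\link_{\Delta_s}\mathbf{d}$ is needed. The diamond in $\Sigma_t$ gives two completions of $\mathbf{c}^-$ at rank $i$, both faces of $\Delta_t$; $CL$-uniqueness says at most one is ascending there, and (CLced-desc) says at most one is descending, so there is exactly one of each. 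The ascending one must coincide with the unique increasing chain of $P$ on $[c_{i-1},c_{i+1}]_{\mathbf{r}'}$, which is $\mathbf{c}$'s chain through $c_i$; this shows $\mathbf{c}\in\Sigma_t$, forces $t=s$, and identifies the descending completion as the desired $\mathbf{c}''\in\Delta_s$. Note that (CLced-desc) does real work here that your argument has no substitute for: it is what guarantees that the increasing chain of $P$ on the rank-$2$ interval actually lies in $\Sigma_t$ at all (otherwise both completions in $\Sigma_t$ could be descending), which is the step that pins $\mathbf{c}$ to the diamond and hence $\mathbf{c}''$ to $\Delta_s$. If you want to salvage your write-up, replace your third step with this ``first subposet containing $\mathbf{c}^-$'' argument.
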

\begin{proof}
Let $\mathbf{c}^{-}=\mathbf{c}\setminus\{c_{i}\}$, and let $\Sigma_{t}$
be the first subposet in the $CL$-ced that contains $\mathbf{c}^{-}$.
Since $\Sigma_{t}$ is the face lattice of a polytope, it is Eulerian,
so $\mathbf{c}^{-}$ has two extensions in $\Sigma_{t}$. By the uniqueness
of ascending chains in $CL$-labelings, at most one is ascending at
rank $i$; by (CLced-desc), at most one is descending. Thus, there
is exactly one of each. The extension with the ascent is $\mathbf{c}$,
call the other extension $\mathbf{c}''$.

We have shown that $\mathbf{c}$ is in $\Sigma_{t}$ and (since $\Sigma_{t}$
is the first subposet containing $\mathbf{c}^{-}$) that $s=t$, so
that $\mathbf{c}''$ is in $\Delta_{s}$. Finally, $\mathbf{c}''$
is lexicographically later than $\mathbf{c}$ by the definition of
$CL$-labeling.
\end{proof}
We also recall a useful lemma from undergraduate point-set topology
\cite[Exercise 17.19]{Munkres:2000}:
\begin{lem}
\label{lem:SubmanifoldBdry}If $B$ is a closed subset of $X$, then
$\bdry B=B\cap\overline{X\setminus B}$.
\end{lem}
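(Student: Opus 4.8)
The plan is to read $\bdry B$ as the \emph{manifold boundary} of $B$ (in the applications, the bounding sphere of a ball ear $\Delta_s$), and to prove that it coincides with the topological frontier $B\cap\overline{X\setminus B}$. The standing situation, which is all that is needed in this paper and which I would make explicit, is that $X$ is an $n$-manifold without boundary (in the applications a simplicial sphere, namely the boundary complex of the polytope whose face lattice is $\Sigma_{s}$) and that $B\subseteq X$ is a closed subset which is itself an $n$-manifold with boundary (in the applications the ball $\Delta_{s}$, a ball by the shelling argument together with Danaraj--Klee). Since $B$ is closed, $\overline{B}=B$, so the asserted right-hand side is exactly the frontier $\operatorname{Fr}(B)=\overline{B}\cap\overline{X\setminus B}$, and for a closed set one has the disjoint decomposition $B=\operatorname{Int}_{X}(B)\sqcup\operatorname{Fr}(B)$, where $\operatorname{Int}_{X}$ denotes topological interior in $X$. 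Dually, $B=\operatorname{int}B\sqcup\bdry B$ is the manifold decomposition into manifold-interior and manifold-boundary points. The whole lemma therefore reduces to the single identity $\operatorname{int}B=\operatorname{Int}_{X}(B)$, after which taking complements inside $B$ gives $\bdry B=\operatorname{Fr}(B)=B\cap\overline{X\setminus B}$.

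First I would prove the easy inclusion $\operatorname{Int}_{X}(B)\subseteq\operatorname{int}B$. If $p\in\operatorname{Int}_{X}(B)$ then $p$ has a neighborhood that is open in $X$ and contained in $B$; shrinking it inside a coordinate chart of the manifold $X$ produces an open set around $p$ homeomorphic to $\mathbb{R}^{n}$ and lying in $B$, hence open in $B$, which is precisely a manifold-interior chart for $B$ at $p$. This direction uses nothing beyond the definition of a manifold.

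The substantive inclusion is $\operatorname{int}B\subseteq\operatorname{Int}_{X}(B)$, and this is where the real content, Brouwer's invariance of domain, enters; I expect it to be the main obstacle in the sense of being the one genuinely nontrivial input. Given $p\in\operatorname{int}B$, choose a $B$-open neighborhood $V$ of $p$ together with a homeomorphism $V\cong\mathbb{R}^{n}$, and compose with a chart of $X$ near $p$ to realize $V$ as the image of a continuous injection of an open subset of $\mathbb{R}^{n}$ into $\mathbb{R}^{n}$. Invariance of domain then forces this map to be open, so $V$ is open in $X$; since $V\subseteq B$ this gives $p\in\operatorname{Int}_{X}(B)$. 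The same circle of ideas (invariance of boundary, a standard consequence of invariance of domain) is what guarantees that the manifold decomposition $B=\operatorname{int}B\sqcup\bdry B$ is well defined in the first place, so that no point is simultaneously of interior and of boundary type; I would cite this rather than reprove it.

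Finally I would assemble the pieces: from $\operatorname{int}B=\operatorname{Int}_{X}(B)$ and the two disjoint decompositions of $B$ we get $\bdry B=B\setminus\operatorname{int}B=B\setminus\operatorname{Int}_{X}(B)=\operatorname{Fr}(B)=B\cap\overline{X\setminus B}$, using $\overline{B}=B$ in the last step. I would add a remark that the hypotheses that $X$ has no boundary and has the same dimension as $B$ are essential, since dropping them allows the frontier and the manifold boundary to disagree, and that in the paper these are supplied by taking $X$ to be the polytopal sphere containing the ball $\Delta_{s}$. In the purely simplicial setting one could alternatively argue combinatorially, using that the boundary ridges of a pseudomanifold ball are exactly those contained in a single facet; but invariance of domain gives the cleanest uniform statement and matches the point-set reference cited.
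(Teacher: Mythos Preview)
The paper does not prove this lemma; it merely cites it as Exercise~17.19 in Munkres's \emph{Topology}, where $\partial B$ is the topological frontier $\overline{B}\cap\overline{X\setminus B}$. Under that reading the statement is a one-line observation: $B$ closed gives $\overline{B}=B$, hence $\partial B=B\cap\overline{X\setminus B}$.

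You instead read $\partial B$ as the manifold boundary of a codimension-zero ball inside a sphere and prove that it equals the frontier via invariance of domain. Your argument is correct, and is arguably the more honest reading for the applications, since in (ced-bdry) one really does want the bounding sphere of $\Delta_{s}$. But it is far more than the paper intends: the paper treats the coincidence of manifold boundary and topological frontier in this simplicial, codimension-zero setting as implicit, and regards the lemma itself as the trivial point-set identity. Your route buys rigor precisely at that silent identification; the paper's route buys brevity by not naming it.
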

Although they did not use the terms {}``$CL$-ced'' or {}``$EL$-ced''
in their paper, the essence of the following theorem was proved by
Nyman and Swartz in \cite[Section 4]{Nyman/Swartz:2004}, where they
used it to construct convex ear decompositions of geometric lattices.
\begin{thm}
\label{thm:ELcedsAreCeds}If $\{\Sigma_{s}\}$ is an $CL$-ced for
$P$, then the associated subcomplexes $\{\Delta_{s}\}$ form a convex
ear decomposition for $\vert P\vert$.\end{thm}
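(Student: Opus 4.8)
The plan is to verify the four axioms (ced-polytope), (ced-topology), (ced-bdry), (ced-union) for the collection $\{\Delta_s\}$ in order, leaning on the hypotheses (CLced-polytope)–(CLced-union) and on the shellability machinery from Section~\ref{sub:Shellings}. The key technical device throughout is the \emph{reverse lexicographic order} on maximal chains: since $\lambda$ is a $CL$-labeling, reading labels with the order reversed makes descending chains play the role of ascending ones, and (CLced-desc) together with Lemma~\ref{lem:CLcedReplaceAscentWithDescent} says that within each $\Delta_s$ (and within each $\Sigma_s$) these reverse-lex orders behave like genuine $CL$-shellings.

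First I would establish (ced-polytope) and the sphere/ball part of (ced-topology) for $\Delta_1$. By (CLced-polytope), $\Sigma_s$ is the face lattice of a convex polytope, so $|\Sigma_s|$ is the boundary complex of a simplicial polytope (after barycentric subdivision, which is what the order complex computes); hence $\Delta_s\subseteq|\Sigma_s|$ is a subcomplex of the boundary of a simplicial $d$-polytope, giving (ced-polytope). For $s=1$ we have $\Delta_1=|\Sigma_1|$, which is a $(d-1)$-sphere. For $s>1$, I would argue that $\Delta_s$ is a shellable proper subcomplex of the sphere $|\Sigma_s|$: order the facets of $\Delta_s$ by reverse lexicographic order of the corresponding maximal chains; using (CLced-desc) to guarantee a unique descending chain in each rooted interval, one checks the shelling condition~\eqref{eq:ShellingDef} exactly as in the standard proof that a $CL$-labeling yields a shelling (Bj\"orner--Wachs), with Lemma~\ref{lem:CLcedReplaceAscentWithDescent} supplying the needed ``replace an ascent by a descent'' step. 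Since $\Delta_s$ is a proper pure $(d-1)$-dimensional subcomplex of the $(d-1)$-sphere $|\Sigma_s|$, Danaraj--Klee's \cite[Proposition 1.2]{Danaraj/Klee:1974} gives that $\Delta_s$ is a $(d-1)$-ball, proving (ced-topology).

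Next, (ced-bdry): I must show $\bigl(\bigcup_{t<s}\Delta_t\bigr)\cap\Delta_s=\bdry\Delta_s$. The inclusion $\bdry\Delta_s\subseteq\bigcup_{t<s}\Delta_t$ is where (CLced-bdry) enters: a facet ridge of $\Delta_s$ lying on $\bdry\Delta_s$ corresponds to a chain $\mathbf{c}$ of length $<k$ extendable to a maximal chain of $\Delta_s$; the shelling structure shows such a boundary ridge is contained in exactly one facet of $\Delta_s$, and then (CLced-bdry) forces $\mathbf{c}$ to extend to some maximal chain of $\Sigma_s$ lying in an earlier $\Delta_t$, hence $\mathbf{c}\in\bigcup_{t<s}\Delta_t$. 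For the reverse inclusion $\bigl(\bigcup_{t<s}\Delta_t\bigr)\cap\Delta_s\subseteq\bdry\Delta_s$, one uses Lemma~\ref{lem:SubmanifoldBdry}: a face of $\Delta_s$ that already appears in some earlier $\Delta_t$ cannot lie in the interior of the ball $\Delta_s$, since by construction every maximal chain through an interior face of $\Delta_s$ is ``new'' to $\Sigma_s$; more carefully, I would show an interior ridge of $\Delta_s$ lies in two facets of $\Delta_s$, and if it also lay in some $\Delta_t$ with $t<s$ then both its extensions would already be present in $\Sigma_t\cup\dots$, contradicting that $\Delta_s$ collects only the genuinely new maximal chains. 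Finally, (ced-union) is immediate from (CLced-union): every maximal chain of $P$ lies in some $\Sigma_s$, hence (taking the least such $s$) in the corresponding $\Delta_s$, so $\bigcup_s\Delta_s=|P|$.

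The main obstacle is (ced-bdry), specifically pinning down which ridges of $\Delta_s$ are on the boundary and matching this up precisely with ``appears in an earlier $\Delta_t$.'' This requires combining three facts cleanly: (i) the reverse-lex shelling of $\Delta_s$ identifies its boundary ridges as those contained in a unique facet; (ii) (CLced-desc) and Lemma~\ref{lem:CLcedReplaceAscentWithDescent} are what make that shelling work inside arbitrary rooted intervals, not just at the top; and (iii) (CLced-bdry) is exactly the hypothesis that converts ``ridge in a unique facet of $\Delta_s$'' into ``ridge extends into $\Sigma_s\setminus\Delta_s$,'' hence into an earlier ear. Getting the logic of these three to interlock without circularity — and handling the subdivision bookkeeping between face lattices $\Sigma_s$ and order complexes $|\Sigma_s|$ — is the delicate part; the rest is the standard $CL$-shelling argument run with labels reversed.
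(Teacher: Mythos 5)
Your overall strategy (reverse-lexicographic shelling of each $\Delta_s$ via (CLced-desc) and Lemma \ref{lem:CLcedReplaceAscentWithDescent}, Danaraj--Klee to get a ball, Lemma \ref{lem:SubmanifoldBdry} for the boundary) is the paper's, but your treatment of (ced-bdry) inverts the logic of the axiom, and the substitute step is false. (CLced-bdry) is an implication whose \emph{hypothesis} is that $\mathbf{c}$ extends to maximal chains of both $\Delta_s$ and an earlier $\Delta_t$, and whose \emph{conclusion} is that $\mathbf{c}$ extends into $\mathcal{M}(\Sigma_s)\setminus\mathcal{M}(\Delta_s)$; you invoke it in the opposite direction, to get from ``$\mathbf{c}$ is a boundary ridge of $\Delta_s$'' to ``$\mathbf{c}$ lies in an earlier ear.'' That inclusion, $\bdry\Delta_s\subseteq\Delta_s\cap\bigl(\bigcup_{t<s}\Delta_t\bigr)$, needs no axiom: by Lemma \ref{lem:SubmanifoldBdry} a face of $\bdry\Delta_s$ lies in $\overline{\vert\Sigma_s\vert\setminus\Delta_s}$, i.e.\ in a maximal chain of $\Sigma_s$ that is not new, hence in some earlier $\Delta_t$ by the very definition of the $\Delta_t$. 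It is the \emph{reverse} inclusion $\Delta_s\cap\bigl(\bigcup_{t<s}\Delta_t\bigr)\subseteq\bdry\Delta_s$ that requires (CLced-bdry): a chain lying in $\Delta_s$ and in an earlier $\Delta_t$ extends into $\mathcal{M}(\Sigma_s)\setminus\mathcal{M}(\Delta_s)$, hence lies in $\Delta_s\cap\overline{\vert\Sigma_s\vert\setminus\Delta_s}=\bdry\Delta_s$. Your substitute argument for this direction---that an interior ridge of $\Delta_s$ lying in an earlier $\Delta_t$ would force both of its facet-extensions in $\Delta_s$ to be present already in $\Sigma_t$---does not hold: a chain can be a face of both $\Sigma_t$ and $\Sigma_s$ while every maximal extension of it inside $\Sigma_s$ is new, and this is exactly the pathology (CLced-bdry) is imposed to exclude. (Your argument also only treats ridges, whereas (ced-bdry) concerns faces of every dimension.)

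A second gap: you assert that $\Delta_s$ is a \emph{proper} subcomplex of $\vert\Sigma_s\vert$ for $s>1$ without justification, and Danaraj--Klee requires this. Nothing in the axioms directly prevents a later $\Sigma_s$ from sharing no maximal chain with earlier ones, in which case $\Delta_s=\vert\Sigma_s\vert$ would be a sphere rather than a ball. The paper rules this out with a separate argument: if $\Delta_s=\vert\Sigma_s\vert$, then by (CLced-desc) and Note \ref{not:DescChainIsLexLast} the labeling $\lambda$ restricted to $\Sigma_s$ is a $CL$-labeling for the reversed label order, so the sphere $\vert\Sigma_s\vert$ must contain a chain that is ascending in $P$; since the ascending maximal chain on $[\hat{0},\hat{1}]$ is unique, this forces $s=1$. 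You need some version of this step before invoking Danaraj--Klee.
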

\begin{proof}
\emph{(Nyman and Swartz }\cite[Section 4]{Nyman/Swartz:2004}\emph{)}
The property (ced-union) follows directly from (CLced-union), and
(ced-polytope) follows from (CLced-polytope) because the barycentric
subdivision of a polytope is again a polytope. 

For (ced-bdry), we first note that $\bdry\Delta_{s}=\bdry\left(\overline{\vert\Sigma_{s}\vert\setminus\Delta_{s}}\right)$
(the topological closure), hence $\bdry\Delta_{s}\subseteq\Delta_{s}\cap(\bigcup_{t<s}\Delta_{t})$.
Conversely, if $\mathbf{c}$ is in $\Delta_{s}\cap(\bigcup_{t<s}\Delta_{t})$,
then (CLced-bdry) gives that $\mathbf{c}$ is in both $\Delta_{s}$
and $\overline{\vert\Sigma_{s}\vert\setminus\Delta_{s}}$. Lemma \ref{lem:SubmanifoldBdry}
then gives the desired inclusion.

It remains to check (ced-topology). Using (CLced-desc), we show that
the reverse of the lexicographic order is a shelling of $\Delta_{s}$.
For if \begin{align*}
\mathbf{c}\, & =\{\hat{0}\lessdot c_{1}\lessdot\dots\lessdot c_{k-1}\lessdot\hat{1}\}\quad\mbox{ and }\\
\mathbf{c}' & =\{\hat{0}\lessdot c'_{1}\lessdot\dots\lessdot c'_{k-1}\lessdot\hat{1}\}\end{align*}
are maximal chains in $\Delta_{s}$, with $\mathbf{c}$ lexicographically
earlier than $\mathbf{c}'$, then (CLced-desc) and Note \ref{not:DescChainIsLexLast}
give that $\mathbf{c}$ has an ascent on some interval where $\mathbf{c}$
disagrees with $\mathbf{c}'$. So $\mathbf{c}$ has an ascent at $i$,
and $c_{i}\neq c'_{i}$. 

Apply Lemma \ref{lem:CLcedReplaceAscentWithDescent} on the interval
$[\hat{0},\hat{1}]$ to get $\mathbf{c}''$ in $\Delta_{s}$ which
descends at $i$, and otherwise is the same as $\mathbf{c}$. Then
$\mathbf{c'}\cap\mathbf{c}\subseteq\mathbf{c}''\cap\mathbf{c}=\mathbf{c}\setminus\{c_{i}\}$,
so $\vert\mathbf{c}''\cap\mathbf{c}\vert=\vert\mathbf{c}\vert-1$,
and so $\mathbf{c}''$ is lexicographically later than $\mathbf{c}$,
as Condition (\ref{eq:ShellingDef}) requires for a shelling.

We now check that $\Delta_{s}$ is a proper subcomplex of $\vert\Sigma_{s}\vert$
for $s\geq2$. Suppose that $\Delta_{s}=\vert\Sigma_{s}\vert$. Then
by Notes \ref{not:LikeCLlabelingOnSimpComplex} and \ref{not:DescChainIsLexLast},
$\lambda$ is a $CL$-labeling on $\Sigma_{s}$ with respect to the
reverse ordering of its label set. Since $\vert\Sigma_{s}\vert$ is
a sphere, there is an ascending chain (descending chain with respect
to the reverse ordering) in $\Sigma_{s}$. Since the ascending chain
in $P$ is unique, we have $s=1$.

By definition $\Delta_{1}=\vert\Sigma_{1}\vert$ is a $(k-2)$-sphere.
Now since $\Delta_{s}$ is shellable and a proper subcomplex of the
$(k-2)$-sphere $\vert\Sigma_{s}\vert$ for $s\geq2$, we get that
$\Delta_{s}$ is a $(k-2)$-ball; thus (ced-topology) holds.\end{proof}
\begin{note}
Each non-empty ear of $\{\Delta_{s}\}$ contains exactly one descending
chain. This is no accident: see the discussion at the end of Section
\ref{sub:Shellings}.\end{note}
\begin{cor}
\label{thm:ExamplesWCLCED}The following families of posets have $EL$-ceds,
thus convex ear decompositions.
\begin{enumerate}
\item \emph{(Nyman and Swartz \cite[Section 4]{Nyman/Swartz:2004})} Geometric
lattices. 
\item \emph{(Schweig \cite[Theorem 3.2]{Schweig:2006})} Supersolvable lattices
with Möbius function non-zero on every interval.
\item \emph{(Schweig \cite[Theorems 5.1 and 7.1]{Schweig:2006})} Rank-selected
subposets of supersolvable and geometric lattices.
\end{enumerate}
\end{cor}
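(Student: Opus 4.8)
The three statements are already known as theorems about convex ear decompositions; what the corollary adds is the observation that in every case the decomposition arises from the axiomatic machinery of Subsection~\ref{sub:ELcedsAndCLceds}. So the plan is not to redo any topology --- Theorem~\ref{thm:ELcedsAreCeds} supplies that --- but to exhibit, inside each cited construction, the ordered family of rank-$k$ subposets $\{\Sigma_s\}$ together with the relevant $EL$-labeling, and to check that the four properties (CLced-polytope), (CLced-desc), (CLced-bdry), (CLced-union) hold. Once these are verified, each conclusion is immediate.

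For part~(1) I would take the standard atom-ordering $EL$-labeling of a geometric lattice $L$ of rank $k$: fix a linear order $a_1<\dots<a_n$ on the atoms and set $\lambda(x\lessdot y)=\min\{i:a_i\vee x=y\}$. Nyman and Swartz's argument produces, index by index, a rank-$k$ subposet $\Sigma_s\subseteq L$ whose order complex is (the barycentric subdivision of) the boundary of a simplicial polytope; I would declare these to be the $\Sigma_s$ of the $EL$-ced. Then (CLced-polytope) is precisely their polytopality statement, (CLced-union) is the fact that their ears exhaust the maximal chains of $L$, (CLced-bdry) is their description of how a new ear meets the union of the earlier ones, and (CLced-desc) is the central combinatorial lemma of their paper --- that within a single ear every interval carries a unique descending maximal chain --- which is exactly the phenomenon flagged in Note~\ref{not:LikeCLlabelingOnSimpComplex}. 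So each axiom translates directly into something they establish.

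For parts~(2) and~(3) the situation is the same with Schweig in place of Nyman and Swartz. For supersolvable lattices with everywhere-nonzero M\"obius function one uses the supersolvable labeling of Subsection~\ref{sub:Supersolvable-lattices}; for rank-selected subposets of supersolvable or geometric lattices one uses the naturally induced $EL$-labeling (rank selection preserves $EL$-shellability, by Bj\"orner--Wachs). In each case one reads off Schweig's ears as the $\Sigma_s$ and checks the same four properties, with (CLced-polytope) and (CLced-bdry)/(CLced-desc) again being restatements of his lemmas. Note that supersolvable lattices with complemented intervals sit inside family~(2), so they require no separate treatment.

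The main --- really the only --- obstacle is careful bookkeeping: confirming that what each set of authors proves is \emph{verbatim} our axioms, with no gap. Two points deserve attention. First, (CLced-polytope) demands that each $\Sigma_s$ be the face lattice of a polytope, not merely that $\vert\Sigma_s\vert$ be a shellable sphere; one must check that the ears are presented (or can be presented) in that form --- as Boolean lattices, or as products of the kind handled in Section~\ref{sec:Poset-Products}, whose order complexes are barycentric subdivisions of polytope boundaries. Second, (CLced-desc) asks for uniqueness of descending maximal chains \emph{restricted to an individual ear}; since descending chains in an $EL$-labeled interval are in general not unique, one must verify that the ear-by-ear restriction built into their constructions is exactly the mechanism forcing uniqueness. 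With those two checks in hand, the corollary follows from Theorem~\ref{thm:ELcedsAreCeds}.
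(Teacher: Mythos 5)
Your proposal is correct and takes essentially the same route as the paper: the paper gives no proof of this corollary at all, presenting it as an immediate repackaging of the Nyman--Swartz and Schweig constructions in the language of the $EL$-ced axioms, which is precisely the verification you outline. Your two cautionary checks (polytopality of the $\Sigma_s$ as face lattices, and ear-by-ear uniqueness of descending chains) are the right things to flag, and your observation that the complemented-intervals and nonzero-M\"obius formulations coincide for supersolvable lattices is accurate.
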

In the following two sections, we will exhibit an $EL$-ced for the
$d$-divisible partition lattice, and (using only slightly different
techniques) a convex ear decomposition for the coset lattice of a
relatively complemented group.

\section{\label{sec:d-divisible-Partition}The $d$-divisible partition lattice}

The \emph{$d$-divisible partition poset}, denoted $\overline{\Pi}_{n}^{d}$,
is the set of all proper partitions of $[n]=\{1,\dots,n\}$ where
each block has cardinality divisible by $d$. The \emph{$d$-divisible
partition lattice}, denoted $\Pi_{n}^{d}$ is $\overline{\Pi}_{n}^{d}$
with a `top' $\hat{1}$ and `bottom' $\hat{0}$ adjoined. $\Pi_{n}^{d}$
is ordered by refinement (which we denote by $\prec$), as in the
usual \emph{partition lattice} $\Pi_{n}$ ($=\Pi_{n}^{1}$). In general,
$\Pi_{n}^{d}$ is a subposet of $\Pi_{n}$, with equality in the case
$d=1$; on the other hand, intervals $[a,\hat{1}]$ are isomorphic
to $\Pi_{n/d}$ for any atom $a\in\Pi_{n}^{d}$. We refer frequently
to \cite{Wachs:1996} for information about the $d$-divisible partition
lattice. 

As $\Pi_{n}$ is a supersolvable geometric lattice, and hence quite
well understood, we restrict ourself to the case $d>1$. It will sometimes
be convenient to partition a different set $S\neq[n]$. In this case
we write $\Pi_{S}$ to be the set of all partitions of $S$, and $\Pi_{S}^{d}$
the set of all $d$-divisible partitions of $S$, so that $\Pi_{n}^{d}=\Pi_{[n]}^{d}$
is a special case. 

\medskip{}
Wachs found a homology basis for $\Pi_{n}^{d}$ in \cite[Section 2]{Wachs:1996}.
We recall her construction. By $S_{n}$ we denote the symmetric group
on $n$ letters. We will write a permutation $\alpha\in S_{n}$ as
a word $\alpha(1)\alpha(2)\dots\alpha(n)$, and define the \emph{descent
set} of $\alpha$ to be the indices where $\alpha$ descends, i.e.,
$\des\alpha=\{i\in[n-1]\,:\,\alpha(i)>\alpha(i+1)\}$.

Then a \emph{split} of $\alpha\in S_{n}$ at $di$ divides $\alpha$
into $\alpha(1)\alpha(2)\dots\alpha(di)$ and $\alpha(di+1)\dots\alpha(n)$.

A \emph{switch-and-split} at position $di$ does the same, but first
transposes (`switches') $\alpha(di)$ and $\alpha(di+1)$.

These operations can be repeated, and the result of repeated applications
of splits and switch-and-splits at $d$-divisible positions is a $d$-divisible
partition. For example, if $\alpha=561234$, then the $2$-divisible
partition $56\,\vert\,13\,\vert\,24$ results from splitting at position
2 and switch-and-splitting at position 4. 

Let $\Sigma_{\alpha}$ be the subposet of $\Pi_{n}^{d}$ that consists
of all partitions that are obtained by splitting and/or switch-and-splitting
the permutation $\alpha$ at positions divisible by $d$. Let \[
A_{n}^{d}=\left\{ \alpha\in S_{n}\,:\,\alpha(n)=n,\des\alpha=\{d,2d,\dots,n-d\}\right\} .\]
Wachs proved
\begin{thm}
\emph{(Wachs \cite[Theorems 2.1-2.2]{Wachs:1996})}\label{thm:d-divisible-homology-basis}
\begin{enumerate}
\item $\Sigma_{\alpha}$ is isomorphic to the face lattice of the $(\frac{n}{d}-1)$-cube
for any $\alpha\in S_{n}$.
\item $\{\Sigma_{\alpha}\,:\,\alpha\in A_{n}^{d}\}$ is a basis for $H_{*}(\Pi_{n}^{d})$.
\end{enumerate}
\end{thm}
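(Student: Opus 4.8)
The plan is to prove the two parts in turn: part~(1) is a direct combinatorial identification, and part~(2) builds on it together with the ($EL$-)shellability of $\Pi_n^d$ recalled in the introduction. For~(1), put $m=\tfrac{n}{d}-1$; since $d\mid n$, the positions in $[n-1]$ divisible by $d$ are precisely $d,2d,\dots,md=n-d$, so there are $m$ of them. I would record a face of the cube $[0,1]^m$ by a vector in $\{0,1,*\}^m$ — coordinate $j$ pinned to $0$ or to $1$, or free ($*$) — together with the empty face as a separate symbol, and define a map $\phi$ from these data to $\Sigma_\alpha$ sending the empty face to $\hat 0$, the all-$*$ vector to $\hat 1$, and any other vector $\epsilon$ to the partition obtained from $\alpha$ by performing, at each position $dj$ with $\epsilon_j\neq *$, a plain split if $\epsilon_j=0$ and a switch-and-split if $\epsilon_j=1$ (and leaving $dj$ uncut when $\epsilon_j=*$). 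Since $d\ge 2$, the transposition in a switch-and-split at $dj$ touches only slots $dj$ and $dj+1$, which lie strictly between the neighbouring cut positions $d(j-1)$ and $d(j+1)$; hence the operations at distinct positions commute and do not interfere, $\phi$ is well defined, and it is clearly onto $\Sigma_\alpha$.

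The crux of~(1) is that $\phi$ is a poset isomorphism, which I would get by recovering the vector $\epsilon$ from the partition $\pi=\phi(\epsilon)$ intrinsically: for $j\in[m]$, ``$\pi$ is cut at $dj$'' holds exactly when one of the sets $\{\alpha(1),\dots,\alpha(dj)\}$ or $\{\alpha(1),\dots,\alpha(dj-1),\alpha(dj+1)\}$ is a union of blocks of $\pi$ — the former signalling $\epsilon_j=0$, the latter $\epsilon_j=1$. These two cannot occur simultaneously: their symmetric difference $\{\alpha(dj),\alpha(dj+1)\}$ would then be a union of blocks, hence $\{\alpha(dj)\}$ would be a block, which is impossible because $d>1$ forbids singleton blocks. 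This gives injectivity; and since a refinement of a partition only subdivides its blocks — so every union of blocks of $\pi'$ is also a union of blocks of any refinement $\pi$ — the same description yields $\pi\preceq\pi'$ in $\Sigma_\alpha$ if and only if $\phi^{-1}(\pi)\subseteq\phi^{-1}(\pi')$ as faces of the cube. Hence $\Sigma_\alpha$ is the face lattice of the $(\tfrac{n}{d}-1)$-cube, which is part~(1).

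For~(2), by the introduction $\Pi_n^d$ is $EL$-shellable, hence Cohen--Macaulay, so $\tilde H_*(\Pi_n^d)$ is free and concentrated in the top degree $t=\tfrac{n}{d}-2$; call its rank $r$. By part~(1), each $\vert\Sigma_\alpha\vert$ is a triangulated $t$-sphere sitting inside the $t$-dimensional complex $\vert\Pi_n^d\vert$, so it carries a fundamental class $\rho_\alpha\in\tilde H_t(\Pi_n^d)$. To conclude that $\{\rho_\alpha:\alpha\in A_n^d\}$ is a basis it suffices to (i) prove these classes are linearly independent and (ii) check $|A_n^d|=r$.

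For~(i), I would attach to each $\alpha\in A_n^d$ the maximal chain $\mu_\alpha$ that splits $\alpha$ at every position $d,2d,\dots,n-d$ and then merges the consecutive $d$-blocks one at a time from left to right — a facet of $\vert\Sigma_\alpha\vert$ built from plain splits alone — order $A_n^d$ (say lexicographically by the word $\alpha$), and show that $\mu_\alpha$ lies in $\vert\Sigma_\beta\vert$ for no $\beta\in A_n^d$ strictly later in this order; pairing the cycles $\rho_\alpha$ against the cochains dual to the $\mu_\beta$ then produces a unitriangular, hence invertible, matrix. (Equivalently one may pair the $\rho_\alpha$ against the descending maximal chains of a fixed $EL$-labeling of $\Pi_n^d$, which span $\tilde H^t$.) For~(ii), $r$ is the top reduced Betti number of $\Pi_n^d$, a known quantity: I would either recall its closed form and match it against $|A_n^d|=\#\{\alpha\in S_n:\alpha(n)=n,\ \des\alpha=\{d,2d,\dots,n-d\}\}$, or give a bijection between $A_n^d$ and the descending maximal chains of that $EL$-labeling (which number $r$). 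The step I expect to be the real obstacle is the triangularity in~(i): controlling exactly which canonical chains $\mu_\beta$ can appear inside a given $\Sigma_\alpha$, and ordering $A_n^d$ so that the incidence matrix comes out unitriangular; the remainder is bookkeeping with the encoding of part~(1) together with standard facts about shellable complexes.
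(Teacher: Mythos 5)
First, a point of reference: the paper does not prove this theorem at all --- it is quoted from Wachs (her Theorems 2.1--2.2), so there is no in-paper argument to compare your route against; I am judging the attempt on its own merits. Your part (1) is essentially complete and correct: the encoding of cube faces by $\{0,1,*\}^{n/d-1}$ together with an empty-face symbol, the observation that for $d\geq 2$ the transpositions occurring in switch-and-splits at distinct $d$-divisible positions involve disjoint slots (so the operations commute and $\phi$ is well defined and surjective), and the intrinsic recovery of the cut data from $\pi$ via the two candidate prefixes all work, as does the order-isomorphism argument. One small repair: to rule out both prefixes $A=\{\alpha(1),\dots,\alpha(dj)\}$ and $B=\{\alpha(1),\dots,\alpha(dj-1),\alpha(dj+1)\}$ being simultaneously unions of blocks, pass directly to $A\setminus B=\{\alpha(dj)\}$ being a union of blocks (intersection of a union of blocks with the complement of one), rather than through the symmetric difference $\{\alpha(dj),\alpha(dj+1)\}$, from which the singleton conclusion does not immediately follow when $d=2$; the conclusion you state is the right one and does follow by the direct route.

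Part (2), however, is an outline rather than a proof, and the two steps you defer are precisely the content of the theorem. For the linear independence, you posit a family of chains $\mu_\alpha$ (split everywhere, then merge left to right) and an ordering of $A_n^d$ (lex on the word $\alpha$), and assert that the incidence matrix of the fundamental classes $\rho_\beta$ against the $\mu_\alpha$ is unitriangular --- but you give no argument that $\mu_\alpha$ avoids $\vert\Sigma_\beta\vert$ for later $\beta$, and you yourself identify this as ``the real obstacle.'' Whether your particular choice of canonical chains and ordering actually produces a triangular matrix is the combinatorial heart of the statement and cannot be taken on faith; Wachs's own proof is devoted to exactly such a triangularity computation (and the paper's Section 3.2 has to work comparably hard, via Lemma \ref{lem:CEDcovers} and Corollary \ref{cor:FormOfDeltaAlpha}, to control which $\Sigma_\beta$ contain a given chain). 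Likewise the count $\vert A_n^d\vert=\tilde{\beta}_{n/d-2}(\Pi_n^d)$ is a genuine enumerative theorem (equivalently, the identification of the descending chains of an $EL$-shelling of $\Pi_n^d$), not bookkeeping, and ``recall its closed form or give a bijection'' does not discharge it. The architecture you propose --- spheres from part (1), triangularity against distinguished maximal chains, a rank count via shellability --- is the standard and correct one, so the gap is in execution rather than strategy; but as written, part (2) is not established.
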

After some work, this basis will prove to be a dual $EL$-ced.

\subsection{A dual $EL$-labeling for $\Pi_{n}^{d}$ \label{sub:d-divisible-EL-labeling}}

In addition to the homology basis already mentioned, Wachs constructs
an $EL$-labeling in \cite[Section 5]{Wachs:1996}, by taking something
close to the standard $EL$-labeling of the geometric lattice on intervals
$[a,\hat{1}]\cong\Pi_{n/d}$ (for $a$ an atom), and {}``twisting''
by making selected labels negative. While her labeling is not convenient
for our purposes, we use her sign idea to construct our own dual $EL$-labeling
starting with a supersolvable $EL$-labeling of $[a,\hat{1}]$.

\medskip{}

Partition lattices were one of the first examples of supersolvable
lattices to be studied \cite{Stanley:1972}. It is not difficult to
see that the maximal chain with $j$th ranked element \[
1\,\vert\,2\,\vert\,\dots\,\vert\, j\,\vert\,(j+1)\dots\, n\]
 is a left modular chain in $\Pi_{n}$.

Let $y\dotsucc z$ be a cover relation in $\Pi_{n}$. Then $y$ is
obtained by merging two blocks $B_{1}$ and $B_{2}$ of the partition
$z$, where without loss of generality $\max B_{1}<\max B_{2}$. The
supersolvable dual $EL$-labeling (relative to the above chain of
left modular elements) is especially natural: \begin{eqnarray*}
\lambda^{ss}(y\dotsucc z) & = & \min\{j\,:\,(1\,\vert\,\dots\,\vert\, j\,\vert\,(j+1)\dots\, n)\wedge y\prec z\}\\
 & = & \max B_{1}.\end{eqnarray*}

We now construct the labeling that we will use for $\Pi_{n}^{d}$.
Let $y\dotsucc z$ be a cover relation in $\Pi_{n}^{d}$, where $z\neq\hat{0}$.
As above, $y$ is obtained by merging blocks $B_{1}$ and $B_{2}$
of $z$, where $\max B_{1}<\max B_{2}$. Label\begin{eqnarray*}
\lambda(y\dotsucc z) & = & \begin{cases}
-\!\!\!\!\! & \max B_{1}\quad\quad\mbox{ if }\max B_{1}<\min B_{2},\\
 & \max B_{1}\quad\quad\mbox{ otherwise, and}\end{cases}\\
\lambda(y\dotsucc\hat{0}) & = & 0.\end{eqnarray*}
When discussing dual $EL$-labelings, any reference to ascending or
descending chains is in the dual poset, so that the inequalities go
in the opposite direction from normal.
\begin{note}
Let $a\in\Pi_{n}^{d}$ be an atom. Then $a$ has $n/d$ blocks, and
every block has $d$ elements. Order the blocks $\{B_{i}\}$ so that
$\max B_{1}<\max B_{2}<\dots<\max B_{n/d}$, and let $\mathcal{B}=\{\max B_{1},\dots,\max B_{n/d}\}$.
Then $[a,1]\cong\Pi_{\mathcal{B}}$, and we recognize $\vert\lambda\vert$
as the supersolvable dual $EL$-labeling $\lambda^{ss}$ on $\Pi_{\mathcal{B}}$.
\end{note}

\begin{note}
We also can view $\Pi_{n}^{d}$ as a subposet of $\Pi_{n}$. A cover
relation $y\dotsucc z$ in $\Pi_{n}^{d}$ is a cover relation in $\Pi_{n}$
unless $z=\hat{0}$. Thus, $\vert\lambda\vert$ is the restriction
of $\lambda^{ss}$ on $\Pi_{n}$, except at the bottom edges $y\dotsucc\hat{0}$.
\end{note}

\begin{note}
The cover relation $x_{0}\dotsucc x_{1}$ gets a negative label if
and only if $B_{1}\,\vert\, B_{2}$ is a \emph{non-crossing partition}
of $B=B_{1}\cup B_{2}$. We will call this a \emph{non-crossing refinement}
of $x_{0}$. The poset of all non-crossing partitions has been studied
extensively \cite{Simion:2000,McCammond:2006}, although this seems
to have a different flavor from what we are doing. Also related is
the \emph{connectivity set} of a permutation \cite{Stanley:2005},
the set of positions at which a split yields a non-crossing partition.
\end{note}
Recall that if $P_{1}$ and $P_{2}$ are posets, then their \emph{direct
product} $P_{1}\times P_{2}$ is the Cartesian product with the ordering
$(x_{1},x_{2})\leq(y_{1},y_{2})$ if $x_{1}\leq x_{2}$ and $y_{1}\leq y_{2}$.
The \emph{lower reduced product} $P_{1}\lrtimes P_{2}$ of two bounded
posets is $\left((P_{1}\setminus\{\hat{0}\})\times(P_{2}\setminus\{\hat{0}\})\right)\cup\{\hat{0}\}$.
Although the definition of the lower reduced product may appear strange
at first glance, it occurs naturally in many settings, including the
following easily-proved lemma:
\begin{lem}
\label{lem:DDivProductStructure}Let $y\succ x$ be elements of $\Pi_{n}^{d}\setminus\{\hat{0}\}$,
with $y=B_{1}\,\vert\,\dots\,\vert\, B_{k}$. Then
\begin{enumerate}
\item $[\hat{0},y]\cong\Pi_{B_{1}}^{d}\lrtimes\Pi_{B_{2}}^{d}\lrtimes\dots\lrtimes\Pi_{B_{k}}^{d}$.
\item $[y,\hat{1}]\cong\Pi_{k}$.
\item $[x,y]$ is the direct product of intervals in $\Pi_{B_{i}}^{d}$.
\end{enumerate}
\end{lem}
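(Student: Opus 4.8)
The plan rests on one elementary observation: a partition $w$ of $[n]$ that refines $y=B_1\mid\dots\mid B_k$ carries exactly the same information as the $k$-tuple $(w\vert_{B_1},\dots,w\vert_{B_k})$ of its restrictions to the blocks of $y$, and this correspondence is visibly order-preserving and order-reflecting once one tracks which partitions are proper, how $d$-divisibility propagates, and where the adjoined bottoms $\hat{0}$, $\hat{0}_i$ go. Each of the three parts is an application of this principle in an appropriate range.

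For (1), I would define $\varphi\colon[\hat{0},y]\to\Pi_{B_1}^{d}\lrtimes\dots\lrtimes\Pi_{B_k}^{d}$ by $\varphi(\hat{0})=\hat{0}$ and $\varphi(w)=(w\vert_{B_1},\dots,w\vert_{B_k})$ for $w\neq\hat{0}$, reading $\hat{1}_i$ as the one-block partition of $B_i$ so that $\varphi(y)=(\hat{1}_1,\dots,\hat{1}_k)$. Each $w\vert_{B_i}$ is $d$-divisible, its blocks being blocks of $w$, and it is never the formal bottom $\hat{0}_i$ --- at worst it equals $\hat{1}_i$ --- so the image genuinely lands in the \emph{lower reduced} product and avoids its adjoined $\hat{0}$. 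Conversely a tuple $(w_1,\dots,w_k)$ with every $w_i\neq\hat{0}_i$ assembles into the $d$-divisible partition of $[n]$ whose block set is $\bigcup_i w_i$; this partition refines $y$ and is proper unless $k=1$ and $w_1=\hat{1}_1$, the one situation in which $y=\hat{1}$ and the asserted isomorphism is a tautology. Since $w\preceq w'$ iff $w\vert_{B_i}\preceq w'\vert_{B_i}$ for all $i$, $\varphi$ is an order isomorphism.

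Part (2) is immediate: an element $w\succeq y$ has blocks that are unions of the $B_i$, and a union of sets of cardinalities divisible by $d$ has cardinality divisible by $d$, so the $d$-divisibility condition on $[y,\hat{1}]$ is automatic; hence $w\mapsto(\text{the partition of }\{B_1,\dots,B_k\}\text{ that }w\text{ induces})$ is an isomorphism $[y,\hat{1}]\cong\Pi_k$. For (3), I would set $x_i=x\vert_{B_i}$, a genuine partition of $B_i$ (so $x_i\neq\hat{0}_i$) because $x\neq\hat{0}$. Under the isomorphism of (1), $[x,y]$ corresponds to the set of tuples $(w_1,\dots,w_k)$ with $x_i\preceq w_i\preceq\hat{1}_i$ for each $i$; as every coordinate is already above $\hat{0}_i$, no reduced product is needed and this set is the ordinary direct product $\prod_{i=1}^{k}[x_i,\hat{1}_i]$ of intervals in the $\Pi_{B_i}^{d}$.

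The only point requiring attention --- bookkeeping rather than a genuine obstacle --- is the interplay of restriction with the adjoined bottoms: restricting a proper $d$-divisible partition to a block of $y$ never produces a formal $\hat{0}_i$, which is exactly why the lower reduced product occurs in (1) but collapses to an ordinary direct product in (3).
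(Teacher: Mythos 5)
Your proof is correct, and it is exactly the blockwise-restriction argument the paper has in mind: the paper states this lemma without proof (calling it "easily-proved"), and your map $w\mapsto(w\vert_{B_1},\dots,w\vert_{B_k})$ with the careful tracking of the adjoined bottoms is the intended justification, including the right explanation of why the lower reduced product appears in (1) but an ordinary direct product suffices in (3).
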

\begin{note}
We discuss (lower/upper reduced) products of posets at much more length
in Section \ref{sec:Poset-Products}. Although the situation with
$\Pi_{n}^{d}$ is simple enough that we do not need to refer directly
to product labelings (introduced in Section \ref{sub:ProductCLlabelings}),
they are the underlying reason we can look at partitions block by
block in the proofs that follow. \end{note}
\begin{thm}
$\lambda$ is a dual $EL$-labeling of $\Pi_{n}^{d}$.\end{thm}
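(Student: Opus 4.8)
The plan is to verify the two defining properties of a dual $EL$-labeling directly: in every (dual) interval $[x,y]$ of $\Pi_n^d$ there is a unique strictly decreasing (in the dual order, i.e.\ strictly ascending when read from $y$ down to $x$) maximal chain, and that chain is lexicographically first. I would split into two cases according to whether $y$ (the larger element, which is the \emph{bottom} in the dual) equals $\hat 0$ or not. When $x\neq\hat 0$, Lemma~\ref{lem:DDivProductStructure}(3) says $[x,y]$ is a direct product of intervals inside the ordinary partition lattices $\Pi_{B_i}$, and on each such interval $|\lambda|$ agrees with the supersolvable labeling $\lambda^{ss}$ of $\Pi_n$ (second Note). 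The signs are a red herring here: within a fixed interval $[x,y]$ not touching $\hat 0$, whether a given merge is a non-crossing refinement is \emph{determined by the pair of blocks being merged}, hence the sign attached to each label value is constant across the interval. So the label poset restricted to this interval is order-isomorphic (as a labeled structure) to the one coming from $\lambda^{ss}$ on the corresponding product of intervals in $\Pi_n$, and the $EL$-property is inherited from the known supersolvable (dual) $EL$-labeling of $\Pi_n$ via the standard fact that a product of posets with $EL$-labelings has an $EL$-labeling (the product labeling; or just check it by hand using property~(\ref{eq:SSPermutationProperty}), since on each factor every chain carries the same multiset of labels).

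The substantive case is $x=\hat 0$, i.e.\ intervals $[\hat 0, y]$. Here I would use Lemma~\ref{lem:DDivProductStructure}(1): $[\hat0,y]\cong \Pi_{B_1}^d\lrtimes\dots\lrtimes\Pi_{B_k}^d$, so it suffices to treat $[\hat 0,\hat 1]$ in $\Pi_m^d$ itself (with $m=n$), since a lower reduced product of posets with dual $EL$-labelings inherits one — again the product-labeling machinery, or a direct argument: a maximal chain from $\hat 1$ down to $\hat0$ first passes through an atom $a$ (all the bottom edges $y\dotsucc\hat0$ get label $0$, and every atom lies below a unique such edge), then continues up to $\hat1$; below $a$ the remaining choices split blockwise. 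Concretely, a maximal chain of $\Pi_n^d$ consists of: the single step $a\dotsucc\hat0$ with label $0$, followed by a maximal chain of $[a,\hat1]\cong\Pi_{\mathcal B}$, which by the first Note carries $\lambda^{ss}$-labels (with signs). So a maximal chain of the interval $[\hat0,\hat1]$ is ascending in the dual iff its sequence of labels $0, \ell_1,\ell_2,\dots$ is increasing, i.e.\ iff $\ell_1<\ell_2<\dots$ and all $\ell_i>0$ after the initial $0$ — wait, the $0$ is the label of the \emph{bottom} edge, so reading from $\hat1$ down we need $\ell_1<\ell_2<\dots<\ell_{k-1}<0$, i.e.\ \emph{all} labels on the $[a,\hat1]$ part must be negative and strictly increasing toward $0$.

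So the heart of the matter is: among all atoms $a$ and all maximal chains of $\Pi_{\mathcal B}$ above $a$, there is exactly one whose $\lambda$-labels are all negative and strictly increasing, and it is lexicographically first. "All labels negative" means every merge performed on the way from $a$ up to $\hat1$ is a non-crossing refinement; I expect the unique such atom to be the one whose blocks are intervals of \emph{consecutive} integers (so $a = \{1,\dots,d\}\mid\{d+1,\dots,2d\}\mid\dots$), since only then can one repeatedly merge adjacent blocks non-crossingly, and the forced increasing sequence of labels $d < 2d < \dots < n-d$ (negated) pins down both the atom and the chain. Establishing this — that no other atom admits a fully-non-crossing increasing chain up to $\hat1$, and that an atom with a non-interval block is blocked immediately — is the main obstacle, and I would prove it by an explicit combinatorial argument: if some block of $a$ is not an interval of consecutive integers, then any first merge involving the block containing the smallest "gap" fails to be non-crossing (or forces a non-increasing label later), ruling out an all-negative increasing chain; and once $a$ is the consecutive-interval atom, the non-crossing increasing maximal chain above it is forced step by step (at each stage the only non-crossing merge with label exceeding all previous ones is to merge the next adjacent pair), giving uniqueness. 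Lexicographic-firstness then follows because $0$ is the minimum label, the negative labels are below all the positive ones, and within the negatives the forced chain realizes the lexicographically smallest possible label sequence $(-d,-2d,\dots,-(n-d))$, which is exactly the first Note's identification of $|\lambda|$ with $\lambda^{ss}$ combined with the sign information. Finally I would remark that by Lemma~\ref{lem:DDivProductStructure}(1) the same analysis applied blockwise handles a general $[\hat 0,y]$, completing the verification.
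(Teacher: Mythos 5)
Your treatment of intervals $[x,y]$ with $x\neq\hat 0$ rests on a claim that is false, and this is precisely where the real work of the theorem lies. You assert that within such an interval the sign attached to each label value is constant, so that the labeled interval is order-isomorphic to the one coming from the unsigned supersolvable labeling $\lambda^{ss}$ and the $EL$-property is simply inherited. It is true that the sign of a \emph{given edge} depends only on the pair of blocks merged at that edge; but a given label value $b_s=\max B_s$ is carried by \emph{different} merges on different chains of the interval, and these need not all be crossing or all non-crossing. Concretely, take $n=8$, $d=2$, $x_0=123456\,\vert\,78$ and $x_m=12\,\vert\,34\,\vert\,56\,\vert\,78$. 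The three maximal chains of $[x_m,x_0]$ have label sequences, read from the top, $(-4,-2)$ (merge $12$ with $34$, then the result with $56$), $(-2,-4)$ (merge $34$ with $56$ first), and $(+4,-2)$ (merge $12$ with $56$ first): the value $4$ appears with both signs, and the unique increasing chain is the \emph{first} one, whereas under $\lambda^{ss}$ the sequences are $(4,2)$, $(2,4)$, $(4,2)$ and the unique increasing chain is the \emph{second}. So the signs are not a red herring: they change which chain is increasing, and no order-isomorphism with the $\lambda^{ss}$-labeled interval exists. The paper spends essentially all of its effort on exactly this case, showing directly that the lexicographically first chain is unique, that it is increasing, and that every increasing chain is lexicographically first; the structural fact driving that argument (which you would need some form of) is that if the top edge of $[x_m,x_0]$ can receive a label $-b_s$, then \emph{every} chain of the interval carries a $-b_s$ label.

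Your analysis of $[\hat 0,y]$ is closer to the mark: all non-final labels of an increasing chain must be negative, which forces the chain through the unique atom partitioning each block of $y$ into consecutive pieces, and this matches the paper's handling of that case. But the paper then reduces $[\hat 0,y]$ to the interval above that atom --- that is, back to the case you dismissed --- so the gap is not confined to one branch of the argument. To repair the proof you must redo the $x\neq\hat 0$ case with the signs taken seriously.
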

\begin{proof}
We need to show that each interval has a unique (dual) increasing
maximal chain which is lexicographically first. There are two forms
of intervals we must check:
\begin{caseenv}
\item Intervals of the form $[\hat{0},x_{0}]$.

Since the bottommost label on every chain in $[\hat{0},x_{0}]$ is
a 0, every other label in an increasing chain must be negative. Hence,
every edge $x_{i}\dotsucc x_{i+1}$ in an increasing chain must correspond
to a non-crossing refinement of $x_{i}$.

In such a chain, any block $B$ of $x_{0}$ is partitioned repeatedly
into non-crossing sub-blocks. At the atom level, this block $B$ is
sub-partitioned as $B_{1}\,\vert\,\dots\,\vert\, B_{k}$, where $\max B_{i}<\min B_{i+1}$.
Thus, any increasing chain on $[\hat{0},x_{0}]$ passes through this
single atom, and we have reduced the problem to Case 2.

\item Intervals of the form $[x_{m},x_{0}]$, where $x_{m}\neq\hat{0}$.

By Lemma \ref{lem:DDivProductStructure} and the discussion following,
it suffices to examine a single block $B$ of $x_{0}$. (The labels
on disjoint blocks are independent of each other.)

In $x_{m}$, let $B$ be subpartitioned as $B_{1}\,\vert\,\dots\,\vert\, B_{k}$,
with $\max B_{s}=b_{s}$ and $b_{1}<b_{2}<\dots<b_{k}$. The edges
we consider correspond with subpartitioning $B$ between itself and
$B_{1}\,\vert\,\dots\,\vert\, B_{k}$. 

First, we show that the lexicographically first chain $\mathbf{c}=x_{0}\dotsucc x_{1}\dotsucc\dots\dotsucc x_{m}$
is unique. If there are any negative labels down from $x_{i}$, the
edge $x_{i}\dotsucc x_{i+1}$ will have the label $-b_{s}$ with greatest
absolute value among negative labels. Thus, \[
x_{i+1}=x_{i}\wedge\left(B_{1}\dots B_{s}\,\vert\, B_{s+1}\dots B_{k}\right),\]
and hence $x_{i}\dotsucc x_{i+1}$ is the unique edge down from $x_{i}$
with this label. Otherwise, $x_{i}\dotsucc x_{i+1}$ will have the
least possible (positive) label, which is unique since $\vert\lambda\vert$
is a dual supersolvable $EL$-labeling on $[x_{m},x_{0}]$.

Next, we show that the lexicographically first chain is increasing.
Suppose that $\mathbf{c}$ has a descent at $x_{i-1}\dotsucc x_{i}\dotsucc x_{i+1}$,
with $\lambda(x_{i-1}\dotsucc x_{i})=\alpha$ and $\lambda(x_{i}\dotsucc x_{i+1})=\beta$,
corresponding to dividing a block $C$ as \[
C\dotsucc C_{1}\,\vert C_{2}\cup C_{3}\dotsucc C_{1}\,\vert\, C_{2}\,\vert\, C_{3}.\]
Since $\vert\lambda\vert$ is a dual $EL$-labeling, both labels cannot
be positive. Thus, $\beta<0$. If then $\vert\alpha\vert<\vert\beta\vert$,
we have $\max C_{1}<\max C_{2}<\min C_{3}$, and then $C\dotsucc C_{1}\cup C_{2}\,\vert\, C_{3}$
is noncrossing, with a $\beta$ label, and so lexicographically before
$x_{i-1}\dotsucc x_{i}$. Otherwise, $\vert\alpha\vert>\vert\beta\vert$.
Since we have a descent at $i$, we see $\alpha>0$, and so the $\pm\beta<\alpha$
label on the edge obtained by partitioning $C\dotsucc C_{1}\cup C_{2}\,\vert\, C_{3}$
is again lexicographically before $x_{i-1}\dotsucc x_{i}$. In either
case, we have shown that any $\mathbf{c}$ with a descent is not lexicographically
first.

Finally, we show that any increasing chain is lexicographically first.
Suppose that there is an edge $x_{0}\dotsucc y\,(\succ x_{m})$ that
receives a $-b_{s}$ label. Then $y=B_{1}\dots B_{s}\,\vert\, B_{s+1}\dots B_{k}$
is a non-crossing partition of $B$, and in particular $B_{s}<B_{s+1},\dots,B_{k}$.
We see that any subpartion of $x_{0}$ separating $B_{s}$ from $B_{t}$
for $t>s$ is non-crossing, thus every chain on $[x_{m},x_{0}]$ has
a $-b_{s}$ label. This fact, combined with (\ref{eq:SSPermutationProperty})
shows that any increasing chain on an interval must be constructed
inductively by repeatedly taking the least-labeled edge down, hence
be lexicographically first.\qedhere

\end{caseenv}
\end{proof}
The descending chains of Wachs's $EL$-labeling are $\{r_{\sigma}\,:\,\sigma\in A_{n}^{d}\}$,
where $r_{\sigma}$ corresponds to successively splitting $\sigma$
at the greatest possible $\sigma(id)$ \cite[Theorem 5.2]{Wachs:1996}.
It is easy to see that each $r_{\sigma}$ is also descending with
respect to our dual $EL$-labeling, and a dimension argument shows
us that $\{r_{\sigma}\,:\,\sigma\in A_{n}^{d}\}$ is exactly the set
of descending chains.

\subsection{An $EL$-ced for $\Pi_{n}^{d}$}

Order $\{\Sigma_{\alpha}\}$ lexicographically by the reverse of the
words $\alpha$ according to the reverse ordering on $[n]$. That
is, order lexicographically by the words $\alpha(n)\alpha(n-1)\cdots\alpha(1)$,
where $n\vartriangleleft n-1\vartriangleleft\cdots\vartriangleleft1$.
We refer to this ordering as \emph{$rr$-lex}, for {}``reverse reverse
lexicographic.'' For example, $132546$ is the first permutation
in $A_{6}^{2}$ with respect to $rr$-lex, while $231546\ltrr142536$
(since $4>3$ in position 5). 

We will prove the following version of Theorem \ref{thm:d-divisibleced_intro}.
Let $\Sigma_{\alpha}$ be as in the text preceding Theorem \ref{thm:d-divisible-homology-basis},
and $\lambda$ as in Section \ref{sub:d-divisible-EL-labeling}.
\begin{thm}
\label{thm:d-divisible-EL-ced} $\{\Sigma_{\alpha}\,:\,\alpha\in A_{n}^{d}\}$
ordered by $rr$-lex is a dual $EL$-ced of $\Pi_{n}^{d}$ with respect
to $\lambda$.
\end{thm}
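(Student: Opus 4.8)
The plan is to verify the four axioms (CLced-polytope), (CLced-desc), (CLced-bdry), and (CLced-union) for the dual $EL$-labeling $\lambda$, treating everything in the dual poset so that ``increasing'' and ``descending'' refer to $\lambda$-labels read from $\hat 1$ downward. First, (CLced-polytope) is immediate from Theorem~\ref{thm:d-divisible-homology-basis}(1): each $\Sigma_\alpha$ is the face lattice of a cube, hence of a polytope. For (CLced-union), I would observe that any maximal chain of $\Pi_n^d$ is obtained by a sequence of splits and switch-and-splits of \emph{some} permutation $\alpha$ (read off the blocks in the order they are cut), and then note that one can always choose a representative $\alpha$ whose descent set is forced to be $\{d,2d,\dots,n-d\}$ with $\alpha(n)=n$ by reordering within blocks and across the final cut --- i.e.\ every chain lies in $\Sigma_\alpha$ for some $\alpha \in A_n^d$. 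This is essentially a restatement of the fact that Wachs's set is a spanning set of chains, and I would cite the combinatorics of \cite[Section 2]{Wachs:1996} for the bookkeeping.

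The substantive axiom is (CLced-desc): for each $\alpha \in A_n^d$, each rooted interval $[x,y]_{\mathbf r}$ contains at most one descending maximal chain lying in $\Delta_\alpha$ (the ``new'' chains of $\Sigma_\alpha$). By Lemma~\ref{lem:DDivProductStructure}, intervals of $\Pi_n^d$ decompose as products of intervals within single blocks, and the labels on disjoint blocks are independent, so it suffices to analyze a single block $B$ and the portion of $\Sigma_\alpha$ living inside it --- this is a face sublattice of a cube, i.e.\ a Boolean-like structure indexed by the $d$-divisible split positions of $\alpha$ restricted to $B$. Within such a cube-interval, a maximal chain corresponds to a linear order in which one introduces the available cut positions one at a time; the $\lambda$-label of introducing cut position $di$ is (up to sign) $\max$ of the block to its left, and it is \emph{negative} exactly when that cut is non-crossing. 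I would show that ``descending'' pins down this order uniquely: going downward, a descending step must either take a negative label of maximal absolute value, or, when no negative labels remain, take the least positive label, and in a cube-interval each of these is realized by a unique cut. This is the same bookkeeping that appears in the proof that $\lambda$ is a dual $EL$-labeling (Case~2 there), now read ``in reverse'' to control descents rather than ascents; the key point is that the face-lattice-of-a-cube structure of $\Sigma_\alpha$ leaves no freedom once the descending requirement is imposed.

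For (CLced-bdry), I would argue as follows. Suppose a chain $\mathbf c$ of length $<k$ extends to a maximal chain in both $\Delta_\alpha$ and $\Delta_\beta$ with $\beta \gtrr \alpha$ (i.e.\ $\beta$ comes earlier in $rr$-lex). I need to produce an extension of $\mathbf c$ inside $\mathcal M(\Sigma_\alpha) \setminus \mathcal M(\Delta_\alpha)$, i.e.\ a maximal chain of $\Sigma_\alpha$ that also appears in some earlier $\Sigma_\gamma$. Here the cube structure is again decisive: $\Sigma_\alpha \cong $ face lattice of a cube means that a codimension-one chain $\mathbf c$ (missing exactly one rank) has exactly two completions in $\Sigma_\alpha$, differing by which of two ``parallel'' cuts is made first; if one completion lies in $\Delta_\beta$ for an earlier $\beta$, I would show the \emph{other} completion lies in an earlier $\Sigma_\gamma$ as well, by explicitly modifying $\alpha$ at the relevant two positions to get a lexicographically earlier word (in $rr$-lex) that still generates that chain --- switching the roles of the two parallel cuts corresponds to a local transposition in $\alpha$, and the $rr$-lex order was designed (reading $\alpha$ from the right, largest letters first) precisely so that ``the non-crossing-first'' completion is the $rr$-lex-earliest. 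I would then invoke Lemma~\ref{lem:CLcedReplaceAscentWithDescent} / Note~\ref{not:DescChainIsLexLast} as needed to match the chain we found with $\mathbf c$'s required extension. Finally, with all four axioms in hand, Theorem~\ref{thm:ELcedsAreCeds} (in its dual form) gives that $\{\Delta_\alpha\}$ is a convex ear decomposition, proving the theorem.

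\textbf{Main obstacle.} I expect the delicate step to be (CLced-bdry): (CLced-polytope) and (CLced-union) are essentially Wachs's results, and (CLced-desc) is a reprise of the $EL$-labeling verification read in reverse, but (CLced-bdry) requires matching the global $rr$-lex order on $A_n^d$ with the local ``two parallel cuts'' structure of each cube $\Sigma_\alpha$, and in particular showing that the second completion of a shared codimension-one chain is genuinely captured by an $rr$-lex-\emph{earlier} $\Sigma_\gamma$ rather than only by $\Sigma_\alpha$ itself --- this is where the precise choice of $rr$-lex (reverse word, reverse alphabet) has to be used rather than merely invoked, and where careful case analysis on crossing vs.\ non-crossing cuts at the two relevant positions is unavoidable.
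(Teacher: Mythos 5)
Your overall strategy (verify the four axioms for $\lambda$) matches the paper's, and your treatment of (CLced-polytope) and (CLced-union) is essentially the paper's: the former is Wachs's Theorem~\ref{thm:d-divisible-homology-basis}(1), and the latter is Lemma~\ref{lem:CEDcovers}, whose proof is exactly the block-by-block ``quicksort'' bookkeeping you allude to. But there are two genuine gaps. First, (CLced-bdry) quantifies over \emph{all} chains of length $<k$, not just codimension-one chains; your ``two parallel cuts'' argument only applies when $\mathbf c$ is missing a single rank, and even there it conflates the extension of $\mathbf c$ into the earlier $\Delta_\beta$ (which need not lie in $\Sigma_\alpha$ at all) with the second completion of $\mathbf c$ inside the cube $\Sigma_\alpha$. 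The paper instead characterizes, for an arbitrary chain $\mathbf c$, the $rr$-lex-first $\beta$ with $\mathbf c\in\Sigma_\beta$ by two explicit conditions (each block splits into sub-blocks listed in increasing order of their maxima; on each bottom block $\beta$ restricts to the sorted word with transpositions applied at $d$-divisible positions), and then shows that if $\alpha$ violates either condition one can extend $\mathbf c$ inside $\Sigma_\alpha$ to a maximal chain contradicting Corollary~\ref{cor:FormOfDeltaAlpha}, hence lying outside $\Delta_\alpha$.

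Second, the engine behind both (CLced-desc) and (CLced-bdry) in the paper is a structural result you never formulate: Corollary~\ref{cor:FormOfDeltaAlpha}, which says that for a maximal chain in $\Delta_\alpha$ (not merely $\Sigma_\alpha$) every merge of $B_1$ and $B_2$ has $\alpha$ of the form $\dots B_1B_2\dots$ up to switching, that a switch-and-split at $id$ forces $\alpha\circ\tau_{id}$ to ascend locally, and that $\max B_1$ is rightmost in $\alpha\vert_{B_1}$. This is what forces the sign pattern of a descending chain in $\Delta_\alpha$ to be unique --- for instance, the Claim in Case~1 of Proposition~\ref{pro:d-div-satisfiesCLced-desc} that a descending chain on $[\hat 0,x_0]$ uses only splits, never switch-and-splits, depends on it. Your proposed descent rule (``take a negative label of maximal absolute value, else the least positive label'') is actually the recipe for the lexicographically first \emph{increasing} chain from the $EL$-labeling proof; for descents one must instead argue that the crossing/non-crossing status of each cut, read off from $\alpha$ via the corollary, determines all the signs, after which at most one ordering of the resulting fixed label multiset is descending. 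Without isolating and proving that corollary, the step ``the cube structure leaves no freedom once the descending requirement is imposed'' does not go through.
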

We introduce some terms. If $B_{1}\,\vert\,\dots\,\vert\, B_{k}$
is a partition of $[n]$, then we say that $\alpha\in S_{n}$ has
the \emph{form} $B_{1}B_{2}\dots B_{k}$ if the first $\vert B_{1}\vert$
elements in the word $\alpha$ are in $B_{1}$, the next $\vert B_{2}\vert$
are in $B_{2}$, and so forth. When $k=2$, we say that $\alpha$
has \emph{switched form} $B_{1}B_{2}$ if $\alpha'$ has the form
$B_{1}B_{2}$ for $\alpha'=\alpha\circ(\vert B_{1}\vert\,\vert B_{1}\vert+1)$,
that is, for $\alpha'$ equal to $\alpha$ composed with the transposition
of adjacent elements at $\vert B_{1}\vert$. 

We can also talk of $\alpha$ having form $B_{1}B_{2}\dots B_{k}$
\emph{up to switching}, by which we mean some $\alpha'$ has the form
$B_{1}\dots B_{k}$, where $\alpha'$ is $\alpha$ up to transpositions
at the borders of some (but not necessarily all) of the blocks. Finally,
if $B\subseteq[n]$, then $\alpha\vert_{B}$ is the word $\alpha=\alpha(1)\alpha(2)\dots\alpha(n)$
with all $\alpha(i)$'s that are not in $B$ removed. 
\begin{example}
If $B_{1}=\{1,2,3\}$ and $B_{2}=\{4,5,6\}$, then $123456$, $321654$,
and $213465$ all have the form $B_{1}B_{2}$. $124356$ and $135246$
have switched form $B_{1}B_{2}$, while $152346$ does not have the
form $B_{1}B_{2}$, even up to switching.
\end{example}
Clearly, the $d$-divisible partition $B_{1}\,\vert\,\dots\,\vert\, B_{k}$
is in $\Sigma_{\alpha}$ if and only if $\alpha$ has the form $B_{1}B_{2}\dots B_{k}$
up to switching.
\begin{lem}
\label{lem:CEDcovers}Every maximal chain $\mathbf{c}$ in $\Pi_{n}^{d}$
is in $\Sigma_{\alpha}$ for some $\alpha\in A_{n}^{d}$. \end{lem}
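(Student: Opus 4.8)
The plan is to take a maximal chain $\mathbf{c}$ in $\Pi_n^d$ and build, from the bottom up, a permutation $\alpha$ that realizes it, then adjust $\alpha$ so that it lands in $A_n^d$ (i.e., so that $\alpha(n)=n$ and $\des\alpha=\{d,2d,\dots,n-d\}$). A maximal chain in $\Pi_n^d$ has the shape $\hat{0} \dotsupset a \dotsupset x_{n/d-1} \dotsupset \dots \dotsupset x_1 = \hat{1}$ (reading downward), where $a$ is an atom with blocks of size $d$, and each successive step merges two blocks. Equivalently, reading upward from $a$, the chain corresponds to a maximal chain in $[a,\hat1]\cong\Pi_{n/d}$, which records an order in which the $n/d$ blocks of $a$ get merged into one. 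First I would fix the atom $a = C_1\mid C_2\mid\dots\mid C_{n/d}$ with $\max C_1 < \max C_2 < \dots < \max C_{n/d}$, and within each block $C_j$ list the $d$ elements \emph{in increasing order}; concatenating these lists in the order $C_1, C_2, \dots, C_{n/d}$ gives a first candidate word $\alpha_0$. By construction $\alpha_0$ has form $C_1 C_2 \cdots C_{n/d}$, and since each block is written increasingly and $\max C_j < \min$ is \emph{not} assumed, $\alpha_0$ descends exactly at the block boundaries $d, 2d, \dots, n-d$ except possibly where $\max C_j < \alpha_0(jd+1)$; this is where the switch operation earns its keep.

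The key observation is the one the authors already record right before the lemma: the partition $B_1\mid\dots\mid B_k$ lies in $\Sigma_\alpha$ iff $\alpha$ has form $B_1 B_2\cdots B_k$ up to switching. So to get all of $\mathbf c$ into a single $\Sigma_\alpha$, I need one $\alpha$ that simultaneously has the form of every partition on the chain, up to switching. Because the chain is a refinement chain, having the form of the atom $a$ (up to switching) forces having the form of every coarser partition on the chain (up to switching) — coarsening only concatenates adjacent blocks, and a word of form $C_1\cdots C_{n/d}$ up to switching automatically has form $(C_1\cup\dots\cup C_{i})\mid(C_{i+1}\cup\dots)$ up to switching after merging. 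Thus it suffices to produce $\alpha\in A_n^d$ of form $C_1 C_2\cdots C_{n/d}$ up to switching. Starting from $\alpha_0$: for each boundary index $jd$ where $\alpha_0(jd) = \max C_j < \alpha_0(jd+1) = \min C_{j+1}$ (the non-crossing case), there is no descent; I switch $\alpha_0(jd)$ and $\alpha_0(jd+1)$, which creates the needed descent at $jd$ while keeping the word of form $C_1\cdots C_{n/d}$ up to switching. Doing this at every offending boundary yields a word $\alpha$ with $\des\alpha = \{d,2d,\dots,n-d\}$.

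It remains to arrange $\alpha(n)=n$. Note $n\in C_{n/d}$, and within $C_{n/d}$ we listed elements increasingly, so $\alpha(n)=\max C_{n/d}$; but $\max C_{n/d}$ is the largest of all the block maxima, hence $=n$, so this is automatic — unless the final boundary $n-d$ was an offending one requiring a switch, which cannot happen since there is no block $C_{n/d+1}$. So $\alpha(n)=n$ holds, $\des\alpha = \{d,2d,\dots,n-d\}$, hence $\alpha\in A_n^d$, and $\mathbf c\subseteq\Sigma_\alpha$ by the form-up-to-switching criterion. The main obstacle, and the point to write carefully, is checking that the switches performed at different boundaries do not interfere with one another and that after all switches the descent set is \emph{exactly} $\{d,2d,\dots,n-d\}$ (no extra descents appear inside a block, no required descent is destroyed); since each switch touches only two adjacent positions straddling a single block boundary and the within-block entries remain increasing, the switches at distinct boundaries are independent, so this bookkeeping is routine but must be stated. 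One should also double-check the degenerate case $d\mid n$ with $n/d=1$, where $\Sigma_\alpha$ is a point and the statement is trivial.
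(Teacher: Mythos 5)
There is a genuine gap at the central reduction step of your plan. You claim that if $\alpha$ has the form $C_{1}C_{2}\cdots C_{n/d}$ of the atom up to switching (with the atom's blocks sorted by their maxima), then $\alpha$ automatically has the form of every coarser partition on the chain up to switching, ``because coarsening only concatenates adjacent blocks.'' This is false: the partitions above the atom in the chain may merge blocks of the atom that are \emph{not} adjacent in the max-sorted order. Concretely, take $n=6$, $d=2$ and the maximal chain
\[
\hat{0}\ \prec\ 12\,\vert\,34\,\vert\,56\ \prec\ 1256\,\vert\,34\ \prec\ \hat{1}.
\]
Your construction produces $\alpha_{0}=123456$ and, after switching at the ascending boundaries $2$ and $4$, the word $\alpha=132546$. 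But the partition $1256\,\vert\,34$ lies in $\Sigma_{\alpha}$ only if each of $\{1,2,5,6\}$ and $\{3,4\}$ occupies a contiguous segment of $\alpha$ up to a switch at the boundary, and $\{3,4\}$ is not contiguous in $132546$ (nor in any word obtained from $123456$ by switches at positions $2$ and $4$). So this chain is not contained in your $\Sigma_{\alpha}$, and the argument does not prove the lemma. The correct order of the atom's blocks along $\alpha$ is dictated by the whole chain, not by the atom alone; here one needs $\alpha$ of the form $(34)(12)(56)$ up to switching, e.g.\ $\alpha=341526$.

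This is precisely why the paper's proof processes the chain from $\hat{1}$ downward: each edge splitting a block $B$ into $B_{1}\,\vert\,B_{2}$ (with $\max B_{1}<\max B_{2}$) forces $\alpha$ to have the form $\cdots B_{1}B_{2}\cdots$ up to switching within the segment occupied by $B$, and iterating down to the atom determines the block order by a quicksort-like recursion. Your final bookkeeping --- sorting within blocks, switching at ascending $d$-divisible boundaries, and checking $\alpha(n)=n$ and $\des\alpha=\{d,2d,\dots,n-d\}$ --- matches the last step of the paper's proof and is fine; what must be repaired is the determination of the block order, which has to come from the top of the chain rather than from the atom.
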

\begin{proof}
We will in fact construct the earliest such $\alpha$ according to
the rr-lex ordering, which will in turn help us with Corollary \ref{cor:FormOfDeltaAlpha}.
The proof has a similar feel to the well-known quicksort algorithm.
Let $\mathbf{c}=\{\hat{1}=c_{0}\dotsucc\dots\dotsucc c_{n/d}=\hat{0}\}$.

Consider first the edge $\hat{1}\dotsucc c_{1}$ in $\mathbf{c}$.
The edge splits $[n]$ into $B_{1}\,\vert\, B_{2}$, and clearly such
$\alpha$, if it exists, must have the form $B_{1}B_{2}$ or $B_{2}B_{1}$
up to switching. If $\max B_{1}<\max B_{2}$, then all permutations
in $A_{n}^{d}$ of the (possibly switched) form $B_{1}B_{2}$ come
before permutations of the (possibly switched) form $B_{2}B_{1}$,
so the rr-lex first $\alpha$ with $\mathbf{c}$ in $\Sigma_{\alpha}$
has the form $B_{1}B_{2}$ up to switching.

Apply this argument inductively down the chain. At $c_{i}$, we will
have shown that the rr-lex first $\alpha$ with $\mathbf{c}$ in $\Sigma_{\alpha}$
must have the form $B_{1}B_{2}\cdots B_{i+1}$ up to switching. Then
if $c_{i}\dotsucc c_{i+1}$ splits block $B_{j}$ into $B_{j,1}$
and $B_{j,2}$, with $\max B_{j,1}<\max B_{j,2}$, an argument similar
to that with $\hat{1}\dotsucc c_{1}$ gives that $\alpha$ must in
fact have the form \[
B_{1}B_{2}\dots B_{j-1}B_{j,1}B_{j,2}B_{j+1}\dots B_{i+1}\]
 up to switching.

At the end, we have shown the earliest $\alpha$ having $\mathbf{c}$
in $\Sigma_{\alpha}$ must have the form $B_{1}\dots B_{n/d}$ up
to switching. Conversely, it is clear from the above that for any
$\alpha$ of this form, $\mathbf{c}$ is in $\Sigma_{\alpha}$. Sort
the elements of each $B_{i}$ in ascending order to get a permutation
$\alpha_{0}$. This $\alpha_{0}$ is in $S_{n}$ but not necessarily
in $A_{n}^{d}$, so we perform a switch at each $d$-divisible position
where there is an ascent (i.e., where $B_{i}<B_{i+1}$). This gives
us an element $\alpha\in A_{n}^{d}$ of the given form up to switching,
and finishes the proof of the statement.

\medskip{}
We continue nonetheless to finish showing that $\alpha$ is the first
element in $A_{n}^{d}$ with $\mathbf{c}$ in $\Sigma_{\alpha}$.
We need to show that if $\beta$ is another element of $A_{n}^{d}$
with the same form up to switching of $B_{1}B_{2}\dots B_{n/d}$ (but
different switches), then $\beta\gtrr\alpha$. If $B_{i}<B_{i+1}$,
then both $\alpha$ and $\beta$ are switched at $id$ (as otherwise
we are not in $A_{n}^{d}$). Otherwise, if $\beta$ is a switch at
$id$, then the switch exchanges $\beta(id)$ and $\beta(id+1)$ (up
to resorting the blocks). Since $\beta(id)>\beta(id+1)$, {}``unswitching''
moves a larger element of $[n]$ later in the permutation, yielding
an rr-lex earlier element of the given form up to switching.
\end{proof}
Let $\Delta_{\alpha}$ be the simplicial complex generated by maximal
chains that are in $\Sigma_{\alpha}$ ($\alpha\in A_{n}^{d}$), but
in no $\Sigma_{\beta}$ for $\beta\in A_{n}^{d}$ with $\beta\ltrr\alpha$.
In the following corollary, we summarize the information from the
proof of Lemma \ref{lem:CEDcovers} about the form of $\alpha$ with
$\mathbf{c}$ in $\Delta_{\alpha}$.
\begin{cor}
\label{cor:FormOfDeltaAlpha}Let $\mathbf{c}$ be a maximal chain
in $\Delta_{\alpha}$, with $y\dotsucc x$ an edge in $\mathbf{c}$
which merges blocks $B_{1}$ and $B_{2}$ into block $B$ ($\max B_{1}<\max B_{2}$).
Then
\begin{enumerate}
\item $\alpha$ has the form $\dots B_{1}B_{2}\dots$, up to switching.
\item Let $\tau_{id}$ be the transposition exchanging $id$ and $id+1$.
If $y\dotsucc x$ corresponds to a switch-and-split at $id$, then
the permutation $\alpha\circ\tau_{id}$ is ascending between positions
$(i-1)d+1$ and $(i+1)d$.
\item $\alpha\vert_{B_{1}}=\dots\max B_{1}$, i.e., $\max B_{1}$ is rightmost
in $\alpha\vert_{B_{1}}$.
\end{enumerate}
\end{cor}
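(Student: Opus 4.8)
The plan is to derive all three parts by inspecting the explicit construction of the $rr$-lex-minimal witnessing permutation carried out in the proof of Lemma~\ref{lem:CEDcovers}. First I would observe that, since $\mathbf{c}\in\Delta_\alpha$, the permutation $\alpha$ is precisely the $rr$-lex-first element of $A_n^d$ with $\mathbf{c}\in\Sigma_\alpha$, i.e.\ the $\alpha$ built in that proof; parts (1) and (3) are then essentially readings of the construction, while part (2) is the one point needing a little extra care.

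For (1), I would run the inductive construction down $\mathbf{c}$ to the step that processes the edge $y\dotsucc x$. At that stage the construction has shown that $\alpha$ must have form $\dots B\dots$ up to switching, where $B=B_1\cup B_2$; and the split of $B$ into $B_1\mid B_2$ (with $\max B_1<\max B_2$) forces the refinement to $\dots B_1B_2\dots$ up to switching, since ordering the two halves the other way would, after reinstating the forced block-border switches, yield an $rr$-lex-earlier permutation — exactly the argument used for the edge $\hat1\dotsucc c_1$.

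For (3), I would continue the construction to the atom level: there $B_1$ is refined into atom blocks $B_1^{(1)}\mid\dots\mid B_1^{(r)}$, and since every split in the construction places the half of larger maximum on the right, $\max B_1$ lies in $B_1^{(r)}$ and equals $\max B_1^{(r)}$. Hence in $\alpha_0$ (each atom block sorted in ascending order) the element $\max B_1$ occupies the position $p$ that is the right border of $B_1$, which is a multiple of $d$. The switches turning $\alpha_0$ into $\alpha$ act on the disjoint pairs $\{md,md+1\}$ (disjoint because $d>1$), and $\max B_1$ — sitting at the multiple-of-$d$ position $p$, and never at a position of the form $md+1$ (such positions hold block minima in $\alpha_0$, and $d>1$ forbids a singleton atom block) — is displaced at most one step to the right, to $p+1$. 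Either way no element of $B_1$ sits to its right in $\alpha$, which is (3).

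For (2), suppose $y\dotsucc x$ is realized in $\Sigma_\alpha$ as a switch-and-split at the $d$-divisible position $id$. Since $\alpha\in A_n^d$ is a concatenation of ascending runs of length $d$ with a descent at each multiple of $d$, it is ascending on $(i-1)d+1,\dots,id$ and on $id+1,\dots,(i+1)d$, with its only descent at $id$; applying $\tau_{id}$ swaps the two values straddling that descent. I would check that this produces an ascending word on the whole window $(i-1)d+1,\dots,(i+1)d$ by tracing the construction: the switch at $id$ was performed only because $\alpha_0$ had an ascent there, i.e.\ because the two atom blocks meeting at position $id$ satisfy $\max(\text{left})<\min(\text{right})$, and undoing the switch restores these two contiguous atom blocks in ascending order, which glue into a single increasing run of length $2d$. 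The step I expect to demand the most care is controlling the interference of the switches at the neighbouring multiples $(i-1)d$ and $(i+1)d$ with the endpoints of this window; but such a neighbouring switch, too, occurs only at an ascent of $\alpha_0$, so the inequality $\max(\text{block before})<\min(\text{left atom block})$ (and its mirror on the right) holds, and that is exactly what keeps $\alpha\circ\tau_{id}$ increasing across those endpoints. Assembling these observations gives (2).
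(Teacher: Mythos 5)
Your proposal is correct and follows the paper's route: all three parts are read off the explicit construction of the $rr$-lex-first witness in the proof of Lemma \ref{lem:CEDcovers} (the paper in fact dismisses (1) and (2) as ``clear'' from that proof, while you supply the details, including the endpoint analysis needed for (2)). The only divergence is in part (3), where the paper argues by contradiction --- if $\max B_{1}$ were not rightmost in $\alpha\vert_{B_{1}}$ it would end some interior $d$-segment of $B_1$'s range, and the split or switch-and-split of $\mathbf{c}$ at that position would violate (1) or (2) respectively --- whereas you trace $\max B_{1}$ directly to the right border of $B_{1}$ in $\alpha_{0}$ and check the effect of the switches; both arguments are valid.
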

\begin{proof}
(1) and (2) are clear from the proof of Lemma \ref{lem:CEDcovers}.

For (3), suppose that $\max B_{1}$ is not rightmost in $\alpha\vert_{B_{1}}$.
Then since $\alpha$ is ascending on $d$-segments, we have that $\max B_{1}$
is rightmost in some $d$-segment of $\alpha\vert_{B_{1}}$. If a
switch-and-split occurs at $\max B_{1}$ then we have a contradiction
of (2), while a split contradicts (1).
\end{proof}
Every chain passing through an atom $a$ has the same labels up to
sign, and Corollary \ref{cor:FormOfDeltaAlpha} tells us what the
labels are. It is now not difficult to prove (CLced-desc) and (CLced-bdry).
\begin{prop}
\label{pro:d-div-satisfiesCLced-desc}Let $[x_{m},x_{0}]$ be an interval
with $x_{m},x_{0}\in\Delta_{\alpha}$. Then there is at most one (dual)
descending maximal chain $\mathbf{c}$ on $[x_{m},x_{0}]$ which is
in $\Delta_{\alpha}$.\end{prop}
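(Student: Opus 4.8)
The plan is to reduce to a single block of $x_{0}$ and then to show that a descending maximal chain in $\Delta_{\alpha}$, if one exists, is forced edge by edge reading downward from $x_{0}$.

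\emph{Reduction to one block.} By Lemma~\ref{lem:DDivProductStructure}(3), $[x_{m},x_{0}]$ is the direct product of the intervals $[x_{m}|_{B},\{B\}]$ as $B$ ranges over the blocks of $x_{0}$, and (exactly as in the proof that $\lambda$ is a dual $EL$-labeling) the label of a cover relation of $[x_{m},x_{0}]$ depends only on the block it refines and agrees with the corresponding label of that factor. Since $x_{m}\neq\hat{0}$ no label $0$ occurs, and the absolute values of the labels occurring in $[x_{m},x_{0}]$ are exactly the maxima of the sub-blocks of $x_{m}$, which are pairwise distinct. A maximal chain of a product is a shuffle of maximal chains of the factors; since all labels are distinct integers, such a shuffle is descending if and only if each factor chain is descending and the shuffle is the unique order-merge of the factors' label sequences. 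In particular a descending maximal chain of $[x_{m},x_{0}]$ is determined by its restrictions to the blocks, and each of its edges lies in a single block, so by Corollary~\ref{cor:FormOfDeltaAlpha} the hypothesis that it lies in $\Delta_{\alpha}$ can be checked block by block. It therefore suffices to prove the statement when $x_{0}$ is a single block $B$.

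\emph{The single-block case.} Write $x_{m}|_{B}=B_{1}\,\vert\cdots\vert\,B_{k}$ with $b_{i}=\max B_{i}$, relabelled so that $b_{1}<\cdots<b_{k}$; note $b_{k}=\max B$. Applying Corollary~\ref{cor:FormOfDeltaAlpha}(1) to each edge of a chain $\mathbf{c}$ in $\Delta_{\alpha}$ shows that the $B_{i}$ occur as consecutive runs of $\alpha|_{B}$ in one fixed order (up to switching at the gaps), and that every merge occurring along any $\Delta_{\alpha}$-chain on $[x_{m},x_{0}]$ joins one consecutive group of runs to an adjacent one. I then claim: for each $i<k$ the label of absolute value $b_{i}$ carried by \emph{any} descending $\Delta_{\alpha}$-chain has a sign that depends only on $\alpha|_{B}$ and $i$. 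Indeed the ``$b_{i}$-edge'' is the first merge, climbing the (planar) merge tree from the leaf $B_{i}$, at which the incoming group has the larger maximum; the incoming group is therefore forced to reach the nearest run of maximum exceeding $b_{i}$ on the relevant side, and whether that merge is a non-crossing refinement is read off from $\alpha|_{B}$ alone --- here Corollary~\ref{cor:FormOfDeltaAlpha}(3) locates $b_{i}$ within its run and~(2) accounts for switched splits. Together with the permutation property~(\ref{eq:SSPermutationProperty}) --- which forces all $\Delta_{\alpha}$-chains from a fixed vertex down to $x_{m}$ to carry the same set of absolute values --- this shows that any two descending $\Delta_{\alpha}$-chains on $[x_{m},x_{0}]$ carry the same multiset of \emph{signed} labels, necessarily in the unique decreasing order. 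Hence if two such chains agreed down to a vertex $x_{i}$ and then diverged, the one taking the larger label out of $x_{i}$ would carry a label strictly exceeding every label of the other, which is impossible. Equivalently: reading down from $x_{0}$ one must always take the edge carrying the maximal remaining label; that this edge is unique is the uniqueness already proved in the dual $EL$-labeling argument --- a negative label $-b_{s}$ names the unique non-crossing refinement $x_{i}\wedge(B_{1}\cdots B_{s}\,\vert\,B_{s+1}\cdots B_{k})$, and a positive label names a unique crossing split respecting the run-order of $\alpha|_{B}$.

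The step I expect to be the main obstacle is the sign-independence claim of the previous paragraph: that inside $\Delta_{\alpha}$ each value $b_{i}$ is attached a well-defined sign regardless of the descending chain chosen. The danger is a chain that, by merging the small sub-blocks around $B_{i}$ in a different order, converts the $b_{i}$-edge from a crossing refinement to a non-crossing one or vice versa; ruling this out is exactly where one must use that $\Delta_{\alpha}$-merges are constrained to respect the run-order of $\alpha|_{B}$ (so the side and the contents of the incoming group at the $b_{i}$-edge are not really free), together with parts~(2) and~(3) of Corollary~\ref{cor:FormOfDeltaAlpha}. Once this is settled, uniqueness of the descending $\Delta_{\alpha}$-chain --- and hence the proposition --- follows at once.
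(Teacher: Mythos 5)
Your reduction to a single block, and your overall strategy for the case $x_{m}\neq\hat{0}$ --- show that the multiset of \emph{signed} labels carried by a descending chain in $\Delta_{\alpha}$ is forced, so that the decreasing order and hence the chain itself is forced --- is exactly the paper's strategy for that case. But there are two genuine gaps. First, you silently assume $x_{m}\neq\hat{0}$ (``Since $x_{m}\neq\hat{0}$ no label $0$ occurs''), and the case $x_{m}=\hat{0}$ never reappears. It cannot be dispensed with: the shelling argument in Theorem \ref{thm:ELcedsAreCeds} uses descending-chain uniqueness on intervals with bottom $\hat{0}$, and Lemma \ref{lem:DDivProductStructure}(3), which underlies your block-by-block reduction, is only stated for elements of $\Pi_{n}^{d}\setminus\{\hat{0}\}$; the interval $[\hat{0},x_{0}]$ is a lower reduced product, its bottom edges carry the label $0$, and the edges into $\hat{0}$ leave from atoms rather than from restrictions of a partition $x_{m}$. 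The paper handles this case by a separate argument: since the bottom label is $0$, every other label of a descending chain must be positive, and one then shows (the Claim inside the paper's proof) that no edge of such a chain can be a switch-and-split --- for a minimal switch-and-split block $B=B_{1}\cup B_{2}$, parts (2) and (3) of Corollary \ref{cor:FormOfDeltaAlpha} force either a negative label (when $\vert B_{2}\vert=d$, since $B_{1}\mid B_{2}$ is then non-crossing) or a later label of absolute value exceeding $\max B_{1}$, either of which is an ascent. Hence the descending chain consists of pure splits and is unique. Nothing in your write-up supplies this.

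Second, in the single-block case you correctly isolate the crux --- that the sign attached to each value $b_{i}$ is the same for every descending $\Delta_{\alpha}$-chain --- but you then explicitly defer it as ``the main obstacle,'' so the proof is incomplete as written. The paper's resolution is concrete and worth comparing with your merge-tree heuristic. Indexing the runs by their position in $\alpha\vert_{B}$ (rather than by sorted maxima, which makes your statements about adjacency ambiguous), the edge labelled $\pm b_{i}$ is the split at the gap between $B_{i}$ and $B_{i+1}$, and its sign in a descending chain is negative precisely when the adjacent pair $B_{i}\mid B_{i+1}$ is non-crossing. The ``only if'' direction is immediate; for the ``if'' direction one shows that if $B_{i}\mid B_{i+1}$ is non-crossing but the chain's gap-$i$ split is crossing, then Corollary \ref{cor:FormOfDeltaAlpha}(3) gives $\min(B_{i+2}\cup\cdots)<\max B_{i}<\min B_{i+1}$, so the gap-$(i+1)$ split is also crossing and contributes the larger positive label $b_{i+1}>b_{i}$ on a lower edge, contradicting descent. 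You would need to supply an argument of this kind (or complete your merge-tree argument) before the uniqueness ``follows at once.''
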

\begin{proof}
There are two cases:
\begin{caseenv}
\item $x_{m}=\hat{0}$.

It suffices to consider a block $B$ of $x_{0}$. Partitions of $B$
corresponding to edges in $\Sigma_{\alpha}$ must either split or
switch-and-split $\alpha\vert_{B}$ at $d$-divisible positions, and
as every chain on $[\hat{0},x_{0}]$ has bottommost label 0, all other
edges of a descending chain must have positive labels (and so correspond
to crossing partitions).
\begin{claim}
All edges of such a descending chain correspond to splittings of $\alpha$.\end{claim}
\begin{proof}
(of Claim) Suppose otherwise. Without loss of generality we can assume
that $x_{0}\dotsucc x_{1}$ in $\mathbf{c}$ corresponds to a switch-and-split
of $B$ into $B_{1}\,\vert\, B_{2}$, with $B$ the block of smallest
size which is switch-and-split by an edge in $\mathbf{c}$. We will
show that $\mathbf{c}$ has an ascent. 

Corollary \ref{cor:FormOfDeltaAlpha} part 2 tells us that the first
$d$ letters in $\alpha\vert_{B_{2}}$ are strictly greater than the
last $d$ in $\alpha\vert_{B_{1}}$, and since $\max B_{1}$ is rightmost
in $\alpha\vert_{B_{1}}$, that the first $d$ letters of $\alpha\vert_{B_{2}}$
are strictly greater than all of $B_{1}$. Since $\lambda(x_{m-1}\dotsucc\hat{0})=0$,
any negative label gives an ascent. If $\vert B_{2}\vert=d$, then
we have shown that $B_{1}\,\vert\, B_{2}$ is non-crossing (giving
a negative label). If on the other hand $\vert B_{2}\vert>d$, then
any subdivision of $B_{2}$ gives a label with absolute value $>\max B_{1}$,
hence an ascent. In either case, we contradict $\mathbf{c}$ being
a descending chain.
\end{proof}
It follows immediately that a descending chain on $[\hat{0},x_{0}]$
is unique.

\item $x_{m}\neq\hat{0}$.

As usual, we consider what happens to a block $B$ of $x_{0}$. In
$x_{m}$, let $B$ partition as $B_{1}\,\vert\,\dots\,\vert\, B_{k}$,
where $\alpha\vert_{B}$ has the form $B_{1}B_{2}\dots B_{k}$ up
to switching. Then every edge in $\Delta_{\sigma}$ comes from subdividing
at some $B_{i}$, i.e., as shown at the dotted line here\[
\dots\cup B_{j}\,\vert\, B_{j+1}\cup\dots\cup B_{i}\,\vdots\, B_{i+1}\cup\dots\cup B_{l}\,\vert\,\dots\]
Let $b_{i}=\max B_{i}$, so every chain on $[x_{m},x_{0}]$ has labels
$\pm b_{1},\pm b_{2},\dots,\pm b_{k-1}$.

Suppose that $\dots\cup B_{i}\,\vert\, B_{i+1}\cup\dots$ is crossing,
but $B_{i}\,\vert\, B_{i+1}$ is non-crossing. Corollary \ref{cor:FormOfDeltaAlpha}
part 3 tells us that $\max(\dots\cup B_{i})=\max B_{i}$, so that
\[
\min(B_{i+2}\cup\dots\cup B_{k})<\max B_{i}<\min B_{i+1}<\max B_{i+1}.\]
It follows that $B_{i+1}\,\vert\, B_{i+2}\cup\dots$ is also crossing.
Thus, if $B_{i}\,\vert\, B_{i+1}$ is non-crossing, then (positive)
$b_{i}$ is not the label of the first edge of a descending chain
$\mathbf{c}$, since $b_{i+1}>b_{i}$ would then be the label of a
later edge. That is, if $B_{i}\,\vert\, B_{i+1}$ is non-crossing,
then a descending chain has a $-b_{i}$ label. The {}``only if''
direction is immediate, thus there is a unique permutation and set
of signs for the $\pm b_{1},\dots,\pm b_{k-1}$ that could label a
descending chain.\qedhere

\end{caseenv}
\end{proof}
\begin{prop}
\label{pro:d-div-satisfiesCLced-bdry}Let $\mathbf{c}$ be a (non-maximal)
chain with extensions in both $\Sigma_{\alpha}$ and $\Sigma_{\beta}$,
$\beta\ltrr\alpha$. Then $\mathbf{c}$ has maximal extensions in
$\mathcal{M}(\Sigma_{\alpha})\setminus\mathcal{M}(\Delta_{\alpha})$.\end{prop}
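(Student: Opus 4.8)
The plan is to produce the required maximal extension explicitly by refining $\mathbf c$ inside its gap the way $\beta$ does, and then to read off from Lemma~\ref{lem:CEDcovers} and Corollary~\ref{cor:FormOfDeltaAlpha} that the resulting chain lies in $\Sigma_\alpha$ but not in $\Delta_\alpha$.

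First I would set up the gap. Since $\mathbf c$ is not maximal, some consecutive pair $u>v$ of $\mathbf c\cup\{\hat 0,\hat 1\}$ spans an interval $[v,u]$ of rank at least $2$ (allowing $u=\hat 1$ or $v=\hat 0$), and by Lemma~\ref{lem:DDivProductStructure} every such interval is a product of intervals inside the $\Pi_{B}^d$'s. I would use this product structure to restrict attention to the block (or blocks) of $u$ on which $\beta$ actually disagrees with $\alpha$. Because $\alpha,\beta\in A_n^d$ are sorted ascending on each of their $d$-segments, and because both are compatible with all of $\mathbf c$, such a disagreement can only occur inside a block that $\mathbf c$ still refines --- either a block of size $>d$ in the finest partition of $\mathbf c$, to be subdivided on the way down to $\hat 0$, or a block whose order of refinement is not yet fixed inside an interior jump $[x_{i+1},x_i]$ of $\mathbf c$. (It is important that the hypothesis $\mathbf c\in\Sigma_\beta$ with $\beta\ltrr\alpha$ is genuinely used here: without it, $\mathbf c$ could be non-maximal only because of an \emph{independent} interleaving of refinements of distinct blocks, in which case every maximal extension of $\mathbf c$ inside $\Sigma_\alpha$ has the same canonical permutation; but one checks directly that such a $\mathbf c$ lies in no $\Sigma_\beta$ with $\beta\neq\alpha$, so the hypothesis rules this out.)

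With the gap localized, I would build the extension by refining the relevant block(s) exactly as $\beta$ does: split $\alpha$ at precisely the $d$-divisible positions that $\beta$'s block boundaries dictate, with the switch choices $\beta$ makes. That this produces a legal chain in $\Sigma_\alpha$ is where Corollary~\ref{cor:FormOfDeltaAlpha} enters: for a bottom gap one also uses that the edge down to $\hat 0$ carries label $0$, so that any non-crossing refinement available to $\beta$ is available to $\alpha$; for an interior gap one uses that $\alpha$ and $\beta$, both containing the same atom of $\mathbf c$, must order that atom's blocks differently, with the greedy ``smaller-$\max$-first'' step of Lemma~\ref{lem:CEDcovers} pinning down why. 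Then, running the canonical-permutation construction of Lemma~\ref{lem:CEDcovers} on this extension $\mathbf c'$ and comparing with $\alpha$: since $\beta\ltrr\alpha$, at least one split inside the gap orders its two sub-blocks in the way $\beta$ does and $\alpha$ does not, so $\alpha(\mathbf c')$ disagrees with $\alpha$ at a position that $rr$-lex examines no later than where $\beta$ first beats $\alpha$; hence $\alpha(\mathbf c')\ltrr\alpha$, so $\mathbf c'\in\mathcal M(\Sigma_\alpha)\setminus\mathcal M(\Delta_\alpha)$, as required.

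The step I expect to be the main obstacle is the localization: showing that the gap of $\mathbf c$ really absorbs all of the disagreement between $\alpha$ and $\beta$, and in particular controlling the ``up to switching'' slack at block boundaries so that $\alpha$ and $\beta$ cannot differ only in ways invisible to the canonical construction of Lemma~\ref{lem:CEDcovers}. Once the correct block and the correct place where $\alpha$ and $\beta$ first disagree in $rr$-lex order have been isolated, the construction of the extension and the verification that it stays in $\Sigma_\alpha$ with a strictly $rr$-lex-earlier canonical permutation should be a routine application of Corollary~\ref{cor:FormOfDeltaAlpha}, running parallel to the case analysis in the proof of Proposition~\ref{pro:d-div-satisfiesCLced-desc}.
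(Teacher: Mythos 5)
Your overall strategy --- exhibit an explicit maximal extension and use Corollary \ref{cor:FormOfDeltaAlpha} to certify that it is not in $\Delta_{\alpha}$ --- is the right one, but the central construction has a genuine gap. You propose to refine $\mathbf{c}$ inside its gap ``the way $\beta$ does,'' i.e.\ to impose $\beta$'s sub-block structure on the extension. A partition reachable from $\beta$ by splits and switch-and-splits need not be reachable from $\alpha$ at all: with $d=2$ and a block $B=\{1,2,3,4\}$ of the finest element of $\mathbf{c}$, if $\alpha\vert_{B}=1324$ and $\beta\vert_{B}=1423$, then $\beta$ can produce the refinement $\{1,4\}\,\vert\,\{2,3\}$ of $B$, which is not obtainable from $\alpha\vert_{B}$ by any split or switch-and-split; so the chain you build need not lie in $\Sigma_{\alpha}$, and Corollary \ref{cor:FormOfDeltaAlpha} (which constrains chains already known to lie in $\Delta_{\alpha}$) cannot repair this. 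Your preliminary localization step is also not correct as stated: $\alpha$ and $\beta$ can disagree at an edge already present in $\mathbf{c}$ --- namely about which of the two sub-blocks created there comes first in the word --- not only inside a rank-$\geq2$ gap (that case is actually the easy one, since then Corollary \ref{cor:FormOfDeltaAlpha} part 1 disqualifies \emph{every} maximal extension from $\Delta_{\alpha}$); and the parenthetical claim that a chain whose gaps are ``independent interleavings'' lies in no $\Sigma_{\beta}$ with $\beta\neq\alpha$ is false, since $\beta$ may differ from $\alpha$ by switches or by block orderings elsewhere in the chain.

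The paper's proof avoids both problems by never imitating $\beta$. It isolates two conditions characterizing the $rr$-lex-first permutation whose $\Sigma$ contains $\mathbf{c}$: (1) at every step $c_{i-1}>c_{i}$ the sub-blocks of each split block appear in the word in increasing order of their maxima, and (2) on each block of the finest nonzero element of $\mathbf{c}$ the restriction is the ascending word with transpositions at $d$-divisible positions. The hypothesis forces $\alpha$ to violate (1) or (2), and each violation is repaired by an $\alpha$-intrinsic move that automatically stays in $\Sigma_{\alpha}$: merge the two consecutive-in-$\alpha$ sub-blocks whose maxima are out of order (so the resulting edge violates Corollary \ref{cor:FormOfDeltaAlpha} part 1), or switch-and-split $\alpha$ at every $d$-divisible position of the offending bottom block (violating part 2). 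To salvage your argument, replace ``refine as $\beta$ does'' with a case analysis of this kind on how $\alpha$ fails to be $rr$-lex-minimal for $\mathbf{c}$.
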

\begin{proof}
Let $\mathbf{c}=\{\hat{1}=c_{0}>c_{1}\dots>c_{m}>c_{m+1}=\hat{0}\}$.
The first $\beta$ with $\mathbf{c}$ in $\Sigma_{\beta}$ obeys the
following two conditions:
\begin{enumerate}
\item For each $c_{i}\neq\hat{0}$, each block $B$ in $c_{i-1}$ splits
into sub-blocks $B_{1},\dots,B_{k}$ in $c_{i}$, where $\max B_{1}<\dots<\max B_{k}$.
The restriction $\beta\vert_{B}$ is of the form $B_{1}B_{2}\dots B_{k}$.
(By repeated application of Corollary \ref{cor:FormOfDeltaAlpha}
.)
\item For each block $B$ of $c_{m}$, $\beta\vert_{B}$ is the permutation
\[
\{b_{1}b_{2}\dots b_{d+1}b_{d}\dots b_{id+1}b_{id}\dots b_{k}\},\]
where $B=\{b_{1},\dots,b_{k}\}$ for $b_{1}<\dots<b_{k}$. That is,
$\beta\vert_{B}$ is the ascending permutation of the elements of
$B$, with transpositions applied at $d$-divisible positions. (The
proof is by starting at the end and working to the front, greedily
taking the greatest possible element for each position.) 
\end{enumerate}
Since $\alpha$ is not the first permutation in $A_{n}^{d}$ such
that $\mathbf{c}\in\Sigma_{\alpha}$, $\alpha$ must violate at least
one of these. If it violates (1) for some $B$, then $\alpha\vert_{B}$
has the form $B_{1}\dots B_{k}$ with $\max B_{j}>\max B_{j+1}$.
Merge $B_{j}$ and $B_{j+1}$ to add an edge down from $c_{i-1}$
that is in $\Sigma_{\alpha}$, otherwise extend arbitrarily in $\Sigma_{\alpha}$.
By Corollary \ref{cor:FormOfDeltaAlpha} part 1, the resulting chain
is not in $\Delta_{\alpha}$. 

If $\alpha\vert_{B}$ violates (2) for some $B$, then extend $\mathbf{c}$
by switch-and-splitting at every $d$-divisible position of $B$,
otherwise arbitrarily in $\Sigma_{\alpha}$. At the bottom, $B$ is
partitioned into some $B_{1}\,\vert\,\dots\,\vert\, B_{k}$. Since
(2) is violated, applying transpositions to $\alpha$ at $d$-divisible
partitions gives a descent. But this contradicts the conclusion of
Corollary \ref{cor:FormOfDeltaAlpha} part 2, and the resulting chain
is not in $\Delta_{\alpha}$. 
\end{proof}
We check the $CL$-ced properties: Wachs had already proved (CLced-polytope)
as presented in Theorem \ref{thm:d-divisible-homology-basis}, Lemma
\ref{lem:CEDcovers} gives us (CLced-union), Proposition \ref{pro:d-div-satisfiesCLced-desc}
gives (CLced-desc), and Proposition \ref{pro:d-div-satisfiesCLced-bdry}
gives (CLced-bdry). We have completed the proof of Theorem \ref{thm:d-divisible-EL-ced}.

\section{\label{sec:coset-lattice}The coset lattice}

\subsection{Group theory background}

The \emph{coset poset} of $G$, denoted $\cosetposet(G)$, is the
set of all right cosets of all proper subgroups of $G$, ordered under
inclusion. The \emph{coset lattice} of $G$, denoted $\cosetlat(G)$,
is $\cosetposet(G)\cup\{\emptyset,G\}$, that is, $\cosetposet(G)$
with a top $\hat{1}=G$ and bottom $\hat{0}=\emptyset$ added. With
our definitions, it makes sense to look at the order complex of $\cosetlat(G)$
(which is the set of all chains of $\cosetposet(G)$), and so we talk
about the coset lattice, even though {}``coset poset'' has a better
sound to it. We notice that $\cosetlat(G)$ has meet operation $Hx\wedge Ky=Hx\cap Ky$
and join $Hx\vee Ky=\langle H,K,xy^{-1}\rangle y$ (so it really is
a lattice.) General background on the coset lattice can be found in
\cite[Chapter 8.4]{Schmidt:1994}, and its topological combinatorics
have been studied in \cite{Brown:2000,Ramras:2005,Woodroofe:2007}.

The \emph{subgroup lattice} of $G$, denoted $L(G)$ is the set of
all subgroups of $G$, ordered by inclusion. General background can
be found in \cite{Schmidt:1994}, and its topological combinatorics
have been studied extensively, for example in \cite{Shareshian:2001,Thevenaz:1985}.

Notice that for any $x\in G$, the interval $[x,G]$ in $\cosetlat(G)$
is isomorphic to $L(G)$. It is a theorem of Iwasawa \cite{Iwasawa:1941}
that $L(G)$ is graded if and only if $G$ is supersolvable, hence
$\cosetlat(G)$ is graded under the same conditions. As we have only
defined convex ear decompositions for pure complexes, we are primarily
interested in supersolvable groups in this paper.

Schweig proved the following:
\begin{prop}
\emph{(Schweig \cite{Schweig:2006})} For a supersolvable lattice
$L$, the following are equivalent:
\begin{enumerate}
\item $L$ has a convex ear decomposition.
\item $L$ is doubly Cohen-Macaulay.
\item Every interval of $L$ is complemented.
\end{enumerate}
\end{prop}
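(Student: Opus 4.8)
The plan is to establish the cycle of implications $(1)\Rightarrow(2)\Rightarrow(3)\Rightarrow(1)$. The implication $(1)\Rightarrow(2)$ is immediate: by Theorem~\ref{thm:CEDthen2CM} every complex with a convex ear decomposition is doubly Cohen--Macaulay, and this applies verbatim to $\vert L\vert$.

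For $(3)\Rightarrow(1)$ I would reduce to part~(2) of Corollary~\ref{thm:ExamplesWCLCED}, which already produces an $EL$-ced, hence a convex ear decomposition, for every supersolvable lattice whose Möbius function is nonzero on every interval. So it suffices to show that a relatively complemented supersolvable lattice $L$ has $\mu(a,b)\neq 0$ for every interval $[a,b]$. I would use three ingredients: (i) each interval $[a,b]$ of a supersolvable lattice is again supersolvable --- contract a chain of left-modular elements $x_i$ of $L$ by $x\mapsto(a\vee x)\wedge b$ and discard repeats, cf.\ \cite{Stanley:1972,McNamara/Thomas:2006} --- and it is relatively complemented whenever $L$ is; (ii) Stanley's factorization $\chi(L,t)=\prod_{i=1}^{n}(t-e_i)$ for a supersolvable lattice, with $e_i$ the number of atoms carrying supersolvable label $i$, so that $\mu(\hat 0,\hat 1)=(-1)^{n}\prod_i e_i$ vanishes precisely when some label is absent from the atoms; and (iii) the fact that relative complementedness forbids a missing label: an atom has label $\ell$ exactly when it lies below $x_{\ell-1}$ but not below $x_\ell$, so if label $\ell$ were absent then every atom of the interval $[\hat 0,x_{\ell-1}]$ would lie below its coatom $x_\ell$, leaving $x_\ell$ with no complement in $[\hat 0,x_{\ell-1}]$. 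Applying (ii) and (iii) inside each interval $[a,b]$ (legitimate by (i)) gives $\mu(a,b)\neq 0$ throughout $L$, and part~(2) of Corollary~\ref{thm:ExamplesWCLCED} then finishes.

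The main obstacle is $(2)\Rightarrow(3)$, which I would argue contrapositively in two moves. First, a doubly Cohen--Macaulay lattice $L$ of rank $n$ has $\mu(\hat 0,\hat 1)\neq 0$: since $\vert L\vert$ is Cohen--Macaulay its only possibly nonzero reduced homology is in the top degree $n-2$, so $\mu(\hat 0,\hat 1)=\pm\dim_k\tilde H_{n-2}(\vert L\vert;k)=\pm h_{n-1}$, and the inequality $h_{n-1}\geq h_0=1$ valid for $2$-CM complexes --- part of the $h$-vector theory for such complexes, cf.\ \cite{Swartz:2006}, and in any case provable directly by induction on $n$ --- shows $\tilde H_{n-2}(\vert L\vert)\neq 0$. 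Since intervals of a $2$-CM poset are $2$-CM (they are links in the order complex, as in the discussion following Theorem~\ref{thm:CEDthen2CM}), this yields $\mu(a,b)\neq 0$ for \emph{every} interval of $L$. Second, if some interval $[a,b]$ of $L$ failed to be complemented, pick $z\in(a,b)$ with no complement in $[a,b]$; Crapo's complementation theorem (see \cite{Stanley:1996}) then forces $\mu(a,b)=0$, a contradiction. Hence every interval of $L$ is complemented, which is $(3)$.

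The two points I expect to need the most care are: in $(3)\Rightarrow(1)$, reconciling the \emph{dual} supersolvable labeling of Section~\ref{sub:Supersolvable-lattices} with Stanley's (non-dual) characteristic-polynomial factorization and carrying the ``missing label $\leftrightarrow$ uncomplemented coatom'' bookkeeping out for an arbitrary interval rather than just for $L$; and, in $(2)\Rightarrow(3)$, pinning down the ``$2$-CM $\Rightarrow\mu\neq0$'' input with a clean citation or a self-contained induction, since that fact is the linchpin of the whole implication.
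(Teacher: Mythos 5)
The paper does not actually prove this proposition---it is quoted from Schweig's paper with only a citation---so there is no internal proof to compare against; I am judging your argument on its own terms. Your $(1)\Rightarrow(2)$ is exactly Theorem \ref{thm:CEDthen2CM}, and your $(2)\Rightarrow(3)$ is sound: intervals of a $2$-CM poset are $2$-CM, a $2$-CM poset of rank $n$ has nonvanishing top homology so $\mu(\hat 0,\hat 1)=\pm h_{n-1}\neq 0$, and Crapo's complementation theorem converts nowhere-zero M\"obius function into complementedness of every interval. The reduction of $(3)\Rightarrow(1)$ to part (2) of Corollary \ref{thm:ExamplesWCLCED}, and your observation (iii) that a label missing from the atoms produces an uncomplemented coatom in some $[\hat 0,x_{\ell-1}]$, are also fine.

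The gap is step (ii). Stanley's factorization $\chi(L,t)=\prod_i(t-e_i)$, with $e_i$ the number of atoms at level $i$, requires upper semimodularity in addition to supersolvability, and with it fails the inference you actually use: ``every label occurs on some atom $\Rightarrow\mu(\hat 0,\hat 1)\neq 0$.'' Concretely, let $L$ be the rank-$3$ lattice with atoms $p,q,r$ and coatoms $m_1=p\vee q$, $m_2=p\vee r$, with $q\vee r=\hat 1$. This is supersolvable with left-modular chain $\hat 0<p<m_1<\hat 1$ (the induced labeling is an $S_3$ $EL$-labeling), each of the labels $1,2,3$ occurs on an atom, so $e_1=e_2=e_3=1$, yet $\chi(L,t)=t(t-1)(t-2)\neq(t-1)^3$ and $\mu(\hat 0,\hat 1)=0$. (This $L$ of course violates (3), since $[q,\hat 1]$ is a $3$-chain; but your argument treats each interval separately, and applied to the single interval $[\hat 0,\hat 1]$ of this $L$ your (ii)+(iii) would wrongly certify $\mu\neq 0$.) The repair is to use the hypothesis on \emph{all} intervals simultaneously rather than one at a time: since every maximal chain of an interval carries the same label set, $\mu(a,b)=\pm(\text{number of descending chains of }[a,b])$, and a descending chain of $[a,b]$ is built by choosing an atom of $[a,b]$ carrying the largest label---which exists by your argument (iii) applied inside $[a,b]$---and then recursing into the interval above that atom, which is again supersolvable with every interval complemented. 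This induction replaces the characteristic-polynomial computation and closes the gap.
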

\begin{note}
A construction very much like Schweig's convex ear decomposition was
earlier used by Thévenaz in \cite{Thevenaz:1985} on a subposet of
$L(G)$ to understand the homotopy type and the conjugation action
on homology of $L(G)$ for a solvable group $G$. 
\end{note}
It is easy to check that any normal subgroup $N\normalin G$ is left
modular in $L(G)$, so a supersolvable group has a supersolvable subgroup
lattice with any chief series as its left modular chain. Let $G'$
denote the commutator subgroup of $G$. The following collected classification
of groups with every interval in their subgroup lattice complemented
is presented in Schmidt's book \cite[Chapter 3.3]{Schmidt:1994},
and was worked out over several years by Zacher, Menegazzo, and Emaldi.
\begin{prop}
\label{pro:RKgroupProperties}The following are equivalent for a (finite)
group $G$:
\begin{enumerate}
\item Every interval of $L(G)$ is complemented.
\item If $H$ is any subgroup on the interval $[H_{0},H_{1}]$, then there
is a $K$ such that $HK=H_{1}$ and $H\cap K=H_{0}$.
\item $L(G)$ is coatomic, i.e., every subgroup $H$ of $G$ is an intersection
of maximal subgroups of $G$.
\item $G$ has elementary abelian Sylow subgroups, and if $H_{1}\normalin H_{2}\normalin H_{3}\subseteq G$,
then $H_{1}\normalin H_{3}$.
\item $G'$ and $G/G'$ are both elementary abelian, $G'$ is a Hall $\pi$-subgroup
of $G$, and every subgroup of $G'$ is normal in $G$.
\end{enumerate}
\end{prop}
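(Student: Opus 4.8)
The plan is to prove the five conditions equivalent, organizing things so that the order-theoretic equivalences are isolated from the group-theoretic classification, which is where the real work lies. I would first treat $(1)\Leftrightarrow(2)\Leftrightarrow(3)$ by essentially lattice-theoretic reasoning. For the implication that is genuinely useful later, $(1)\Rightarrow(3)$: given $H<G$, let $M$ be the intersection of all maximal subgroups of $G$ containing $H$; a relative complement $K$ of $M$ in $[H,G]$ satisfies $M\vee K=G$, and if $M\neq H$ then $K<G$ lies in a maximal subgroup, which must then also contain $M$, contradicting $M\vee K=G$; so $M=H$ and $L(G)$ is coatomic. Conversely $(3)\Rightarrow(1)$ is the standard argument: given $H_{0}\le H\le H_{1}$, take $K$ maximal among subgroups of $[H_{0},H_{1}]$ meeting $H$ in $H_{0}$ and use coatomicity to force $\langle H,K\rangle=H_{1}$. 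The equivalence with $(2)$ is the observation that under these hypotheses the set product $HK$ of complementary subgroups is already a subgroup, so $\langle H,K\rangle=H_{1}$ upgrades to $HK=H_{1}$; this last point is cleanest once the structure in $(5)$ is in hand, so in practice I would close the cycle through $(5)$.

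Next I would extract the structural consequences $(3)\Rightarrow(4)$. Applying relative complementation to the chain $1\le\langle x\rangle\le P$, where $P$ is a Sylow $p$-subgroup and $x$ has order $p$ in $\Phi(P)$, yields a contradiction: such an $x$ lies in every maximal subgroup of $P$ and so has no complement. Hence $\Phi(P)=1$ and every Sylow subgroup of $G$ is elementary abelian. For the subnormality clause, suppose $H_{1}\normalin H_{2}\normalin H_{3}$; inside the (relatively) complemented lattice $L(H_{3})$, a suitably chosen complement to $H_{1}$ between $H_{1}$ and $H_{2}$, together with $H_{2}\normalin H_{3}$, lets one pull $H_{3}$-conjugates of $H_{1}$ back into $H_{2}$ and then into $H_{1}$, giving $H_{1}\normalin H_{3}$; inducting on subnormal defect then shows every subnormal subgroup of every subgroup of $G$ is normal.

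The hard step, and the one I expect to be the main obstacle, is $(4)\Rightarrow(5)$: this is the genuine classification, overlapping Gasch\"utz's structure theorem for soluble $T$-groups and completed by Zacher, Menegazzo and Emaldi, and I would cite Schmidt \cite[Chapter 3.3]{Schmidt:1994} for the full proof rather than reconstruct it. Condition $(4)$ says precisely that $G$ has elementary abelian Sylow subgroups and is a $T$-group (in fact every subgroup of $G$ is a $T$-group). One first rules out nonsoluble $G$ by a minimality argument, a minimal counterexample being forced to involve a nonabelian composition factor that is itself a $T$-group with elementary abelian Sylows, which the known possibilities exclude. Gasch\"utz's theorem then yields an abelian normal Hall subgroup $L$ of odd order with $G/L$ Dedekind and $G$ acting on $L$ by power automorphisms; imposing ``all Sylows elementary abelian'' forces $L=G'$ elementary abelian, $G/G'$ elementary abelian (a Dedekind group with elementary abelian Sylows is elementary abelian), $G'$ a Hall $\pi$-subgroup, and every subgroup of $G'$ normal in $G$ --- the content of $(5)$. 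Finally $(5)\Rightarrow(2)$ is a direct computation using the power action on $G'$ and the Dedekind quotient to build, for any $H_{0}\le H\le H_{1}$, a genuine subgroup complement $K$ with $HK=H_{1}$; this closes the cycle $(2)\Rightarrow(1)\Rightarrow(3)\Rightarrow(4)\Rightarrow(5)\Rightarrow(2)$ and at the same time settles the loose ends left in the first paragraph.
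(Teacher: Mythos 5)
The first thing to say is that the paper does not prove this proposition at all: it is presented as a collected classification taken from Schmidt \cite[Chapter 3.3]{Schmidt:1994}, due to Zacher, Menegazzo and Emaldi, and the paper explicitly notes that the classification of finite simple groups is used in the proof that (3) is equivalent to the others. So the benchmark here is a citation, and your decision to defer the hard structural step $(4)\Rightarrow(5)$ to Schmidt is entirely in the spirit of the paper. Your argument for $(1)\Rightarrow(3)$ (intersect the maximal subgroups over $H$, complement that intersection in $[H,G]$) is correct, as is the Frattini argument giving elementary abelian Sylow subgroups from relative complementation.

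The genuine gap is your treatment of the direction \emph{out of} (3). You claim $(1)\Leftrightarrow(3)$ is ``essentially lattice-theoretic'' and that $(3)\Rightarrow(1)$ is ``the standard argument'': take $K$ maximal in $[H_{0},H_{1}]$ with $H\cap K=H_{0}$ and use coatomicity to force $\langle H,K\rangle=H_{1}$. This cannot work as stated. Coatomicity of a finite lattice does not imply its intervals are complemented (one can insert an extra element into a chain of a Boolean lattice and restore coatomicity with an additional coatom), and coatomicity of $L(G)$ does not visibly pass to the interval $[H_{0},H_{1}]$, which is what your maximality argument would need. Indeed $(3)\Rightarrow(1)$ is the deepest implication in the proposition: it is precisely the direction for which the paper warns that CFSG is required, since one must rule out nonsolvable groups with coatomic subgroup lattice. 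This propagates: your proof of $(3)\Rightarrow(4)$ (both the Frattini step and the subnormality step) actually invokes relative complementation, i.e.\ condition (1), so your cycle really runs $(2)\Rightarrow(1)\Rightarrow(4)\Rightarrow(5)\Rightarrow(2)$ with $(1)\Rightarrow(3)$ hanging off the side, and no implication from (3) back into the cycle is ever established. The fix is simply to cite Schmidt for that direction as well (or, as the paper does, for the whole equivalence), rather than to present it as elementary.
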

\begin{note}
The classification of finite simple groups is used in the proof that
(3) is equivalent to the others.
\end{note}
We will follow Schmidt and call such a group a \emph{relatively complemented}
group. 

We notice that relatively complemented groups are \emph{complemented},
that is, satisfy the condition of Proposition \ref{pro:RKgroupProperties}
Part (2) on the interval $[1,G]$. On the other hand, $S_{3}\times\mathbb{Z}_{3}$
is an example of a complemented group which is not relatively complemented.
The complemented groups are exactly the groups with (equivalently
in this case) shellable, Cohen-Macaulay, and sequentially Cohen-Macaulay
coset lattice \cite{Woodroofe:2007}. Computation with GAP \cite{GAP4}
shows that there are 92804 groups of order up to 511, but only 1366
complemented groups, and 1186 relatively complemented groups.

We summarize the situation for the subgroup lattice regarding convex
ear decompositions:
\begin{cor}
The following are equivalent for a group $G$:
\begin{enumerate}
\item $L(G)$ has a convex ear decomposition.
\item $L(G)$ is doubly Cohen-Macaulay.
\item G is a relatively complemented group.
\end{enumerate}
\end{cor}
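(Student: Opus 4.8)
The plan is to obtain the corollary by concatenating the proposition of Schweig quoted above with Proposition~\ref{pro:RKgroupProperties}, after first disposing separately of the case in which $G$ is not supersolvable.

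The first step is to observe that each of (1), (2), (3) forces $G$ to be supersolvable, so that when $G$ is not supersolvable all three conditions are simply false and the equivalence holds vacuously. For (1): a complex with a convex ear decomposition is pure, so $\vert L(G)\vert$ is pure, so $L(G)$ is graded, so $G$ is supersolvable by Iwasawa's theorem. For (2): a doubly Cohen--Macaulay complex is in particular Cohen--Macaulay, and Cohen--Macaulay complexes are pure, so the same chain of implications applies. For (3): by Proposition~\ref{pro:RKgroupProperties}(5), $G'$ and $G/G'$ are elementary abelian and every subgroup of $G'$ is normal in $G$; refining a series of $G$ through $G'$ (using that all subgroups of $G'$ are $G$-normal) and through $G/G'$ (whose subgroups, the quotient being abelian, pull back to $G$-normal subgroups) produces a normal series of $G$ with cyclic factors, i.e.\ $G$ is supersolvable.

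Granting now that $G$ is supersolvable, any chief series of $G$ is a chain of left modular elements of $L(G)$, as noted just before Proposition~\ref{pro:RKgroupProperties}, so $L(G)$ is a supersolvable lattice. Schweig's proposition then applies with $L = L(G)$, giving (1)$\iff$(2)$\iff$ ``every interval of $L(G)$ is complemented,'' and Proposition~\ref{pro:RKgroupProperties} identifies this last condition with (3). Chaining the equivalences proves the corollary.

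I do not expect a serious obstacle here: the mathematical content is entirely contained in the two propositions already established. The only points needing a little care are that (doubly) Cohen--Macaulay complexes are pure (so that a non-graded $L(G)$ is ruled out of (2) exactly as it is ruled out of (1) via Theorem~\ref{thm:CEDthen2CM}), and the routine extraction of supersolvability of $G$ from the structural description in Proposition~\ref{pro:RKgroupProperties}.
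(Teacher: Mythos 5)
Your proof is correct and follows exactly the route the paper intends: the corollary is stated as a summary obtained by chaining Schweig's proposition for supersolvable lattices with Proposition \ref{pro:RKgroupProperties}. Your explicit handling of the non-supersolvable case (via purity, Iwasawa's theorem, and the structural description in part (5)) is a careful spelling-out of a step the paper leaves implicit, not a different argument.
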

As a consequence, we get one direction of Theorem \ref{thm:cosetposetced_intro}.
\begin{cor}
\label{cor:2CMimpliesRC}If $\cosetlat(G)$ is doubly Cohen-Macaulay
(hence if it has a convex ear decomposition), then $G$ is a relatively
complemented group.\end{cor}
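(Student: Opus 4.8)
The plan is to derive this as a one-step corollary of the subgroup-lattice classification recalled just above, using the fact that the doubly Cohen-Macaulay property descends to intervals. First I would dispose of the parenthetical: by Theorem \ref{thm:CEDthen2CM}, if $\cosetlat(G)$ has a convex ear decomposition then it is doubly Cohen-Macaulay, so it suffices to treat the case where $\cosetlat(G)$ is 2-CM.

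So assume $\cosetlat(G)$ is doubly Cohen-Macaulay. Pick any element $x\in G$ and consider the interval $[x,G]$ in $\cosetlat(G)$, which as noted in the text is isomorphic to the subgroup lattice $L(G)$. Since every interval of a poset is (up to the proper-part conventions fixed in Section \ref{sec:Definitions-and-Tools}) the link of a face in the order complex, and since the link of a face in a 2-CM complex is again 2-CM of the same dimension as that link — exactly the inheritance remark recorded earlier in the excerpt — the interval $[x,G]$ is 2-CM. Hence $L(G)$ is 2-CM.

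Finally, apply the preceding corollary, which asserts the equivalence of ``$L(G)$ is doubly Cohen-Macaulay'' with ``$G$ is a relatively complemented group'': we conclude that $G$ is relatively complemented, as claimed. I do not expect a genuine obstacle here; the only point requiring care is that the isomorphism $[x,G]\cong L(G)$ and the descent of 2-CM to links are compatible with the proper-part order-complex convention in use, but both of these are already stated in Section \ref{sec:Definitions-and-Tools}. All of the real content lives in the previously cited Zacher--Menegazzo--Emaldi classification of relatively complemented groups; this corollary merely transports the statement for $L(G)$ over to $\cosetlat(G)$ along that isomorphism, and gives the ``easy'' direction of Theorem \ref{thm:cosetposetced_intro}, the converse being supplied by the explicit construction in Section \ref{sec:coset-lattice}.
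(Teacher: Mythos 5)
Your proposal is correct and is essentially the paper's own argument: the paper proves this corollary in one line by noting that every interval of a 2-CM poset is 2-CM and that the interval $[1,G]$ in $\cosetlat(G)$ is isomorphic to $L(G)$, then invoking the preceding corollary equating 2-CM-ness of $L(G)$ with $G$ being relatively complemented. Your extra care about the proper-part convention and the link identification is fine but adds nothing beyond what the paper already records in Section \ref{sec:Definitions-and-Tools}.
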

\begin{proof}
Every interval of a 2-Cohen-Macaulay poset is 2-Cohen-Macaulay, and
the interval $[1,G]$ in $\cosetlat(G)$ is isomorphic to $L(G)$.
\end{proof}
The remainder of Section \ref{sec:coset-lattice} will be devoted
to proving the other direction.

\subsection{A dual $EL$-labeling for $\cosetlat(G)$\label{sub:EL-labelingForC(G)}}

As with the $d$-divisible partition lattice, the first thing we need
is a dual $EL$-labeling of $\cosetlat(G)$. We will construct one
for the more general case where $G$ is complemented. The main idea
is to start with the $EL$-labeling of an upper interval and {}``twist''
by adding signs, similarly to our $EL$-labeling for $\Pi_{n}^{d}$.
The resulting labeling is significantly simpler than the one I described
in \cite{Woodroofe:2007}.

Let $G$ be a complemented group, and fix a chief series $G=N_{1}\innormal N_{2}\innormal\cdots\innormal N_{k+1}=1$
for $G$ throughout the remainder of Section \ref{sec:coset-lattice}.
Our labeling (and later our convex ear decomposition) will depend
on this choice of chief series, but the consequences for the topology
and $h$-vector of $\cosetlat(G)$ will obviously depend only on $G$.

For each factor $N_{i}/N_{i+1}$, choose a complement $B_{i}^{0}$,
i.e., a subgroup such that $N_{i}B_{i}^{0}=G$ but $N_{i}\cap B_{i}^{0}=N_{i+1}$.
(Such a $B_{i}^{0}$ exists, as every quotient group of a complemented
group is itself complemented \cite[Lemma 3.2.1]{Schmidt:1994}.) From
Section \ref{sub:Supersolvable-lattices}, the usual dual $EL$-labeling
of the subgroup lattice of a supersolvable group is \[
\lambda^{ss}(K_{0}\dotsupset K_{1})=\max\{i\,:\, N_{i}K_{1}\supseteq K_{0}\}=\min\{i\,:\, N_{i+1}\cap K_{0}\subseteq K_{1}\}.\]
Remember that $\lambda^{ss}$ labels every chain on a given interval
with the same set of labels (up to permutation). 

We now define a labeling $\lambda$ of $\cosetlat(G)$ as follows.
For $K_{0}\dotsupset K_{1}$ labeled by $\lambda^{ss}$ with $i$,
let \begin{align*}
\lambda(K_{0}x\dotsupset K_{1}x) & =\left\{ \begin{array}{cl}
-i & \textrm{ if }K_{1}x=K_{0}x\cap B_{i}^{0},\\
\,\,\,\, i & \textrm{otherwise, and}\end{array}\right.\\
\lambda(x\dotsupset\emptyset) & =0.\end{align*}
It is immediate from this construction that $|\lambda|_{[x,G]}=\lambda^{ss}$
(up to the {}``dropping $x$'' isomorphism), much like the situation
discussed in Section \ref{sub:d-divisible-EL-labeling} for the $d$-divisible
partition lattice. 
\begin{lem}
\label{lem:ComplementsVsFactors}Let $G$ be any supersolvable group.
Then:
\begin{enumerate}
\item If $KB=G$ where $B\subsetdot G$, then $K\cap B\subsetdot K$.
\item If $\lambda^{ss}(K_{0}\dotsupset K_{1})=i$, then for any complement
$B_{i}$ of $N_{i}/N_{i+1}$ and $K\supseteq K_{0}$ we have $K_{0}B_{i}=KB_{i}=G$.
\end{enumerate}
\end{lem}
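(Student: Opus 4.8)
The plan is to dispatch the two parts separately; the only group-theoretic input needed beyond definitions is the standard fact that a maximal subgroup of a supersolvable group has prime index (equivalently, that every chief factor has prime order).

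For (1), I would start from the counting identity $|KB| = |K|\,|B|/|K\cap B|$. Since $KB = G$ this gives $[K:K\cap B] = [G:B]$, which is prime because $B\subsetdot G$ and $G$ is supersolvable. A subgroup of prime index is automatically maximal --- any intermediate subgroup would split the (prime) index as a product of two indices, one of which must then be trivial --- and prime index also forces $K\cap B\neq K$, so $K\cap B\subsetdot K$.

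For (2), observe first that $K\supseteq K_0$ gives $KB_i\supseteq K_0B_i$, so it is enough to prove $K_0B_i = G$. By the ($\min$) formula for $\lambda^{ss}$ recalled above, $\lambda^{ss}(K_0\dotsupset K_1) = i$ means precisely that $N_{i+1}\cap K_0\subseteq K_1$ but $N_i\cap K_0\not\subseteq K_1$. The key step is to deduce that $N_i\cap K_0$ is \emph{not} contained in $N_{i+1}$: otherwise $N_i\cap K_0 = N_{i+1}\cap K_0\subseteq K_1$, a contradiction. Since the chief factor $N_i/N_{i+1}$ has prime order, its only subgroups are trivial, so the nontrivial image of $N_i\cap K_0$ must be all of it; that is, $(N_i\cap K_0)N_{i+1} = N_i$. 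Now for any complement $B_i$ of $N_i/N_{i+1}$ we have $N_{i+1} = N_i\cap B_i\subseteq B_i$, hence $N_i = (N_i\cap K_0)N_{i+1}\subseteq K_0B_i$, and therefore $G = N_iB_i\subseteq (K_0B_i)B_i = K_0B_i$ (using that $B_i$ is a subgroup). Thus $K_0B_i = G$, and a fortiori $KB_i = G$.

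The part that takes a moment of thought is the translation of the hypothesis $\lambda^{ss}(K_0\dotsupset K_1) = i$ into the statement that $N_i\cap K_0$ surjects onto the chief factor $N_i/N_{i+1}$; once that is in hand, the conclusion of (2) follows from routine manipulation of subgroup products, and (1) needs nothing beyond the prime-index property of maximal subgroups in a supersolvable group.
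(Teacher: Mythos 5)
Your proof is correct and follows essentially the same route as the paper's: part (1) by the counting identity $[K:K\cap B]=[G:B]$ together with the fact that maximal subgroups of a supersolvable group have prime index, and part (2) by translating $\lambda^{ss}(K_0\dotsupset K_1)=i$ into $N_i\cap K_0\not\subseteq N_{i+1}$, concluding $(N_i\cap K_0)N_{i+1}=N_i$, and then absorbing $N_{i+1}\subseteq B_i$ to get $K_0B_i\supseteq N_iB_i=G$. The only cosmetic difference is that you invoke the prime order of the chief factor where the paper invokes the maximality of $N_{i+1}$ in $N_i$; these are interchangeable here.
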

\begin{proof}
For part 1, count: $\vert K\cap B\vert=\frac{\vert K\vert\vert B\vert}{\vert G\vert}=\frac{\vert K\vert}{[G:B]}$
and by supersolvability, $[K:K\cap B]=[G:B]$ is a prime.

For part 2, by the definition of the labeling, $N_{i}\cap K_{0}\not\subseteq K_{1}$
but $N_{i+1}\cap K_{0}\subseteq K_{1}$. We see that $N_{i}\cap K_{0}\not\subseteq N_{i+1}$,
and since $N_{i+1}\subsetdot N_{i}$, that $(N_{i}\cap K_{0})N_{i+1}=N_{i}$
and so $K_{0}N_{i+1}\supseteq N_{i}$. Then $K_{0}B_{i}=K_{0}N_{i+1}B_{i}\supseteq N_{i}B_{i}=G$.\end{proof}
\begin{thm}
\label{thm:anELlabelingofC(G)}If $G$ is a complemented group, then
$\lambda$ is a dual $EL$-labeling of $\cosetlat(G)$.\end{thm}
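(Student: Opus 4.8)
The plan is to verify the two defining conditions of a (dual) $EL$-labeling for $\lambda$ on $\cosetlat(G)$: on every interval $[\hat 0, Kx]_{\textrm{op}}$ of the dual poset there is a unique increasing maximal chain, and this chain is lexicographically first. (All ``increasing''/``decreasing'' language below refers to the dual order, i.e.\ to chains read from $G$ downward in $\cosetlat(G)$.) As with the $d$-divisible partition lattice, there are two shapes of interval to treat. First, intervals $[Hx, Ky]$ with $Hx \neq \emptyset$: here $\lambda$ restricts, up to the ``drop the coset representative'' isomorphism $[x,G]\cong L(G)$, to $\lambda^{ss}$ with signs attached. Since $\lambda^{ss}$ is a supersolvable labeling it satisfies the permutation property~(\ref{eq:SSPermutationProperty}): every maximal chain on $[Hx,Ky]$ carries the same multiset $\{i_1 < \dots < i_r\}$ of absolute values. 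I would build the candidate increasing chain greedily from the top ($Ky$) downward, at each step taking the cover relation whose label is smallest in the order on the (signed) label poset — negative labels before positive, larger absolute value of a negative label before smaller. The key local fact needed is that this greedy choice is \emph{unique} and \emph{produces an increasing chain}: uniqueness of the positive step comes from $|\lambda| = \lambda^{ss}$ being a (dual) $EL$-labeling, and uniqueness of the ``most negative'' step $Kx \cap B_i^0$ is immediate from its definition; the exchange/shortening argument (just as in the proof for $\Pi_n^d$) shows a descent can always be swapped for an earlier choice, so the lex-first chain has no descents.

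Second, intervals of the form $[\emptyset, Kx]$. The bottom cover relation always has label $0$, so in an increasing chain every other label must be negative, i.e.\ every cover $Hx \dotsupset H'x$ (above the bottom) must be of the form $H'x = Hx \cap B_j^0$ for the appropriate $j = \lambda^{ss}(H \dotsupset H')$. I would argue that this forces the chain to pass through a \emph{unique} atom: iterating ``intersect with $B_i^0$'' from $Kx$ downward, with $i$ running over $\lambda^{ss}$-labels, the nested intersections $Kx \cap B_{i_1}^0 \cap B_{i_2}^0 \cap \dots$ are forced and land on a single coset of $\bigcap_i B_i^0$ (compare the ``passes through this single atom'' step in Case~1 of the $\Pi_n^d$ proof). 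This reduces the problem to the already-handled interval above that atom, giving both existence and uniqueness of the increasing chain, and lex-firstness follows because any competing chain either fails to be increasing (hence is beaten by the greedy one via the exchange lemma) or deviates only at a later rank.

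The main obstacle — and the place where I expect to spend the real work — is showing that the greedy negative steps on $[\emptyset, Kx]$ are well defined and consistent, i.e.\ that after taking $K_1x = Kx \cap B_{i_1}^0$ the next $\lambda^{ss}$-label $i_2$ that appears is again realized by an intersection with $B_{i_2}^0$, and that the process does not stall before reaching an atom. This is exactly the coset-lattice analogue of ``repeated non-crossing refinement reaches a unique atom''; the technical input is Lemma~\ref{lem:ComplementsVsFactors}, which guarantees $K_0 B_i = G$ for any $K_0$ with $\lambda^{ss}$-label $i$ on a cover down from it and any complement $B_i$ of $N_i/N_{i+1}$, so that $K_0 x \cap B_i^0$ really is a cover of $K_0 x$ with label $-i$. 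Using part~(1) of that lemma one checks the intersection drops by exactly one prime at each step, and using part~(2) one checks the required complements persist as we descend; assembling these gives that the negative-label chain from $Kx$ to an atom exists, is unique, and is increasing. Once the atom is reached, the remainder is Case~1 applied to $[a, Kx] \cong$ an interval of $L(G)$, and the two cases together complete the verification that $\lambda$ is a dual $EL$-labeling.
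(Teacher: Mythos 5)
Your overall plan coincides with the paper's: the same split into intervals $[\emptyset,H_0x]$ and $[H_nx,H_0x]$ with $H_nx\neq\emptyset$, the same use of Lemma \ref{lem:ComplementsVsFactors} to guarantee that the negative-labeled steps $K_0x\cap B_i^0$ exist and are covers, and the same greedy construction of the lexicographically first chain. Your treatment of $[\emptyset,H_0x]$ (all labels above the bottom $0$ forced negative, each forced step realized by intersecting with the appropriate $B_i^0$, landing on a single atom) and your argument that the lex-first chain on $[H_nx,H_0x]$ is increasing are both in line with the paper.

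The gap is uniqueness of the increasing chain on intervals with $H_nx\neq\emptyset$. You deduce it from the uniqueness of each greedy step, but that only shows the lex-first chain is well defined; it does not rule out a second, non-lex-first increasing chain. The danger is a sign ambiguity: property (\ref{eq:SSPermutationProperty}) fixes only the absolute values $S$ of the labels on the interval, not their signs, and a top-to-bottom label sequence such as $-5,+3,+7$ is just as increasing as $-5,-3,+7$. The appeal to ``just as in the proof for $\Pi_n^d$'' does not transfer here, because in $\Pi_n^d$ the paper proves that if one chain on an interval carries a given negative label then every chain does, so the signed label multiset is chain-independent; in $\cosetlat(G)$ different chains on the same interval genuinely can realize $i$ with different signs (intersecting with $B_i^0$ versus some other complement of $N_i/N_{i+1}$). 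The paper closes this by showing an increasing chain cannot carry $+i$ when $H_nx\subseteq B_i^0$ and $i\in S$: on the subinterval below the upper endpoint of such an edge all of the chain's labels are positive and at least $i$, so $i=\min S$ of that subinterval; since $\lambda^{ss}$ is a dual $EL$-labeling the edge down from the top with label of absolute value $i$ is then unique, and it is exactly the intersection with $B_i^0$, hence labeled $-i$ --- a contradiction. You need this (or an equivalent) argument to complete the uniqueness half of the verification.
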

\begin{proof}
We need to show that every interval has a unique increasing maximal
chain which is lexicographically first. There are two kinds of intervals
we need to check:
\begin{caseenv}
\item $[\emptyset,H_{0}x]$

As the last label of any chain on this interval is 0, in an increasing
chain the others must be negative (in increasing order). Since every
chain has the same labels up to permutation, uniqueness of the increasing
chain is clear from the definition of $\lambda$. Existence follows
from applying Lemma \ref{lem:ComplementsVsFactors} to the maximal
subgroups $B_{i}^{0}$. Finally, the chain takes the edge with the
least possible label down from each $Hx$, so it is lexicographically
first.

\item $[H_{n}x,H_{0}x]$

Let $S$ be the label set of $\lambda^{ss}$ restricted to the interval
$[H_{n},H_{0}]$. We notice that a $-i$ label is possible on $[H_{n}x,H_{0}x]$
only if $H_{n}x\subseteq B_{i}^{0}$ and $i\in S$. Thus, the lexicographically
first chain is labeled by all possible negative labels (in increasing
order), followed by the remaining (positive) labels, also in increasing
order. Such a chain clearly exists and is increasing. The negative-labeled
part is unique since a $-i$ label corresponds with intersection by
$B_{i}^{0}$, while the positive-labeled part is unique since $\lambda^{ss}=\vert\lambda\vert$
is an $EL$-labeling.

It remains to check that there are no other increasing chains. We
have already shown that there is only one increasing chain which has
a $-i$ label for each $B_{i}^{0}$ containing $H_{n}x$, so any other
increasing chain would need to have a $+i$ label for some $i\in S$
where $H_{n}x\subseteq B_{i}^{0}$. Without loss of generality, let
this edge $H_{0}x\dotsupset H_{1}x$ be directly down from $H_{0}x$.
Then $i=\min S$, and since $\lambda^{ss}$ is a dual $EL$-labeling,
we have that there is a unique edge down from $H_{0}x$ with label
$\pm i$. But then $H_{1}x=H_{0}x\cap B_{i}^{0}$, so the edge gets
a $-i$ label, giving us a contradiction and completing the proof.\qedhere

\end{caseenv}
\end{proof}
Though we do not need it for our convex ear decomposition, let us
briefly sketch the decreasing chains of $\lambda$. Following Thévenaz
\cite{Thevenaz:1985}, a \emph{chain of complements} to a chief series
$G=N_{1}\dotsupset N_{2}\dotsupset\dots\dotsupset N_{k+1}=1$ is a
chain of subgroups $G=H_{k+1}\dotsupset H_{k}\dotsupset\dots\dotsupset H_{1}=1$
where for each $i,$ $H_{i}$ is a complement to $N_{i}$. Thévenaz
showed that the chains of complements in $G$ correspond to homotopy
spheres in $\vert L(G)\vert$. The following proposition is the $EL$-shelling
version of Thévenaz's result for a supersolvable group, and is a special
case of \cite[Proposition 4.3]{Woodroofe:2008}.
\begin{prop}
The decreasing chains in $L(G)$ with respect to $\lambda^{ss}$ are
the chains of complements to $G=N_{1}\innormal\dots\innormal N_{k+1}=1$.\end{prop}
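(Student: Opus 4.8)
\emph{Proof sketch.}
The plan is to read off, along a maximal chain $G=K_{0}\dotsupset K_{1}\dotsupset\dots\dotsupset K_{k}=1$, how the label $\lambda^{ss}(K_{j}\dotsupset K_{j+1})$ records the position of $K_{j+1}$ relative to the fixed chief series. First I would pin down the label set: applying $\lambda^{ss}$ to the chain $K_{j}=N_{j+1}$ gives $\lambda^{ss}(N_{j}\dotsupset N_{j+1})=\max\{i: N_{i}N_{j+1}\supseteq N_{j}\}=j$, so by the permutation property (\ref{eq:SSPermutationProperty}) every maximal chain of $L(G)$ carries the label set $\{1,2,\dots,k\}$. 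Hence a maximal chain is decreasing exactly when its labels, read from $G$ downward, are $k,k-1,\dots,1$ in that order; it therefore suffices to show that $\lambda^{ss}(K_{j-1}\dotsupset K_{j})=k-j+1$ for every $j$ if and only if each $K_{j}$ is a complement to $N_{k-j+1}$ in $G$ -- that is, if and only if $(K_{j})_{j}$ is a reindexed chain of complements (with $K_{0}=G$ a complement to $N_{k+1}=1$).

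The main step is a local lemma, proved by induction on $j$ (base case: $K_{0}=G$ is a complement to $N_{k+1}=1$): if $K_{j}$ is a complement to $N_{m+1}$, where $m=k-j$, then $\lambda^{ss}(K_{j}\dotsupset K_{j+1})=m$ if and only if $K_{j+1}$ is a complement to $N_{m}$. Set $C=N_{m}\cap K_{j}$. From $K_{j}N_{m+1}=G\supseteq N_{m}$ and the Dedekind/modular law one gets $CN_{m+1}=N_{m}$, and since $K_{j}\cap N_{m+1}=1$ this yields $C\cong N_{m}/N_{m+1}$, which is cyclic of prime order $p$ because $G$ is supersolvable. Now $\lambda^{ss}(K_{j}\dotsupset K_{j+1})=\min\{i: N_{i+1}\cap K_{j}\subseteq K_{j+1}\}$; as $N_{m+1}\cap K_{j}=1\subseteq K_{j+1}$ automatically, the label equals $m$ exactly when $C\not\subseteq K_{j+1}$, which (as $|C|=p$ and $K_{j+1}$ is maximal in $K_{j}$) is equivalent to $C\cap K_{j+1}=1$ together with $|K_{j}:K_{j+1}|=p$. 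Given that, the order count $|K_{j+1}|\,|N_{m}|=(|G|/(p|N_{m+1}|))(p|N_{m+1}|)=|G|$ combined with $K_{j+1}\cap N_{m}\subseteq C\cap K_{j+1}=1$ forces $K_{j+1}N_{m}=G$, so $K_{j+1}$ is a complement to $N_{m}$; conversely, if $K_{j+1}$ complements $N_{m}$ then $C\cap K_{j+1}\subseteq N_{m}\cap K_{j+1}=1$, which makes the label $m$.

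Chaining the lemma from $j=0$ to $j=k$ then gives both implications: a decreasing maximal chain forces each $K_{j}$ to complement $N_{k-j+1}$, hence is a chain of complements; and conversely a chain of complements has each $K_{j}$ complementing $N_{k-j+1}$ -- and is automatically a maximal chain, since the order count makes each index $|K_{j}:K_{j+1}|$ prime -- so its label sequence is $k,k-1,\dots,1$, i.e. it is decreasing. (Alternatively, this is the case $\lambda=\lambda^{ss}$ of \cite[Proposition 4.3]{Woodroofe:2008}.)

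The step I expect to need the most care is the local lemma, precisely the passage between ``$C\not\subseteq K_{j+1}$'' and ``$K_{j+1}$ is a complement to $N_{m}$'': this relies on $C$ having prime order (equivalently, on chief factors of a supersolvable group being of prime order) and on keeping the index/order bookkeeping consistent. Once the label set $\{1,\dots,k\}$ and the permutation property are in hand, the rest is formal.
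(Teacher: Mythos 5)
Your proposal is correct, and for the harder direction it takes a genuinely different route from the paper. For the forward implication (chains of complements are decreasing) both arguments are essentially the same direct label computation, though the paper does it in one line with the Dedekind identity rather than via your order-counting. The real divergence is in the converse: the paper does \emph{not} argue structurally at all, but instead invokes Th\'evenaz's theorem that chains of complements correspond to the homotopy spheres of $\vert L(G)\vert$, together with the fact that descending chains of a (dual) $EL$-labeling index a (co)homology basis; since every chain of complements is already known to be descending, a counting/dimension argument shows there can be no further descending chains. Your local lemma replaces this topological appeal with an elementary induction down the chain, which is self-contained and actually characterizes where the label $m$ can occur. One small point to flesh out in the lemma: in passing from $C=N_{m}\cap K_{j}\not\subseteq K_{j+1}$ to $\vert K_{j}:K_{j+1}\vert=p$ you need $CK_{j+1}$ to be a subgroup equal to $K_{j}$; this holds because $C=N_{m}\cap K_{j}\normalin K_{j}$ (as $N_{m}\normalin G$), so $CK_{j+1}=K_{j}$ by maximality and the index is $\vert C:C\cap K_{j+1}\vert=p$. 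With that noted, your argument is complete; it trades the brevity of the paper's sphere-counting for a proof that needs no input from the topology of $\vert L(G)\vert$.
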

\begin{proof}
If $G=H_{k+1}\dotsupset H_{k}\dotsupset\dots\dotsupset H_{1}=1$ is
a chain of complements, then $N_{i}H_{i}=G\supseteq H_{i+1}$, while
\[
N_{i+1}H_{i}\cap H_{i+1}=(N_{i+1}\cap H_{i+1})H_{i}=1\cdot H_{i}=H_{i}\]
by left modularity (the Dedekind identity). Thus $\lambda^{ss}(H_{i+1}\dotsupset H_{i})=i$,
and the chain is descending.

Conversely, any descending chain corresponds to a sphere in $\vert L(G)\vert$,
and by Thévenaz's correspondence, there can be no others.\end{proof}
\begin{cor}
The decreasing chains in $\cosetlat(G)$ with respect to $\lambda$
are all cosets of chains of complements $\{G=H_{k+1}x\dotsupset\dots\dotsupset H_{1}x=x\dotsupset\emptyset\}$
to the chief series $G=N_{1}\innormal\dots\innormal N_{k+1}=1$ such
that no $H_{i}x=H_{i+1}x\cap B_{i}^{0}$.
\end{cor}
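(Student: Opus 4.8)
The plan is to reduce the statement to the preceding Proposition by exploiting the interval structure of $\cosetlat(G)$. Every atom of $\cosetlat(G)$ is a singleton coset $\{x\}=1x$, and $[\{x\},G]\cong L(G)$ by dropping $x$, with $|\lambda|_{[x,G]}=\lambda^{ss}$ under this isomorphism. So every maximal chain of $\cosetlat(G)$ passes through a unique atom $\{x\}$ and has the form
\[
G=H_{k+1}x\dotsupset H_{k}x\dotsupset\dots\dotsupset H_{2}x\dotsupset H_{1}x=x\dotsupset\emptyset,
\]
where $G=H_{k+1}\dotsupset H_{k}\dotsupset\dots\dotsupset H_{1}=1$ is a maximal chain of $L(G)$; moreover, if $\lambda^{ss}(H_{i+1}\dotsupset H_{i})=m_{i}$, then by construction $\lambda(H_{i+1}x\dotsupset H_{i}x)=\pm m_{i}$, with sign $-$ exactly when $H_{i}x=H_{i+1}x\cap B_{m_{i}}^{0}$, while $\lambda(x\dotsupset\emptyset)=0$.

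First I would treat the forward direction. Suppose the displayed chain is decreasing; since $\lambda$ is a dual $EL$-labeling this means its labels strictly decrease as read from $G$ down to $\emptyset$, ending with $\dots>\lambda(H_{2}x\dotsupset H_{1}x)>\lambda(x\dotsupset\emptyset)=0$. The final inequality forces $\lambda(H_{2}x\dotsupset H_{1}x)>0$, and then an easy induction from the bottom up forces every label positive, since a negative $\lambda(H_{i+1}x\dotsupset H_{i}x)$ would fail to exceed the $\ge 0$ label of the edge immediately below it. Therefore $\lambda(H_{i+1}x\dotsupset H_{i}x)=m_{i}=\lambda^{ss}(H_{i+1}\dotsupset H_{i})$ for every $i$, and these values strictly decrease; so the subgroup chain $G=H_{k+1}\dotsupset\dots\dotsupset H_{1}=1$ is $\lambda^{ss}$-decreasing in $L(G)$. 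The preceding Proposition identifies it as a chain of complements to the fixed chief series, for which $\lambda^{ss}(H_{i+1}\dotsupset H_{i})=i$; hence ``all signs positive'' is exactly the requirement that no $H_{i}x=H_{i+1}x\cap B_{i}^{0}$, and the chain is a coset of a chain of complements of the asserted kind.

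The converse is the same computation in reverse. Given a chain of complements $G=H_{k+1}\dotsupset\dots\dotsupset H_{1}=1$ and a representative $x$ with $H_{i}x\neq H_{i+1}x\cap B_{i}^{0}$ for all $i$, the Proposition gives $\lambda^{ss}(H_{i+1}\dotsupset H_{i})=i$, so the coset chain carries labels $k,k-1,\dots,1,0$ read from $G$ down to $\emptyset$, which strictly decrease; thus it is a decreasing chain of $\cosetlat(G)$. I expect no genuine obstacle here: the substantive content is the preceding Proposition (ultimately Th\'evenaz's correspondence between chains of complements and spheres in $\vert L(G)\vert$), and the only point requiring care is the sign analysis at the bottom edge labeled $0$, which is precisely what pins every sign to $+$.
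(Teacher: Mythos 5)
Your proof is correct and is precisely the argument the paper leaves implicit: the corollary is stated without proof, being immediate from the preceding Proposition once one observes that the bottom label $0$ forces every label of a descending chain to be positive, which pins $\lambda=\vert\lambda\vert=\lambda^{ss}$ on $[x,G]$ and translates ``positive sign at rank $i$'' into $H_{i}x\neq H_{i+1}x\cap B_{i}^{0}$. The only cosmetic caveat is that ``descending'' for a $CL$/$EL$-labeling means weakly decreasing, but since the labels of any maximal chain have distinct absolute values $1,\dots,k$ together with the final $0$, weak and strict decrease coincide here, as your argument tacitly uses.
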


\subsection{A convex ear decomposition for $\cosetlat(G)$}

Recall that subgroups $H$ and $K$ \emph{commute} if $HK=KH$ is
a subgroup of $G$. 
\begin{lem}
\label{lem:ComplementsCommute}\emph{(Warm-up Lemma)} Let $G$ be
a solvable group with chief series $G=N_{1}\innormal N_{2}\innormal\cdots\innormal N_{k+1}=1$,
and $B_{i}$ and $B_{j}$ be complements of normal factors $N_{i}/N_{i+1}$
and $N_{j}/N_{j+1}$ where $i\neq j$. Then $B_{i}$ and $B_{j}$
commute.\end{lem}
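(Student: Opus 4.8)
The plan is to prove the lemma almost for free, by noticing that one of the two complements already contains the normal subgroup whose complement the \emph{other} one is, which immediately forces their product to be all of $G$. So the approach is: reduce by symmetry to the case where the two indices are comparable, locate the containment $N_j \le B_i$, and then read off $B_i B_j = G$.

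Concretely, I would argue as follows. Since ``$B_i$ and $B_j$ commute'' is symmetric in the two, assume without loss of generality that $i<j$, so that $N_{i+1}\supseteq N_j$ as terms of the chief series. Because $B_i$ is a complement of $N_i/N_{i+1}$ we have $N_i\cap B_i=N_{i+1}$, and in particular $N_{i+1}\le B_i$; combined with $N_j\le N_{i+1}$ this gives $N_j\le B_i$. Now $N_j\normalin G$ (it is a term of a chief series of $G$), so $N_jB_j$ is a subgroup of $G$, and in fact $N_jB_j=G$ since $B_j$ is a complement of $N_j/N_{j+1}$. Therefore
\[
G \;=\; N_j B_j \;\subseteq\; B_i B_j \;\subseteq\; G,
\]
so $B_iB_j=G$; as $G$ is a subgroup, $B_i$ and $B_j$ permute (and indeed $B_jB_i=(B_iB_j)^{-1}=G$ as well).

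I do not expect a serious obstacle here --- this is a warm-up for the genuinely harder statements that follow. The only two points that need a moment's care are (i) extracting the containment $N_{i+1}\le B_i$ from the complement condition $N_i\cap B_i=N_{i+1}$, and (ii) invoking normality of $N_j$ in $G$ so that $N_jB_j$ is a subgroup equal to $G$ rather than merely a subset. It is worth remarking that solvability of $G$ is not actually used in this lemma; it is stated as a hypothesis only because it is the standing assumption for the main construction, where the analogous but more delicate statement about complements will genuinely require the elementary abelian structure of the chief factors.
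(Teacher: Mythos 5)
Your proof is correct and is essentially the paper's own argument: both reduce by symmetry to the case where the indices are comparable, observe that the complement with the smaller index contains the normal subgroup with the larger index (via $N_{i+1}=N_i\cap B_i\subseteq B_i$ and the nesting of the chief series), and conclude that the product of the two complements contains $N_jB_j=G$. Your side remarks (that $B_jB_i=G$ as well, and that solvability is not actually needed) are also accurate.
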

\begin{proof}
Suppose $j<i$. Then $N_{i+1}\subsetneq N_{i}\subseteq N_{j+1}\subsetneq N_{j}$,
and $N_{j+1}\subseteq B_{j}$. Thus, $B_{j}B_{i}\supseteq N_{i}B_{i}=G$.
\end{proof}
Recalling $G=N_{1}\innormal N_{2}\innormal\cdots\innormal N_{k+1}=1$
as the chief series we fixed in Section \ref{sub:EL-labelingForC(G)},
let \[
\mathcal{B}=\{B_{i}\,:\, B_{i}\mbox{ is a complement to }N_{i}/N_{i+1},1\leq i\leq k\}\]
be a set of complements to $N_{i}$, one complement for each chief
factor (so that $\vert\mathcal{B}\vert=k$). For any $x\in G$, let
$\mathcal{B}x=\{B_{i}x\,:\, B_{i}\in\mathcal{B}\}$. We will call
$\mathcal{B}$ a \emph{base-set} for $\cosetlat(G)$. 

The first step is to show that intersections of certain cosets of
$\mathcal{B}$ give us a cube, using a stronger version of Lemma \ref{lem:ComplementsCommute}.
\begin{lem}
If $\mathcal{B}$ is a base-set, then $(B_{i_{1}}\cap\dots\cap B_{i_{l}})B_{i_{\ell+1}}=G$.\end{lem}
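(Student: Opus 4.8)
The plan is to prove the statement by induction on $\ell$, following exactly the counting idea behind the Warm-up Lemma \ref{lem:ComplementsCommute}. The base case $\ell = 1$ is precisely Lemma \ref{lem:ComplementsCommute}: two complements to distinct chief factors commute (indeed their product is all of $G$, which is even stronger than what we need, since we are claiming $(B_{i_1})B_{i_2} = G$). So the real content is the inductive step: assuming $K := B_{i_1} \cap \dots \cap B_{i_\ell}$ is such that $K B_{i_j} = G$ for all $j \le \ell$ (or at least that we can make sense of the intersection), we want $K B_{i_{\ell+1}} = G$.

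First I would reorder the indices so that $i_1 < i_2 < \dots < i_{\ell+1}$; since $\mathcal{B}$ has one complement per chief factor and intersection is commutative and associative, this is harmless. The key observation is the one used in the Warm-up Lemma: if $j < i$ then $N_{i+1} \subsetneq N_i \subseteq N_{j+1} \subseteq B_j$, so $N_i \subseteq B_j$ for every $j < i$. Applying this with $i = i_{\ell+1}$, we get $N_{i_{\ell+1}} \subseteq B_{i_t}$ for every $t \le \ell$, hence $N_{i_{\ell+1}} \subseteq B_{i_1} \cap \dots \cap B_{i_\ell} = K$. But then $K B_{i_{\ell+1}} \supseteq N_{i_{\ell+1}} B_{i_{\ell+1}} = G$, since $B_{i_{\ell+1}}$ is a complement to $N_{i_{\ell+1}}/N_{i_{\ell+1}+1}$ and in particular $N_{i_{\ell+1}} B_{i_{\ell+1}} = G$. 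This gives $K B_{i_{\ell+1}} = G$ directly, with no induction actually needed beyond the single key containment — the statement follows once we know $N_{i_{\ell+1}}$ sits inside each of the other complements.

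I expect the main subtlety — not really an obstacle, but the point requiring care — is justifying that the intersection on the left is being taken in a sensible order, i.e. that we are free to single out $B_{i_{\ell+1}}$ as the "last" factor and relabel so its chief-factor index is the largest. This is legitimate because the conclusion $(B_{i_1} \cap \dots \cap B_{i_\ell}) B_{i_{\ell+1}} = G$ is symmetric enough: we only need to identify one complement among the $\ell+1$ whose associated normal subgroup $N_i$ lies below all the others in the chief series, and a complement to a chief factor $N_i/N_{i+1}$ contains $N_{i+1}$, hence contains every $N_j$ with $j \ge i+1$, i.e. every $N_j$ lower in the series. Choosing $i_{\ell+1}$ to have the largest index, $N_{i_{\ell+1}}$ is contained in $B_{i_t}$ for all $t < \ell+1$ (using $N_{i_{\ell+1}} \subseteq N_{i_t + 1} \subseteq B_{i_t}$), which is all that is required. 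One should also note that solvability (or, for the paper's purposes, supersolvability, which is all that is needed later) guarantees the chief factors behave as claimed, but this is already built into the setup of the fixed chief series.
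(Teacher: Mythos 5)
There is a genuine gap, and it sits exactly at the point you describe as ``not really an obstacle.'' Your argument proves the lemma only in the special case where the complement singled out as the outside factor, $B_{i_{\ell+1}}$, is the one with the largest chief-series index: then indeed $N_{i_{\ell+1}}\subseteq N_{i_t+1}\subseteq B_{i_t}$ for all $t\le\ell$, so $N_{i_{\ell+1}}$ lies in the intersection and $(B_{i_1}\cap\dots\cap B_{i_\ell})B_{i_{\ell+1}}\supseteq N_{i_{\ell+1}}B_{i_{\ell+1}}=G$. But the lemma asserts the identity for an \emph{arbitrary} choice of which complement sits outside the intersection, and your reduction ``relabel so that $i_{\ell+1}$ has the largest index'' is not legitimate: commutativity of intersection lets you permute $B_{i_1},\dots,B_{i_\ell}$ among themselves, but it does not let you exchange one of them with $B_{i_{\ell+1}}$, because the set $(B_{i_1}\cap\dots\cap B_{i_\ell})B_{i_{\ell+1}}$ treats the outside factor asymmetrically --- a product of two subgroups need not even be a subgroup, and that these particular factors commute is essentially what the lemma is meant to deliver (it is how it is used in Corollary \ref{cor:BasisForC(G)GivesCube}). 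So the conclusion is not ``symmetric enough'' a priori, and the case actually needed later (an arbitrary $B_j$ against the intersection of the others) is left unproved.

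This missing symmetry is precisely the content of the paper's proof: it writes $\vert(B_{i_1}\cap\dots\cap B_{i_\ell})B_{i_{\ell+1}}\vert=\vert B_{i_1}\cap\dots\cap B_{i_\ell}\vert\,\vert B_{i_{\ell+1}}\vert/\vert B_{i_1}\cap\dots\cap B_{i_{\ell+1}}\vert$, uses induction on $\ell$ and this formula to show the cardinality is unchanged when the outside factor is swapped with one inside, hence is independent of the choice of $i_{\ell+1}$ and of the ordering, and only then invokes your largest-index containment to identify the common value as $\vert G\vert$. Your argument can be repaired in the same spirit: first prove by induction (always peeling off the largest index, which \emph{is} harmless inside the intersection) that $\vert B_{i_1}\cap\dots\cap B_{i_\ell}\vert=\vert G\vert/\prod_{t}[G:B_{i_t}]$, and then apply the product-cardinality formula with an arbitrary singled-out $B_{i_{\ell+1}}$ to conclude $\vert(B_{i_1}\cap\dots\cap B_{i_\ell})B_{i_{\ell+1}}\vert=\vert G\vert$. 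As written, however, the key quantifier ``for any choice of $i_{\ell+1}$'' is exactly what your proof does not establish.
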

\begin{proof}
We count

\begin{eqnarray*}
\vert(B_{i_{1}}\cap\dots\cap B_{i_{\ell}})B_{i_{\ell+1}}\vert & = & \frac{\vert(B_{i_{1}}\cap\dots\cap B_{i_{\ell}})\vert\vert B_{i_{\ell+1}}\vert}{\vert B_{i_{1}}\cap\dots\cap B_{i_{\ell}}\cap B_{i_{\ell+1}}\vert}\\
 & = & \frac{\vert B_{i_{1}}\cap\dots\cap B_{i_{\ell-1}}\vert\vert B_{i_{\ell}}\vert\vert B_{i_{\ell+1}}\vert}{\vert(B_{i_{1}}\cap\dots\cap B_{i_{\ell-1}})B_{i_{\ell}}\vert\vert B_{i_{1}}\cap\dots\cap B_{i_{\ell}}\cap B_{i_{\ell+1}}\vert}.\end{eqnarray*}

By induction on $\ell$, this is \[
=\frac{\vert B_{i_{1}}\cap\dots\cap B_{i_{\ell-1}}\vert\vert B_{i_{\ell}}\vert\vert B_{i_{\ell+1}}\vert}{\vert G\vert\vert B_{i_{1}}\cap\dots\cap B_{i_{\ell}}\cap B_{i_{\ell+1}}\vert},\]

and by symmetry, \[
\vert(B_{i_{1}}\cap\dots\cap B_{i_{\ell}})B_{i_{\ell+1}}\vert=\vert(B_{i_{1}}\cap\dots\cap B_{i_{\ell-1}}\cap B_{i_{\ell+1}})B_{i_{\ell}}\vert.\]
Repeating this argument shows that $\vert(B_{i_{1}}\cap\dots\cap B_{i_{\ell}})B_{i_{\ell+1}}|$
is independent of the ordering of the $B_{i_{j}}$'s, or of the choice
of $i_{\ell+1}$.

Then take $i_{\ell+1}$ to be the largest index of any such $B_{i_{j}}$,
so that $N_{i_{\ell+1}}\subseteq B_{i_{1}}\cap\dots\cap B_{i_{\ell}}$.
In particular, $(B_{i_{1}}\cap\dots\cap B_{i_{\ell}})B_{i_{\ell+1}}\supseteq N_{i_{\ell+1}}B_{i_{\ell+1}}=G$.
Since the ordering of the $i_{j}$'s doesn't affect the cardinality,
$\vert(B_{i_{1}}\cap\dots\cap B_{i_{\ell}})B_{i_{\ell+1}}|=|G|$ for
any choice of $i_{\ell+1}$, proving the lemma.\end{proof}
\begin{cor}
\label{cor:BasisForC(G)GivesCube}If $\mathcal{B}$ is a base-set
and $x$ is such that the elements of $\mathcal{B}$ and $\mathcal{B}x$
are distinct from one another (i.e., $B_{i}\neq B_{i}x$ for all $i$),
then the meet sublattice generated by $\mathcal{B}\cup\mathcal{B}x$
is isomorphic to the face lattice of the boundary of a $k$-cube.\end{cor}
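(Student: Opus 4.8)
The plan is to show that $\mathcal{B} \cup \mathcal{B}x$ behaves exactly like the $2k$ facet hyperplanes of a $k$-cube, where $B_i$ and $B_i x$ are the two "opposite faces" in the $i$-th coordinate direction. Concretely, I would set up a bijection between the meet-sublattice $L$ generated by $\mathcal{B} \cup \mathcal{B}x$ and the face lattice of the boundary $\partial(I^k)$ of a combinatorial $k$-cube. A proper nonempty face of $\partial(I^k)$ is specified by choosing, for each coordinate $i$ in some nonempty subset $T \subseteq [k]$, one of the two values $\{0,1\}$ (the coordinates not in $T$ are "free"); the empty face and the whole boundary sphere correspond to $\hat{0}$ and $\hat{1}$. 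I would associate to such data the intersection $\bigcap_{i \in T} C_i$, where $C_i \in \{B_i, B_i x\}$ according to the chosen value. The previous lemma, asserting $(B_{i_1} \cap \dots \cap B_{i_\ell})B_{i_{\ell+1}} = G$, is precisely what is needed to make this work: it shows that these intersections are "as large as possible," i.e., that intersecting with an additional complement in a new direction always drops the order by the correct prime factor, and never by more.

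The key steps, in order. First I would establish the cardinality/codimension bookkeeping: using the lemma together with $|A B| = |A||B|/|A \cap B|$, show by induction that $[G : B_{i_1} \cap \dots \cap B_{i_\ell}] = \prod_{j=1}^{\ell}[G:B_{i_j}]$ for distinct indices $i_1,\dots,i_\ell$, and that the same holds when some of the $B_{i_j}$ are replaced by their $x$-translates $B_{i_j}x$ (here one must be slightly careful, as $B_i x$ is a coset, not a subgroup, but intersections of cosets are cosets or empty, and the lemma combined with the hypothesis $B_i \ne B_i x$ controls which). In particular no "coordinate collapse" occurs: choosing both $B_i$ and $B_i x$ in the same direction gives $B_i \cap B_i x$, a proper coset of $B_i \cap \langle x \rangle$-type object of strictly smaller order — and I would check this never accidentally equals one of the larger intersections, so the two parallel faces really are distinct and their meet sits strictly below both. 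Second, I would verify that every element of the meet-sublattice $L$ arises as exactly one such "face intersection" (plus $\hat 0, \hat 1$): closure under meet is built in, and the bijection respects it because $\left(\bigcap_{i \in T} C_i\right) \cap \left(\bigcap_{i \in T'} C_i'\right)$ either equals the face indexed by $T \cup T'$ with the combined value assignment (when the assignments agree on $T \cap T'$) or drops to $\hat 0$ (when they disagree in some direction — using again that $B_i \cap B_i x$ is a proper sub-coset, so further intersections can vanish). Third, I would confirm the map is order-reversing with respect to inclusion in the face poset, so that $L \cong \mathcal{F}(\partial I^k)^{\mathrm{op}} \cong \mathcal{F}(\partial I^k)$ (the face lattice of a cube is self-dual, so orientation is not an issue).

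The main obstacle I anticipate is the coset (rather than subgroup) bookkeeping in the second step: the lemma as stated is about intersections and products of the subgroups $B_i$, but the faces of the cube involve the translates $B_i x$, and one must translate all the counting statements to that setting and, crucially, rule out "accidental coincidences" — e.g. that no intersection involving a genuinely mixed choice (some $B_i$, some $B_j x$) collapses to something it shouldn't, and that $B_i \cap B_i x$ lands strictly below the $(k-1)$-faces in every direction as required for the cube structure. The hypothesis $B_i \ne B_i x$ for all $i$ is exactly the input that prevents the two parallel facets from coinciding; I expect the translate version of the product lemma (which should follow by the same counting, since $|B_{i_1}x \cap \dots| $ equals $|B_{i_1} \cap \dots|$ whenever the intersection is nonempty, and nonemptiness is forced by the $G$-covering property) to be the technical heart. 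Once the codimension count is pinned down so that each of the $2^k$ "vertices" $\bigcap_{i=1}^k C_i$ is a single coset of the trivial subgroup (a point of $G$) and the face containments are strict at every level, the isomorphism with $\mathcal{F}(\partial I^k)$ drops out.
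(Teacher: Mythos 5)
Your overall strategy---use the product lemma to control the index of each intersection, and then match intersections of subsets of $\mathcal{B}\cup\mathcal{B}x$ with faces of the cube coordinate by coordinate---is the same as the paper's, which disposes of the corollary in one sentence by citing the product lemma, Lemma \ref{lem:ComplementsVsFactors} (each new intersection drops exactly one level in the lattice), and the fact that $B_{i}\cap B_{i}x=\emptyset$. However, there is a genuine error at precisely the point the paper singles out: you assert that $B_{i}\cap B_{i}x$ is ``a proper coset \dots of strictly smaller order'' whose meet ``sits strictly below both'' parallel facets, and later that mixed intersections drop to $\hat{0}$ only because ``further intersections can vanish.'' In fact $B_{i}$ and $B_{i}x$ are two distinct right cosets of the \emph{same} subgroup $B_{i}$, so they are disjoint: $B_{i}\cap B_{i}x=\emptyset=\hat{0}$ immediately. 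This is not cosmetic. In the face lattice of the boundary of a $k$-cube, two opposite facets have meet equal to the empty face; if their meet in your lattice were a nonempty element strictly below both, as you describe, the generated meet-sublattice would \emph{not} be a cube's face lattice at all. The picture you paint is therefore inconsistent with the conclusion you are aiming for; the correct (and much easier) observation is the disjointness, which is exactly what the hypothesis $B_{i}\neq B_{i}x$ buys you.

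A secondary slip: the boundary face lattice of the $k$-cube is not self-dual (its polar dual is the cross-polytope), so you cannot wave away the orientation of your bijection by appealing to self-duality. Fortunately no dualization is needed: your map sends the face with constrained coordinate set $T$ to $\bigcap_{i\in T}C_{i}$, so vertices (all coordinates constrained) go to single points of $G$, which are atoms of the coset lattice, and facets go to the maximal cosets $C_{i}$, which are coatoms; the map is order-preserving as it stands. The remainder of your argument---nonemptiness of the mixed intersections forced inductively by $(B_{i_{1}}\cap\dots\cap B_{i_{\ell}})B_{i_{\ell+1}}=G$ (a coset of $H$ meets a coset of $K$ whenever $HK=G$), and the index dropping by exactly one prime at each step via Lemma \ref{lem:ComplementsVsFactors}---is correct and is what the paper's one-line proof intends.
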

\begin{proof}
Any $B_{j}$ commutes with any intersection of $B_{i}$'s, $j\neq i$,
and the result follows from Lemma \ref{lem:ComplementsVsFactors}
and since $B_{i}\cap B_{i}x=\emptyset$ for all $i$.
\end{proof}
We henceforth assume that $G$ is relatively complemented. 

Let $\mathcal{B}$ be a base-set for $\cosetlat(G)$ as above, and
$x\in G$ be such that $B_{i}x\neq B_{i}^{0}$ (for each $i$). Then
we define $\Sigma_{\mathcal{B}x}$ to be the meet sublattice of $\cosetlat(G)$
generated by $\mathcal{B}x\cup\{B_{i}^{0}\,:\, B_{i}x=B_{i}^{0}x\}$,
and the larger meet sublattice $\Sigma_{\mathcal{B}x}^{+}$ to be
generated by \[
\mathcal{B}x\cup\{B_{i}^{0}\,:\, B_{i}x=B_{i}^{0}x\}\cup\{B_{i}y_{i}\,:\, B_{i}\neq B_{i}^{0}\},\]
where the $y_{i}$'s are some elements such that $B_{i}y_{i}\neq B_{i}x$.
By Lemma \ref{lem:ComplementsVsFactors} and the proof of Corollary
\ref{cor:BasisForC(G)GivesCube}, \[
\bigcap_{\{i\,:\, B_{i}x=B_{i}^{0}x\}}B_{i}^{0}\cap\bigcap_{\{i\,:\, B_{i}\neq B_{i}^{0}\}}B_{i}y_{i}=y\mbox{ (for some }y\mbox{),}\]
so $\Sigma_{\mathcal{B}x}^{+}$ is given by all intersections of $\mathcal{B}x\cup\mathcal{B}y$.
Thus (also by Corollary \ref{cor:BasisForC(G)GivesCube}) $\vert\Sigma_{\mathcal{B}x}^{+}\vert$
is a convex polytope with subcomplex $\vert\Sigma_{\mathcal{B}x}\vert$.
\begin{lem}
Let $H_{0}x\dotsupset H_{1}x$ be an edge in $\cosetlat(G)$ with
$\lambda(H_{0}x\dotsupset H_{1}x)=i$. Then $H_{1}x=H_{0}x\cap B_{i}x$
for some complement $B_{i}$ to $N_{i}/N_{i+1}$.\end{lem}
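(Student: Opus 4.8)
The plan is to translate the statement into one about the subgroup lattice $L(G)$ and then split according to the sign of the label, the positive case being where the real work lies. The starting observation is the elementary identity $Kx\cap Lx=(K\cap L)x$ for subgroups $K,L\le G$ and $x\in G$ (an element lies in both cosets iff $gx^{-1}\in K\cap L$). Because of this, it suffices to produce a complement $B_i$ to $N_i/N_{i+1}$ with $H_0\cap B_i=H_1$: translating by $x$ then gives $H_0x\cap B_ix=(H_0\cap B_i)x=H_1x$, which is the assertion of the lemma.

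If $\lambda(H_0x\dotsupset H_1x)=-i$ this is immediate from the definition of $\lambda$: we are given $H_1x=H_0x\cap B_i^0$, the right-hand side is a right coset of $H_0\cap B_i^0$, and a coset determines its subgroup, so $H_1=H_0\cap B_i^0$; since $B_i^0$ is a complement to $N_i/N_{i+1}$ by construction, take $B_i=B_i^0$. So assume $\lambda(H_0x\dotsupset H_1x)=i>0$, i.e.\ $\lambda^{ss}(H_0\dotsupset H_1)=i$. Unwinding the two descriptions of $\lambda^{ss}$ gives the four facts $N_iH_1\supseteq H_0$, $N_{i+1}H_1\not\supseteq H_0$, $N_{i+1}\cap H_0\subseteq H_1$, and $N_i\cap H_0\not\subseteq H_1$. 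Recalling that $N_{i+1}\subsetdot N_i$ has prime index (the chief series is a maximal chain of left modular elements in $L(G)$), two short computations with the Dedekind identity yield (a) $N_i\cap H_0\not\subseteq N_{i+1}$ (otherwise $N_i\cap H_0=N_{i+1}\cap H_0\subseteq H_1$), and (b) $N_i\cap H_1\subseteq N_{i+1}$ (since $N_iH_1/N_{i+1}H_1\cong N_i/N_{i+1}(N_i\cap H_1)$ is nontrivial, hence all of $N_i/N_{i+1}$); from (b), $N_i\cap N_{i+1}H_1=N_{i+1}(N_i\cap H_1)=N_{i+1}$, so $N_i\not\subseteq N_{i+1}H_1$.

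Now I invoke that $G$ is relatively complemented, so $L(G)$ is coatomic (Proposition \ref{pro:RKgroupProperties}): $N_{i+1}H_1$ is an intersection of maximal subgroups, and since it does not contain $N_i$, one of them, call it $B_i$, satisfies $N_i\not\subseteq B_i$. Then $B_i\supseteq N_{i+1}$, and by maximality together with $N_{i+1}\subsetdot N_i$ one gets $N_i\cap B_i=N_{i+1}$ and $N_iB_i=G$, so $B_i$ is a complement to $N_i/N_{i+1}$. Moreover $B_i\supseteq H_1$, while $H_0\not\subseteq B_i$ — for otherwise $N_i\cap H_0\subseteq N_i\cap B_i=N_{i+1}$, contradicting (a) — and since $H_1\subsetdot H_0$ this forces $H_0\cap B_i=H_1$. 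Combined with the reduction above, this completes the proof.

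I expect the main obstacle to be the positive case, and within it the step of realizing the \emph{prescribed} coatom $H_1$ of $H_0$ as an intersection $H_0\cap B_i$ with an honest complement $B_i$ (not just as $H_0\cap B_i^0$ for the distinguished complement, which would be a different coatom). Coatomicity of $L(G)$ for relatively complemented $G$ is exactly what lets one grow $N_{i+1}H_1$ to a maximal subgroup avoiding $N_i$, and the inequality $N_i\cap H_0\not\subseteq N_{i+1}$ extracted from the label is what forces that maximal subgroup to also avoid $H_0$, pinning the intersection down to $H_1$; getting those two facts cleanly out of $\lambda^{ss}(H_0\dotsupset H_1)=i$ (keeping in mind that $H_0$ need not contain $N_{i+1}$) is the part requiring care.
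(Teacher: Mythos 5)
Your proof is correct, and while it ends at the same place as the paper's --- producing a complement $B_i$ to $N_i/N_{i+1}$ with $H_1\subseteq B_i$ but $H_0\not\subseteq B_i$, so that $H_0\cap B_i=H_1$ by the covering relation --- the way you manufacture $B_i$ is genuinely different. The paper stays inside the labeling machinery: it observes that every maximal chain carries exactly one $\pm i$ edge, then climbs from $H_1\subsetdot H_0$ up to $G$ one rank at a time, using the fact that every height-$2$ interval of $L(G)$ is complemented (so has both an increasing and a decreasing chain) to keep finding an edge labeled $i$ at each level; the terminal subgroup of this climb is the complement. You instead unwind $\lambda^{ss}(H_0\dotsupset H_1)=i$ into explicit containments, show $N_i\not\subseteq N_{i+1}H_1$ by a Dedekind/index computation, and then invoke coatomicity of $L(G)$ (Proposition \ref{pro:RKgroupProperties}) to grow $N_{i+1}H_1$ to a maximal subgroup avoiding $N_i$, which prime index of $N_{i+1}$ in $N_i$ forces to be a complement. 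Both routes draw on the relatively complemented hypothesis, just through different equivalent characterizations: the paper through local complementation of rank-$2$ intervals, you through the global coatomicity condition. Your version buys a more concrete identification of $B_i$ (a coatom over $N_{i+1}H_1$), an explicit treatment of the negative-label case and of the passage from subgroups to cosets via $Kx\cap Lx=(K\cap L)x$ --- details the paper leaves implicit --- at the cost of somewhat more group-theoretic computation; it also makes visible, in the forward direction, the link between this covering statement and coatomicity that the paper only remarks on after Corollary \ref{cor:EarsCover_C(G)}.
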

\begin{proof}
Since every maximal chain in $\cosetlat(G)$ has exactly one edge
with $\lambda(H_{0}x\dotsupset H_{1}x)=\pm i$ for each $i\in[k]$,
it suffices to show that $H_{1}$ is contained in some complement
$B_{i}$ to $N_{i}/N_{i+1}$. Then $H_{0}$ cannot be contained in
$B_{i}$, as that would give two $\pm i$ edges, and so $H_{1}=H_{0}\cap B_{i}$.

Since $G$ is relatively complemented, every interval in $L(G)$ is
complemented. In particular, any interval of height 2 has both increasing
and decreasing chains, so for any $H_{-1}\dotsupset H_{0}$ there
is an $H_{1}^{+}\subsetdot H_{-1}$ with $\lambda^{ss}(H_{-1}\dotsupset H_{1}^{+})=i$. 

Repeat this argument inductively on $H_{-1}\dotsupset H_{1}^{+}$
until $H_{-1}=G$. The final $H_{1}^{+}$ is the desired $B_{i}$,
and the definition of $\lambda^{ss}$ shows that $B_{i}$ is a complement
to $N_{i}/N_{i+1}$.\end{proof}
\begin{cor}
\label{cor:EarsCover_C(G)}Every maximal chain in $\cosetlat(G)$
is in some $\Sigma_{\mathcal{B}x}$. 
\end{cor}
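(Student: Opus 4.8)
The plan is to reconstruct a base-set directly from the labels that the chain carries, and then to observe that the chain consists entirely of nested coset-intersections of the associated cosets, which is exactly what it means to lie in one of the $\Sigma_{\mathcal{B}x}$.

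First I would fix a maximal chain $\mathbf{c}$ of $\cosetlat(G)$ and write it as $G = H_0x \dotsupset H_1x \dotsupset \dots \dotsupset H_kx \dotsupset \emptyset$, where $G = H_0 \supsetneq H_1 \supsetneq \dots \supsetneq H_k = 1$ is a maximal chain of $L(G)$ and $H_kx = \{x\}$ is the coset of the trivial subgroup forced by $\mathbf{c}$. Since $|\lambda|$ restricted to $[x,G]$ is the supersolvable labeling, and a supersolvable labeling puts the full label set $\{1,\dots,k\}$ on every maximal chain of an interval by property~(\ref{eq:SSPermutationProperty}), the $k$ edges of $\mathbf{c}$ above $H_kx$ carry the values $\pm1,\dots,\pm k$, one value of each absolute value. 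For each $i$, apply the lemma immediately preceding this corollary to the unique edge $H_ax \dotsupset H_{a+1}x$ of $\mathbf{c}$ with $|\lambda| = i$: it produces a complement $B_i$ of $N_i/N_{i+1}$ with $H_{a+1}x = H_ax \cap B_ix$. Put $\mathcal{B} = \{B_1,\dots,B_k\}$, which is a base-set by construction.

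The heart of the argument is then a short induction: for every $j$ one has $H_jx = \bigcap_{i \in S_j} B_ix$, where $S_j \subseteq [k]$ is the set of absolute values of the labels on the edges of $\mathbf{c}$ strictly above $H_jx$, so that $|S_j| = j$ and $S_k = [k]$. The case $j = 0$ is the empty intersection $G$; in the inductive step, if the edge $H_jx \dotsupset H_{j+1}x$ has $|\lambda| = i$ then $i \notin S_j$ because the labels are distinct, and $H_{j+1}x = H_jx \cap B_ix = \bigcap_{i \in S_{j+1}} B_ix$ by the previous paragraph. Hence every $H_jx$ lies in the meet-sublattice generated by $\mathcal{B}x$, and as $\Sigma_{\mathcal{B}x}$ is the face lattice of a cube it also contains $\hat{0}$ and $\hat{1}$; therefore all of $\mathbf{c}$ lies in $\Sigma_{\mathcal{B}x}$.

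The one delicate point — which I expect to be the main obstacle — is that $\Sigma_{\mathcal{B}x}$ is only defined when $B_ix \neq B_i^0$ for every $i$. If $B_ix = B_i^0$, then $1 \in B_ix$ forces $x \in B_i$, hence $B_i = B_ix = B_i^0$ and $x \in B_i^0$, and consequently the level-$i$ edge of $\mathbf{c}$ must carry the label $-i$. To escape this I would exploit the flexibility hidden in the preceding lemma: using relative complementedness (Proposition~\ref{pro:RKgroupProperties}, via the abundance of complements to a chief factor in the spirit of Lemma~\ref{lem:ComplementsVsFactors}), one replaces $B_i$ by another complement $\tilde B_i$ of $N_i/N_{i+1}$ that still realizes that edge as $H_{a+1}x = H_ax \cap \tilde B_ix$ but with $\tilde B_i \neq B_i^0$, so that $\tilde B_ix \neq B_i^0$. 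Verifying that such a replacement is always available is the bookkeeping to be careful about; granting it, the nested-intersection argument above goes through unchanged for the adjusted base-set, and the proof is complete.
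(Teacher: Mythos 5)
Your reduction to the preceding lemma and the nested-intersection induction are the right idea, and for a chain all of whose labels in $\{\pm1,\dots,\pm k\}$ are positive your argument is complete: in that case each $B_i$ produced by the lemma satisfies $B_ix\neq B_i^0$ (a coset $B_ix$ equal to the subgroup $B_i^0$ would force $B_i=B_i^0\ni x$ and hence a $-i$ label), so $\Sigma_{\mathcal{B}x}$ is defined with $x$ the chain's atom and contains the chain. But your resolution of the ``delicate point'' does not work. When the level-$i$ edge $H_ax\dotsupset H_{a+1}x$ carries the label $-i$, you want a complement $\tilde B_i\neq B_i^0$ of $N_i/N_{i+1}$ with $H_{a+1}x=H_ax\cap\tilde B_ix$; unwinding this, $\tilde B_i$ must be a complement of $N_i/N_{i+1}$ other than $B_i^0$ that contains $H_{a+1}=H_a\cap B_i^0$. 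No amount of relative complementedness guarantees this: already for $G=\mathbb{Z}_2$ the trivial subgroup is the \emph{only} complement of $G/1$, so the chain $G\dotsupset\{0\}\dotsupset\emptyset$ admits no such replacement; more generally, for any chief series the top factor $N_1/N_2$ has $N_2$ as its unique complement, so every chain passing through $N_2$ defeats your substitution. Relative complementedness supplies complements \emph{over} $N_{i+1}$ inside $[N_{i+1},G]$, not a second complement through an arbitrary prescribed subgroup.

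The actual escape is built into the definition of $\Sigma_{\mathcal{B}x}$, which you are implicitly truncating: its generating set is $\mathcal{B}x\cup\{B_i^0\,:\,B_ix=B_i^0x\}$, not $\mathcal{B}x$ alone, so whenever $B_i=B_i^0$ the subgroup $B_i^0$ itself is available as a generator even though it is not a coset through $x$. The point is therefore to \emph{not} take $x$ to be the atom of the chain. Write the atom as $\{y\}$, let $I^+$ and $I^-$ be the indices with positive and negative labels, take $B_i$ from the lemma for $i\in I^+$ and $B_i=B_i^0$ for $i\in I^-$, and then choose $x\in\bigcap_{i\in I^+}B_iy$ with $x\notin B_i^0$ for every $i\in I^-$. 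Such an $x$ exists because the complements are independent (the intersection of one coset of each $B_i$ is a single point, by the counting lemma used for Corollary \ref{cor:BasisForC(G)GivesCube}), so by inclusion--exclusion $\bigl|\bigcap_{i\in I^+}B_iy\setminus\bigcup_{i\in I^-}B_i^0\bigr|=\bigl|\bigcap_{i\in I^+}B_iy\bigr|\prod_{i\in I^-}\bigl(1-[G:B_i^0]^{-1}\bigr)>0$. With this $x$ the defining condition $B_ix\neq B_i^0$ holds for all $i$, the positive-label edges are intersections with $B_iy=B_ix\in\mathcal{B}x$, the negative-label edges are intersections with the extra generators $B_i^0$, and your induction then goes through. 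As written, though, your proof fails on every chain carrying a negative label.
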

Corollary \ref{cor:EarsCover_C(G)} would not hold if we replaced
`relatively complemented' with any weaker condition, since the result
implies coatomicity, and Proposition \ref{pro:RKgroupProperties}
tells us that relatively complemented groups are exactly those with
coatomic subgroup lattice.

\medskip{}

Now that we have a set of cubes that cover $\cosetlat(G)$, the next
step is to assign an order to them. For any base-set $\mathcal{B}$,
let $\rho_{i}(\mathcal{B})$ be 0 if $B_{i}=B_{i}^{0}$, and 1 otherwise.
We put the $\rho_{i}$'s together in a binary vector $\rho(\mathcal{B})$,
which we will call the \emph{pattern} of $\mathcal{B}$. Order the
$\mathcal{B}x$'s (and hence the $\Sigma_{\mathcal{B}x}$'s) in any
linear extension of the lexicographic order on $\rho(\mathcal{B})$.
Let $\Delta_{\mathcal{B}x}$ be the simplicial complex with facets
the maximal chains that are in $\Sigma_{\mathcal{B}x}$, but not in
any preceding $\Sigma_{\mathcal{B}'x'}$.

The $\Sigma_{\mathcal{B}x}$'s are generally proper subsets of face
lattices of convex polytopes, so (CLced-polytope) does not hold and
we do not have an $EL$-ced. We can use the same sort of argument,
however, to prove the following refinement of Theorem \ref{thm:cosetposetced_intro}:
\begin{thm}
\label{thm:CEDforCosetLattice}$\{\Delta_{\mathcal{B}x}\}$ is a convex
ear decomposition for $\cosetlat(G)$ under the pattern ordering.
\end{thm}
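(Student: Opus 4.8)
The strategy is to mimic the proof of Theorem \ref{thm:ELcedsAreCeds}, verifying the four convex ear decomposition axioms directly rather than through the $CL$-ced machinery (which fails because (CLced-polytope) does not hold). The four things to check are (ced-union), (ced-polytope), (ced-bdry), and (ced-topology). We have already done most of the work: (ced-union) is exactly Corollary \ref{cor:EarsCover_C(G)}, and (ced-polytope) follows from Corollary \ref{cor:BasisForC(G)GivesCube} together with the observation (made just before the theorem statement) that $\vert\Sigma_{\mathcal{B}x}\vert$ sits inside $\vert\Sigma_{\mathcal{B}x}^+\vert$, which is the barycentric subdivision of the boundary of a $k$-cube, hence a subcomplex of a simplicial $(k-1)$-polytope. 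So the real content is (ced-topology) and (ced-bdry).

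For (ced-topology), the first ear $\Delta_{\mathcal{B}x}$ (with pattern $\rho=\mathbf{0}$, i.e.\ $\mathcal{B}=\mathcal{B}^0$) should be all of $\vert\Sigma_{\mathcal{B}^0 x}\vert$, a $(k-2)$-sphere, because it is the unique ear containing the unique increasing chain — here I would invoke Theorem \ref{thm:anELlabelingofC(G)} and the analysis of increasing chains in its proof, exactly as in the last paragraphs of the proof of Theorem \ref{thm:ELcedsAreCeds}. For the later ears, I would show that the reverse lexicographic order on maximal chains of $\Delta_{\mathcal{B}x}$ is a shelling, using an analogue of the technical Lemma \ref{lem:CLcedReplaceAscentWithDescent}: given two maximal chains of $\Delta_{\mathcal{B}x}$, the lexicographically earlier one has an ascent at some rank where the two disagree, and we can swap that ascent for a descent while staying inside $\Delta_{\mathcal{B}x}$ (staying inside $\Sigma_{\mathcal{B}x}$ is automatic since $\Sigma_{\mathcal{B}x}$ is Eulerian, being a subdivided cube boundary or a face of one; staying out of earlier ears requires the pattern-ordering argument). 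The swapped chain is lexicographically later and meets the original in a facet-minus-a-vertex, giving condition (\ref{eq:ShellingDef}). Then $\Delta_{\mathcal{B}x}$ is a shellable proper subcomplex of the $(k-2)$-sphere $\vert\Sigma_{\mathcal{B}x}^+\vert$ — properness because only the first ear contains an increasing chain — hence a $(k-2)$-ball by the Danaraj--Klee result.

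For (ced-bdry), I would again follow the template of Theorem \ref{thm:ELcedsAreCeds}: the inclusion $\bdry\Delta_{\mathcal{B}x}\subseteq\Delta_{\mathcal{B}x}\cap\bigl(\bigcup_{\text{earlier}}\Delta_{\mathcal{B}'x'}\bigr)$ holds by general topology (Lemma \ref{lem:SubmanifoldBdry}), since $\bdry\Delta_{\mathcal{B}x}=\bdry\bigl(\overline{\vert\Sigma_{\mathcal{B}x}^+\vert\setminus\Delta_{\mathcal{B}x}}\bigr)$. The reverse inclusion is where I expect to have to work: I must show that any chain $\mathbf{c}$ lying in both $\Delta_{\mathcal{B}x}$ and some earlier $\Delta_{\mathcal{B}'x'}$ extends to a maximal chain of $\Sigma_{\mathcal{B}x}$ that is \emph{not} in $\Delta_{\mathcal{B}x}$ (the analogue of (CLced-bdry)). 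This is the group-theoretic heart of the argument: using the pattern ordering and the structure of base-sets, one argues that if $\mathcal{B}'x'$ comes earlier then $\rho(\mathcal{B}')$ precedes $\rho(\mathcal{B})$ lexicographically, so there is a coordinate $i$ where $B_i' = B_i^0$ but $B_i \neq B_i^0$ (or the cosets differ appropriately); intersecting against $B_i^0$ rather than $B_i x$ produces an extension of $\mathbf{c}$ inside $\Sigma_{\mathcal{B}x}$ that already appeared in an earlier ear, hence is not new, hence lies in $\mathcal{M}(\Sigma_{\mathcal{B}x})\setminus\mathcal{M}(\Delta_{\mathcal{B}x})$.

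**Main obstacle.** I expect the delicate point to be the reverse inclusion in (ced-bdry) — precisely tracking how a shared chain, together with the pattern ordering, forces the existence of a "non-new" extension inside $\Sigma_{\mathcal{B}x}$. This is the step where the choice of ordering (any linear extension of lexicographic order on patterns) really gets used, and where the relatively-complemented hypothesis is essential (it guarantees, via coatomicity, that the relevant complements $B_i$ and $B_i^0$ both genuinely occur as cover relations with the right labels). The shelling argument for (ced-topology), while requiring an analogue of Lemma \ref{lem:CLcedReplaceAscentWithDescent}, should go through largely as before once one checks that $\Sigma_{\mathcal{B}x}$ being Eulerian suffices in place of "face lattice of a polytope" in that lemma's proof.
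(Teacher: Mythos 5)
Your overall plan coincides with the paper's: (ced-union) and (ced-polytope) are handled exactly as you say, (ced-topology) is proved by showing each ear is shellable via an ascent-to-descent swap (the paper's Lemma \ref{lem:C(G)CedHasDescentProperty} and Corollary \ref{cor:CosetEarsAreShellable}), and (ced-bdry) rests on a chain-extension argument. However, you have misjudged where the work in (ced-bdry) lies, and the step you dismiss as ``general topology'' would fail as stated. Lemma \ref{lem:SubmanifoldBdry} gives $\bdry\Delta_{\mathcal{B}x}=\Delta_{\mathcal{B}x}\cap\overline{\vert\Sigma_{\mathcal{B}x}^{+}\vert\setminus\Delta_{\mathcal{B}x}}$; in the $CL$-ced template of Theorem \ref{thm:ELcedsAreCeds} this immediately yields $\bdry\Delta_{s}\subseteq\Delta_{s}\cap\bigl(\bigcup_{t<s}\Delta_{t}\bigr)$ because every facet of $\Sigma_{s}$ outside $\Delta_{s}$ lies in some earlier ear. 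Here that is no longer automatic: the ambient sphere must be $\vert\Sigma_{\mathcal{B}x}^{+}\vert$, which strictly contains $\vert\Sigma_{\mathcal{B}x}\vert$, and its extra facets run through the auxiliary cosets $B_{i}y_{i}$; such a facet need not lie in any earlier $\Sigma_{\mathcal{B}'x'}$, so a face of $\Delta_{\mathcal{B}x}$ can lie in $\overline{\vert\Sigma_{\mathcal{B}x}^{+}\vert\setminus\Delta_{\mathcal{B}x}}$ with no a priori reason to lie in an earlier ear. The paper closes this direction with a genuine combinatorial argument (the second half of Proposition \ref{pro:CedBdryForC(G)}): if $\mathbf{c}$ lies in no earlier $\Sigma_{\mathcal{B}'x'}$, the pattern ordering forces $\rho_{i}(\mathcal{B})=0$ for every $i$ labelling the bottom gap of $\mathbf{c}$, and the deliberate inclusion of those $B_{i}^{0}$ among the generators of $\Sigma_{\mathcal{B}x}$ then guarantees that every extension of $\mathbf{c}$ inside $\Sigma_{\mathcal{B}x}^{+}$ already lies in $\Delta_{\mathcal{B}x}$, whence $\mathbf{c}\notin\bdry\Delta_{\mathcal{B}x}$. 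This is precisely where the pattern ordering and the seemingly odd definition of $\Sigma_{\mathcal{B}x}$ earn their keep, and your plan has no substitute for it.

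Two smaller corrections. For the inclusion you do identify as hard, you ask for an extension in $\mathcal{M}(\Sigma_{\mathcal{B}x})\setminus\mathcal{M}(\Delta_{\mathcal{B}x})$; this is both more than you need and more than is generally available --- the paper's canonical extension $\mathbf{c}^{++}$ of Lemma \ref{lem:earchain_extension} lands only in $\Sigma_{\mathcal{B}x}^{+}$ in general (it lands in $\Sigma_{\mathcal{B}x}$ exactly when $\mathcal{B}x$ is the first complex containing $\mathbf{c}$), and membership in $\mathcal{M}(\Sigma_{\mathcal{B}x}^{+})\setminus\mathcal{M}(\Delta_{\mathcal{B}x})$ suffices for Lemma \ref{lem:SubmanifoldBdry}. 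In the shelling step, $\Sigma_{\mathcal{B}x}$ is generally a proper subset of a polytope face lattice and so is not Eulerian; what makes the ascent-swap work is that its intervals $[a,b]$ with $a\neq\emptyset$ are Boolean, while an ascent at the bottom rank (labels $-i,0$) must be handled separately by exchanging $B_{i}^{0}$ for $B_{i}^{0}x$, as in Corollary \ref{cor:CosetEarsAreShellable}.
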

Corollary \ref{cor:EarsCover_C(G)} shows that the ears cover $\cosetlat(G)$,
that is, that (CLced-union) holds. Our next step is to show that an
analogue of (CLced-desc) holds.

It will be convenient to let $S([a,b])$ be the label set of $\vert\lambda\vert$
on the interval $[a,b]$, that is, the set of nonnegative $i$'s such
that $\lambda$ gives $\pm i$ labels on cover relations in $[a,b]$.
\begin{lem}
\label{lem:C(G)CedHasDescentProperty}For any interval $[a,b]$ in
$\cosetlat(G)$, there is at most one (dual) descending maximal chain
$\mathbf{c}$ on $[a,b]$ which is in $\Delta_{\mathcal{B}x}$. \end{lem}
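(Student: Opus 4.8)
The plan is to mirror the two-case structure used in the proof for the $d$-divisible partition lattice (Proposition \ref{pro:d-div-satisfiesCLced-desc}), exploiting the fact that $\vert\lambda\vert$ restricts to the supersolvable labeling $\lambda^{ss}$ on any upper interval. First I would split according to whether $a = \hat 0 = \emptyset$ or $a \neq \emptyset$. In either case, recall from \eqref{eq:SSPermutationProperty} that every maximal chain on $[a,b]$ carries the same label \emph{multiset} under $\vert\lambda\vert$; call its underlying set $S([a,b])$. So a descending chain is determined by two pieces of data: the order in which the absolute values $S([a,b])$ appear (and for a descending chain this order is forced to be strictly decreasing, hence unique), and the choice of \emph{signs} on each edge. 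The whole problem thus reduces to showing that, for a chain lying in $\Delta_{\mathcal{B}x}$, the sign of each label is forced.

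For the case $a = \emptyset$: the bottom edge $y \dotsupset \emptyset$ always gets label $0$, so any negative label strictly above it would create an ascent. Hence a descending chain in $[\emptyset, b]$ must carry all \emph{positive} labels above the bottom, i.e.\ no edge $H_0 x \dotsupset H_1 x$ may satisfy $H_1 x = H_0 x \cap B_i^0$. But a chain in $\Delta_{\mathcal{B}x}$ consists of intersections with cosets $B_i x$ from the base-set $\mathcal B$; I would argue (using the analogue of Corollary \ref{cor:FormOfDeltaAlpha} for $\cosetlat(G)$ — namely that $\mathcal B x$ has pattern $\rho(\mathcal B)$ chosen lexicographically first, so that $B_i x = B_i^0$, i.e.\ the edge gets a negative label, whenever it is permitted) that this pins the chain down completely: once the positive-label requirement is imposed, each successive edge is the \emph{unique} minimal-label edge that avoids $B_i^0$, and these choices are forced inductively exactly as in Case~2 of the proof of Theorem \ref{thm:anELlabelingofC(G)}. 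So the descending chain, if it exists, is unique.

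For the case $a \neq \emptyset$: translating by $x^{-1}$ we may work inside $L(G) \cong [x, G]$, and the edges available in $\Delta_{\mathcal{B}x}$ on $[a,b]$ are exactly intersections with members of $\mathcal B x$ (or the corresponding $B_i^0$ where $B_i x = B_i^0 x$). Now I would run the following greedy argument: among the indices in $S([a,b])$, consider the largest index $i$ such that $a \subseteq B_i x$ (equivalently, a $-i$ label is geometrically possible on $[a,b]$). I claim that on a descending chain a $-i$ label is \emph{forced} precisely when $a\subseteq B_i x$: if instead some such edge got a $+i$ label, then the unique $\pm i$ edge down from the relevant coatom would in fact be the intersection with $B_i x$ (by uniqueness of the $\pm i$ edge, since $\vert\lambda\vert = \lambda^{ss}$ is an $EL$-labeling on the upper interval), forcing the $-i$ label and contradicting the ascent-free hypothesis — this is the same mechanism as in Case~2 of Theorem \ref{thm:anELlabelingofC(G)}. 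Conversely where $a \not\subseteq B_i x$ only a $+i$ label is possible. So every sign is determined, the decreasing order of absolute values is determined, and the chain is unique.

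The main obstacle I expect is the case $a \neq \emptyset$, specifically verifying cleanly that "an edge in $\Delta_{\mathcal{B}x}$ on $[a,b]$ must be intersection with a member of $\mathcal B x \cup \{B_i^0\}$ carrying the pattern-lexicographically-first signs" — i.e.\ establishing the right analogue of Corollary \ref{cor:FormOfDeltaAlpha} for the coset lattice, tracking how the pattern ordering on the $\Sigma_{\mathcal Bx}$'s interacts with which $\Sigma_{\mathcal B'x'}$ a given chain first appears in. Once that bookkeeping lemma is in place (it should follow from the definition of the pattern ordering together with Lemma \ref{lem:ComplementsVsFactors}), the sign-forcing argument and the uniqueness of the decreasing order of $\vert$labels$\vert$ are routine, and the lemma follows.
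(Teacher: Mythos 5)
Your overall reduction is the right one and matches the paper's: split into $a=\emptyset$ and $a\neq\emptyset$, use the fact that $[a,b]\cap\Sigma_{\mathcal{B}x}$ carries a fixed label set up to sign, and observe that once the signs are pinned down at most one arrangement is descending. The $a=\emptyset$ case is essentially fine (the bottom $0$ forces all other labels positive, and there is only one positive-$i$ edge in each direction of $\Sigma_{\mathcal{B}x}$). The gap is in the sign-forcing step for $a\neq\emptyset$, which you correctly flag as the crux but then argue incorrectly. First, your criterion ``a $-i$ label is forced precisely when $a\subseteq B_{i}x$'' is not the right one: negative labels are by definition intersections with the \emph{distinguished} complement $B_{i}^{0}$, not with the base-set coset $B_{i}x$, and containment of $a$ in an $i$-direction generator is close to automatic for any chain of $\Sigma_{\mathcal{B}x}$ on $[a,b]$, so it cannot determine the sign. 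Indeed, two different ears $\Delta_{\mathcal{B}x}$ and $\Delta_{\mathcal{B}'x'}$ can each contain a descending chain on the same interval with opposite signs on the $i$-label; any argument that reads the sign off the interval alone proves too much. Second, the mechanism you borrow from Case~2 of Theorem \ref{thm:anELlabelingofC(G)} --- ``the unique $\pm i$ edge down from the relevant coatom'' --- is only valid when $i$ is the \emph{minimum} of the remaining label set (that is exactly why it applies to increasing chains, whose first edge carries the minimal label); at the position where a $+i$ edge sits in a \emph{descending} chain, $i$ need not be minimal, and there can be many edges down from that element with absolute label $i$ (already in $L(\mathbb{Z}_p^2)$ the maximal label occurs on $p$ distinct coatoms).

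The missing ingredient is the pattern-ordering argument, which is where the hypothesis ``$\mathbf{c}\in\Delta_{\mathcal{B}x}$'' (not merely $\Sigma_{\mathcal{B}x}$) does its work: if a chain of $\Delta_{\mathcal{B}x}$ carries a $-i$ label on $[a,b]$, that edge is an intersection with $B_{i}^{0}$, so if $\rho_{i}(\mathcal{B})=1$ one could replace $B_{i}$ by $B_{i}^{0}$ in the base-set and exhibit the chain in a pattern-earlier $\Sigma$, contradicting newness; hence $\rho_{i}(\mathcal{B})=0$, and then \emph{every} label of absolute value $i$ on $[a,b]$ in $\Delta_{\mathcal{B}x}$ is negative. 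This yields a single signed label set for all chains of $\Delta_{\mathcal{B}x}$ on $[a,b]$, after which your concluding observation applies. Your ``bookkeeping lemma'' placeholder points at the right object, but the argument you actually sketch in its place does not substitute for it.
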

\begin{proof}
If $a=\emptyset$, then the unique descending chain on $[a,b]$ in
$\Sigma_{\mathcal{B}x}$ is given by intersecting with each $B_{i}x$
($i\in S([\emptyset,b])\setminus\{0\}$) in order.

If $a\neq\emptyset$, then the interval $[a,b]$ in $\Sigma_{\mathcal{B}x}$
is Boolean, with a maximal chain for each permutation of $S([a,b])$.
If there is a $-i$ label on a chain in $\Delta_{\mathcal{B}x}$,
then the edge can be obtained by intersecting with $B_{i}^{0}$. But
since $\Sigma_{\mathcal{B}x}$ is the first such complex containing
the chain, we must have $\rho_{i}(\mathcal{B})=0$ (otherwise, replace
$B_{i}x$ with $B_{i}^{0}$). But this tells us that every label with
absolute value $i$ on $[a,b]$ in $\Delta_{\mathcal{B}x}$ is negative.
Every such chain thus has the same set of labels, and at most one
permutation of these labels is descending.\end{proof}
\begin{cor}
\label{cor:CosetEarsAreShellable}$\Delta_{\mathcal{B}x}$ is shellable.\end{cor}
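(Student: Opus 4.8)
The plan is to show that the reverse of the lexicographic order on the maximal chains of $\Delta_{\mathcal{B}x}$, taken with respect to the dual $EL$-labeling $\lambda$, is a shelling; this runs parallel to the verification of (ced-topology) in the proof of Theorem~\ref{thm:ELcedsAreCeds}. The two inputs that powered that argument have analogues here: the statement that ``the descending chain on an interval is lexicographically last'' (Note~\ref{not:DescChainIsLexLast}) is replaced by Lemma~\ref{lem:C(G)CedHasDescentProperty} together with the fact that $|\lambda|$ restricted to any interval is an $EL$-labeling (Theorem~\ref{thm:anELlabelingofC(G)}), and the ``replace an ascent by a descent'' move of Lemma~\ref{lem:CLcedReplaceAscentWithDescent} must be re-proved with $\Sigma_{\mathcal{B}x}$ in place of a polytope face lattice.

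So the first step is a version of Lemma~\ref{lem:CLcedReplaceAscentWithDescent} for $\Delta_{\mathcal{B}x}$: if $\mathbf{c}$ is a maximal chain of $\Delta_{\mathcal{B}x}$ with an ascent at $c_i$, then $\Delta_{\mathcal{B}x}$ contains a chain $\mathbf{c}''=(\mathbf{c}\setminus\{c_i\})\cup\{c_i''\}$ that descends at $c_i''$ and is lexicographically later than $\mathbf{c}$. Set $\mathbf{c}^{-}=\mathbf{c}\setminus\{c_i\}$ and consider the height-two interval $[c_{i+1},c_{i-1}]$. When $c_{i+1}\neq\emptyset$, the proof of Lemma~\ref{lem:C(G)CedHasDescentProperty} shows that this interval is Boolean inside $\Sigma_{\mathcal{B}x}$, so $\mathbf{c}^{-}$ has exactly two extensions in $\Sigma_{\mathcal{B}x}$; by uniqueness of increasing chains for $\lambda$ one of them ascends at rank $i$ (namely $\mathbf{c}$), and by Lemma~\ref{lem:C(G)CedHasDescentProperty} at most one descends there, so the second extension is the desired $\mathbf{c}''$. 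When $c_{i+1}=\emptyset$, the label $\lambda(c_i\dotsupset\emptyset)=0$ forces $\lambda(c_{i-1}\dotsupset c_i)$ to be positive; since $\mathbf{c}$ lies in $\Delta_{\mathcal{B}x}$, the pattern of $\mathcal{B}$ at that index must be $0$ (else a translate of this chain already appears in an earlier ear obtained by swapping $B_i x$ for $B_i^{0}$), so intersecting $c_{i-1}$ with the appropriate $B_i^{0}$ produces a $-i$ labelled edge and hence the descending $\mathbf{c}''$. In both cases $\mathbf{c}^{-}\subseteq\mathbf{c}$ lies in $\Sigma_{\mathcal{B}x}$, and the same reasoning as in Lemma~\ref{lem:CLcedReplaceAscentWithDescent} shows $\Sigma_{\mathcal{B}x}$ is also the first ear containing $\mathbf{c}^{-}$, so $\mathbf{c}''\in\Delta_{\mathcal{B}x}$; the lexicographic comparison is immediate from the definition of $\lambda$.

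With this move available, the shelling argument is verbatim the one in Theorem~\ref{thm:ELcedsAreCeds}. Given maximal chains $\mathbf{c}$ and $\mathbf{c}'$ of $\Delta_{\mathcal{B}x}$ with $\mathbf{c}$ lexicographically earlier, Lemma~\ref{lem:C(G)CedHasDescentProperty} and the interval analogue of Note~\ref{not:DescChainIsLexLast} force $\mathbf{c}$ to ascend at some rank $i$ where $c_i\neq c_i'$; the replacement move on $[\emptyset,G]$ then produces $\mathbf{c}''\in\Delta_{\mathcal{B}x}$ that descends at $i$ and otherwise agrees with $\mathbf{c}$, so $\mathbf{c}'\cap\mathbf{c}\subseteq\mathbf{c}''\cap\mathbf{c}=\mathbf{c}\setminus\{c_i\}$ with $\mathbf{c}''$ lexicographically after $\mathbf{c}$ --- exactly Condition~(\ref{eq:ShellingDef}). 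Hence reverse lexicographic order is a shelling and $\Delta_{\mathcal{B}x}$ is shellable. I expect the replacement move to be the crux, and within it the bookkeeping on the bottom interval $[\emptyset,c_{i-1}]$, where $\Sigma_{\mathcal{B}x}$ is no longer Boolean and one must exploit the sign constraint $\lambda(\,\cdot\,\dotsupset\emptyset)=0$ and the pattern-minimality defining $\Delta_{\mathcal{B}x}$ to manufacture a descending alternative; this is precisely where the group-theoretic content of Corollary~\ref{cor:BasisForC(G)GivesCube} and Lemma~\ref{lem:C(G)CedHasDescentProperty} is used.
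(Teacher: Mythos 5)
Your overall strategy is the paper's: establish an ascent-to-descent replacement move for $\Delta_{\mathcal{B}x}$ and then run the reverse-lexicographic shelling argument of Theorem \ref{thm:ELcedsAreCeds}, splitting into the case of a height-two interval not touching $\emptyset$ (where the interval is Boolean in $\Sigma_{\mathcal{B}x}$ and the two extensions do the work) and the case of the bottom edge. The first case and your final shelling paragraph match the paper's proof.

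The bottom case, however, is backwards. In the dual convention used throughout this section (labels are read from $G$ down to $\emptyset$, and Lemma \ref{lem:C(G)CedHasDescentProperty} concerns dual descents), an ascent at $c_{i}$ with $c_{i+1}=\emptyset$ means $\lambda(c_{i-1}\dotsupset c_{i})<\lambda(c_{i}\dotsupset\emptyset)=0$, so that label is $-i$, not positive; equivalently $c_{i}=c_{i-1}\cap B_{i}^{0}$. It is this \emph{negative} label that forces $\rho_{i}(\mathcal{B})=0$ by the swap-$B_{i}x$-for-$B_{i}^{0}$ argument you cite, since that argument applies only to edges realized by intersecting with $B_{i}^{0}$. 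The correct replacement is therefore the opposite of the one you propose: because $\rho_{i}(\mathcal{B})=0$, both $B_{i}^{0}$ and $B_{i}^{0}x\neq B_{i}^{0}$ are generators of $\Sigma_{\mathcal{B}x}$, and intersecting $c_{i-1}$ with $B_{i}^{0}x$ instead of $B_{i}^{0}$ yields a $+i$-labeled edge and hence the descent $(+i,0)$. As written, your recipe --- ``intersect with $B_{i}^{0}$ to produce a $-i$ labelled edge'' --- returns the very edge the ascending chain already has, so it produces no new chain and no descent. With the signs corrected, the rest of your argument (the first-ear bookkeeping placing $\mathbf{c}''$ in $\Delta_{\mathcal{B}x}$ and the appeal to Condition (\ref{eq:ShellingDef})) goes through exactly as in the paper.
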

\begin{proof}
Suppose a maximal chain $\mathbf{c}=\{G=c_{1}\dotsupset\dots\dotsupset c_{k+1}\dotsupset c_{k+2}=\emptyset\}$
in $\Sigma_{\mathcal{B}x}$ has an ascent at $j$. If $j\neq k+1$,
then it is immediate that $\mathbf{c}\setminus\{c_{j}\}$ has two
extensions in $\Sigma_{\mathcal{B}x}$, and we argue exactly as in
Lemma \ref{lem:CLcedReplaceAscentWithDescent} and Theorem \ref{thm:ELcedsAreCeds}.

If $j=k+1$, then the ascent at $j$ has labels $-i,0$, and hence
$\rho_{i}(\mathcal{B})=0$ and $\Sigma_{\mathcal{B}x}$ is the first
cube containing $\mathbf{c}\setminus\{c_{k+1}\}$. Intersecting with
$B_{i}^{0}x$ instead of $B_{i}^{0}$ at $c_{k}$ gives another chain
$\mathbf{c}'$ in $\Delta_{\mathcal{B}x}$ with a descent at $k+1$,
and we again argue as in Theorem \ref{thm:ELcedsAreCeds}. 
\end{proof}
Finally, we show directly that (ced-bdry) holds. We start with a lemma.
\begin{lem}
\label{lem:earchain_extension} Given any chain $\mathbf{c}=\{G=c_{1}\supset\cdots\supset c_{m}\supset c_{m+1}=\emptyset\}$,
there is an extension to a maximal chain $\mathbf{c}^{++}$ such that
if $\mathbf{c}$ is in $\Sigma_{\mathcal{B}x}$, then $\mathbf{c}^{++}$
is in some $\Sigma_{\mathcal{B}x}^{+}$. If $\mathcal{B}x$ is the
first such with $\mathbf{c}$ in $\Sigma_{\mathcal{B}x}$, then $\mathbf{c}^{++}$
is in $\Sigma_{\mathcal{B}x}$.\end{lem}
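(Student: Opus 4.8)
The plan is to build the extension $\mathbf{c}^{++}$ greedily from the bottom up, mimicking the proof of Lemma \ref{lem:CEDcovers} (the ``quicksort''-style argument) but now in the setting of $\cosetlat(G)$. First I would handle the gap between $c_m$ and $\hat 0=\emptyset$: if $c_m\neq\emptyset$ is some coset $Hx$, then I extend downward by intersecting successively with $B_i x$ for $i\in S([\emptyset,Hx])\setminus\{0\}$, in increasing order of $i$, ending at the coset $x$ (a single element) and then dropping to $\emptyset$. Each such intersection is a cover relation by Lemma \ref{lem:ComplementsVsFactors}(1), and by Corollary \ref{cor:BasisForC(G)GivesCube} this chain lies in the cube generated by $\mathcal{B}x$ — hence, if $\mathbf{c}$ was in $\Sigma_{\mathcal{B}x}$, so is this portion of the extension. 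Next I extend \emph{upward through the interior}: for each cover gap $c_{j-1}\supset c_j$ that is not already a cover relation, I refine it using the restriction of $\lambda^{ss}$ to $[c_j,c_{j-1}]$ — since this interval is Boolean in $\Sigma_{\mathcal{B}x}$ once $\mathbf{c}\in\Sigma_{\mathcal{B}x}$ (as in Lemma \ref{lem:C(G)CedHasDescentProperty}), any maximal chain refining it stays inside $\Sigma_{\mathcal{B}x}$.

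The remaining and subtlest piece is the top: extending $c_1=G$ down to $c_2$ when $c_2$ is not a coatom, i.e., filling the interval $[c_2,G]\cong L(G)$ or a sub-interval of it. Here I would use the fact that $G$ is relatively complemented, so every interval of $L(G)$ is complemented and in particular every height-$2$ interval has a decreasing chain; iterating as in the proof of the ``$H_1=H_0\cap B_i$'' lemma, I can refine $[c_2,G]$ by a chain each of whose cover relations is an intersection with a complement $B_i$ to $N_i/N_{i+1}$. The point is that these $B_i$'s together with the complements already appearing in $\mathbf{c}$ may fail to form a base-set compatible with $\mathcal{B}$ — which is exactly why the conclusion of the lemma only places $\mathbf{c}^{++}$ in some $\Sigma_{\mathcal{B}x}^{+}$ rather than in $\Sigma_{\mathcal{B}x}$ itself. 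I expect this to be the main obstacle: I must check that the union of all complements used along $\mathbf{c}^{++}$ is a set $\mathcal{B}'$ of complements, one per chief factor, such that $\mathcal{B}'x' $ and $\mathcal{B}'y'$ intersect down to the chain — i.e.\ that $\Sigma_{\mathcal{B}'x'}^{+}$ (equivalently, $\Sigma_{\mathcal{B}x}^{+}$, once we observe the two base-sets agree on the coordinates where it matters) really is a cube containing $\mathbf{c}^{++}$. This follows from Corollary \ref{cor:BasisForC(G)GivesCube} together with the commuting lemmas (Lemma \ref{lem:ComplementsCommute} and its strengthening), provided the complements chosen in the interior and at the top are consistent with those used in the bottom portion; the greedy construction gives us exactly this freedom of choice.

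Finally, for the ``first such'' clause: suppose $\mathcal{B}x$ is the lexicographically earliest cube (under the pattern ordering) with $\mathbf{c}\in\Sigma_{\mathcal{B}x}$. I claim the extension above can be performed so that $\mathbf{c}^{++}\in\Sigma_{\mathcal{B}x}$, not merely $\Sigma_{\mathcal{B}x}^{+}$. The reasoning parallels the end of the proof of Lemma \ref{lem:CEDcovers}: where the greedy construction has a choice between using $B_i x$ and $B_i^{0}$ for a cover relation, minimality of the pattern $\rho(\mathcal{B})$ forces $\rho_i(\mathcal{B})=0$ (else replacing $B_i x$ by $B_i^{0}$ would yield an earlier cube also containing $\mathbf{c}$), and then $B_i^{0}$ is already a generator of $\Sigma_{\mathcal{B}x}$. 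Hence every cover relation introduced by the extension is an intersection with one of the existing generators of $\Sigma_{\mathcal{B}x}$, so $\mathbf{c}^{++}$ stays inside $\Sigma_{\mathcal{B}x}$, completing the proof.
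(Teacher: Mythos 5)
There is a genuine gap, and it comes from the quantifier structure of the lemma. The statement asserts the existence of a \emph{single} extension $\mathbf{c}^{++}$, chosen before any cube is fixed, such that \emph{for every} $\mathcal{B}x$ with $\mathbf{c}\in\Sigma_{\mathcal{B}x}$ the conclusion holds; this is exactly how Proposition \ref{pro:CedBdryForC(G)} uses it, where the same $\mathbf{c}^{++}$ must lie in $\Sigma_{\mathcal{B}x}^{+}$ for the current cube \emph{and} in $\Sigma_{\mathcal{B}'x'}$ for the pattern-first cube containing $\mathbf{c}$, hence outside $\Delta_{\mathcal{B}x}$. Your construction fills the gap $c_{m}\supset\emptyset$ by intersecting with the cosets $B_{i}x$ drawn from $\mathcal{B}x$ itself, so your $\mathbf{c}^{++}$ depends on the cube. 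For a fixed cube this trivially keeps the extension inside $\Sigma_{\mathcal{B}x}$ (every chain of the graded subposet $\Sigma_{\mathcal{B}x}$ extends to a maximal one there), which makes your version of the lemma nearly vacuous and, more importantly, useless for the proposition: there is then no reason the extension should lie in any \emph{earlier} cube. The paper instead extends $[\emptyset,c_{m}]$ by intersecting with the fixed reference complements $B_{i}^{0}$, in increasing order of $i$ --- a choice independent of $\mathcal{B}x$ --- and uses the identity $Hx\cap B_{i}^{0}=Hx\cap B_{i}y_{i}$ (from uniqueness of the increasing chain in $[1,H]$) to see that the result lands in $\Sigma_{\mathcal{B}x}^{+}$, and lands in $\Sigma_{\mathcal{B}x}$ itself precisely when $\rho_{i}(\mathcal{B})=0$ for the relevant $i$, which is what pattern-minimality gives.

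Relatedly, you have located the $\Sigma_{\mathcal{B}x}$-versus-$\Sigma_{\mathcal{B}x}^{+}$ tension in the wrong place. The top interval $[c_{2},G]$ poses no special difficulty: once $\mathbf{c}\in\Sigma_{\mathcal{B}x}$, every interval $[c_{j+1},c_{j}]$ with $c_{j+1}\neq\emptyset$ is already a Boolean lattice of partial intersections of the generators of $\Sigma_{\mathcal{B}x}$, and one simply takes the $\vert\lambda\vert$-increasing chain there (canonical because increasing chains are unique, and realized inside $\Sigma_{\mathcal{B}x}$ by intersecting with the appropriate generators in increasing order). No appeal to relative complementation or to new complements is needed at that stage, and in any case $\Sigma_{\mathcal{B}x}^{+}$ does not adjoin new complements $B_{i}'$ --- it adjoins new \emph{translates} $B_{i}y_{i}$ of the same subgroups $B_{i}\in\mathcal{B}$, which is what the bottom extension by the $B_{i}^{0}$ requires. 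Your closing argument that pattern-minimality forces $\rho_{i}(\mathcal{B})=0$ is the right idea for the final clause, but it needs to be attached to the canonical, cube-independent construction for the lemma to do its job.
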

\begin{proof}
We make the extension in two steps. First, let $\mathbf{c}^{+}$ be
the extension of $\mathbf{c}$ by augmenting each $c_{j}\supset c_{j+1}$
for $j\neq m$ with the chain on $[c_{j+1},c_{j}]$ that is increasing
according to $\vert\lambda\vert$. Intersecting $c_{j}$ iteratively
with $B_{i}x$ or $B_{i}^{0}$ (as appropriate, for each $i$ in $S([c_{j+1},c_{j}])$)
in increasing order gives this chain, thus, $\mathbf{c}^{+}$ is also
in $\Sigma_{\mathcal{B}x}$.

In a similar manner, let $\mathbf{c}^{++}$ be the extension of $\mathbf{c}^{+}$
at $c_{m}\supset\emptyset$ by intersecting with each $B_{i}^{0}$
for $i\in S(m)$ in increasing order. Suppose $c_{m}=Hx$. Then uniqueness
of the lexicographically first chain in $[1,H]$ gives that $H\cap B_{i}^{0}=H\cap B_{i}$,
so there is some $B_{i}y_{i}$ with $Hx\cap B_{i}^{0}=Hx\cap B_{i}y_{i}$.
Repeated use of this gives us a $\Sigma_{\mathcal{B}x}^{+}$ containing
$\mathbf{c}^{++}$: the generating elements for this cube include
those for $\Sigma_{\mathcal{B}x}$ and the $B_{i}y_{i}$'s found here.
Notice that if $\rho(i)=0$ for each $i\in S([\emptyset,c_{m}])$,
then $B_{i}^{0}$ is already in the generating set for $\Sigma_{\mathcal{B}x}$,
thus $\mathbf{c}^{++}$ is also in $\Sigma_{\mathcal{B}x}$.\end{proof}
\begin{prop}
\label{pro:CedBdryForC(G)}$\Delta_{\mathcal{B}x}\cap\left(\bigcup_{\mathcal{B}'x'\prec\mathcal{B}x}\Delta_{\mathcal{B}'x'}\right)=\bdry\Delta_{\mathcal{B}x}$.\end{prop}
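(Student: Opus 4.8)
The plan is to mirror the argument for (ced-bdry) in the proof of Theorem~\ref{thm:ELcedsAreCeds}, adapting it to the fact that here the $\Sigma_{\mathcal{B}x}$ are only subcomplexes of polytopes rather than full face lattices. First I would establish the easy inclusion $\bdry\Delta_{\mathcal{B}x}\subseteq\Delta_{\mathcal{B}x}\cap\bigl(\bigcup_{\mathcal{B}'x'\prec\mathcal{B}x}\Delta_{\mathcal{B}'x'}\bigr)$. Since $\vert\Sigma_{\mathcal{B}x}^{+}\vert$ is a convex polytope (hence a sphere) and $\Delta_{\mathcal{B}x}\subseteq\vert\Sigma_{\mathcal{B}x}\vert\subseteq\vert\Sigma_{\mathcal{B}x}^{+}\vert$, a boundary face of $\Delta_{\mathcal{B}x}$ lies in the closure of the complement, so it lies in some maximal chain \emph{not} in $\Delta_{\mathcal{B}x}$. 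By Corollary~\ref{cor:EarsCover_C(G)} every maximal chain lies in some $\Sigma_{\mathcal{B}'x'}$, and the defining property of $\Delta_{\mathcal{B}x}$ (facets are the chains appearing first in $\Sigma_{\mathcal{B}x}$) forces that other chain into some $\Sigma_{\mathcal{B}'x'}$ with $\mathcal{B}'x'\prec\mathcal{B}x$; applying Lemma~\ref{lem:SubmanifoldBdry} inside $\vert\Sigma_{\mathcal{B}x}^{+}\vert$ pins the face down to $\Delta_{\mathcal{B}'x'}$.

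For the reverse inclusion, suppose $\mathbf{c}$ is a chain in $\Delta_{\mathcal{B}x}\cap\Delta_{\mathcal{B}'x'}$ with $\mathcal{B}'x'\prec\mathcal{B}x$; I must show $\mathbf{c}\in\bdry\Delta_{\mathcal{B}x}$. Here Lemma~\ref{lem:earchain_extension} is the workhorse: it produces a maximal extension $\mathbf{c}^{++}$ that stays inside $\Sigma_{\mathcal{B}x}$ (since $\mathcal{B}x$ is assumed first for $\mathbf{c}$ — if it is not, one first replaces $\mathcal{B}x$ by the first such base-set, noting $\mathbf{c}$ then still lies in the earlier $\Delta$) and simultaneously inside some $\Sigma_{\mathcal{B}x}^{+}$. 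The point is that $\mathbf{c}^{++}$, being a maximal chain of $\Sigma_{\mathcal{B}x}^{+}$, corresponds to a facet of the polytope $\vert\Sigma_{\mathcal{B}x}^{+}\vert$; one of its codimension-one faces, obtained by deleting the appropriate vertex, lies outside $\Delta_{\mathcal{B}x}$ (because $\mathbf{c}^{++}$ has an ascent — by the argument in Corollary~\ref{cor:CosetEarsAreShellable}, any maximal chain of $\Sigma_{\mathcal{B}x}$ other than the descending one has an ascent, and such an ascent can be flipped to a chain leaving $\Delta_{\mathcal{B}x}$). This exhibits $\mathbf{c}$ as contained in the closure of $\vert\Sigma_{\mathcal{B}x}^{+}\vert\setminus\Delta_{\mathcal{B}x}$ while also in $\Delta_{\mathcal{B}x}$, and Lemma~\ref{lem:SubmanifoldBdry} again gives $\mathbf{c}\in\bdry\Delta_{\mathcal{B}x}$.

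The step I expect to be the main obstacle is the careful bookkeeping in the reverse inclusion: one must verify that the ascent-flipping really produces a chain \emph{outside} $\Delta_{\mathcal{B}x}$ and not merely outside $\Sigma_{\mathcal{B}x}$ — i.e., that flipping an ascent cannot accidentally land in an earlier ear only to be reabsorbed — and that the bottom-edge case ($j=k+1$, labels $-i,0$) from Corollary~\ref{cor:CosetEarsAreShellable} is handled correctly, since there the flip replaces an intersection with $B_i^0$ by one with $B_i^0 x$ and one needs $\rho_i(\mathcal{B})=0$ to conclude $\Sigma_{\mathcal{B}x}$ is genuinely first. Once the shellability argument of Corollary~\ref{cor:CosetEarsAreShellable} and the extension of Lemma~\ref{lem:earchain_extension} are in hand, however, the topology is forced exactly as in the $EL$-ced case, and no new ideas beyond Lemma~\ref{lem:SubmanifoldBdry} are needed.
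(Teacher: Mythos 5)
Both directions of your argument have real gaps, and in each case the missing content is exactly what the paper's proof supplies. First, the inclusion you call easy is in fact the delicate one. A boundary face of $\Delta_{\mathcal{B}x}$ does lie in a facet of the sphere $\vert\Sigma_{\mathcal{B}x}^{+}\vert$ that is not in $\Delta_{\mathcal{B}x}$, but you then assert that this facet must lie in some $\Sigma_{\mathcal{B}'x'}$ with $\mathcal{B}'x'\prec\mathcal{B}x$. That does not follow: the facet may use one of the auxiliary cosets $B_{i}y_{i}$ and so lie in $\Sigma_{\mathcal{B}x}^{+}\setminus\Sigma_{\mathcal{B}x}$, and the ears containing such a chain can come \emph{later} in the pattern ordering; Corollary \ref{cor:EarsCover_C(G)} only places it in \emph{some} $\Sigma_{\mathcal{B}'x'}$, and "the defining property of $\Delta_{\mathcal{B}x}$" says nothing about facets outside $\Sigma_{\mathcal{B}x}$. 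The paper handles this direction by contraposition and this is where the pattern order earns its keep: if $\mathbf{c}\in\Delta_{\mathcal{B}x}$ lies in no earlier $\Sigma_{\mathcal{B}'x'}$, then $\rho_{i}(\mathcal{B})=0$ for every $i\in S([\emptyset,c_{m}])$ (else replacing $B_{i}$ by $B_{i}^{0}$ gives an earlier base-set containing $\mathbf{c}$), and since $\Sigma_{\mathcal{B}x}$ by definition contains $B_{i}^{0}$ for those $i$, \emph{every} maximal extension of $\mathbf{c}$ in $\Sigma_{\mathcal{B}x}^{+}$ stays in $\Sigma_{\mathcal{B}x}$, hence in $\Delta_{\mathcal{B}x}$; Lemma \ref{lem:SubmanifoldBdry} then shows $\mathbf{c}\notin\bdry\Delta_{\mathcal{B}x}$. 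Your sketch skips all of this.

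For the other inclusion your use of Lemma \ref{lem:earchain_extension} is backwards. When $\mathbf{c}\in\Delta_{\mathcal{B}x}\cap\Delta_{\mathcal{B}'x'}$ with $\mathcal{B}'x'\prec\mathcal{B}x$, the base-set $\mathcal{B}x$ is \emph{never} the first one containing $\mathbf{c}$, and you cannot "replace $\mathcal{B}x$ by the first such base-set," because the claim to be proved concerns $\bdry\Delta_{\mathcal{B}x}$ for this particular ear. The lemma is designed to be used the other way: the single canonical extension $\mathbf{c}^{++}$ lies in $\Sigma_{\mathcal{B}x}^{+}$ \emph{and} in the earliest $\Sigma_{\mathcal{B}''x''}$ containing $\mathbf{c}$, which precedes $\mathcal{B}x$; hence $\mathbf{c}^{++}\in\mathcal{M}(\Sigma_{\mathcal{B}x}^{+})\setminus\mathcal{M}(\Delta_{\mathcal{B}x})$, it contains $\mathbf{c}$, and Lemma \ref{lem:SubmanifoldBdry} immediately gives $\mathbf{c}\in\bdry\Delta_{\mathcal{B}x}$ -- no ascent-flipping is needed. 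Your flipping argument, as written, does not close the gap: the chain obtained by flipping an ascent of $\mathbf{c}^{++}$ differs from $\mathbf{c}^{++}$ in one element which may well belong to $\mathbf{c}$ itself, so it need not contain $\mathbf{c}$ and thus does not exhibit $\mathbf{c}$ in the closure of $\vert\Sigma_{\mathcal{B}x}^{+}\vert\setminus\Delta_{\mathcal{B}x}$; moreover the flips in Corollary \ref{cor:CosetEarsAreShellable} produce chains \emph{inside} $\Delta_{\mathcal{B}x}$ (they verify the shelling condition), not chains leaving it.
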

\begin{proof}
Suppose that $\mathbf{c}$ is in $\Delta_{\mathcal{B}x}\cap\left(\bigcup_{\mathcal{B}'x'\prec\mathcal{B}x}\Delta_{\mathcal{B}'x'}\right)$,
and let $\mathbf{c}^{++}$ be as in Lemma \ref{lem:earchain_extension}.
Then $\mathbf{c}^{++}$ is an extension in $\Sigma_{\mathcal{B}x}^{+}$,
but since $\mathbf{c}^{++}$ is contained in $\Sigma_{\mathcal{B}'x}$
for the first such complex containing $\mathbf{c}$, we get that $\mathbf{c}^{++}$
is in $\mathcal{M}(\Sigma_{\mathcal{B}x}^{+})\setminus\mathcal{M}(\Delta_{\mathcal{B}x})$.
Lemma \ref{lem:SubmanifoldBdry} then gives that $\mathbf{c}$ is
in $\bdry\Delta_{\mathcal{B}x}$.

Conversely, let $\mathbf{c}$ be in $\Delta_{\mathcal{B}x}$, but
not in a previous $\Sigma_{\mathcal{B}'x'}$. Since $\mathbf{c}$
is not in any previous $\Sigma_{\mathcal{B}'x'}$, no extensions of
it are either, so any extension of $\mathbf{c}$ that is in $\Sigma_{\mathcal{B}x}$
is in $\Delta_{\mathcal{B}x}$. As we have ordered the base-sets by
pattern, we get that $\rho_{i}(\mathcal{B})=0$ for $i\in S([\emptyset,c_{m}])$,
thus, by the special treatment of $B_{i}^{0}$ in the definition of
$\Sigma_{\mathcal{B}x}$, every extension of $\mathbf{c}$ in any
$\Sigma_{\mathcal{B}x}^{+}$ is in $\Sigma_{\mathcal{B}x}$. Combining
these two statements, we see that there is no extension of $\mathbf{c}$
in $\mathcal{M}(\Sigma_{\mathcal{B}x}^{+})\setminus\mathcal{M}(\Delta_{\mathcal{B}x})$,
and so by Lemma \ref{lem:SubmanifoldBdry} that $\mathbf{c}$ is not
in $\bdry\Delta_{\mathcal{B}x}$.
\end{proof}
We have now finished the proof of Theorem \ref{thm:CEDforCosetLattice}.
Let us review: Corollary \ref{cor:BasisForC(G)GivesCube} gave us
(ced-polytope), Proposition \ref{pro:CedBdryForC(G)} was (ced-bdry),
and Corollary \ref{cor:EarsCover_C(G)} gave us (ced-union). We notice
that the base-set with the earliest pattern is $\mathcal{B}_{0}=\{B_{i}^{0}\}$,
and that each $\Sigma_{\mathcal{B}_{0}x}$ is the face lattice of
a cube. Thus the first $\Delta_{\mathcal{B}x}$ is a polytope, while
all subsequent ones are proper subcomplexes of polytopes. Since we
proved in Corollary \ref{cor:CosetEarsAreShellable} that each $\Delta_{\mathcal{B}x}$
is shellable, we have (ced-topology).
\begin{note}
As previously mentioned, the convex ear decomposition we have constructed
is not a (dual) $EL$-ced. Although we would rather find an $EL$-ced
than a general convex ear decomposition, this is not in general possible
with the cubes we are looking at here. For example $\cosetlat(\mathbb{Z}_{2}^{2})$
has exactly three possible $\Sigma_{\mathcal{B}x}^{+}$'s, but the
homotopy type of the wedge of 6 $1$-spheres, so some $\vert\Sigma_{\mathcal{B}x}^{+}\vert\setminus\vert\Sigma_{\mathcal{B}'x'}^{+}\vert$
must be disconnected. The example of $\cosetlat(\mathbb{Z}_{2}^{2})$
is a geometric lattice, so does have an $EL$-ced (for a different
$EL$-labeling), but I have not been able to extend this to an $EL$-ced
for other relatively complemented groups. 

\medskip{}
The reader may have noticed that the constructed convex ear decomposition
is not far from being an $EL$-ced -- the difference is that each
$\Sigma_{\mathfrak{B}x}^{+}$ gives several {}``new'' ears -- and
that another possibility would be to extend the definition of $EL$-ced
to cover this case. However, as this would make the definition more
complicated, and as the gain seems relatively small, I have chosen
to leave the definition as presented.
\end{note}

\section{\label{sec:Poset-Products}Poset products}

Throughout this section, let $P_{1}$ and $P_{2}$ be bounded posets.

In Section \ref{sub:d-divisible-EL-labeling}, we defined the product
$P_{1}\times P_{2}$ and lower reduced product $P_{1}\lrtimes P_{2}$
of $P_{1}$ and $P_{2}$. It should come as no surprise that the \emph{upper
reduced product} $P_{1}\urtimes P_{2}$ of $P_{1}$ and $P_{2}$ is
defined as $\left((P_{1}\setminus\{\hat{1}\})\times(P_{2}\setminus\{\hat{1}\})\right)\cup\{\hat{1}\}$.
There is a natural inclusion of $P_{1}\lrtimes P_{2}$ (and of $P_{1}\urtimes P_{2}$)
into $P_{1}\times P_{2}$. 

Our goal in Section \ref{sec:Poset-Products} is to explain the background
and give proofs for Theorems \ref{thm:cedprod_intro} and \ref{thm:clcedprod_intro}.
The flavor and techniques of this section are different from the previous
two, so we pause to justify its connection with {}``Cubical Convex
Ear Decompositions''. Lower reduced products come up fundamentally
both in the $d$-divisible partition lattice, as we discussed in Section
\ref{sub:d-divisible-EL-labeling}, as well as in the coset lattice,
where $\cosetlat(G_{1}\times G_{2})\cong\cosetlat(G_{1})\lrtimes\cosetlat(G_{2})$
for groups $G_{1}$ and $G_{2}$ of co-prime orders. And some of the
decompositions in product posets are cubical after all: a cube is
the direct product of intervals, so if $C_{d}$ is the boundary of
the $d$-cube, with face lattice $L(C_{d})$, then $L(C_{d})=\lrprod_{1}^{d}L(C_{1})$.

\subsection{Poset products and polytopes}

I am told that the following proposition is folklore. It is also discussed
briefly in \cite{Kalai:1988}.
\begin{prop}
\label{pro:ProductOfPolytopes}If $\Sigma_{1}$ and $\Sigma_{2}$
are the face lattices of convex polytopes $X_{1}$ and $X_{2}$, then 
\begin{enumerate}
\item $\Sigma_{1}\times\Sigma_{2}$ is the face lattice of the {}``free
join'' $X_{1}\freejoin X_{2}$, a convex polytope.
\item $\Sigma_{1}\lrtimes\Sigma_{2}$ is the face lattice of the Cartesian
product $X_{1}\times X_{2}$, a convex polytope.
\item $\Sigma_{1}\urtimes\Sigma_{2}$ is the face lattice of the {}``free
sum'' of $X_{1}$ and $X_{2}$, a convex polytope.
\end{enumerate}
\end{prop}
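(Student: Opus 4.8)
The plan is to identify each of $\Sigma_1\times\Sigma_2$, $\Sigma_1\lrtimes\Sigma_2$, $\Sigma_1\urtimes\Sigma_2$ with a classical polytope construction, using only the description of a face of a polytope $X$ as the set on which some linear functional attains its maximum over $X$, together with the fact that the face lattice $\mathcal{F}(X)$ --- with the empty face and the improper face $X$ included, so that $\hat{0}=\emptyset$ and $\hat{1}=X$ --- depends only on the combinatorial type of $X$. I would start with (2). Realize $X_1\times X_2\subseteq\mathbb{R}^{d_1}\times\mathbb{R}^{d_2}$; a linear functional on the ambient space splits as $\ell=\ell_1+\ell_2$, and since $\ell_1,\ell_2$ are maximized independently over $X_1,X_2$, the maximizer face of $\ell$ is $G_1\times G_2$ with $G_i$ the maximizer face of $\ell_i$ over $X_i$. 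Conversely every product $G_1\times G_2$ of nonempty faces arises this way, while $X_1\times X_2$ is the improper face and $\emptyset$ the empty face. Hence $(G_1,G_2)\mapsto G_1\times G_2$ (with $(\hat{0},\hat{0})\mapsto\hat{0}$) is an inclusion-preserving bijection from $\big((\Sigma_1\setminus\{\hat{0}\})\times(\Sigma_2\setminus\{\hat{0}\})\big)\cup\{\hat{0}\}=\Sigma_1\lrtimes\Sigma_2$ onto $\mathcal{F}(X_1\times X_2)$, which is (2).

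For (1), place $X_1$ and $X_2$ in skew position inside $\mathbb{R}^{d_1+d_2+1}$, say as $X_1\times\{0\}\times\{0\}$ and $\{0\}\times X_2\times\{1\}$, and set $X_1\freejoin X_2=\conv(X_1\cup X_2)$. A linear functional $\ell$ induces an affine function on each $X_i$ through these embeddings, and its maximizer face on the hull is $\conv(G_1\cup G_2)$, where $G_i$ is the maximizer face on $X_i$ when the two maxima agree, and is the single face with the larger maximum otherwise. Since the extra coordinate provides a free additive offset between the two blocks, any comparison of the two maxima can be realized, so the faces of $X_1\freejoin X_2$ are exactly the sets $\conv(G_1\cup G_2)$ for faces $G_i$ of $X_i$ (allowing $G_i=\emptyset$, with $\conv(\emptyset\cup G)=G$), including $\emptyset$ and the improper face $\conv(X_1\cup X_2)$. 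Then $(G_1,G_2)\mapsto\conv(G_1\cup G_2)$ is an inclusion-preserving bijection from $\Sigma_1\times\Sigma_2$ onto $\mathcal{F}(X_1\freejoin X_2)$, which is (1).

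For (3) I would argue by polar duality rather than directly. Since $\mathcal{F}$ is a combinatorial invariant, translate each $X_i$ so that $0$ lies in its relative interior; then $X_i$ has a polar dual polytope $X_i^*$ with $\mathcal{F}(X_i^*)\cong\Sigma_i^{\mathrm{op}}$. Writing $X_1\oplus X_2=\conv\big((X_1\times\{0\})\cup(\{0\}\times X_2)\big)$ for the free sum, the classical fact that the free sum is polar-dual to the product gives $X_1\oplus X_2=(X_1^*\times X_2^*)^*$, so
\[
\mathcal{F}(X_1\oplus X_2)\;\cong\;\mathcal{F}(X_1^*\times X_2^*)^{\mathrm{op}}\;\cong\;(\Sigma_1^{\mathrm{op}}\lrtimes\Sigma_2^{\mathrm{op}})^{\mathrm{op}}\;\cong\;\Sigma_1\urtimes\Sigma_2,
\]
where the first isomorphism comes from $\mathcal{F}(Y^*)\cong\mathcal{F}(Y)^{\mathrm{op}}$, the second is (2) applied to $X_1^*,X_2^*$, and the third is the elementary identity $(P\lrtimes Q)^{\mathrm{op}}=P^{\mathrm{op}}\urtimes Q^{\mathrm{op}}$ read off from the definitions.

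The proposition carries little content beyond the classical polytope facts, so I expect no serious obstacle; the points to pin down are that in (1) and (2) the constructions introduce \emph{no unexpected faces} and that inclusion of faces matches the product order exactly, and --- the easiest thing to get wrong --- that the degenerate faces are tracked so that each bijection lands on precisely the reduced or unreduced product indicated by $\times$, $\lrtimes$, $\urtimes$. For (3) the only extra ingredients are the identity $X_1\oplus X_2=(X_1^*\times X_2^*)^*$ and the order-reversal of polar duality on face lattices, both standard.
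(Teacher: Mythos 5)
Your argument is correct, but there is nothing in the paper to compare it against: the paper explicitly treats Proposition \ref{pro:ProductOfPolytopes} as folklore (with a pointer to Kalai) and supplies no proof, so you have filled a gap rather than paralleled an argument. Your route is the standard one and it works: for the Cartesian product, a linear functional splits coordinatewise, so faces are exactly products of nonempty faces together with $\emptyset$, giving $\Sigma_{1}\lrtimes\Sigma_{2}$; for the free join, the skew embedding with the extra coordinate lets you tune the offset between the two maxima, so every pair $(G_{1},G_{2})$ -- including pairs where one entry is empty or improper -- is realized as a face $\conv(G_{1}\cup G_{2})$, giving the full product $\Sigma_{1}\times\Sigma_{2}$; and the free sum follows by polarity from the product case via the identity $(P\lrtimes Q)^{\mathrm{op}}=P^{\mathrm{op}}\urtimes Q^{\mathrm{op}}$. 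The two points you flag at the end (no unexpected faces; careful bookkeeping of the empty and improper faces so that each construction lands on the indicated reduced or unreduced product) are indeed the only places where care is needed, and your treatment of them is adequate. One cosmetic remark: in part (3), after translating so that $0$ is in the relative interior you should also pass to the affine hull so that each $X_{i}$ is full-dimensional before taking polars; this is implicit in your setup and costs nothing.
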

Proposition \ref{pro:ProductOfPolytopes} guides us to a proof of
Lemma \ref{lem:posetpolytopeprod_intro}. Our main tool will be stellar
subdivision.

If $\Delta$ is a convex polytope with a proper face $\sigma$, then
a \emph{stellar subdivision} of $\Delta$ at $\sigma$, denoted $\stellarsd_{\sigma}\Delta$,
is $\conv\left(\Delta\cup\{v_{\sigma}\}\right)$, where $v_{\sigma}=w_{\sigma}-\varepsilon(w_{\Delta}-w_{\sigma})$
for some point $w_{\sigma}$ in the relative interior of $\sigma$,
some point $w_{\Delta}$ in the interior of $\Delta$, and a small
number $\varepsilon$. In plain language, we {}``cone off'' a new
vertex lying just over $\sigma$. Note that the relative interior
of a vertex is the vertex itself. Stellar subdivisions are discussed
in depth in \cite[III.2]{Ewald:1996} and \cite{Ewald/Shepard:1974}.

The main fact \cite[III.2.1, III.2.2]{Ewald:1996} that we will need
is that the faces of the boundary complex of $\stellarsd_{\sigma}\Delta$
are \[
\{\tau\,:\,\sigma\not\subseteq\tau\}\cup\left\{ v_{\sigma}*\tau\,:\,\tau\in\Delta\mbox{ with }\tau,\sigma\subseteq\tau'\mbox{ for some }\tau'\in\Delta,\mbox{ but }\sigma\not\subseteq\tau\right\} .\]
Thus the stellar subdivision replaces the faces containing $\sigma$
with finer subdivisions. 
\begin{example}
\cite[Section 2]{Ewald/Shepard:1974} The barycentric subdivision
of a polytopal $d$-complex $\Delta$ is the repeated stellar subdivision
of $\Delta$ along a reverse linear extension of its face lattice
$L(\Delta)$. That is, subdivide each $d$-dimensional face, then
each $(d-1)$-dimensional face, and so forth.
\end{example}
If $X$ is the boundary complex of a polytope, then let $\overline{X}$
denote $\conv X$, that is, the polytope of which $X$ is the boundary
complex.
\begin{lem}
\label{lem:LrPolytopeProduct}Suppose $P_{1}$ and $P_{2}$ are bounded
posets and that $\vert P_{1}\vert$ and $\vert P_{2}\vert$ are the
boundary complexes of polytopes. Then $\vert P_{1}\lrtimes P_{2}\vert$
can be obtained from the boundary complex of $\overline{\vert P_{1}\vert}\times\overline{\vert P_{2}\vert}$
by a sequence of stellar subdivisions.\end{lem}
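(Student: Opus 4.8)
I want to realize $|P_1 \lrtimes P_2|$ as an iterated stellar subdivision of the boundary complex of $\overline{|P_1|} \times \overline{|P_2|}$. The starting point is Proposition \ref{pro:ProductOfPolytopes}(2): the face lattice of the Cartesian product $\overline{|P_1|} \times \overline{|P_2|}$ is $L(\overline{|P_1|}) \lrtimes L(\overline{|P_2|})$. Now $|P_i|$ is the order complex of $P_i$, so $|P_i|$ is already the barycentric subdivision of the boundary complex $\partial \overline{|P_i|}$ — equivalently, $P_i$ (minus $\hat 0, \hat 1$) is isomorphic to the proper part of the face poset of $\overline{|P_i|}$. So what I really need to show is: starting from the face lattice $\Sigma = L(\overline{|P_1|}) \lrtimes L(\overline{|P_2|})$ of a polytope, a suitable sequence of stellar subdivisions produces a simplicial complex whose face poset's proper part is the proper part of $(P_1 \setminus \hat 0) \times (P_2 \setminus \hat 0)$ with a single $\hat 0$ adjoined — i.e.\ the complex $|P_1 \lrtimes P_2|$.

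**Key steps in order.** First I would set up the dictionary: a face of $\overline{|P_1|} \times \overline{|P_2|}$ is a pair $(\sigma_1, \sigma_2)$ with $\sigma_i$ a face of $\overline{|P_i|}$, \emph{not both empty}, and this is exactly the nonzero part of $L(\overline{|P_1|}) \lrtimes L(\overline{|P_2|})$ — matching the definition $\lrtimes$ deletes only the pair of bottoms. Second, I'd recall (from the barycentric-subdivision example following the statement of stellar subdivision) that iterated stellar subdivision along a reverse linear extension of the whole face lattice yields the order complex of the face poset; applied to the polytope $\overline{|P_1|} \times \overline{|P_2|}$ this gives the order complex of $L(\overline{|P_1|}) \lrtimes L(\overline{|P_2|})$ minus top and bottom, which is precisely $|\,L(\overline{|P_1|}) \lrtimes L(\overline{|P_2|})\,|$. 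Third — the crucial reduction — I'd observe that the order complex of the proper part of $L(\overline{|P_1|}) \lrtimes L(\overline{|P_2|})$ is combinatorially the same as the order complex of the proper part of $P_1 \lrtimes P_2$: this is because $P_i \setminus \{\hat 0,\hat 1\}$ is order-isomorphic to the proper part of $L(\overline{|P_i|})$ (that is what it means for $|P_i|$ to be the boundary complex of $\overline{|P_i|}$), and $\lrtimes$ only glues a new $\hat 0$ onto the product of the no-$\hat 0$ parts, which is an operation intrinsic to the posets $P_i \setminus \{\hat 0\}$. Hence $|L(\overline{|P_1|}) \lrtimes L(\overline{|P_2|})| = |P_1 \lrtimes P_2|$ as simplicial complexes, and we are done.

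**Alternative, more hands-on route.** If I want to exhibit the stellar subdivisions explicitly rather than invoking the barycentric-subdivision fact as a black box, I'd process the faces of $\Sigma = L(\overline{|P_1|}) \lrtimes L(\overline{|P_2|})$ in decreasing order of rank (a reverse linear extension), stellarly subdividing at each face $\tau$ that is not already a simplex of the running complex. Using the displayed formula for the faces of $\stellarsd_\sigma \Delta$ — which replaces every face containing $\sigma$ by cones $v_\sigma * \tau$ over the faces $\tau$ not containing $\sigma$ — one checks by downward induction on rank that after subdividing all faces of rank $> j$, the faces of rank $\le j$ survive untouched while everything above them has been simplicially refined; carrying this to rank $1$ gives exactly the order complex of the proper part of $\Sigma$. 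The bookkeeping that each non-simplex face of $\Sigma$ gets subdivided exactly once, and that the result stabilizes to the order complex, is routine once the rank-decreasing order is fixed.

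**Main obstacle.** The real content is the identification in step three: making precise and correct the claim that passing from $P_i$ to $L(\overline{|P_i|})$ (the face lattice of the polytope whose boundary is $|P_i|$) and then forming $\lrtimes$ is compatible with forming $\lrtimes$ first and then taking order complexes. One has to be careful that $\lrtimes$ is defined on \emph{bounded} posets and involves deleting both $\hat 0$'s and adding one back, so the bijection must be checked to respect $\hat 0$, $\hat 1$, and the non-extreme elements separately; the potential pitfall is an off-by-one in which bottom/top element gets identified with which. Once that correspondence is pinned down, everything else is the standard stellar-subdivision/barycentric-subdivision machinery quoted from \cite{Ewald:1996} and \cite{Ewald/Shepard:1974}, together with Proposition \ref{pro:ProductOfPolytopes}(2).
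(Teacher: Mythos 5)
Your reduction in the third step rests on a misreading of the hypothesis, and the identity it produces is false. The hypothesis says that the \emph{order complex} $\vert P_i\vert$ is the boundary complex of the polytope $\overline{\vert P_i\vert}=\conv\vert P_i\vert$. Consequently the proper part of the face lattice $L(\overline{\vert P_i\vert})$ is the poset of \emph{simplices} of $\vert P_i\vert$, i.e.\ of nonempty chains of $P_i\setminus\{\hat 0,\hat 1\}$; it is not order-isomorphic to $P_i\setminus\{\hat 0,\hat 1\}$ in general (it is typically much larger), and $P_i$ itself need not be the face lattice of any polytope. Concretely, take $P_1$ to be the face lattice of a square and $P_2$ a bounded poset whose proper part is a two-element antichain. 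Then $\vert P_1\vert$ is the boundary of an octagon, $\overline{\vert P_1\vert}\times\overline{\vert P_2\vert}$ is a prism over an octagon with $16+24+10=50$ proper nonempty faces, so the complex $\vert L(\overline{\vert P_1\vert})\lrtimes L(\overline{\vert P_2\vert})\vert$ --- the full barycentric subdivision of the prism boundary, which is what both your main route and your ``hands-on'' variant produce --- has $50$ vertices, whereas $\vert P_1\lrtimes P_2\vert$ has only $9\cdot 3-1=26$ vertices. So your construction lands on a complex strictly finer than the target; the step you flag as the ``main obstacle'' is not a technicality to be pinned down but exactly the point where the argument fails, and it cannot be repaired within this strategy.

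The actual proof subdivides only a carefully chosen family of faces rather than all of them. One first stellarly subdivides the faces $\overline{\vert P_1\vert}\times v^{(2)}$, for $v^{(2)}\in P_2\setminus\{\hat 0,\hat 1\}$ taken in a reverse linear extension of $P_2$, and then the faces $v^{(1)}\times\overline{\vert P_2\vert}$ similarly; the new vertices are identified with the elements $(\hat 1,v^{(2)})$ and $(v^{(1)},\hat 1)$ of $P_1\lrtimes P_2$. One then subdivides at the original vertices $(v^{(1)},v^{(2)})$ in a reverse linear extension of $P_1\lrtimes P_2$, and an inductive description of the intermediate faces of the form $(F^{(1)}\times F^{(2)})*C$ shows that the final complex is exactly $\vert P_1\lrtimes P_2\vert$. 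A useful sanity check, which already rules out any scheme introducing a vertex for every face of the product polytope: the vertex set of the final complex must be precisely $P_1\lrtimes P_2\setminus\{\hat 0,\hat 1\}$.
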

\begin{proof}
Let $\Delta_{0}$ be the boundary complex of $\overline{\vert P_{1}\vert}\times\overline{\vert P_{2}\vert}$.
The faces of $\Delta_{0}$ are exactly the products $F^{(1)}\times F^{(2)}$,
where each $F^{(i)}$ is a non-empty face in $P_{i}$, and at least
one is proper. In particular the vertices are products of vertices
$v^{(1)}\times v^{(2)}$, where $v^{(i)}$ is in $P_{i}\setminus\{\hat{0},\hat{1}\}$.
We write this product of vertices as $(v^{(1)},v^{(2)})$, and think
of it as sitting in $\vert P_{1}\lrtimes P_{2}\vert$.

We start by ordering the elements $\{v^{(2)}\}$ of $P_{2}$ by a
reverse linear extension, and stellarly subdividing at each $\sigma=\overline{\vert P_{1}\vert}\times v^{(2)}$
in this order. Inductively assume that the faces containing $\sigma$
are those of the form $(\overline{\vert P_{1}\vert}\times F^{(2)})*C$,
where $F^{(2)}$ is a face of $\vert P_{2}\vert$ with top-ranked
vertex $v^{(2)}$, and $C$ is a simplex corresponding to (the simplicial
join of) a chain of elements of the form $(\hat{1},w^{(2)})$ (with
each $w^{(2)}>v^{(2)}$). Subdivision replaces these faces with those
of the form $(\overline{\vert P_{1}\vert}\times F_{0}^{(2)})*C*\{v_{\sigma}\}$,
where $F_{0}^{(2)}$ is a face having top-ranked vertex $<v^{(2)}$.
We abuse notation to call the newly introduced vertex $v_{\sigma}$
as $(\hat{1},v^{(2)})$, which puts us in the situation required to
continue our induction.

We next do the same procedure for the faces $v^{(1)}\times\overline{\vert P_{2}\vert}$.
That is, we order $\{v^{(1)}\}$ by a reverse linear extension of
$P_{1}$, and repeatedly perform stellar subdivision at each such
face according to this order. Since a face cannot contain both $\overline{\vert P_{1}\vert}$
and $\overline{\vert P_{2}\vert}$, these stellar subdivisions are
independent of the ones at $\overline{\vert P_{1}\vert}\times v^{(2)}$. 

After subdividing at all $\overline{\vert P_{1}\vert}\times v^{(2)}$
and $v^{(1)}\times\overline{\vert P_{2}\vert}$, we obtain a complex
$\Delta_{1}$. The vertex set of $\Delta_{1}$ is exactly $P_{1}\lrtimes P_{2}\setminus\{\hat{0},\hat{1}\}$.
The faces of $\Delta_{1}$ are $\{(F^{(1)}\times F^{(2)})*C\}$, where
$F^{(i)}$ is a face of $\vert P_{i}\vert$, and $C$ is a simplex
corresponding to either a chain of elements $(\hat{1},w^{(2)})$ or
a chain of elements $(w^{(1)},\hat{1})$.

Finally, we perform stellar subdivision at the vertices $v=(v^{(1)},v^{(2)})$,
where $v^{(i)}\in P_{i}\setminus\{\hat{0},\hat{1}\}$, in the order
of a reverse linear extension of $P_{1}\lrtimes P_{2}$. We make an
induction argument parallel to the one above: at the step associated
with vertex $v$, the faces containing $v$ are $\{(F^{(1)}\times F^{(2)})*C\}$.
As before, $F^{(i)}$ is a face of $\vert P_{i}\vert$ with top-ranked
vertex $v^{(i)}$, and $C$ corresponds to (the simplicial join of)
elements in a chain greater than $v$ in $P_{1}\lrtimes P_{2}$. Stellar
subdivision at $v$ replaces these faces with $\{(F_{0}^{(1)}\times F_{0}^{(2)})*C*\{v\}\}$,
where $F_{0}^{(i)}$ has greatest vertex $<v^{(i)}$, and we continue
the induction.

When we have subdivided at every vertex, we obtain a complex $\Delta_{2}$.
The faces of $\Delta_{2}$ are simply $\{C\}$, where $C$ is the
simplicial join of vertices in a chain of $P_{1}\lrtimes P_{2}$,
which is the definition of the order complex $\vert P_{1}\lrtimes P_{2}\vert$.\end{proof}
\begin{cor}
If If $P_{1}$ and $P_{2}$ are bounded posets such that $\vert P_{1}\vert$
and $\vert P_{2}\vert$ are the boundary complexes of polytopes, then
$\vert P_{1}\lrtimes P_{2}\vert$ and (by duality) $\vert P_{1}\urtimes P_{2}\vert$
are also boundary complexes of polytopes.
\end{cor}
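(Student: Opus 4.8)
The plan is to deduce the corollary from Lemma~\ref{lem:LrPolytopeProduct} together with the elementary fact that a stellar subdivision of the boundary complex of a convex polytope is again the boundary complex of a convex polytope. By definition $\stellarsd_{\sigma}\Delta = \conv(\Delta \cup \{v_{\sigma}\})$ is a convex polytope, and by \cite[III.2.1, III.2.2]{Ewald:1996} its boundary complex (for $\varepsilon$ small enough) is precisely the stellar subdivision of $\bdry\Delta$ at $\sigma$ described in Section~\ref{sec:Poset-Products}. Since Lemma~\ref{lem:LrPolytopeProduct} exhibits $\vert P_{1}\lrtimes P_{2}\vert$ as the result of a finite sequence of such subdivisions applied to the boundary complex of the polytope $\overline{\vert P_{1}\vert}\times\overline{\vert P_{2}\vert}$, a straightforward induction on the length of that sequence shows $\vert P_{1}\lrtimes P_{2}\vert$ to be the boundary complex of a polytope, namely the polytope obtained by performing the corresponding sequence of geometric stellar subdivisions.

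For the upper reduced product I would argue by duality. First, reversing the order gives a poset isomorphism $(P_{1}\urtimes P_{2})^{*}\cong P_{1}^{*}\lrtimes P_{2}^{*}$: the top $\hat{1}$ of $P_{1}\urtimes P_{2}$ becomes the bottom, and $(P_{i}\setminus\{\hat{1}\})^{*}=P_{i}^{*}\setminus\{\hat{0}\}$, so the two ``glued product'' constructions match. Second, for any bounded poset $Q$ the order complex is insensitive to order-reversal, $\vert Q\vert=\vert Q^{*}\vert$, since the chains of the proper part are the same subsets in either orientation; in particular $\vert P_{i}\vert=\vert P_{i}^{*}\vert$ is, by hypothesis, the boundary complex of a polytope, so $P_{1}^{*}$ and $P_{2}^{*}$ satisfy the hypotheses of the lower-reduced-product case. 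Hence $\vert P_{1}\urtimes P_{2}\vert=\vert(P_{1}\urtimes P_{2})^{*}\vert=\vert P_{1}^{*}\lrtimes P_{2}^{*}\vert$ is the boundary complex of a polytope.

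I do not foresee a real obstacle here, since all the substantive work lies in Lemma~\ref{lem:LrPolytopeProduct}; the corollary is a short wrap-up. The one point deserving care is the inductive step for the stellar subdivisions: one must know that at each stage the face being subdivided is still a proper face of the current polytope and that the combinatorial effect of the geometric subdivision matches the abstract one, but this is exactly what the cited results in \cite{Ewald:1996} provide, and the reverse-linear-extension orders used in Lemma~\ref{lem:LrPolytopeProduct} are chosen so that this holds. A minor secondary check is verifying the isomorphism $(P_{1}\urtimes P_{2})^{*}\cong P_{1}^{*}\lrtimes P_{2}^{*}$ directly against the definitions, which is routine.
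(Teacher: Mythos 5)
Your proposal is correct and follows exactly the route the paper intends: the paper states this corollary without proof as an immediate consequence of Lemma \ref{lem:LrPolytopeProduct} (stellar subdivision of a polytope boundary is again a polytope boundary, by the definition $\stellarsd_{\sigma}\Delta=\conv(\Delta\cup\{v_{\sigma}\})$), with the upper reduced case handled by the same duality $(P_{1}\urtimes P_{2})^{*}\cong P_{1}^{*}\lrtimes P_{2}^{*}$ you invoke. You have simply made explicit the induction and the order-reversal isomorphism that the paper leaves to the reader.
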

For $P_{1}\times P_{2}$, a similar result holds. Recall that the
\emph{free join} $\Delta_{1}\freejoin\Delta_{2}$ of two polytopes
$\Delta_{1}$ and $\Delta_{2}$ is obtained by taking the convex hull
of embeddings of $\Delta_{1}$ and $\Delta_{2}$ into skew affine
subspaces of Euclidean space (of high enough dimension). The faces
of $\Delta_{1}\freejoin\Delta_{2}$, as hinted in Proposition \ref{pro:ProductOfPolytopes},
are $F^{(1)}\freejoin F^{(2)}$, and $\dim F^{(1)}\freejoin F^{(2)}=\dim F^{(1)}+\dim F^{(2)}+1$. 
\begin{lem}
Suppose $P_{1}$ and $P_{2}$ are bounded posets and that $\vert P_{1}\vert$
and $\vert P_{2}\vert$ are the boundary complexes of polytopes. Then
$\vert P_{1}\times P_{2}\vert$ can be obtained from the boundary
complex of $\overline{\vert P_{1}\vert}\freejoin\overline{\vert P_{2}\vert}$
by a sequence of stellar subdivisions.\end{lem}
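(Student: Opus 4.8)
The plan is to imitate the proof of Lemma~\ref{lem:LrPolytopeProduct}, with the free join playing the role of the Cartesian product. Write $\Sigma_i$ for the face lattice of $\overline{\vert P_i\vert}$; by Proposition~\ref{pro:ProductOfPolytopes}(1) the face lattice of $\overline{\vert P_1\vert}\freejoin\overline{\vert P_2\vert}$ is $\Sigma_1\times\Sigma_2$, so its boundary complex $\Delta_0$ has faces $F^{(1)}\freejoin F^{(2)}$ with $F^{(i)}$ a face of $\overline{\vert P_i\vert}$ (possibly empty), not both the whole polytope. The vertices of $\Delta_0$ are the vertices of $\overline{\vert P_1\vert}$ and of $\overline{\vert P_2\vert}$, which we regard as the elements $(v^{(1)},\hat 0)$ and $(\hat 0,v^{(2)})$ inside $\vert P_1\times P_2\vert$. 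To reach $\vert P_1\times P_2\vert$ we must introduce, by stellar subdivision, the remaining elements of $(P_1\times P_2)\setminus\{\hat 0,\hat 1\}$: the ``mixed'' vertices $(\hat 1,v^{(2)})$ and $(v^{(1)},\hat 1)$, the two ``seam'' vertices $(\hat 1,\hat 0)$ and $(\hat 0,\hat 1)$ --- which correspond to the faces $\overline{\vert P_1\vert}$ and $\overline{\vert P_2\vert}$ of the free join --- and the ``interior'' vertices $(v^{(1)},v^{(2)})$ with both coordinates proper.

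I would carry out the subdivisions in three phases, paralleling the three steps of Lemma~\ref{lem:LrPolytopeProduct}. In Phase~1, for $v^{(2)}$ running over $P_2\setminus\{\hat 1\}$ in reverse linear-extension order (so $\hat 0$ comes last), stellarly subdivide at $\overline{\vert P_1\vert}\freejoin v^{(2)}$ when $v^{(2)}$ is proper, and at $\overline{\vert P_1\vert}$ itself when $v^{(2)}=\hat 0$; the new vertex is named $(\hat 1,v^{(2)})$. An induction exactly like the one in Lemma~\ref{lem:LrPolytopeProduct} shows that at the step for $v^{(2)}$ the faces still containing the subdivided cell have the form $(\overline{\vert P_1\vert}\freejoin G^{(2)})*C$ with $G^{(2)}$ a face of $\vert P_2\vert$ whose top vertex is $v^{(2)}$ and $C$ a simplex on a chain of elements $(\hat 1,w^{(2)})$, $w^{(2)}>v^{(2)}$, and that the subdivision replaces these by $(\overline{\vert P_1\vert}\freejoin G_0^{(2)})*C*\{(\hat 1,v^{(2)})\}$ with $G_0^{(2)}$ of smaller top vertex. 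Phase~2 is the mirror image, running $v^{(1)}$ over $P_1\setminus\{\hat 1\}$ and subdividing at $v^{(1)}\freejoin\overline{\vert P_2\vert}$ (or at $\overline{\vert P_2\vert}$) to introduce $(v^{(1)},\hat 1)$; since no face can contain both $\overline{\vert P_1\vert}$ and $\overline{\vert P_2\vert}$ the two phases are independent. After Phases~1 and~2, every face of the resulting complex $\Delta_1$ is a simplex corresponding to a chain of $P_1\times P_2$, except for the faces $G^{(1)}\freejoin G^{(2)}$ with $G^{(1)},G^{(2)}$ proper nonempty faces of $\overline{\vert P_1\vert},\overline{\vert P_2\vert}$: these are untouched by Phases~1--2 (neither contains $\overline{\vert P_i\vert}$), and although each is a geometric simplex, its vertex set $\{(u,\hat 0):u\in G^{(1)}\}\cup\{(\hat 0,w):w\in G^{(2)}\}$ is not a chain, so it still must be subdivided.

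Phase~3 repairs precisely these cells, imitating the last step of Lemma~\ref{lem:LrPolytopeProduct}: for each pair $(v^{(1)},v^{(2)})$ of proper elements, in reverse linear-extension order of $(P_1\setminus\{\hat 0,\hat 1\})\times(P_2\setminus\{\hat 0,\hat 1\})$, stellarly subdivide at the single edge $\{v^{(1)}\}\freejoin\{v^{(2)}\}$ of the free join joining those two vertices, introducing the vertex $(v^{(1)},v^{(2)})$. Each such edge survives Phases~1--2 (it contains neither $\overline{\vert P_1\vert}$ nor $\overline{\vert P_2\vert}$) and survives all earlier steps of Phase~3 (a $1$-face cannot contain another distinct $1$-face); in the degenerate case where some $P_i$ has empty proper part there are no such edges and Phase~3 is vacuous. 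A routine induction, as for the vertices $(v^{(1)},v^{(2)})$ in Lemma~\ref{lem:LrPolytopeProduct}, shows that subdividing at all these edges in this order carries each $G^{(1)}\freejoin G^{(2)}$ onto the order complex of the sub-poset of $P_1\times P_2$ on $\bigl(\{u\in G^{(1)}\}\cup\{\hat 0\}\bigr)\times\bigl(\{w\in G^{(2)}\}\cup\{\hat 0\}\bigr)\setminus\{\hat 0\}$, all of whose chains lie in $P_1\times P_2$. Hence the final complex $\Delta_2$ has as its faces precisely the chains of $P_1\times P_2$, i.e.\ $\Delta_2=\vert P_1\times P_2\vert$, as desired. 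One checks along the way that the vertices were introduced in an order refining a fixed linear extension of $P_1\times P_2$.

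As in Lemma~\ref{lem:LrPolytopeProduct}, the real work lies in the bookkeeping: carrying the inductive descriptions of the faces through each phase, verifying that the cell targeted by each stellar subdivision is still present when it is reached, and checking the order compatibility just mentioned. This is a little more delicate than in the Cartesian-product case, because the faces of a free join have the ``join-type'' shape $F^{(1)}\freejoin F^{(2)}$ rather than the ``product-type'' shape $F^{(1)}\times F^{(2)}$; in particular, even the faces of $\Delta_0$ that are already simplices --- namely the $G^{(1)}\freejoin G^{(2)}$ with both parts proper --- carry the wrong vertex sets, which is exactly what Phase~3 is there to correct. I expect this verification to be the only genuine obstacle.
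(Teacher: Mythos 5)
Your proposal is correct and takes essentially the same route as the paper's proof, which is itself only a sketch deferring to the detailed argument for $\vert P_{1}\lrtimes P_{2}\vert$: subdivide first at the faces $\overline{\vert P_{1}\vert}\freejoin v^{(2)}$ and $v^{(1)}\freejoin\overline{\vert P_{2}\vert}$ to create the vertices $(\hat{1},v^{(2)})$ and $(v^{(1)},\hat{1})$, then at the mixed edges $v^{(1)}\freejoin v^{(2)}$ to create the vertices $(v^{(1)},v^{(2)})$. If anything, you make explicit two points the paper's sketch glosses over --- the subdivisions at $\overline{\vert P_{1}\vert}$ and $\overline{\vert P_{2}\vert}$ themselves producing $(\hat{1},\hat{0})$ and $(\hat{0},\hat{1})$, and the observation that the simplices $G^{(1)}\freejoin G^{(2)}$ carry the wrong (non-chain) vertex sets and so still require the edge subdivisions.
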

\begin{proof}
Since the details of the proof are very similar to the preceding Lemma
\ref{lem:LrPolytopeProduct}, we provide a sketch only. Let $\Delta_{0}=\overline{\vert P_{1}\vert}\freejoin\overline{\vert P_{2}\vert}$.
Notice that the vertices of $\Delta_{0}$ are $\{\emptyset\freejoin v^{(2)}\}\cup\{v^{(1)}\freejoin\emptyset\}$,
while the edges are $\{v^{(1)}\freejoin v^{(2)}\}$. 

As in Lemma \ref{lem:LrPolytopeProduct}, we begin by ordering the
facets $\overline{\vert P_{1}\vert}\freejoin v^{(2)}$ and $v^{(1)}\freejoin\overline{\vert P_{2}\vert}$
according to reverse linear extensions of $P_{2}$ and $P_{1}$, and
inductively performing stellar subdivision. Each such subdivision
creates a vertex, which we name $(\hat{1},v^{(2)})$ or $(v^{(1)},\hat{1})$.
We obtain a complex $\Delta_{1}$ with faces $\{(F^{(1)}\freejoin F^{(2)})*C\}$
where $F^{(i)}$ is a proper face of $P_{i}$ (possibly empty), and
$C$ corresponds to a chain in the elements $\{(\hat{1},v^{(2)})\}$
or $\{(v^{(1)},\hat{1})$\}.

We then order the edges $v^{(1)}\freejoin v^{(2)}$ by a linear extension
of $P_{1}\times P_{2}$, and inductively perform stellar subdivision
to create vertices $(v^{(1)},v^{(2)})$. The resulting complex is
isomorphic to $\vert P_{1}\times P_{2}\vert$.\end{proof}
\begin{cor}
If $P_{1}$ and $P_{2}$ are bounded posets such that $\vert P_{1}\vert$
and $\vert P_{2}\vert$ are the boundary complexes of polytopes, then
$\vert P_{1}\times P_{2}\vert$ is also the boundary complex of a
polytope.
\end{cor}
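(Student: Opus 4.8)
The plan is to read the corollary straight off the lemma that immediately precedes it. That lemma exhibits $\vert P_{1}\times P_{2}\vert$ as the outcome of a finite sequence of stellar subdivisions applied to the boundary complex of the free join $\overline{\vert P_{1}\vert}\freejoin\overline{\vert P_{2}\vert}$, and by Proposition \ref{pro:ProductOfPolytopes}(1) this free join is itself a convex polytope (its face lattice is $\Sigma_{1}\times\Sigma_{2}$, where $\Sigma_{i}$ is the face lattice of $\overline{\vert P_{i}\vert}$). So the only thing that remains to be said is that the class of polytope boundary complexes is closed under stellar subdivision, and the corollary is immediate.

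For that closure statement I would argue directly from the definition recalled just before the lemma: if $X$ is the boundary complex of a convex $d$-polytope $\overline{X}=\conv X$ and $\sigma$ is a proper face of $\overline{X}$, then $\stellarsd_{\sigma}\overline{X}=\conv(\overline{X}\cup\{v_{\sigma}\})$ is again a convex $d$-polytope, hence its boundary complex is again a polytope boundary complex; and for $\varepsilon$ small enough this boundary complex is the explicit one listed after the definition (the faces through $\sigma$ get replaced by their stars on $v_{\sigma}$). In particular the combinatorial type of $\stellarsd_{\sigma}\overline{X}$ depends only on the face lattice of $\overline{X}$ together with the chosen face $\sigma$, so "is the boundary complex of a polytope'' survives each step and hence the whole sequence built in the lemma. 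Since that final complex is combinatorially $\vert P_{1}\times P_{2}\vert$, we are done; the same bookkeeping recovers the companion statement for $P_{1}\lrtimes P_{2}$ and $P_{1}\urtimes P_{2}$ from Lemma \ref{lem:LrPolytopeProduct} and duality.

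The one place to be a little careful — and the closest thing here to an obstacle — is that each stellar subdivision in the lemma is taken at a face of the polytope produced by the \emph{previous} subdivisions, not at a face of the original free join: one subdivides first along the facets $\overline{\vert P_{1}\vert}\freejoin v^{(2)}$ and $v^{(1)}\freejoin\overline{\vert P_{2}\vert}$, and then along the edges $v^{(1)}\freejoin v^{(2)}$, and these must still be genuine faces of the current polytope when they are used. This is exactly what the inductive description of the faces in the lemma's proof guarantees, so the recursion is legitimate and no extra work is needed.
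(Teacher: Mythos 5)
Your proposal is correct and matches the paper's intended argument exactly: the paper states this corollary without a separate proof, treating it as immediate from the preceding lemma together with Proposition \ref{pro:ProductOfPolytopes} and the fact that stellar subdivision (being the convex-hull construction $\conv(\Delta\cup\{v_{\sigma}\})$) preserves the property of being a polytope boundary complex. Your added remark about each subdivision being taken at a face of the \emph{current} polytope is a legitimate point of care, and it is indeed handled by the inductive bookkeeping in the lemma's proof, just as you say.
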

This completes the proof of Lemma \ref{lem:posetpolytopeprod_intro}.

\subsection{\label{sub:CedsOfProductPosets}Convex ear decompositions of product
posets}

Let $P_{1}$ and $P_{2}$ be bounded posets with respective convex
ear decompositions $\{\Delta_{s}^{(1)}\}$ and $\{\Delta_{t}^{(2)}\}$.
Let $P$ be either $P_{1}\times P_{2}$, $P_{1}\lrtimes P_{2}$, or
$P_{1}\urtimes P_{2}$; with coordinate projection maps $p_{1}$ and
$p_{2}$. Take $d=\dim\vert P\vert$, $d_{1}=\dim\vert P_{1}\vert$,
and $d_{2}=\dim\vert P_{2}\vert$.

We define $\Delta_{s,t}$ to be the simplicial complex generated by
the maximal chains of $P$ that project to $\Delta_{s}^{(1)}$ in
the first coordinate, and $\Delta_{t}^{(2)}$ in the second. Order
these complexes lexicographically by $(s,t)$.
\begin{thm}
\label{thm:ced-respects-products}$\{\Delta_{s,t}\}$ is a convex
ear decomposition for $\vert P\vert$.\end{thm}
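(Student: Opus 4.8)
The plan is to verify the four axioms (ced-polytope), (ced-topology), (ced-bdry), (ced-union) for the ordered family $\{\Delta_{s,t}\}$, leaning on the two ingredients already developed: Proposition \ref{pro:ProductOfPolytopes} together with Lemma \ref{lem:posetpolytopeprod_intro} (to handle polytopality of the ears), and the Bj\"orner--Wachs theory of $CL$-labelings of poset products recalled in Section \ref{sub:ProductCLlabelings} (to handle shellability and the boundary structure). First I would set up notation carefully: for a maximal chain $\mathbf{c}$ of $P$, write $\mathbf{c}^{(i)} = p_i(\mathbf{c})$ for its projection to $P_i$; note that $\mathbf{c}^{(i)}$ is a maximal chain of $P_i$ (this uses that $P$ is one of the three product constructions, so cover relations in $P$ project to cover relations or equalities in each coordinate, and a maximal chain of $P$ has length $d_1 + d_2$ in the $\times$ case and the appropriate reduced length in the $\lrtimes$, $\urtimes$ cases). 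Then $\mathbf{c} \in \Delta_{s,t}$ precisely when $\mathbf{c}^{(1)}$ is a ``new'' facet of $\Delta^{(1)}_s$ and $\mathbf{c}^{(2)}$ is a ``new'' facet of $\Delta^{(2)}_t$, which is the whole point of the indexing.

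For (ced-union): every maximal chain $\mathbf{c}$ of $P$ has $\mathbf{c}^{(1)}$ lying in some first $\Delta^{(1)}_s$ and $\mathbf{c}^{(2)}$ in some first $\Delta^{(2)}_t$, so $\mathbf{c} \in \Delta_{s,t}$; this is immediate. For (ced-polytope): the subcomplex $\Delta_{s,t}$ is contained in the order complex of the subposet $\Sigma^{(1)}_s \bullet \Sigma^{(2)}_t$, where $\bullet$ is the corresponding one of $\times$, $\lrtimes$, $\urtimes$ on the ambient polytopal ears $\Sigma^{(i)}$ furnished by the convex ear decompositions of $P_i$; by Proposition \ref{pro:ProductOfPolytopes} this product of face lattices of polytopes is again the face lattice of a polytope, and its order complex (the barycentric subdivision) is polytopal, so $\Delta_{s,t}$ embeds as a subcomplex of the boundary of a simplicial $d$-polytope. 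For (ced-topology) and (ced-bdry) I would use a product $CL$-labeling $\lambda_1 \bullet \lambda_2$ on $P$ obtained from $CL$-shellings of $\Sigma^{(1)}_s$ and $\Sigma^{(2)}_t$ (after reversing, as in the proof of Theorem \ref{thm:ELcedsAreCeds}): a facet of $\Delta_{s,t}$ is ``new'' iff each of its projections is ``new,'' and the reverse-lexicographic order on $\Delta_{s,t}$ induced by the product labeling is a shelling --- here one checks that an ascent of $\mathbf{c}$ (in one coordinate) can be replaced by a descent without changing the other coordinate, exactly mimicking Lemma \ref{lem:CLcedReplaceAscentWithDescent}. Shellability plus properness of $\Delta_{s,t}$ inside the sphere $\vert \Sigma^{(1)}_s \bullet \Sigma^{(2)}_t \vert$ (properness because at least one coordinate ear is a proper subcomplex when $(s,t) \neq (1,1)$) gives that $\Delta_{1,1}$ is a sphere and the rest are balls. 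For (ced-bdry) I would show $\partial \Delta_{s,t} = \Delta_{s,t} \cap \bigcup_{(s',t') < (s,t)} \Delta_{s',t'}$ by the same Lemma \ref{lem:SubmanifoldBdry} argument as in Theorem \ref{thm:ELcedsAreCeds}: a chain in the intersection is one all of whose extensions to maximal chains of $\Sigma^{(1)}_s \bullet \Sigma^{(2)}_t$ are ``old'' in at least one coordinate, hence lies in the closure of the complement, hence in the boundary; conversely any boundary chain has a unique ``missing'' extension, forcing one projection to be old and therefore the chain to reappear in an earlier $\Delta_{s',t'}$, using that the boundary of a ``new'' face of $\Delta^{(i)}_{s_i}$ is covered by earlier ears (the (ced-bdry) property downstairs).

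The main obstacle I expect is the interaction between the two coordinates in establishing (ced-bdry) and the shelling --- specifically, pinning down the correct statement that ``a face of $\Delta_{s,t}$ lies on its boundary iff at least one projection lies on the boundary of the corresponding ear downstairs,'' and checking that the lexicographic order on pairs $(s,t)$ is compatible with this. The subtlety is that a non-maximal chain $\mathbf{c}$ of $P$ can have projections that are maximal in one coordinate but not the other, so one must argue about extensions coordinate-by-coordinate while keeping track of which ear each extension first lands in; getting the bookkeeping right (especially for the reduced products, where the rank function and the behavior near $\hat 0$ or $\hat 1$ differ) is where the real care is needed. A secondary technical point is verifying that the product $CL$-labelings of Bj\"orner--Wachs restrict correctly to the subposets $\Sigma^{(1)}_s \bullet \Sigma^{(2)}_t$ and that reversing the label order behaves well under the product operation; I would quote the relevant results from Section \ref{sub:ProductCLlabelings} and \cite[Section 10]{Bjorner/Wachs:1997} rather than reprove them. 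Everything else is a direct transcription of the single-poset argument in Theorem \ref{thm:ELcedsAreCeds}.
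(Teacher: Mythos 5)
Your plan for (ced-union) is fine and you have correctly identified the key boundary claim, but there is a genuine gap in your treatment of (ced-topology), and it leaks into (ced-polytope) and (ced-bdry): you assume structure that the hypotheses of this theorem do not provide. The theorem assumes only that $P_{1}$ and $P_{2}$ carry \emph{convex ear decompositions} $\{\Delta_{s}^{(1)}\}$, $\{\Delta_{t}^{(2)}\}$. For a general convex ear decomposition the ears are not order complexes of subposets $\Sigma_{s}^{(i)}\subseteq P_{i}$ (that is the extra structure of a $CL$-ced), there is no $CL$-labeling of $P_{1}$, $P_{2}$, or $P$ available, and the ears, while balls, are not known to be shellable balls. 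Consequently the ambient spheres $\vert\Sigma_{s}^{(1)}\bullet\Sigma_{t}^{(2)}\vert$ you propose to shell inside do not exist as subcomplexes of $\vert P\vert$, and the product labeling $\lambda_{1}\bullet\lambda_{2}$ you propose to reverse-lex shell with cannot be formed. What you have sketched is essentially a proof of Theorem \ref{thm:clcedprod_intro} (the $CL$-ced product theorem, which the paper proves separately with exactly this labeling machinery), not of Theorem \ref{thm:ced-respects-products}, whose point is that no labeling hypothesis is needed.

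The paper establishes (ced-topology) purely topologically: arguments identical to Quillen's and Walker's show $\Delta_{s,t}\approx\Delta_{s}^{(1)}*\Delta_{t}^{(2)}$ in the reduced cases and $\Delta_{s,t}\approx\operatorname{susp}(\Delta_{s}^{(1)}*\Delta_{t}^{(2)})$ for the direct product, and then standard PL-topology facts about joins and suspensions of balls and spheres give that $\Delta_{1,1}$ is a $d$-sphere and every later $\Delta_{s,t}$ is a $d$-ball; no shelling is involved. For (ced-bdry) the paper proves precisely the claim you flagged as the main obstacle --- a face of $\Delta_{s,t}$ lies in $\partial\Delta_{s,t}$ iff some projection lies in $\partial\Delta_{s}^{(1)}$ or $\partial\Delta_{t}^{(2)}$ --- but it does so using only the fact, already established, that $\Delta_{s,t}$ is a simplicial $d$-ball, whose boundary is generated by the codimension-one faces contained in a unique facet, together with a projection argument; it then finishes with (ced-bdry) applied in each coordinate, with no appeal to Lemma \ref{lem:SubmanifoldBdry} or to chain extensions inside a $\Sigma$. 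For (ced-polytope) the paper simply cites Lemma \ref{lem:posetpolytopeprod_intro}, again without invoking subposets of $P$. If you wish to keep your labeling-based route, you must strengthen the hypothesis to $CL$-ceds, which is exactly the content of the separate Theorem \ref{thm:clcedprod_intro}; as a proof of the present theorem, the shelling step does not get off the ground.
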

\begin{proof}
Lemma \ref{lem:posetpolytopeprod_intro} gives that $\Delta_{s,t}$
is a subcomplex of the boundary complex of a polytope, so (ced-polytope)
is satisfied.

The topology of various poset products is nicely discussed in Sundaram's
\cite[Section 2]{Sundaram:1994}. There are homeomorphisms \[
\vert P_{1}\lrtimes P_{2}\vert\approx\vert P_{1}\urtimes P_{2}\vert\approx\vert P_{1}\vert*\vert P_{2}\vert,\]
where $*$ is the join of topological spaces. This result goes back
to Quillen \cite[Proposition 1.9]{Quillen:1978}, although his notation
was much different -- Sundaram makes the connection in \cite[proof of Proposition 2.5]{Sundaram:1994}.
Walker \cite[Theorem 5.1 (d)]{Walker:1988} extends this to show that

\[
\vert P_{1}\times P_{2}\vert\approx\susp(\vert P_{1}\vert*\vert P_{2}\vert),\]
where $\susp$ denotes the topological suspension. Identical proofs
to Quillen's and Walker's show that $\Delta_{s,t}\approx\Delta_{s}*\Delta_{t}$
in the upper/lower reduced case, and that $\Delta_{s,t}\approx\susp(\Delta_{s}*\Delta_{t})$
in the direct product case. In particular, $\Delta_{s,t}$ is a $d$-ball
for $(s,t)>(1,1)$ and a $d$-sphere for $(s,t)=(1,1)$ by results
in PL-topology \cite[Proposition 2.23]{Rourke/Sanderson:1972}. We
have shown that (ced-topology) is satisfied.

It is clear that (ced-union) holds. It remains to check (ced-bdry).
\begin{claim}
$\bdry\Delta_{s,t}$ is exactly the set of all faces in $\Delta_{s,t}$
that project to either $\bdry\Delta_{s}^{(1)}$ or $\bdry\Delta_{t}^{(2)}$
(or both).\end{claim}
\begin{proof}
The boundary of a simplicial $d$-ball $\Delta$ is generated by the
$d-1$ faces that are contained in only a single facet of $\Delta$.
If $\mathbf{c}$ is a $d-1$ face of $\Delta_{s,t}$ (i.e., a chain
of length $d-1$), then at least one of $p_{1}(\mathbf{c})$ and $p_{2}(\mathbf{c})$
also has codimension 1.

Since $\Delta_{s,t}$ is defined to be the chains which project to
$\Delta_{s}^{(1)}$ and $\Delta_{t}^{(2)}$, we see that if $p_{1}(\mathbf{c})$
is $d_{1}-1$ dimensional and $\mathbf{c}$ is $d-1$ dimensional,
then $p_{1}(\mathbf{c})$ has exactly one extension in $\Delta_{s}^{(1)}$
if and only if $\mathbf{c}$ has exactly one extension in $\Delta_{s,t}$.
The argument if $p_{2}(\mathbf{c})$ has codimension 1 is entirely
similar.
\end{proof}
We now show both inclusions for (ced-bdry). If $\mathbf{d}$ is any
chain in $\bdry\Delta_{s,t}$ with $p_{1}(\mathbf{d})$ in $\bdry\Delta_{s}^{(1)}$,
then $p_{1}(\mathbf{d})$ is in $\Delta_{u}^{(1)}$ for some $u<s$
by (ced-bdry), so $\mathbf{d}$ is in $\Delta_{u,t}$; similarly if
$p_{1}(\mathbf{d})$ is maximal and $p_{2}(\mathbf{d})$ is in $\bdry\Delta_{t}^{(2)}$.
Thus $\bdry\Delta_{s,t}\subseteq\Delta_{s,t}\cap\left(\bigcup_{(u,v)<(s,t)}\Delta_{u,v}\right)$.

In the other direction: if $\mathbf{c}$ is in $\Delta_{s,t}$ and
$\Delta_{u,v}$ (for $(u,v)<(s,t)$), then $p_{1}(\mathbf{c})$ is
in both $\Delta_{s}^{(1)}$ and $\Delta_{u}^{(1)}$. If $s\neq u$,
then $p_{1}(\mathbf{c})$ is in $\bdry\Delta_{s}^{(1)}$, so $\mathbf{c}$
is in $\bdry\Delta_{s,t}$. A similar argument applies for $p_{2}$
when $s=u$. Thus, $\bdry\Delta_{s,t}\supseteq\Delta_{s,t}\cap\left(\bigcup_{(u,v)<(s,t)}\Delta_{u,v}\right)$,
and we have shown (ced-bdry), completing the proof.
\end{proof}

\subsection{\label{sub:ProductCLlabelings}Product $CL$-labelings}

In this subsection, we explicitly recall the product $CL$-labelings
introduced by Björner and Wachs in \cite[Section 10]{Bjorner/Wachs:1997},
and hinted at in Section \ref{sub:d-divisible-EL-labeling}. Since
there is no particular reason to work with dual labelings in Section
\ref{sec:Poset-Products}, I've chosen to work with standard (not
dual) $CL$-labelings, so that everything is {}``upside down'' relative
to Sections \ref{sec:d-divisible-Partition} and \ref{sec:coset-lattice}.
Since the root of an edge of the form $\hat{0}\lessdot x$ is always
$\emptyset$, we suppress the root from our notation in this case.

Let $P$ be a bounded poset with a $CL$-labeling $\lambda$ that
has label set $S_{\lambda}$. A label $s\in S_{\lambda}$ is \emph{atomic}
if it is used to label a cover relation $\hat{0}\lessdot x$ (for
any atom $x$), and \emph{non-atomic} if it is used to label any other
rooted cover relation. (In an arbitrary $CL$-labeling, a label can
be both atomic and non-atomic.) A $CL$-labeling is \emph{orderly}
if $S_{\lambda}$ is totally ordered and partitions into $S_{\lambda}^{-}<S_{\lambda}^{A}<S_{\lambda}^{+}$,
where every atomic label is in $S_{\lambda}^{A}$, and every non-atomic
label is either in $S_{\lambda}^{-}$ or $S_{\lambda}^{+}$. There
are similar definitions of \emph{co-atomic}, \emph{non-co-atomic},
and \emph{co-orderly}, and of course we can generalize to talk of
orderly and co-orderly chain edge labelings, even if the $CL$-property
is not met. 
\begin{lem}
\emph{(Björner and Wachs \cite[Lemma 10.18]{Bjorner/Wachs:1997})}\label{lem:OrderlyCLlabelingsExist}
Let $P$ be a bounded poset with a $CL$-labeling $\lambda$. Then
$P$ has an orderly $CL$-labeling $\lambda'$, and a co-orderly $CL$-labeling
$\lambda''$, such that any maximal chain $\mathbf{c}$ in $P$ has
the same set of ascents and descents under each of the three labelings
$\lambda$, $\lambda'$, and $\lambda''$.
\end{lem}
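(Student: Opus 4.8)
The plan is to relabel each rooted cover relation by prefixing its old label with a \emph{telescoping level} that records (number of ascents) minus (number of descents) along the part of the chain at or below that cover relation. Since this level moves by exactly $\pm 1$ at each step, comparing consecutive new labels reproduces the old ascent/descent pattern verbatim; and since the only atomic step of a maximal chain sits at the bottom, all atomic cover relations get level $0$ and a common tag, so they can be segregated into the middle band required by orderliness. The co-orderly labeling $\lambda''$ is produced by running the identical recipe on the dual poset (equivalently, measuring levels along co-roots descending from $\hat 1$), so it suffices to produce $\lambda'$.

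Here is the construction. Adopt the (standard) convention that a step $c_i\to c_{i+1}$ of a maximal chain is an \emph{ascent} if $\lambda(c_i)<\lambda(c_{i+1})$ and a \emph{descent} otherwise, and fix once and for all a linear extension $\preceq$ of the label poset $S_\lambda$. Given a rooted cover relation $c=(\mathbf r, u\lessdot v)$ with root $\mathbf r$ the maximal chain $\hat 0 = w_0\lessdot\cdots\lessdot w_{i-1}=u$, read off the word of labels $\lambda(w_0\lessdot w_1),\dots,\lambda(w_{i-2}\lessdot w_{i-1}),\lambda(c)$ and let $\ell(c)\in\mathbb{Z}$ be the number of ascents minus the number of descents among its $i-1$ consecutive comparisons; this depends only on $c$, because the root of $c$ is a uniquely determined maximal chain from $\hat 0$. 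Put $\varepsilon(c)=0$ if $u=\hat 0$ (i.e.\ $c$ atomic) and $\varepsilon(c)=1$ otherwise, and define
\[ \lambda'(c)=\bigl(\ell(c),\ \varepsilon(c),\ \lambda(c)\bigr), \]
with label set $\mathbb{Z}\times\{0,1\}\times S_\lambda$ ordered lexicographically (the last coordinate by $\preceq$).

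The verification splits into three parts. \emph{Ascents and descents are preserved}: along any maximal chain the first coordinates of $\lambda'(c_j)$ and $\lambda'(c_{j+1})$ differ by $+1$ when step $j$ was an ascent and by $-1$ when it was a descent (the telescoping identity), and as $\mathbb{Z}$ has top priority this already decides the comparison; hence $\lambda'$ has the same ascent set and descent set on every maximal chain as $\lambda$. \emph{$\lambda'$ is a $CL$-labeling}: the unique increasing chain of any rooted interval stays increasing for $\lambda'$ and is still the only one, since every other chain keeps a descent; and where two maximal chains of a rooted interval first diverge, their common prefix forces equal levels, so the new labels compare there exactly as the old ones did and the increasing chain remains lexicographically first. \emph{$\lambda'$ is orderly}: since $\varepsilon(c)=0$ forces $\ell(c)=0$, the atomic labels of $\lambda'$ are exactly the triples $(0,0,s)$, which form the contiguous interval $S^A:=\{0\}\times\{0\}\times S_\lambda$; taking $S^-:=\{(\ell,\varepsilon,s):\ell<0\}$ and $S^+:=\bigl(\{0\}\times\{1\}\times S_\lambda\bigr)\cup\{(\ell,\varepsilon,s):\ell>0\}$ gives the partition $S^-<S^A<S^+$, and every non-atomic label (which has $\varepsilon=1$) lies in $S^-\cup S^+$.

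The point needing the most care — and the reason the middle coordinate $\varepsilon$ is present — is guaranteeing that the band $S^A$ contains \emph{only} atomic labels. A non-atomic cover relation may perfectly well sit at level $0$, and one must be sure it is never forced to be compared against an atomic label inside that band: this works precisely because the level strictly increases or strictly decreases at every step (it changes by $\pm 1$, including at the atomic step, and in particular never repeats), so two consecutive cover relations never share a level, and the band is consulted only in the static picture, not at any transition. Keeping the flat-step convention uniform is exactly what keeps the increments in $\{\pm 1\}$ and makes all three verification steps go through; the resulting statement is Björner and Wachs's Lemma 10.18 of \cite{Bjorner/Wachs:1997}, and the argument above is a streamlined rendering of theirs.
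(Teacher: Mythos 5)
Your direct construction of the orderly labeling $\lambda'$ takes a genuinely different route from the one the paper indicates: the paper (following Bj\"orner and Wachs) passes from $\lambda$ to a recursive atom ordering and then builds a fresh $CL$-labeling from that ordering, whereas you relabel $\lambda$ in place by a telescoping level. For $\lambda'$ this essentially works: the level is a legitimate function of the rooted cover relation because the root determines the whole label word below the edge, the $\pm 1$ increments reproduce the ascent/descent sets, and the $\varepsilon$ coordinate cleanly isolates the atomic band. One imprecision: at the first divergence of two chains in a rooted interval the two competing edges need \emph{not} have equal levels --- they can differ by $2$, since each edge's level already incorporates its own comparison with the last label of the common prefix --- but a short case analysis (both labels above the prefix label, both not above, or one of each) shows the lexicographic comparison still agrees with the old one, so the conclusion survives.

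The genuine gap is the co-orderly labeling $\lambda''$. ``Running the identical recipe on the dual poset'' is not available: a $CL$-labeling of $P$ is a function on cover relations rooted from $\hat{0}$ and does not induce a $CL$-labeling of the dual poset, whose rooted cover relations carry chains descending from $\hat{1}$. More fundamentally, the ``co-level'' your recipe would need --- ascents minus descents from the edge up to $\hat{1}$ --- is not a function of the rooted cover relation $(\mathbf{r},x\lessdot y)$ when $y\neq\hat{1}$: it depends on which maximal chain one continues along above $y$, so assigning it as a label would give the same rooted edge different labels on different maximal chains, which is exactly what a chain-edge labeling forbids. (The same objection applies if $\lambda$ is an $EL$-labeling.) So $\lambda''$ is not constructed, and I do not see a repair inside your framework; that half appears to genuinely require the recursive-atom-ordering detour that Bj\"orner and Wachs take, where the ascent/descent data is fixed first and one retains the freedom to choose the numerical values of the labels afterwards.
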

The proof involves constructing a recursive atom ordering from $\lambda$,
and then constructing a $CL$-labeling with the desired properties
from the recursive atom ordering. 
\begin{note}
The result of Lemma \ref{lem:OrderlyCLlabelingsExist} is not known
to be true if `$CL$' is replaced by `$EL$'.
\end{note}
\medskip{}
To find a $CL$-labeling of $P_{1}\times P_{2}$, we label each edge
in $P_{1}\times P_{2}$ with the edge in $P_{1}$ or $P_{2}$ to which
it projects. More formally, notice that any rooted cover relation
$(\mathbf{r},x\lessdot y)$ projects to a cover relationship in one
coordinate, and to a point in the other. Then the \emph{product labeling},
denoted $\lambda_{1}\times\lambda_{2}$, labels $(\mathbf{r},x\lessdot y)$
with $\lambda_{i}\left(p_{i}(\mathbf{r}),p_{i}(x\lessdot y)\right)$,
where $i$ is the coordinate where projection is nontrivial. It is
straightforward to show that $\lambda_{1}\times\lambda_{2}$ is a
$CL$-labeling if $\lambda_{1}$ and $\lambda_{2}$ are $CL$-labelings
of $P_{1}$ and $P_{2}$, and where we order $S_{\lambda_{1}}\cup S_{\lambda_{2}}$
by any shuffle of $S_{\lambda_{1}}$ and $S_{\lambda_{2}}$ \cite[Proposition 10.15]{Bjorner/Wachs:1997}.

The idea behind finding a $CL$-labeling of $P_{1}\lrtimes P_{2}$
(or similarly $P_{1}\urtimes P_{2}$) is to restrict $\lambda_{1}\times\lambda_{2}$
to $P_{1}\lrtimes P_{2}$. For a cover relation $x\lessdot y$ where
$x\neq\hat{0}$, this works very well, as $x\lessdot y$ in $P_{1}\lrtimes P_{2}$
is also a cover relation in $P_{1}\times P_{2}$, and the roots project
straightforwardly. The problem comes at cover relations $\hat{0}\lessdot y$,
which project to a cover relation in both $P_{1}$ and $P_{2}$. Here,
we need to combine the labels $\lambda_{1}\left(\hat{0}\lessdot p_{1}(y)\right)$
and $\lambda_{2}\left(\hat{0}\lessdot p_{2}(y)\right)$. 

The orderly labelings constructed in Lemma \ref{lem:OrderlyCLlabelingsExist}
are a tool to perform this combination in a manner that preserves
the $CL$-property. Let $P_{1}$ and $P_{2}$ have orderly $CL$-labelings
$\lambda_{1}$ and $\lambda_{2}$, with disjoint label sets $S_{1}$
and $S_{2}$. Suppose the label sets are shuffled together as \[
S_{1}^{-}<S_{2}^{-}<S_{1}^{A}<S_{2}^{A}<S_{1}^{+}<S_{2}^{+}.\]
Then the \emph{lower reduced product labeling} $\lambda_{1}\lrtimes\lambda_{2}$
labels an edge $\hat{0}\lessdot y$ with the word $\lambda_{1}\left(p_{1}(\hat{0}\lessdot y)\right)\lambda_{2}\left(p_{2}(\hat{0}\lessdot y)\right)$
in $S_{1}^{A}S_{2}^{A}$ (lexicographically ordered), while all other
rooted edges $(\mathbf{r},x\lessdot y)$ (for $x\neq\hat{0})$ are
labeled with the nontrivial projection $\lambda_{i}\left(p_{i}(\mathbf{r}),p_{i}(x\lessdot y)\right)$
as in $\lambda_{1}\times\lambda_{2}$. Björner and Wachs proved \cite[Theorems 10.2 and 10.17]{Bjorner/Wachs:1997}
that $\lambda_{1}\lrtimes\lambda_{2}$ is a $CL$-labeling of $P_{1}\lrtimes P_{2}$. 

Similarly, if $\lambda_{1}$ and $\lambda_{2}$ are co-orderly $CL$-labelings
of $P_{1}$ and $P_{2}$, with disjoint label sets shuffled together
as for the orderly labelings above, we define the \emph{upper reduced
product labeling} $\lambda_{1}\urtimes\lambda_{2}$ as follows. Label
an edge of the form $(\mathbf{r},x\lessdot\hat{1})$ with the word
\[
\lambda_{1}\left(p_{1}(\mathbf{r}),p_{1}(x\lessdot\hat{1})\right)\lambda_{2}\left(p_{2}(\mathbf{r}),p_{2}(x\lessdot\hat{1})\right)\]
 in $S_{1}^{A}S_{2}^{A}$, and all other edges $(\mathbf{r},x\lessdot y)$
(for $y\neq\hat{1})$ as in $\lambda_{1}\times\lambda_{2}$. Then
\cite[Theorems 10.2 and 10.17]{Bjorner/Wachs:1997} gives us that
$\lambda_{1}\urtimes\lambda_{2}$ is a $CL$-labeling of $P_{1}\urtimes P_{2}$.
\begin{example}
\label{exa:DDivLabelingAsProduct}The labeling $\lambda_{\mbox{div}}$
we constructed for the $d$-divisible partition lattice was an co-orderly
$EL$-labeling of the dual lattice: actually, $S^{A}$ was just $\{0\}$.
As discussed in Lemma \ref{lem:DDivProductStructure}, intervals split
as products, and the restriction of $\lambda_{\mbox{div}}$ to an
interval splits as the appropriate product labeling.
\end{example}
We summarize in the following theorem:
\begin{thm}
\emph{(Björner and Wachs \cite[Proposition 10.15 and Theorem 10.17]{Bjorner/Wachs:1997})}
Let $P_{1}$ and $P_{2}$ be posets, with respective labelings $\lambda_{1}$
and $\lambda_{2}$.
\begin{enumerate}
\item If $\lambda_{1}$ and $\lambda_{2}$ are $CL$-labelings ($EL$-labelings),
then $\lambda_{1}\times\lambda_{2}$ is a $CL$-labeling ($EL$-labeling)
of $P_{1}\times P_{2}$.
\item If $\lambda_{1}$ and $\lambda_{2}$ are orderly $CL$-labelings,
then $\lambda_{1}\lrtimes\lambda_{2}$ is a $CL$-labeling of $P_{1}\lrtimes P_{2}$.
\item If $\lambda_{1}$ and $\lambda_{2}$ are co-orderly $CL$-labelings,
then $\lambda_{1}\urtimes\lambda_{2}$ is a $CL$-labeling of $P_{1}\urtimes P_{2}$.
\end{enumerate}
\end{thm}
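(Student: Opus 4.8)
The plan is to observe that this theorem merely repackages two results of Björner and Wachs, so the proof reduces to recalling why each of the three constructions preserves the $CL$-labeling property, organized around the one delicate point: in the reduced products the extreme cover relations project nontrivially in \emph{both} coordinates, and the orderly (resp.\ co-orderly) hypothesis is present precisely to control this.

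For part (1), the key combinatorial fact is that a cover relation in $P_{1}\times P_{2}$ is nontrivial in exactly one coordinate, so a maximal chain of a rooted interval $[(x_{1},x_{2}),(y_{1},y_{2})]_{\mathbf{r}}$ is exactly a shuffle of a maximal chain of $[x_{1},y_{1}]_{p_{1}(\mathbf{r})}$ in $P_{1}$ with one of $[x_{2},y_{2}]_{p_{2}(\mathbf{r})}$ in $P_{2}$, and its label word is the corresponding shuffle of the two label words. Since $S_{\lambda_{1}}\cup S_{\lambda_{2}}$ is ordered by a fixed shuffle extending both orders, such a shuffle is strictly increasing if and only if each constituent chain is strictly increasing and the shuffle is the order-merge of the two label words; hence there is a unique increasing maximal chain, namely the order-merge of the (unique) increasing chains of the two factors. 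For lexicographic minimality, if another maximal chain $\mathbf{c}$ first disagrees with it at step $k$, project to the coordinate $i$ in which step $k$ is nontrivial; lex-minimality of the increasing chain of $[x_{i},y_{i}]_{p_{i}(\mathbf{r})}$ under $\lambda_{i}$, together with the fact that each coordinate's labels keep their internal order inside the shuffle, forces the increasing chain to come first. This is \cite[Proposition 10.15]{Bjorner/Wachs:1997}; the $EL$-case is the same argument with roots suppressed.

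For part (2), every rooted cover relation of $P_{1}\lrtimes P_{2}$ other than those of the form $\hat{0}\lessdot(a_{1},a_{2})$ coincides (with its root) with a rooted cover relation of $P_{1}\times P_{2}$, so any rooted interval $[x,y]_{\mathbf{r}}$ with $x\neq\hat{0}$ is, labels and all, a rooted interval of $P_{1}\times P_{2}$ and part (1) applies verbatim. The only intervals needing fresh attention are $[\hat{0},y]$. A maximal chain there consists of an atomic edge $\hat{0}\lessdot z$, $z=(z_{1},z_{2})$, carrying the two-letter label $\lambda_{1}(\hat{0}\lessdot z_{1})\lambda_{2}(\hat{0}\lessdot z_{2})\in S_{1}^{A}S_{2}^{A}$, followed by a maximal chain of $[z,y]\cong[z_{1},p_{1}y]\times[z_{2},p_{2}y]$ labeled by $\lambda_{1}\times\lambda_{2}$. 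The essential use of orderliness is that in an orderly $CL$-labeling the increasing chain from $\hat{0}$ to any element has all of its non-bottom labels in $S^{+}$ (these are non-atomic, hence in $S^{-}\cup S^{+}$, and they exceed the atomic bottom label, hence lie in $S^{A}\cup S^{+}$). Therefore the candidate chain obtained by splicing the increasing chain of $[\hat{0},p_{1}y]$ in $P_{1}$, that of $[\hat{0},p_{2}y]$ in $P_{2}$ (which together pin down $z$), and the increasing chain of $[z,y]$ from part (1), has its two-letter label $\in S_{1}^{A}S_{2}^{A}$ strictly below every later label (which lies in $S_{1}^{+}\cup S_{2}^{+}$, placed above both $S_{1}^{A}$ and $S_{2}^{A}$ in the chosen shuffle), so it is genuinely increasing; uniqueness and lex-minimality then follow as in part (1), once the comparison convention between two-letter and one-letter labels is checked. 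This is \cite[Theorems 10.2 and 10.17]{Bjorner/Wachs:1997}. Part (3) is the order-dual of (2): co-orderliness replaces orderliness and the special edges are those of the form $x\lessdot\hat{1}$; one runs the argument of (2) upside down.

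The main obstacle, such as it is, lies in the splicing step of (2) and (3): verifying monotonicity across the junction between the two-letter atomic label and the labels above it, which is exactly where the partition $S^{-}<S^{A}<S^{+}$ and the chosen interleaving of the two label sets are used. Everything else is a routine transcription of the single-coordinate $CL$-property through the shuffle correspondence, and in the write-up I would simply cite the Björner–Wachs results for the details rather than reprove them.
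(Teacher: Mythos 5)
The paper gives no proof of this theorem at all: it is stated purely as a summary of cited results of Bj\"orner and Wachs, which is exactly what you propose to do in your final paragraph. Your sketch of why the citations apply is sound --- in particular you correctly isolate the one real point, that orderliness forces every non-bottom label of an increasing chain out of $\hat{0}$ into $S^{+}$, which is what makes the spliced chain in the reduced products increasing --- so this is consistent with (and somewhat more informative than) the paper's treatment.
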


\subsection{$CL$-ceds of product posets}

Fix our notation as in Section \ref{sub:CedsOfProductPosets}, but
suppose in addition that $P_{1}$ and $P_{2}$ have $CL$-ceds $\{\Sigma_{s}^{(1)}\}$
and $\{\Sigma_{t}^{(2)}\}$ with respect to the $CL$-labelings $\lambda_{1}$
and $\lambda_{2}$. Denote the resulting ears of new chains as $\{\Delta_{s}^{(1)}\}$
and $\{\Delta_{t}^{(2)}\}$, as in Section \ref{sub:ELcedsAndCLceds}.
Then take $\Sigma_{s,t}$ to be the appropriate product of $\Sigma_{s}^{(1)}$
and $\Sigma_{t}^{(2)}$, and $\Delta_{s,t}$ to be the associated
ear of new chains. 

We first notice that there is no inconsistency with the notation used
in Section \ref{sub:CedsOfProductPosets}:
\begin{lem}
A maximal chain $\mathbf{c}$ is in $\Delta_{s,t}$ if and only if
$p_{1}(\mathbf{c})$ is in $\Delta_{s}^{(1)}$ and $p_{2}(\mathbf{c})$
is in $\Delta_{t}^{(2)}$.\end{lem}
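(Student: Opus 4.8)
The plan is to show that the two descriptions of $\Delta_{s,t}$ agree: the one in Section~\ref{sub:CedsOfProductPosets}, where $\Delta_{s,t}$ is generated by the maximal chains of $P$ projecting to $\Delta_s^{(1)}$ in the first coordinate and $\Delta_t^{(2)}$ in the second, and the one obtained by running the ear construction of Section~\ref{sub:ELcedsAndCLceds} on the family $\{\Sigma_{s,t}\}$, where $\Delta_{s,t}$ is generated by the maximal chains lying in $\Sigma_{s,t}$ but in no $\Sigma_{u,v}$ that precedes it in the lexicographic order on $(s,t)$. The bridge between the two is a single claim that I would isolate and prove first: \emph{a maximal chain $\mathbf{c}$ of $P$ lies in $\mathcal{M}(\Sigma_{s,t})$ if and only if $p_1(\mathbf{c})\in\mathcal{M}(\Sigma_s^{(1)})$ and $p_2(\mathbf{c})\in\mathcal{M}(\Sigma_t^{(2)})$.}

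To establish the claim I would first invoke (CLced-polytope) and Proposition~\ref{pro:ProductOfPolytopes}: each $\Sigma_s^{(1)}$, $\Sigma_t^{(2)}$ is the face lattice of a polytope, hence so is $\Sigma_{s,t}$, so all of these are graded, and $\Sigma_{s,t}$, $\Sigma_s^{(1)}$, $\Sigma_t^{(2)}$ have the same ranks as $P$, $P_1$, $P_2$ respectively. Next I would note the elementary fact that, for each of the three product constructions, an element $(x,y)$ of $P$ lies in $\Sigma_{s,t}$ precisely when $x\in\Sigma_s^{(1)}$ and $y\in\Sigma_t^{(2)}$ --- the identifications of $\hat{0}$ and $\hat{1}$ are harmless because those elements belong to every $\Sigma$ --- and consequently a chain $\mathbf{c}$ of $P$ is contained in $\Sigma_{s,t}$ iff $p_1(\mathbf{c})\subseteq\Sigma_s^{(1)}$ and $p_2(\mathbf{c})\subseteq\Sigma_t^{(2)}$. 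Finally I would use that the coordinate projection of a maximal chain of $P$ (with repetitions deleted) is a maximal chain of the corresponding factor: a maximal chain of $P$ contained in $\Sigma_{s,t}$ therefore projects to maximal chains of $P_1$ and $P_2$ sitting inside $\Sigma_s^{(1)}$ and $\Sigma_t^{(2)}$, which are maximal there because the ranks agree; conversely, if $p_1(\mathbf{c})$ and $p_2(\mathbf{c})$ are maximal in $\Sigma_s^{(1)}$ and $\Sigma_t^{(2)}$ then $\mathbf{c}\subseteq\Sigma_{s,t}$, and being a maximal chain of $P$ inside a subposet of the same rank it is maximal in $\Sigma_{s,t}$.

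Granted the claim, the lemma becomes a short trichotomy argument on the lexicographic order. For the ``if'' direction, if $p_1(\mathbf{c})\in\mathcal{M}(\Delta_s^{(1)})$ and $p_2(\mathbf{c})\in\mathcal{M}(\Delta_t^{(2)})$ then $\mathbf{c}\in\mathcal{M}(\Sigma_{s,t})$ by the claim, and if $\mathbf{c}$ also lay in some $\mathcal{M}(\Sigma_{u,v})$ with $(u,v)$ before $(s,t)$, the claim would force $p_1(\mathbf{c})\in\mathcal{M}(\Sigma_u^{(1)})$ and $p_2(\mathbf{c})\in\mathcal{M}(\Sigma_v^{(2)})$; since $p_1(\mathbf{c})$ is a ``new'' chain of $\Sigma_s^{(1)}$ we cannot have $u<s$, and since $(u,v)$ precedes $(s,t)$ we cannot have $u>s$, so $u=s$ and $v<t$, contradicting that $p_2(\mathbf{c})$ is a ``new'' chain of $\Sigma_t^{(2)}$ --- hence $\mathbf{c}\in\mathcal{M}(\Delta_{s,t})$. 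For the ``only if'' direction, if $\mathbf{c}\in\mathcal{M}(\Delta_{s,t})$ then $p_1(\mathbf{c})\in\mathcal{M}(\Sigma_s^{(1)})$ and $p_2(\mathbf{c})\in\mathcal{M}(\Sigma_t^{(2)})$ by the claim; were $p_1(\mathbf{c})$ in $\mathcal{M}(\Sigma_u^{(1)})$ for some $u<s$, the claim would put $\mathbf{c}$ in $\mathcal{M}(\Sigma_{u,t})$ with $(u,t)$ before $(s,t)$, a contradiction, so $p_1(\mathbf{c})\in\mathcal{M}(\Delta_s^{(1)})$, and symmetrically (using $(s,v)$ with $v<t$) $p_2(\mathbf{c})\in\mathcal{M}(\Delta_t^{(2)})$.

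I expect the only real content to be the claim --- in particular the two compatibility observations, that membership in a product subposet is coordinatewise and that coordinate projection carries maximal chains of $P$ to maximal chains of the factors, together with the remark that the product subposets are graded of the expected ranks (so ``maximal in $P$ and inside $\Sigma_{s,t}$'' upgrades to ``maximal in $\Sigma_{s,t}$''). The rest is routine bookkeeping. The one mild nuisance is that all three products $\times$, $\lrtimes$, $\urtimes$ must be covered, but the coordinatewise-membership verification is uniform across them once one checks that $\hat{0}$ and $\hat{1}$ cause no trouble.
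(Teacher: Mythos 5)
Your proposal is correct and follows essentially the same route as the paper: the paper's entire proof is the one-line observation that the maximal chains of $\Sigma_{s,t}$ are exactly those projecting to $\Sigma_{s}^{(1)}$ and $\Sigma_{t}^{(2)}$, which is precisely the claim you isolate, with the remaining lexicographic bookkeeping left to the reader. Your version simply spells out those details (coordinatewise membership, projections of maximal chains, and the trichotomy on $(s,t)$) explicitly.
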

\begin{proof}
The statement follows straightforwardly from the fact that the maximal
chains of $\Sigma_{s,t}$ are those that project to $\Sigma_{s}^{(1)}$
and $\Sigma_{t}^{(2)}$. 
\end{proof}
As we did in Section \ref{sub:CedsOfProductPosets}, order the $\{\Sigma_{s,t}\}$
according to the lexicographic order of $(s,t)$. Let $\lambda$ be
the appropriate product $CL$-labeling, where we assume without loss
of generality via Lemma \ref{lem:OrderlyCLlabelingsExist} that $\lambda_{1}$
and $\lambda_{2}$ are orderly or co-orderly. Then we will prove:
\begin{thm}
$\{\Sigma_{s,t}\}$ is a $CL$-ced for $\vert P\vert$ with respect
to $\lambda$.\end{thm}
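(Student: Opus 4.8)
The plan is to verify the four defining properties of a $CL$-ced directly for $\{\Sigma_{s,t}\}$ with respect to $\lambda$, and then invoke Theorem~\ref{thm:ELcedsAreCeds}; equivalently, one may view this result as refining Theorem~\ref{thm:ced-respects-products}, so that the only genuinely new content is the $\lambda$-dependent axioms. Property (CLced-polytope) is immediate from Proposition~\ref{pro:ProductOfPolytopes}: each $\Sigma_s^{(1)}$ and $\Sigma_t^{(2)}$ is the face lattice of a convex polytope, hence so is $\Sigma_{s,t}$, whichever of the three products we are forming. Property (CLced-union) is equally quick: a maximal chain $\mathbf c$ of $P$ has $p_1(\mathbf c)\in\Sigma_s^{(1)}$ and $p_2(\mathbf c)\in\Sigma_t^{(2)}$ for some $s,t$ by (CLced-union) for $P_1$ and $P_2$, whence $\mathbf c\in\Sigma_{s,t}$. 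By Lemma~\ref{lem:OrderlyCLlabelingsExist} we may assume $\lambda_1,\lambda_2$ are orderly (resp.\ co-orderly in the $\urtimes$ case), so that $\lambda$ is one of the product labelings of Section~\ref{sub:ProductCLlabelings}.

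The heart of the proof is (CLced-desc), and the tool is the following structural fact, implicit in Bj\"orner and Wachs's analysis in \cite[Section 10]{Bjorner/Wachs:1997} (and of which Example~\ref{exa:DDivLabelingAsProduct} is a special case): every rooted interval $[x,y]_{\mathbf r}$ of $P$ is again a product---direct, lower-, or upper-reduced---of the rooted intervals $[p_1(x),p_1(y)]_{p_1(\mathbf r)}$ and $[p_2(x),p_2(y)]_{p_2(\mathbf r)}$, and $\lambda$ restricts to the corresponding product labeling. Using this, I would prove the key lemma that a maximal chain $\mathbf c$ on $[x,y]_{\mathbf r}$ is descending with respect to $\lambda$ if and only if both $p_1(\mathbf c)$ and $p_2(\mathbf c)$ are descending. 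The forward direction holds because projection preserves the relative order in which the labels of a fixed factor occur, and because the only ``combined'' cover relations (whose label is a word in $S_1^AS_2^A$, at the bottom for $\lrtimes$ and at the top for $\urtimes$) occur at a single edge of $\mathbf c$; for $\mathbf c$ to descend, every label below/above it must lie in $S_1^-\cup S_2^-$ (resp.\ $S_1^+\cup S_2^+$), and such labels project to labels below (resp.\ above) the relevant $S_i^A$-component. The reverse direction follows because, given descending $p_1(\mathbf c)$ and $p_2(\mathbf c)$, the shuffle pattern needed to reassemble a descending $\mathbf c$ is forced by the total order on $S_1\cup S_2$. Granting this, (CLced-desc) for $P$ reduces to (CLced-desc) for $P_1$ and $P_2$: a descending maximal chain on $[x,y]_{\mathbf r}$ lying in $\Delta_{s,t}$ would project to descending maximal chains on the two sub-rooted-intervals lying in $\Delta_s^{(1)}$ and $\Delta_t^{(2)}$, each unique by hypothesis, and $\mathbf c$ is recovered from this pair.

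For (CLced-bdry) I would reduce to the boundary statement already established in Theorem~\ref{thm:ced-respects-products}. If a chain $\mathbf c$ of length $<\dim\vert P\vert$ extends to maximal chains in both $\Delta_{s,t}$ and $\Delta_{u,v}$ with $(u,v)<(s,t)$, then since $\Delta_{s,t},\Delta_{u,v}$ are subcomplexes, $\mathbf c\in\Delta_{s,t}\cap\Delta_{u,v}\subseteq\bdry\Delta_{s,t}$. Because $\Sigma_{s,t}$ is a polytope face lattice, $\vert\Sigma_{s,t}\vert$ is a sphere; (CLced-desc) together with the argument from the proof of Theorem~\ref{thm:ELcedsAreCeds} shows $\Delta_{s,t}$ is a \emph{proper} full-dimensional shellable subcomplex for $(s,t)>(1,1)$, hence a ball, so $\bdry\Delta_{s,t}=\bdry\overline{\vert\Sigma_{s,t}\vert\setminus\Delta_{s,t}}$ by Lemma~\ref{lem:SubmanifoldBdry}. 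Therefore $\mathbf c$ extends to a maximal chain in $\mathcal M(\Sigma_{s,t})\setminus\mathcal M(\Delta_{s,t})$, which is (CLced-bdry). With all four axioms in hand, Theorem~\ref{thm:ELcedsAreCeds} gives that $\{\Delta_{s,t}\}$ is a convex ear decomposition and that $\{\Sigma_{s,t}\}$ is a $CL$-ced; the $\urtimes$ case follows from the $\lrtimes$ case by passing to dual posets and co-orderly labelings.

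The step I expect to be the real obstacle is the ``descending iff both projections descending'' lemma---precisely the bookkeeping at the combined atomic/co-atomic edges and the claim that the reassembling shuffle is unique. Distinctness of the labels occurring along one chain is not part of the $CL$-axioms, so in the presence of repeated labels one must argue uniqueness of the descending shuffle more carefully; the cleanest route is to match the number of descending maximal chains with the dimension of the reduced (co)homology of the relevant rooted interval via the K\"unneth formula (using $\vert P_1\lrtimes P_2\vert\approx\vert P_1\vert*\vert P_2\vert$, etc.), so that the descending chains of the product biject with pairs of descending chains of the factors and no two coincide.
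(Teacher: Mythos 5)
Your proposal is correct and follows essentially the same route as the paper: polytope and union are immediate, (CLced-bdry) is deduced from the already-proved boundary identity of Theorem \ref{thm:ced-respects-products} together with Lemma \ref{lem:SubmanifoldBdry}, and (CLced-desc) comes from projecting a descending chain to descending chains in the factors and noting the descending shuffle is unique, with the special bottom (resp.\ top) edge labeled in $S_{1}^{A}S_{2}^{A}$ handled exactly as you describe. The one {}``obstacle'' you flag is not actually one: since $S_{1}$ and $S_{2}$ are disjoint and totally ordered, any merge of two descending label sequences into a descending sequence is forced at every step (the larger of the two current heads must come next, repeats within a single factor notwithstanding), so the paper's direct shuffle-uniqueness argument suffices and no K\"unneth-type counting is needed.
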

\begin{proof}
Proposition \ref{pro:ProductOfPolytopes} tells us that (CLced-polytope)
is satisfied, and (CLced-union) is immediate from the definitions. 

For (CLced-bdry), we work backwards, and notice that we have already
shown in Theorem \ref{thm:ced-respects-products} that $\bdry\Delta_{s,t}=\Delta_{s,t}\cap\left(\bigcup_{u,v<s,t}\Delta_{u,v}\right)$.
Lemma \ref{lem:SubmanifoldBdry} meanwhile gives that $\bdry\Delta_{s,t}=\Delta_{s,t}\cap\overline{\vert\Sigma_{s,t}\vert\setminus\Delta_{s,t}}$,
hence that a chain $\mathbf{c}$ with extensions in both $\Delta_{s,t}$
and $\Delta_{u,v}$ has an extension in $\mathcal{M}(\Sigma_{s,t})\setminus\mathcal{M}(\Delta_{s,t})$,
as required. (A direct proof is also straightforward.)

It remains to check (CLced-desc). Although the statement of this property
is very similar to \cite[Theorem 10.17]{Bjorner/Wachs:1997} (which
says that $\lambda$ is a $CL$-labeling), the proof in \cite{Bjorner/Wachs:1997}
uses some machinery. So we work from scratch, as follows.

If $P=P_{1}\times P_{2}$ and we are considering the rooted interval
$[x,y]_{\mathbf{r}}$, then the labels of a maximal chain $\mathbf{c}_{\mathbf{r}}$
on $[x,y]_{\mathbf{r}}$ are the same as the labels of $p_{1}(\mathbf{c})_{p_{1}(\mathbf{r})}$
union with the labels of $p_{2}(\mathbf{c})_{p_{2}(\mathbf{r})}$,
{}``shuffled together'' in some order. Thus, if $\mathbf{c_{r}}$
is descending, then the projections must also be descending. Since
the label sets $S_{1}$ and $S_{2}$ are taken to be disjoint, there
is a unique way of shuffling the two label sets (and so the two chains)
together to get a descending chain.

For $P=P_{1}\lrtimes P_{2}$, the proof is the same unless $x=\hat{0}$.
In this case, the first label of a maximal chain $\mathbf{c}$ is
in $S_{1}^{A}S_{2}^{A}$, while the first label of the projections
are in $S_{1}^{A}$ and $S_{2}^{A}$, respectively. If $\mathbf{c}$
is descending, then all labels after the first are from $S_{1}^{-}$
or $S_{2}^{-}$, since $S_{1}^{-}<S_{2}^{-}<S_{1}^{A}S_{2}^{A}<S_{1}^{+}<S_{2}^{+}$,
and thus $p_{1}(\mathbf{c})$ and $p_{2}(\mathbf{c})$ are descending,
and we argue as before.

The proof for $P=P_{1}\urtimes P_{2}$ is entirely similar to that
for $P_{1}\lrtimes P_{2}$.
\end{proof}

\section{\label{sec:Further-Questions}Further questions}

The close relationship between the techniques used in Sections \ref{sec:d-divisible-Partition}
and \ref{sec:coset-lattice} leads us to ask the following question.

\begin{question}Are there other families of posets with similar structure
to $\Pi_{n}^{d}$ and $\cosetlat(G)$? Can the techniques used in
Sections \ref{sec:d-divisible-Partition} and \ref{sec:coset-lattice}
be used to construct dual $EL$-labelings and $EL$-ceds?\end{question}

What we mean by `similar' here is not clear. At the least, we need
a poset $P$ where every interval of the form $[a,\hat{1}]$ is supersolvable,
and where the supersolvable structure is canonically determined, i.e.,
such that we can label all edges of $P\setminus\{\hat{0}\}$ in a
way that restricts to a supersolvable labeling on each such $[a,\hat{1}]$
interval. We then need a way to sign the edges giving an $EL$-labeling,
and the poset has to somehow be `wide' or `rich' enough to have an
$EL$-ced.

One possible source of such examples is the theory of exponential
structures. An \emph{exponential structure} is a family of posets
with each upper interval isomorphic to the partition lattice, and
each lower interval isomorphic to a product of smaller elements in
the same family. Exponential structures were introduced in \cite{Stanley:1978},
where the family of $d$-divisible partition lattices was shown to
be one example. Shellings are constructed for some other examples
in \cite{Sagan:1986,Welker:1995}.

\begin{question}Can techniques like those used in Section \ref{sec:d-divisible-Partition}
(and Section \ref{sec:coset-lattice}) be used to construct dual $EL$-labelings
and/or $EL$-ceds of exponential structures besides the $d$-divisible
partition lattice?\end{question}

However, it is not a priori clear how to construct a labeling that
restrict to a supersolvable labeling on any $[a,\hat{1}]$ for exponential
structures. In examples even finding an $EL$-labeling often seems
to be a difficult problem.

A question suggested by the results of Section \ref{sec:Poset-Products}
is:

\begin{question}Are there other operations on posets that preserve
convex ear decompositions and/or $CL$-ceds?\end{question}

For example, Schweig shows \cite[Theorem 5.1]{Schweig:2006} that
rank-selected supersolvable and geometric lattices have convex ear
decompositions. Do all rank-selected subposets of posets with convex
ear decompositions have a convex ear decomposition? Are there any
other useful constructions that preserve having a convex ear decomposition
and/or $EL$-ced? A place to start looking would be in Björner and
Wach's papers \cite{Bjorner/Wachs:1983,Bjorner/Wachs:1996,Bjorner/Wachs:1997},
where they answer many such questions for $EL$/$CL$-labelings.

\section*{Acknowledgements}

Thanks to Ed Swartz for introducing me to convex ear decompositions;
and to him, Jay Schweig, and my graduate school advisor Ken Brown
for many helpful discussions about them. Tom Rishel listened to and
commented on many intermediate versions of the results and definitions
of this paper. Sam Hsiao helped me in understanding the material of
Proposition \ref{pro:ProductOfPolytopes}, and in looking for its
extension to Lemma \ref{lem:posetpolytopeprod_intro}. Vic Reiner
pointed out that the labeling based on pivots of $\cosetlat(G)$ that
I used in \cite{Woodroofe:2007} was really a supersolvable labeling,
which suggested the improved $EL$-labeling used in Section \ref{sec:coset-lattice}.
Volkmar Welker suggested exponential structures as a possible area
for further exploration. The anonymous referee gave many helpful comments.

\bibliographystyle{hamsplain}
\bibliography{5_Users_paranoia_Documents_Research_Master}

\end{document}